\let\Re=\undefined\DeclareMathOperator*{\Re}{Re}
\theoremstyle{plain}
\newtheorem{theorem}{Theorem}
\newtheorem{proposition}[theorem]{Proposition}
\newtheorem{lemma}[theorem]{Lemma}
\theoremstyle{definition}
\newtheorem{remark}[theorem]{Remark}
\newcounter{smalllist}
\numberwithin{equation}{section} \numberwithin{theorem}{section}
\begin{document}

\title[Global wellposedness of the 3-D full water wave problem]{Global wellposedness of the 3-D full water wave problem}
\author{Sijue Wu
}
\address{University of Michigan, Ann Arbor, MI}

\thanks {Financial support in part by NSF grant DMS-0800194}

\begin{abstract}
We consider the problem of global in time existence and uniqueness of solutions of the 3-D infinite depth full water wave problem. We show that the nature of the nonlinearity of the water wave equation is essentially of cubic and higher orders. For any initial interface that is sufficiently small in its steepness and velocity, we show that there exists a unique smooth solution of the full water wave problem for all time, and the solution decays at the rate $1/t$. 
\end{abstract}

\maketitle

\baselineskip18pt

\section{Introduction}
In this paper we continue our study of the global in time behaviors of the full water wave problem. 

The mathematical problem of $n$-dimensional water wave concerns the
motion of the 
interface separating an inviscid, incompressible, irrotational fluid,
under the influence of gravity, 
from a region of zero density (i.e. air) in 
$n$-dimensional space. It is assumed that the fluid region is below the
air region. Assume that
the density  of the fluid is $1$, the gravitational field is
$-{\bold k}$, where ${\bold k}$ is the unit vector pointing in the  upward
vertical direction, and at  
 time $t\ge 0$, the free interface is $\Sigma(t)$, and the fluid
occupies  region
$\Omega(t)$. When surface tension is
zero, the motion of the fluid is  described by 
\begin{equation}\label{euler}
\begin{cases}   \ \bold v_t + \bold v\cdot \nabla \bold v = -\bold k-\nabla P
\qquad  \text{on } \Omega(t),\ t\ge 0,
\\
\ \text{div}\,\bold v=0 , \qquad \text{curl}\,\bold v=0, \qquad  \text{on }
\Omega(t),\ t\ge 
0,
\\  
\ P=0, \qquad\text{on }
\Sigma(t) \\ 
\ (1, \bold v) \text{ 
is tangent to   
the free surface } (t, \Sigma(t)),
\end{cases}
\end{equation}
where $ \bold v$ is the fluid velocity, $P$ is the fluid
pressure. 
It is well-known that when surface tension is neglected, the water wave motion can be subject to the Taylor instability \cite{bi, ta, bhl}.  Assume that the free interface $\Sigma(t)$ is described by $
\xi=\xi(\alpha, t)$, 
where $\alpha\in R^{n-1}$ is the Lagrangian 
coordinate, i.e.
$\xi_t(\alpha,t)=\bold v(z(\alpha, t),t)$
is the fluid velocity on the interface, $\xi_{tt}(\alpha, t)=(\bold v_t+\bold v\cdot\nabla \bold v)(z(\alpha,t), t)$ is the
acceleration. Let 
 $\text{\bf n}$ be the unit normal pointing out of  
$\Omega(t)$. The Taylor sign condition relating to
Taylor instability is 
\begin{equation}\label{taylor}
-\frac{\partial P}{\partial \text{\bf n}}=(\xi_{tt}+{\bold k})\cdot \text{\bf n}\ge c_0>0,
\end{equation}
point-wisely on the interface for some positive constant $c_0$. 
In previous works \cite{wu1, wu2}, we showed that  the Taylor sign condition \eqref{taylor} 
always holds for the
$n$-dimensional infinite depth water wave problem \eqref{euler}, $n\ge 2$, as long as the interface is non-self-intersecting; and the initial value problem of
the  water
wave system \eqref{euler} is uniquely 
solvable {\bf locally} in   
time in 
Sobolev spaces for arbitrary given data.  
Earlier work includes Nalimov
\cite{na},   
 and Yosihara \cite{yo} on local existence and uniqueness for small data in 2D. 
We  mention the following recent work on local wellposedness \cite{am, cl, cs, ig1, la, li, ot, sz, zz}.  However the global in time behavior of the solutions remained open until last year.

In \cite{wu3}, we showed that for the 2D full water wave problem \eqref{euler} ($n=2$), the quantities $\Theta=(I-\frak H)y$, $(I-\frak H)\psi$,   under an appropriate coordinate change $k=k(\alpha,t)$, satisfy  equations of the type
\begin{equation}\label{cubic}
\partial_t^2\Theta-i\partial_\alpha\Theta=G
\end{equation}
with $G$ consisting of nonlinear terms of only cubic and higher orders. Here $\frak H$ is the Hilbert transform related to the water region $\Omega(t)$,
$y$ is the height function for the interface $\Sigma(t): (x(\alpha, t),y(\alpha, t))$, and $\psi$ is the trace on $\Sigma(t)$ of the velocity potential. Using this favorable structure, and the $L^\infty$ time decay rate for the 2D water wave  $1/{t^{1/2}}$, we showed that the full water wave equation \eqref{euler} in two space dimensions has a unique smooth solution for a time period $[0, e^{c/\epsilon}]$ for initial data  $\epsilon\Phi$, where $\Phi$ is arbitrary, $c$ depends only on $\Phi$,  and $\epsilon$ is sufficiently small.

Briefly, the structural advantage of \eqref{cubic} can be explained as the following. We know the water wave 
equation \eqref{euler} is equivalent to an equation on the interface of the form
\begin{equation}\label{linearized}
\partial_t^2u+|D|u=\text{nonlinear terms}
\end{equation}
where the nonlinear terms contain quadratic nonlinearity.
For given smooth data, the free equation $\partial_t^2u+|D|u=0$ has a unique solution globally in time, with $L^\infty$ norm decays at the  rate $1/t^{\frac{n-1}2}$. However the nonlinear interaction can cause blow-up at finite time. The weaker the nonlinear interaction, the longer the solution stays smooth. For small data, 
quadratic interactions are in general stronger than the cubic and higher order interactions. In \eqref{cubic} there is no quadratic terms, using it we are able to prove a longer time existence of classical solutions for small initial data.

Naturally, we would like to know if the 3D water wave equation also posses such special structures. 
We find that indeed this is the case. A natural setting for 3D to utilize the ideas of 2D is the Clifford analysis. However deriving such equations \eqref{cubic} in 3D in the Clifford Algebra framework is not straightforward due to  the non-availability of the Riemann mapping, the non-commutativity of the Clifford numbers, and the fact that the multiplication of two Clifford analytic functions is not necessarily analytic. 
Nevertheless we have overcome these difficulties. 

Let  $\Sigma(t): \xi=(x(\alpha,\beta, t),y(\alpha, \beta, t), z(\alpha, \beta, t))$ be the interface in Lagrangian coordinates $(\alpha, \beta)\in R^2$,
and let $\frak H$ be the Hilbert transform associated to the water region $\Omega(t)$, 
$N=\xi_\alpha\times \xi_\beta$ be the outward normal. In this paper, we show that  the quantity $\theta=(I-\frak H)z$ satisfies such equation
\begin{equation}\label{cubic'}
\partial_t^2\theta- \frak a N\times \nabla \theta =G
\end{equation}
where $G$ is a nonlinearity of cubic and higher orders in nature. 
We also find a coordinate change $k$  that transforms \eqref{cubic'} into an equation consisting of a linear part plus only
cubic and higher order nonlinear terms.\footnote{We will explain more precisely the meaning of these statements in subsection 1.2.} For $\psi$ the trace of the velocity potential, $(I-\frak H)\psi$ also satisfies a similar type equation. However we will not derive it  since we do not need it in this paper.

Given that in 3D the $L^\infty$ time decay rate is a faster $1/t$, it is not surprising that for small data, the water wave equation \eqref{euler} ($n=3$) is solvable globally in time. In fact we obtain better results than in 2D in terms of the initial data set. We show that if the steepness of the initial interface  and the  velocity along the initial interface (and finitely many of their derivatives) are sufficiently  small, then the solution of the 3D full water wave equation \eqref{euler} remains smooth for all time and decays at a $L^\infty$ rate of $1/t$. No  assumptions are made to the height of the initial interface and the velocity field in the fluid domain.  
In particular, this means that the amplitude of the initial interface can be arbitrary large, the initial kinetic energy $\|\bold v\|_{L^2(\Omega(0))}^2$ can be infinite.
This certainly makes sense physically. We note that the almost global wellposedness result we obtained for 2D water wave \cite{wu3} requires the initial amplitude of the interface and the initial kinetic energy $\|\bold v\|_{L^2(\Omega(0))}^2$
being small. One may view 2D water wave as a special case of 3D where the wave is constant in one direction. In 2D there is one less direction for the wave to disperse and the $L^\infty$ time decay rate is a slower $1/{t^{1/2}}$. Technically our proof of the almost global wellposedness result in 2D \cite{wu3} used to the full extend the decay rate and required the smallness in the amplitude and kinetic energy since we needed to control the derivatives in the full range. One may think the assumption on the smallness in amplitude and kinetic energy is to compensate the lack of decay in one direction. However this is merely a  technical reason. In 3D assuming the wave  tends to zero at spatial infinity, we have a
faster $L^\infty$ time decay rate $1/t$. This allows us a less elaborate proof and a global wellposedness result with less assumptions on the initial data.

Recently, Germain, Masmoudi, Shatah \cite{gms} studied the global existence of the 3D water wave through analyzing the space-time resonance, for initial interfaces that are small in their amplitude and half derivative of the trace of the velocity potential on the interface, as well as  finitely many of their derivatives (this implies smallness in velocity and steepness as well, since velocity and steepness are derivatives of the velocity potential and the height.) In particular, they assume among other things that the half derivative of the trace of the initial velocity potential  $|D|^{1/2}\psi_0\in L^2((1+|x|^2)\,dx)\cap L^1(dx)\cap H^N(dx)$ for some large $N$.
Here $H^N$ is the $L^2$ Sobolev space with $N$ derivatives. We know $|D|^{1/2}\psi_0\in  L^1(dx)$ implies $\psi_0\in L^{4/3}(dx)$ (see \cite{sgw} p.119, Theorem 1). This together with the assumption that $|D|^{1/2}\psi_0\in H^N(dx)$  implies that the trace of the initial velocity potential $\psi_0$ decays at  infinity. This is equivalent to assuming the line integral of the initial velocity field along any curve on the interface from infinity to infinity is zero. This is a moment condition which in general doesn't hold. It puts the data set studied by \cite{gms} into a lower dimensional subset of the data set we consider in this paper. 
Moreover
we know $|D|^{1/2}\psi_0\in L^2(dx)$ is equivalent to the finiteness of the kinetic energy, $\|\bold v\|^2_{L^2(\Omega(0))}<\infty$.  $|D|^{1/2}\psi_0\in L^2((1+|x|^2)\,dx)$ would require more than the initial kinetic energy being finite. In fact one may estimate the decay rate of $\psi_0$ at infinity as follows.  
We know (\cite{sgw} p.117)
\begin{equation*}
\begin{aligned}
c\psi_0(x)&=\int\frac1{|x-y|^{2-1/2}}|D|^{1/2}\psi_0(y)\,dy\\&
=(\int_{|y|\le\frac 12|x|}+\int_{|y|\ge 2|x|}+\int_{\frac 12|x|\le |y|\le 2|x|})\frac1{|x-y|^{2-1/2}}|D|^{1/2}\psi_0(y)\,dy
\end{aligned}
\end{equation*}
where
\begin{equation*}
\begin{aligned}
|(\int_{|y|\le\frac 12|x|}+&\int_{|y|\ge 2|x|})\frac1{|x-y|^{2-1/2}}|D|^{1/2}\psi_0(y)\,dy|\\&\lesssim \frac1{|x|^{3/2}}(\||D|^{1/2}\psi_0\|_{L^1(dx)}+\||D|^{1/2}\psi_0\|_{L^2(|x|^2dx)})
\end{aligned}
\end{equation*}
and
\begin{equation*}
\begin{aligned}
\int|&\int_{\frac 12|x|\le |y|\le 2|x|}\frac1{|x-y|^{2-1/2}}|D|^{1/2}\psi_0(y)\,dy|^2|x|\,dx\\&\le\int(\int_{\frac 12|x|\le |y|\le 2|x|}\frac1{|x-y|^{3/2}}\,dy)(\int_{\frac 12|x|\le |y|\le 2|x|}\frac1{|x-y|^{3/2}}||D|^{1/2}\psi_0(y)|^2\,dy)|x|\,dx\\&\lesssim
\int(\int_{\frac12|y|\le |x|\le 2|y|}\frac{|x|^{3/2}}{|x-y|^{3/2}}\,dx) ||D|^{1/2}\psi_0(y)|^2\,dy\lesssim \int |y|^2||D|^{1/2}\psi_0(y)|^2\,dy
\end{aligned}
\end{equation*}
So if  $\psi_0$ satisfies $|D|^{1/2}\psi_0\in L^1(dx)\cap L^2(|x|^2dx)$ as assumed in \cite{gms},  it is necessary then that $\psi_0(x)$ decays at a rate no slower than $1/{|x|^{3/2}}$ as $|x|\to\infty$.

\subsection{Notations and Clifford analysis}

We study the 3D water wave problem  in the setting of the Clifford Algebra $\mathcal C(V_2)$, i.e. the algebra of quaternions.  We refer  to \cite{gm} for an in depth 
discussion of Clifford analysis.

Let $\{1, e_1, e_2, e_3\}$ be the basis of $\mathcal C(V_2)$ satisfying
\begin{equation}\label{product}
e^2_i=-1,  \quad e_ie_j=-e_je_i, \quad i, j=1,2,3, \ i\ne j,\qquad e_3=e_1e_2.
\end{equation}
An element $\sigma\in \mathcal C(V_2)$ has a unique representation
$\sigma=\sigma_0+\sum_{i=1}^3 \sigma_i e_i$, with $\sigma_i\in \mathbb R$ for $0\le i\le  3$. We call $\sigma_0$ the real part of $\sigma$ and denote it by $\Re \sigma$ and $\sum_{i=1}^3 \sigma_i e_i$ the vector part of $\sigma$. We call $\sigma_i$ the $e_i$ component of $\sigma$. We denote $\overline \sigma=e_3\sigma e_3$, $|\sigma|^2=\sum_{i=0}^3 \sigma_i^2$. If not specified, we always assume in such an expression $\sigma=\sigma_0+\sum_{i=1}^3 \sigma_i e_i$   that $\sigma_i\in \mathbb R$, for $0\le i\le 3$. We define $\sigma\cdot\xi=\sum_{j=0}^3\sigma_j\xi_j$.
We call $\sigma\in \mathcal C(V_2)$ a vector if $\Re \sigma=0$.
We identify a point or vector $\xi=(x,y,z)\in \mathbb R^3$ with its $\mathcal C(V_2)$ counterpart $\xi=xe_1+ye_2+ze_3$.  
 For vectors $\xi ,\ \eta \in \mathcal C(V_2)$, we know
\begin{equation}\label{vector1}
\xi\eta=-\xi\cdot\eta+\xi\times\eta,
\end{equation}
where $\xi\cdot\eta$ is the dot product, $\xi\times\eta$ the cross product. For vectors $\xi$, $\zeta$, $\eta$, $\xi(\zeta\times \eta)$ is obtained by first finding the cross product $\zeta\times \eta$, then regard it as a Clifford vector and calculating its multiplication with 
$\xi$ by the rule \eqref{product}. We write $\mathcal D=\partial_xe_1+\partial_y e_2+\partial_z e_3$, $\nabla=(\partial_x,\partial_y,\partial_z)$. At times we also use the notation $\xi=(\xi_1, \xi_2, \xi_3)$ to indicate a point in $\mathbb R^3$. In this case $\nabla=(\partial_{\xi_1},\partial_{\xi_2},\partial_{\xi_3})$, $\mathcal D=\partial_{\xi_1}e_1+\partial_{\xi_2}e_2+\partial_{\xi_3}e_3$.

Let $\Omega$ be an unbounded\footnote{Similar definitions and results exist for bounded domains, see \cite{gm}. For the purpose of this paper, we discuss only for  unbounded domain $\Omega$.}
$C^2$ domain in $\mathbb R^3$, $\Sigma=\partial\Omega$ be its boundary and $\Omega^c$ be its complement. A $\mathcal C(V_2)$ valued function $F$ is Clifford analytic in $\Omega$ if $\mathcal DF=0$ in $\Omega$. Let
\begin{equation}
\Gamma(\xi)=-\frac1{\omega_3}\frac1{ |\xi|},
\qquad K(\xi)=-2\mathcal D \Gamma(\xi)
=-\frac 2{\omega_3} \frac{ \xi}{|\xi|^{3}},
\quad\qquad \text{for }\xi=\sum_1^3\xi_ie_i,
\end{equation}
where $\omega_3$ is the surface area of the unit sphere in $\mathbb R^3$.
Let $\xi=\xi(\alpha,\beta)$, $(\alpha,\beta)\in \mathbb R^2$ be a parameterization of $\Sigma$ with $N=\xi_\alpha\times\xi_\beta$ pointing out of $\Omega$.  The Hilbert transform associated to the parameterization $\xi=\xi(\alpha,\beta)$, $(\alpha,\beta)\in \mathbb R^2$  
is defined by
\begin{equation}
 \frak H_{\Sigma} f(\alpha,\beta )=p.v. \iint_{\mathbb R^2} K(\xi(\alpha',\beta') -\xi(\alpha,  \beta))\,(\xi'_{\alpha'}\times\xi'_{\beta'})\,f( \alpha',\beta')\,d\alpha'd\beta'.
\end{equation}
We know a $\mathcal C(V_2)$ valued function $F$  that decays at infinity is Clifford analytic in $\Omega$ if and only if its trace on $\Sigma$: $f(\alpha,\beta)=F(\xi(\alpha,\beta))$ satisfies
\begin{equation}\label{hilbert}
f=\frak H_{\Sigma} f.
\end{equation}
We know
$\frak H_\Sigma^2=I$ in $L^2$. We use the convention $\frak H_\Sigma1=0$. 
We abbreviate 
\begin{align*} \frak H_{\Sigma}f(\alpha,\beta)&
=\iint K(\xi(\alpha',\beta') -\xi(\alpha,  \beta))\,(\xi'_{\alpha'}\times\xi'_{\beta'})\,f( \alpha',\beta'
)\,d\alpha'd\beta'
\\&= \iint K(\xi' -\xi)\,(\xi'_{\alpha'}\times\xi'_{\beta'})\,f'\,d\alpha'd\beta'= \iint K\,N'\,f'\,d\alpha'd\beta'.
\end{align*}

Let $f=f(\alpha,\beta)$ be defined for $(\alpha,\beta)\in \mathbb R^2$. We say $f^\hbar$ is the harmonic extension of $f$ to $\Omega$ if $\Delta f^{\hbar}=0$ in $\Omega$ and $f^\hbar(\xi(\alpha,\beta))=f(\alpha,\beta)$. We denote by $\mathcal D_\xi f$ the trace of $\mathcal D f^{\hbar}$ on $\Sigma$, i.e.
\begin{equation}\label{hext}
\mathcal D_\xi f(\alpha,\beta)=\mathcal D f^\hbar (\xi(\alpha,\beta)).
\end{equation}
Similarly $\nabla_\xi f(\alpha,\beta)=\nabla f^\hbar(\xi(\alpha,\beta))$, $\partial_xf(\alpha,\beta)=\partial_x f^{\hbar}(\xi(\alpha,\beta))$ etc. In the context of water wave where
$\Omega(t)$ is the fluid domain, we denote by 
 $\nabla_\xi^+ f$ (respectively $\nabla_\xi^-f$) the trace of $\nabla f^\hbar$ on $\Sigma(t)$, where $f^\hbar$ is the harmonic extension of $f$ to $\Omega(t)$ (respectively $\Omega(t)^c$).
We have
\begin{lemma}\label{lemma 1.1} 1.
Let $f=f(\alpha,\beta)$, $(\alpha,\beta)\in \mathbb R^2$ be a real valued smooth function decays fast at infinity. We have
\begin{equation}\label{basicid}
\iint K(\xi(\alpha',\beta')-\xi(\alpha,\beta)) \cdot (N'\times \nabla' f)(\alpha',\beta')\,d\alpha'd\beta'=0.
\end{equation}
2. For any function $f=\sum_1^3 f_i e_i$ satisfying $f=\frak H_{\Sigma}f$ or $f=-\frak H_{\Sigma}f$, we have
\begin{equation}\label{analytic1}
\xi_\beta\cdot \partial_\alpha f-\xi_\alpha\cdot \partial_\beta f=0.
\end{equation}
\end{lemma}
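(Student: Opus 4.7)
\emph{Part (1).} My plan is to recognize the integrand as an exact two-dimensional derivative and apply Green's theorem in the $(\alpha',\beta')$-plane. Since the cross product with $N'$ annihilates the normal component of $\nabla' f$, the BAC--CAB identity combined with the chain rule gives
\begin{equation*}
N'\times\nabla' f = (\xi'_{\alpha'}\times\xi'_{\beta'})\times\nabla' f = f_{\alpha'}\,\xi'_{\beta'} - f_{\beta'}\,\xi'_{\alpha'}.
\end{equation*}
Because $K=-2\mathcal D\Gamma$, one has $K(\xi'-\xi)\cdot\xi'_{\alpha'}=-2\partial_{\alpha'}\Gamma(\xi'-\xi)$ and similarly for $\beta'$, so the integrand collapses to a two-dimensional curl:
\begin{equation*}
K\cdot(N'\times\nabla' f) = -2\bigl[\partial_{\alpha'}\bigl(f\,\partial_{\beta'}\Gamma(\xi'-\xi)\bigr) - \partial_{\beta'}\bigl(f\,\partial_{\alpha'}\Gamma(\xi'-\xi)\bigr)\bigr].
\end{equation*}
I would then excise a small parameter disk $B_\epsilon(\alpha,\beta)$ and apply Green's theorem on $\mathbb R^2\setminus B_\epsilon$. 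The outer boundary contribution vanishes by the decay of $f$. On the inner circle $\alpha'=\alpha+\epsilon\cos\theta$, $\beta'=\beta+\epsilon\sin\theta$, Taylor expansion $\xi'-\xi\approx\xi_\alpha\epsilon\cos\theta+\xi_\beta\epsilon\sin\theta$ produces a boundary integrand of order $1/\epsilon$ whose angular coefficient, after a short calculation, equals $-\tfrac{d}{d\theta}g(\theta)^{-1}$ with $g(\theta)^2=|\xi_\alpha\cos\theta+\xi_\beta\sin\theta|^2$ the first fundamental form restricted to direction $\theta$. This is an exact $\theta$-derivative, so its integral around the circle vanishes by periodicity; the subleading $O(\epsilon)$ corrections drop out as $\epsilon\to0$, giving the identity as a principal value.

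\emph{Part (2).} By the boundary characterization of Clifford analyticity stated in the paper, the hypothesis $f=\pm\frak H_\Sigma f$ makes $f$ the trace on $\Sigma$ of a Clifford analytic function $F$ in $\Omega$ (respectively $\Omega^c$) that decays at infinity. I would decompose $F = F_0+\vec F$ into scalar and vector parts. A direct computation of the quaternionic product gives
\begin{equation*}
\mathcal D F = -\operatorname{div}\vec F + \bigl(\nabla F_0 + \operatorname{curl}\vec F\bigr),
\end{equation*}
so $\mathcal D F=0$ forces $\operatorname{div}\vec F=0$ and $\nabla F_0=-\operatorname{curl}\vec F$. Taking divergence of the latter yields $\Delta F_0=0$. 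Since $f$ has no scalar part by hypothesis, $F_0$ is harmonic with zero boundary values and decay at infinity, hence $F_0\equiv0$ by uniqueness for the Dirichlet problem. Consequently $\operatorname{curl}\vec F=0$ in $\Omega$, i.e.\ $\partial_iF_j=\partial_jF_i$ on $\Sigma$. Combined with the chain rule $\partial_\alpha f_i = \xi_\alpha\cdot\nabla F_i|_\Sigma$ (and similarly for $\beta$),
\begin{equation*}
\xi_\beta\cdot\partial_\alpha f - \xi_\alpha\cdot\partial_\beta f = \sum_{i,j}(\xi_\beta)_i(\xi_\alpha)_j\bigl(\partial_jF_i-\partial_iF_j\bigr) = 0,
\end{equation*}
which is the desired identity.

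\emph{Main obstacle.} The delicate point will be controlling the singular contribution of the Green's-theorem boundary integral in part~(1); recognizing the leading angular integrand as an exact $\theta$-derivative of $g(\theta)^{-1}$ is the crucial cancellation. Everything else reduces to the chain rule, a direct expansion of the Clifford product $\mathcal D F$ into its scalar and vector parts, and uniqueness of the Dirichlet problem for the Laplacian.
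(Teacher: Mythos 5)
Your argument is essentially the paper's: both reduce to the fact that the Clifford analytic extension $F$ of $f$ is curl-free (and both implicitly or explicitly use vanishing of the scalar part $F_0$ via the maximum principle), and then compute $\xi_\beta\cdot\partial_\alpha f-\xi_\alpha\cdot\partial_\beta f$ by the chain rule. You make the step $F_0\equiv 0$ explicit, which the paper leaves implicit inside the claim ``$f=\frak H_\Sigma f$ implies $\mathcal D_\xi f=0$''; otherwise the two proofs coincide.

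\textbf{Part 1.} Here you take a genuinely different route. The paper observes that $\mathcal D f^\hbar$ is Clifford analytic, hence $\mathcal D_\xi f=\frak H_\Sigma\mathcal D_\xi f$, and then extracts the real part of that identity; the scalar part of $K\,N'\,\nabla_\xi f'$ is $-K\cdot(N'\times\nabla' f)$, so the identity drops out with no boundary analysis at all. You instead recognize $K\cdot(N'\times\nabla'f)\,d\alpha'\,d\beta'$ as $-2\,d(f\,d\Gamma)$ and apply Green's theorem on $\mathbb R^2\setminus B_\epsilon$. This is a valid and more elementary strategy (it does not invoke the Hilbert-transform characterization of analyticity), but your accounting of the boundary term has a gap. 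The subleading contributions to $\oint_{\partial B_\epsilon} f\,d\Gamma$ are $O(1)$, not $O(\epsilon)$: writing $f(\alpha',\beta')=f(\alpha,\beta)+\epsilon(\cos\theta\,f_\alpha+\sin\theta\,f_\beta)+O(\epsilon^2)$ and $d\Gamma/d\theta=\tfrac{1}{\omega_3\epsilon}\,\tfrac{d}{d\theta}(-g(\theta)^{-1})+O(1)$, the cross term (linear part of $f$ against the $1/\epsilon$ part of $d\Gamma/d\theta$) is of order one, and it is \emph{not} an exact $\theta$-derivative. Equivalently, after integrating by parts one is left with
\begin{equation*}
-\oint_{\partial B_\epsilon}\Gamma\,df\ \longrightarrow\ \frac{1}{\omega_3}\int_0^{2\pi}\frac{-\sin\theta\,f_\alpha+\cos\theta\,f_\beta}{g(\theta)}\,d\theta ,
\end{equation*}
which is a nonzero-looking $O(1)$ quantity. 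It does vanish, but only because $g(\theta+\pi)=g(\theta)$ while $\cos$ and $\sin$ change sign, so the integrand is odd under $\theta\mapsto\theta+\pi$; this is the same parity that makes the principal value well defined in the first place. You need to supply that parity step (or some equivalent cancellation); asserting that the corrections are $O(\epsilon)$ and therefore negligible is incorrect and leaves the proof of part (1) incomplete.
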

\begin{proof} 
Let $f^{\hbar}$ be the harmonic extension of $f$ to the domain $\Omega$. We know $\mathcal D f^\hbar$ is Clifford analytic in $\Omega$. Therefore the trace of $\mathcal D f^\hbar$ on $\Sigma$ satisfies
\begin{equation}\label{id}
\mathcal D_\xi f= \frak H_{\Sigma} \mathcal D_\xi f. 
\end{equation}
 Taking the real part of \eqref{id} gives us \eqref{basicid}.

For \eqref{analytic1}, we only prove for the case $f=\frak H_{\Sigma} f$. The proof for the case $f=-\frak H_{\Sigma}f$ is similar,
since $f=-\frak H_{\Sigma}f$ is equivalent to the harmonic extension of $f$ to $\Omega^c$ being analytic. 

We have from  $f=\frak H_{\Sigma} f$ that $\mathcal D_\xi f=0$. Therefore
$$\xi_\beta\cdot \partial_\alpha f-\xi_\alpha\cdot \partial_\beta f=\sum_{i,j=1}^3\partial_\beta\xi_{i}\partial_\alpha\xi_{j}\partial_{\xi_j}f_i-\sum_{i,j=1}^3\partial_\alpha\xi_{j}\partial_\beta\xi_{i}\partial_{\xi_i}f_j=0.$$
\end{proof}

Assume that for each $t\in [0,T]$, $\Omega(t)$ is a $C^2$ domain with boundary $\Sigma(t)$. Let $\Sigma(t): \xi=\xi(\alpha,\beta,t)$, $(\alpha,\beta)\in\mathbb R^2$; $\xi\in C^2(\mathbb R^2\times [0, T])$, $N=\xi_\alpha\times\xi_\beta$.  We know $N\times\nabla=\xi_\beta\partial_\alpha-\xi_\alpha\partial_\beta$. Denote $[A,B]=AB-BA$.  We have 
\begin{lemma}\label{lemma 1.2}
 Let $f\in C^1(\mathbb R^2\times [0,T])$ be a $\mathcal C(V_2)$ valued function vanishing at spatial infinity, and $\frak a$ be real valued. Then
 \begin{gather}
[\partial_t, \frak H_{\Sigma(t)}]f=\iint K(\xi'-\xi)\,(\xi_{t}-\xi'_t)\times
(\xi'_{\beta'}\partial_{\alpha'}-\xi'_{\alpha'}\partial_{\beta'})f'\,d\alpha'd\beta'.\label{commuteth}\\
 [\partial_\alpha, \frak H_{\Sigma(t)}]f=\iint
K(\xi'-\xi)\,(\xi_{\alpha}-\xi'_{\alpha'})\times 
(\xi'_{\beta'}\partial_{\alpha'}-\xi'_{\alpha'}\partial_{\beta'})f'\,d\alpha'd\beta'\label{commuteah}\\
[\partial_\beta, \frak H_{\Sigma(t)}]f=\iint
K(\xi'-\xi)\,(\xi_{\beta}-\xi'_{\beta'})\times 
(\xi'_{\beta'}\partial_{\alpha'}-\xi'_{\alpha'}\partial_{\beta'})f'\,d\alpha'd\beta'\label{commutebh}\\
[\frak a N\times \nabla, \frak H_{\Sigma(t)}]f=\iint
K(\xi'-\xi)\,(\frak aN-\frak a'N')\times 
(\xi'_{\beta'}\partial_{\alpha'}-\xi'_{\alpha'}\partial_{\beta'})f'\,d\alpha'd\beta'\label{commutenh}
\end{gather}
\begin{align}
[\partial_t^2, \frak H_{\Sigma(t)}]f=\iint K(\xi'-\xi)\,(\xi_{tt}-\xi'_{tt})\times
(\xi'_{\beta'}\partial_{\alpha'}-\xi'_{\alpha'}\partial_{\beta'})f'\,d\alpha'd\beta'\tag*{}\\
+\iint
K(\xi'-\xi)\,(  \xi_{t}-\xi'_t  )\times 
(\xi'_{t\beta'}\partial_{\alpha'}-\xi'_{t\alpha'}\partial_{\beta'})f'\,d\alpha'd\beta'\label{commutetth}\\
+ \iint \partial_t K(\xi'-\xi)\,(\xi_{t}-\xi'_t)\times
(\xi'_{\beta'}\partial_{\alpha'}-\xi'_{\alpha'}\partial_{\beta'})f'\,d\alpha'd\beta'\tag*{}\\
+ 2\iint K(\xi'-\xi)\,(\xi_{t}-\xi'_t)\times
(\xi'_{\beta'}\partial_{\alpha'}-\xi'_{\alpha'}\partial_{\beta'})f'_t\,d\alpha'd\beta'\tag*{}
\end{align}

\end{lemma}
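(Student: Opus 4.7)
The five identities split into the four first-order commutators \eqref{commuteth}--\eqref{commutenh} and the second-order one \eqref{commutetth}. All four first-order ones fit the common template
\[
[L,\frak H_{\Sigma(t)}]f = \iint K(\xi'-\xi)(V-V')\times(\xi'_{\beta'}\partial_{\alpha'}-\xi'_{\alpha'}\partial_{\beta'})f'\,d\alpha'd\beta',
\]
with $(L,V)\in\{(\partial_t,\xi_t),(\partial_\alpha,\xi_\alpha),(\partial_\beta,\xi_\beta),(\frak aN\times\nabla,\frak aN)\}$, and I will obtain \eqref{commutetth} by iterating the $\partial_t$ case. My plan is to carry out $L=\partial_\alpha$ as the representative. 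Differentiating under the integral in $\frak H_{\Sigma(t)}f=\iint K(\xi'-\xi)N'f'$, only $K(\xi'-\xi)$ depends on $\alpha$; writing $\nabla_\eta K$ for the gradient of the kernel in its argument evaluated at $\eta=\xi'-\xi$, I would split $\partial_\alpha K(\xi'-\xi) = -((\xi_\alpha-\xi'_{\alpha'})\cdot\nabla_\eta)K - \partial_{\alpha'}K(\xi'-\xi)$ and integrate the last term by parts in $\alpha'$. The boundary terms vanish by decay; the result is $\iint K\cdot\partial_{\alpha'}N'\cdot f' + \iint K\cdot N'\cdot\partial_{\alpha'}f'$, of which the second equals $\frak H_{\Sigma(t)}(\partial_\alpha f)$ and cancels on forming the commutator, leaving, with $u=\xi_\alpha-\xi'_{\alpha'}$,
\[
[\partial_\alpha,\frak H_{\Sigma(t)}]f = -\iint ((u\cdot\nabla_\eta)K)\,N'\,f'\,d\alpha'd\beta' + \iint K\,\partial_{\alpha'}N'\,f'\,d\alpha'd\beta'.
\]

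To match the claimed target, I would perform the parallel integration by parts on the target integrand $K(u\times\xi'_{\beta'})\partial_{\alpha'}f' - K(u\times\xi'_{\alpha'})\partial_{\beta'}f'$: the contributions $\pm K(u\times\xi'_{\alpha'\beta'})$ cancel in pairs, and the remaining $N'$-derivative contributions assemble into $K\,\partial_{\alpha'}N'$, so the match reduces to the pointwise-in-$(\alpha',\beta')$ identity
\[
((u\cdot\nabla_\eta)K)\,N' = \partial_{\alpha'}K\,(u\times\xi'_{\beta'}) - \partial_{\beta'}K\,(u\times\xi'_{\alpha'}).
\]
Writing $Q_{ij}=\partial_{\eta_i}K_j$ for the Jacobian of the kernel on $\eta=\xi'-\xi$, Clifford analyticity $\mathcal D K=0$ is equivalent to $\operatorname{tr}Q=0$ (i.e.\ $\operatorname{div}K=0$) together with $Q_{ij}=Q_{ji}$ (i.e.\ $\operatorname{curl}K=0$). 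Expanding both sides of the pointwise identity by the $3$D Clifford rule $ab=-a\cdot b+a\times b$, the scalar part reduces to $\det(Qu,\xi'_{\alpha'},\xi'_{\beta'})=\det(u,\xi'_{\beta'},Q\xi'_{\alpha'})+\det(\xi'_{\alpha'},u,Q\xi'_{\beta'})$, which is the classical identity $\det(Qa,b,c)+\det(a,Qb,c)+\det(a,b,Qc)=(\operatorname{tr}Q)\det(a,b,c)$ specialized to $\operatorname{tr}Q=0$; the vector part reduces by $a\times(b\times c)=b(a\cdot c)-c(a\cdot b)$ and the symmetry $Q_{ij}=Q_{ji}$ to the evident $(Qu)\times N'=\xi'_{\alpha'}((Qu)\cdot\xi'_{\beta'})-\xi'_{\beta'}((Qu)\cdot\xi'_{\alpha'})$.

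The cases $L=\partial_t,\partial_\beta$ are verbatim the same with $\xi_\alpha$ replaced by $\xi_t,\xi_\beta$. For $L=\frak aN\times\nabla=\frak a\xi_\beta\partial_\alpha-\frak a\xi_\alpha\partial_\beta$ I would combine the $\partial_\alpha$ and $\partial_\beta$ computations with the coefficients $\frak a\xi_\beta$ and $-\frak a\xi_\alpha$ (frozen at $(\alpha,\beta)$); the primed counterparts arising from the integration-by-parts boundary contributions assemble into $\frak a'N'$, producing the difference $\frak aN-\frak a'N'$. For \eqref{commutetth} I would use $[\partial_t^2,\frak H_{\Sigma(t)}]f=\partial_t([\partial_t,\frak H_{\Sigma(t)}]f)+[\partial_t,\frak H_{\Sigma(t)}](\partial_t f)$ and substitute \eqref{commuteth}; differentiating the integrand in $t$ by the product rule produces one piece per factor $K(\xi'-\xi),(\xi_t-\xi'_t),(\xi'_{\beta'}\partial_{\alpha'}-\xi'_{\alpha'}\partial_{\beta'}),f'$, giving the four lines of \eqref{commutetth}, with the $f'_t$ piece doubled by $[\partial_t,\frak H_{\Sigma(t)}](\partial_t f)$. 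The main obstacle throughout is the Clifford-algebraic bookkeeping in the pointwise identity above: the noncommutativity of Clifford multiplication requires careful tracking of cross versus Clifford products, but the symmetry and trace-freeness of the kernel Jacobian, which are precisely $\mathcal D K=0$, supply exactly the cancellations needed.
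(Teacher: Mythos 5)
Your derivations of \eqref{commuteah}, \eqref{commutebh}, \eqref{commuteth}, and of \eqref{commutetth} via the product rule, are correct and self-contained. The pointwise identity you isolate, after rewriting $\partial_{\alpha'}K=(\xi'_{\alpha'}\cdot\nabla)K$, $\partial_{\beta'}K=(\xi'_{\beta'}\cdot\nabla)K$, is exactly \eqref{vector3}, which the paper records later in Lemma~\ref{lemmavector} and identifies as identity (3.5) of \cite{wu2}; your verification of it via the trace-free and symmetric parts of the Jacobian of $K$, both encoded in $\mathcal D K=0$, is a valid elementary proof. The paper simply cites Lemma 3.1 of \cite{wu2} for these three commutators, so here you reprove in full what the paper imports.

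The gap is \eqref{commutenh}. Your plan to ``combine the $\partial_\alpha$ and $\partial_\beta$ computations with $\frak a\xi_\beta$ and $-\frak a\xi_\alpha$ frozen'' and read $\frak aN-\frak a'N'$ off ``from integration-by-parts boundary contributions'' skips the genuinely noncommutative step. After bookkeeping analogous to your scalar cases (this is what yields the paper's \eqref{11}--\eqref{12}), $\frak a N\times\nabla\,\frak H_{\Sigma(t)} f$ equals
\begin{equation*}
\iint \frak a\xi_\beta\,K\,\xi_\alpha\times\bigl(\xi'_{\beta'}\partial_{\alpha'}-\xi'_{\alpha'}\partial_{\beta'}\bigr)f' \;-\; \iint \frak a\xi_\alpha\,K\,\xi_\beta\times\bigl(\xi'_{\beta'}\partial_{\alpha'}-\xi'_{\alpha'}\partial_{\beta'}\bigr)f',
\end{equation*}
with the frozen vectors $\frak a\xi_\beta,\,\frak a\xi_\alpha$ sitting as left Clifford factors \emph{in front of} $K$, while the target has $K$ leading followed by the single vector $\frak a N$. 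Rearranging $\xi_\beta K\,(\xi_\alpha\times\eta)-\xi_\alpha K\,(\xi_\beta\times\eta)$ into $K(\xi_\alpha\times\xi_\beta)\times\eta$ is a genuine Clifford computation --- identity \eqref{13} --- and it produces an extra scalar piece $-2(K\cdot\eta)(\xi_\alpha\times\xi_\beta)$. That piece is killed only by integrating and invoking \eqref{basicid} from Lemma~\ref{lemma 1.1}, which is stated for real-valued $f$; hence the argument must first be run for real $f$ and then extended componentwise, as the paper does. Moreover, $\frak a'N'$ is not a boundary term: it is exactly the subtracted piece $\frak H_{\Sigma(t)}(\frak a N\times\nabla f)$. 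Your sketch acknowledges the Clifford bookkeeping for the pointwise identity, which you handle, but \eqref{commutenh} has a further noncommutative layer --- the left vector factors --- and a scalar-part annihilation via \eqref{basicid}, neither of which your plan engages.
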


\begin{proof}
Applying  Lemma 3.1 of \cite{wu2} component-wisely to $f$ gives 
\eqref{commuteth}, \eqref{commuteah}, \eqref{commutebh}.  \eqref{commutetth} is a direct consequence of \eqref{commuteth} and the fact $[\partial^2_t, \frak H_{\Sigma(t)}]=\partial_t[\partial_t, \frak H_{\Sigma(t)}]+[\partial_t, \frak H_{\Sigma(t)}]\partial_t$. We now prove \eqref{commutenh}  for $f$ real valued. 
Notice that $\frak a N\times \nabla \frak H_{\Sigma(t)} f=\frak a\xi_\beta\partial_\alpha\frak H_{\Sigma(t)}f-\frak a\xi_\alpha\partial_\beta\frak H_{\Sigma(t)}f$. From \eqref{commuteah}, we have
\begin{align}\frak a\xi_\beta&\partial_\alpha\frak H_{\Sigma(t)}f=\frak a\xi_\beta[\partial_\alpha, \frak H_{\Sigma(t)}]f+\frak a\xi_\beta\frak H_{\Sigma(t)}\partial_\alpha f\tag*{}\\&=\iint \frak a\xi_\beta
K(\xi'-\xi)\,(\xi_{\alpha}-\xi'_{\alpha'})\times 
(\xi'_{\beta'}\partial_{\alpha'}-\xi'_{\alpha'}\partial_{\beta'})f'\,d\alpha'd\beta'+\frak a\xi_\beta\iint K N'\partial_{\alpha'}f'\,d\alpha'\,d\beta'\tag*{}\\&=
\iint \frak a\xi_\beta
K(\xi'-\xi)\,\xi_{\alpha}\times 
(\xi'_{\beta'}\partial_{\alpha'}-\xi'_{\alpha'}\partial_{\beta'})f'\,d\alpha'd\beta'\label{11}
\end{align}
Similarly
\begin{equation}\label{12}
\frak a\xi_\alpha\partial_\beta\frak H_{\Sigma(t)}f=\iint \frak a\xi_\alpha
K(\xi'-\xi)\,\xi_{\beta}\times 
(\xi'_{\beta'}\partial_{\alpha'}-\xi'_{\alpha'}\partial_{\beta'})f'\,d\alpha'd\beta'
\end{equation}
Now for any vectors $K$, $\eta$,
\begin{equation}\label{13}
\begin{aligned}
&\xi_\beta
K\,\xi_{\alpha}\times\eta-\xi_\alpha
K\,\xi_{\beta}\times \eta\\
&= -K\xi_\beta\,\xi_{\alpha}\times\eta+ K\xi_\alpha\,\xi_{\beta}\times \eta-
2\xi_\beta\cdot
K\,\xi_{\alpha}\times\eta+2\xi_\alpha\cdot
K\,\xi_{\beta}\times \eta\\&
=-K\xi_\beta\times(\xi_{\alpha}\times\eta)+ K\xi_\alpha\times(\xi_{\beta}\times \eta)\\&
+K\xi_\beta\cdot(\xi_{\alpha}\times\eta)- K\xi_\alpha\cdot(\xi_{\beta}\times \eta)-
2(K\times (\xi_\alpha\times\xi_\beta))\times\eta\\&=K (-\xi_\alpha \xi_\beta\cdot\eta+\xi_\beta\xi_\alpha\cdot \eta)-
2K(\xi_\alpha\times\xi_\beta)\cdot\eta+2K (\xi_\alpha\times\xi_\beta)\cdot\eta-2K\cdot\eta (\xi_\alpha\times\xi_\beta)\\&
=K (\xi_\alpha\times\xi_\beta)\times\eta-2K\cdot\eta (\xi_\alpha\times\xi_\beta)
\end{aligned}
\end{equation}
In the above calculation, we  used repeatedly the  identities \eqref{vector1} and $a\times( b\times c)=b\,a\cdot c-c \,a\cdot b$.
Combining \eqref{11}, \eqref{12} and applying  \eqref{basicid} and \eqref{13} with $\eta=(\xi'_{\beta'}\partial_{\alpha'}-\xi'_{\alpha'}\partial_{\beta'})f'=N'\times\nabla'f'$, we get
\begin{equation*}
\frak a N\times \nabla \frak H_{\Sigma(t)} f= \iint K (\xi'-\xi)\frak a (\xi_\alpha\times\xi_\beta)\times (N'\times\nabla'f')\,d\alpha'\,d\beta'.
\end{equation*}
Notice that
$$\frak H_{\Sigma(t)} (\frak a N\times \nabla f)
=\iint K (\xi'-\xi)\frak a' N'\times (N'\times\nabla'f')\,d\alpha'\,d\beta'.$$
 \eqref{commutenh} therefore holds for real valued $f$. \eqref{commutenh} for $\mathcal C(V_2)$ valued $f$ directly follows.

\end{proof}

\subsection{The main equations and  main results}

We now discuss the 3D water wave. Let $\Sigma(t): \xi(\alpha,\beta,t)=x(\alpha,\beta,t)e_1+y(\alpha,\beta,t)e_2+z(\alpha,\beta,t) e_3 $, $(\alpha,\beta)\in \mathbb R^2$ be the parameterization of the interface at time $t$ in Lagrangian coordinates $(\alpha,\beta)$ with $N=\xi_\alpha\times\xi_\beta=(N_1, N_2, N_3)$ pointing out of the fluid domain $\Omega(t)$. Let $\frak H=\frak H_{\Sigma(t)}$, and 
$$\frak a=-\frac1{|N|}\frac{\partial P}{\partial \text{\bf n}}.$$
We know from \cite{wu2} that $\frak a>0$ and equation \eqref{euler} is equivalent to the following nonlinear system defined on the interface $\Sigma(t)$: 
\begin{align}
\xi_{tt}+e_3=\frak a N\label{ww1}\\
\xi_t=\frak H\xi_t\label{ww2}
\end{align}
Motivated by   \cite{wu3}, we would like to know whether in 3-D, the quantity $\pi=(I-\frak H)ze_3$  under an appropriate coordinate change satisfies an equation with nonlinearities containing no quadratic terms. We first derive the equation for $\pi$ in Lagrangian coordinates.  
\begin{proposition}\label{prop:cubic1} We have
\begin{equation}\label{cubic1}
\begin{aligned}
(\partial_t^2-\frak a N\times \nabla)\pi=\iint K(\xi'-\xi)\,(\xi_{t}-\xi'_t)\times
(\xi'_{\beta'}\partial_{\alpha'}-\xi'_{\alpha'}\partial_{\beta'})\overline{\xi_t'}\,d\alpha'd\beta'\\
-\iint K(\xi'-\xi)\,(\xi_{t}-\xi'_t)\times
(\xi'_{t\beta'}\partial_{\alpha'}-\xi'_{t\alpha'}\partial_{\beta'})z'\,d\alpha'd\beta'e_3\\
-\iint \partial_t K(\xi'-\xi)\,(\xi_{t}-\xi'_t)\times
(\xi'_{\beta'}\partial_{\alpha'}-\xi'_{\alpha'}\partial_{\beta'})z'\,d\alpha'd\beta'e_3
\end{aligned}
\end{equation}
\end{proposition}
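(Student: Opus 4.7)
The plan is to compute $(\partial_t^2-\frak a N\times\nabla)\pi$ by first evaluating the operator on $ze_3$ directly, then using commutator identities to move the operator past $I-\frak H$. First, I would observe that the harmonic extension of the coordinate function $z$ to $\Omega(t)$ is $z$ itself, so $\nabla_\xi z=e_3$, and therefore $N\times\nabla z=N\times e_3 = z_\alpha\xi_\beta-z_\beta\xi_\alpha$ (by the vector identity $(\xi_\alpha\times\xi_\beta)\times e_3=(\xi_\alpha\cdot e_3)\xi_\beta-(\xi_\beta\cdot e_3)\xi_\alpha$). Then $\frak a N\times\nabla(ze_3)=\frak a(N\times\nabla z)e_3$, and using the Clifford multiplication rules $e_1e_3=-e_2$, $e_2e_3=e_1$, $e_3e_3=-1$ together with the water wave equation $\xi_{tt}+e_3=\frak a N$, I expect the right multiplication by $e_3$ to produce $\frak a N\times\nabla(ze_3)=-\xi_{tt}+z_{tt}e_3$. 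Consequently $(\partial_t^2-\frak a N\times\nabla)(ze_3)=\xi_{tt}$.

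Next, I would write
\[
(\partial_t^2-\frak a N\times\nabla)\pi=(I-\frak H)\xi_{tt}-[\partial_t^2,\frak H](ze_3)+[\frak a N\times\nabla,\frak H](ze_3).
\]
The second water wave equation \eqref{ww2} gives $(I-\frak H)\xi_t=0$; differentiating in $t$ and applying the product rule yields $(I-\frak H)\xi_{tt}=[\partial_t,\frak H]\xi_t$, which by \eqref{commuteth} is an integral with kernel $(\xi_t-\xi'_t)\times(\xi'_{\beta'}\partial_{\alpha'}-\xi'_{\alpha'}\partial_{\beta'})\xi'_t$.

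Now I would substitute the formulas \eqref{commutetth} and \eqref{commutenh}. The first term of $[\partial_t^2,\frak H](ze_3)$ carries a factor $\xi_{tt}-\xi'_{tt}$, which by \eqref{ww1} equals $\frak a N-\frak a'N'$; by \eqref{commutenh} this term coincides exactly with $[\frak a N\times\nabla,\frak H](ze_3)$, so the two cancel in the combination $-[\partial_t^2,\frak H](ze_3)+[\frak a N\times\nabla,\frak H](ze_3)$. What remains are three integrals (multiplied by $-1$ and followed by $e_3$): the one with $(\xi'_{t\beta'}\partial_{\alpha'}-\xi'_{t\alpha'}\partial_{\beta'})z'$, the one with $\partial_tK$, and the factor-$2$ integral with $(\xi'_{\beta'}\partial_{\alpha'}-\xi'_{\alpha'}\partial_{\beta'})z'_t$.

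Finally I would merge $[\partial_t,\frak H]\xi_t$ with the last remaining integral. Computing $\overline{\xi'_t}=e_3\xi'_te_3$ component by component (using $e_3e_je_3=e_j$ for $j=1,2$ and $e_3e_3e_3=-e_3$) gives $\overline{\xi'_t}=\xi'_t-2z'_te_3$, so
\[
\iint K(\xi'-\xi)(\xi_t-\xi'_t)\times(\xi'_{\beta'}\partial_{\alpha'}-\xi'_{\alpha'}\partial_{\beta'})\bigl(\xi'_t-2z'_te_3\bigr)\,d\alpha'd\beta'
\]
collapses into exactly the first integral on the right-hand side of \eqref{cubic1}. The main obstacle is the algebraic manipulation of the opening step: keeping track of signs when multiplying the vector $N\times\nabla z$ by $e_3$ on the right in the Clifford algebra, and recognizing that the combination $z_{tt}e_3-\frak a N\times\nabla(ze_3)$ simplifies cleanly to $\xi_{tt}$ thanks to the simultaneous use of $\nabla z=e_3$ and $\frak a N=\xi_{tt}+e_3$. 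Once this observation is made, the rest is bookkeeping with the commutator formulas of Lemma \ref{lemma 1.2}.
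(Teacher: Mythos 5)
Your proposal is correct and takes essentially the same route as the paper: establish $(\partial_t^2-\frak a N\times\nabla)(ze_3)=\xi_{tt}$, use \eqref{ww2} to get $(I-\frak H)\xi_{tt}=[\partial_t,\frak H]\xi_t$, expand the commutator $[\partial_t^2-\frak a N\times\nabla,\frak H](ze_3)$ via Lemma~\ref{lemma 1.2}, cancel the first term of $[\partial_t^2,\frak H](ze_3)$ against $[\frak a N\times\nabla,\frak H](ze_3)$ using $\xi_{tt}=\frak a N-e_3$, and absorb the factor-two term into $[\partial_t,\frak H]\xi_t$ to produce $\overline{\xi'_t}$. The paper presents these steps more tersely (as \eqref{112}, \eqref{111}, and "an easy consequence of" the commutator lemmas) but the underlying calculation is identical; your explicit bookkeeping of the cancellation and of the Clifford arithmetic $e_3\xi'_te_3=\xi'_t-2z'_te_3$ is a faithful unpacking of what the paper leaves implicit.
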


\begin{proof}
Notice 
from \eqref{ww1}
\begin{equation}\label{112}
(\partial_t^2-\frak a N\times \nabla)z e_3=z_{tt}e_3 +\frak aN_1e_1+\frak aN_2e_2=\xi_{tt}
\end{equation}
and from \eqref{ww2}  that
\begin{equation}\label{111}
(I-\frak H) \xi_{tt}=[\partial_t, \frak H]\xi_t
\end{equation}
\eqref{cubic1} is an easy consequence of  \eqref{commuteth}, \eqref{commutenh} and \eqref{commutetth} and \eqref{ww1}, \eqref{112}, \eqref{111}:
\begin{equation*}
\begin{aligned}
(\partial_t^2-\frak a N\times \nabla)\pi&=(I-\frak H)(\partial_t^2-\frak a N\times \nabla)ze_3-[\partial_t^2-\frak a N\times \nabla, \frak H]ze_3\\
&=[\partial_t, \frak H]\xi_t-[\partial_t^2-\frak a N\times \nabla, \frak H]ze_3\\
&=\iint K(\xi'-\xi)\,(\xi_{t}-\xi'_t)\times
(\xi'_{\beta'}\partial_{\alpha'}-\xi'_{\alpha'}\partial_{\beta'})\overline{\xi_t'}\,d\alpha'd\beta'\\
&-\iint K(\xi'-\xi)\,(\xi_{t}-\xi'_t)\times
(\xi'_{t\beta'}\partial_{\alpha'}-\xi'_{t\alpha'}\partial_{\beta'})z'\,d\alpha'd\beta'e_3\\
&-\iint \partial_t K(\xi'-\xi)\,(\xi_{t}-\xi'_t)\times
(\xi'_{\beta'}\partial_{\alpha'}-\xi'_{\alpha'}\partial_{\beta'})z'\,d\alpha'd\beta'e_3
\end{aligned}
\end{equation*}

\end{proof}

We see that the second and third terms in the right hand side of \eqref{cubic1} are consisting of terms of cubic and higher orders, while the first term contains quadratic terms. Unlike the 2D case, multiplications of Clifford analytic functions are not necessarily analytic, so we cannot reduce the first term at the right hand side of equation \eqref{cubic1}  into a cubic form. However we notice that $\overline \xi_t=x_t e_1+y_t e_2-z_t e_3$ is almost analytic in the air region $\Omega(t)^c$, and this implies that the first term is  almost  analytic in the fluid domain $\Omega(t)$, or in other words, is almost of the type $(I+\frak H)Q$ in nature, here $Q$ means quadratic.\footnote{See Proposition~\ref{propon}, also see the derivation of the energy estimates in section~\ref{energy}.} Notice that the left hand side of \eqref{cubic1} is almost analytic in the air region, or of the type $(I-\frak H)$. The orthogonality of the projections 
$(I-\frak H)$ and $(I+\frak H)$ allows us to reduce the first term to cubic in energy estimates.

Notice that the left hand side of \eqref{cubic1} still contains quadratic terms and \eqref{cubic1}  is invariant under a change of coordinates. We now want to see  if in 3D, there is a coordinate change $k$, such that under which the left hand side of \eqref{cubic1} becomes a linear part plus only cubic and higher order terms. In 2D, such a coordinate change exists (see (2.18) in \cite{wu3}). However it is defined by the Riemann mapping. Although there is no Riemann mapping in 3D, we realize that the Riemann mapping used in 2-D is just a holomorphic function in the fluid region with its imaginary part equal to zero on $\Sigma(t)$. This motivates us to define
\begin{equation}\label{k}
k=k(\alpha,\beta,t)=\xi(\alpha,\beta,t)-(I+\frak H)z (\alpha,\beta, t)e_3+\frak K z (\alpha,\beta,t) e_3
\end{equation}
Here $\frak K=\Re \frak H$: 
\begin{equation}\label{doublelayer1}
\frak K f(\alpha,\beta, t)=-\iint K(\xi(\alpha',\beta',t)-\xi(\alpha,\beta, t))\cdot N' f(\alpha', \beta',t)\,d\alpha'\,d\beta'
\end{equation}
is the double layered potential operator. It is clear that the $e_3$ component of $k$ as defined in \eqref{k} is zero. In fact, the real part of $k$ is also zero.
This is because
\begin{equation*}
\begin{aligned}
&\iint K(\xi'-\xi) \times(\xi'_{\alpha'}\times \xi'_{\beta'}) z' e_3\,d\alpha'\,d\beta'=\iint (\xi'_{\alpha'} \xi'_{\beta'}\cdot K-\xi'_{\beta'}\xi'_{\alpha'}\cdot K)z'e_3\,d\alpha'\,d\beta' \\
&= -2\iint (\xi'_{\alpha'} \partial_{\beta'}\Gamma(\xi'-\xi)-\xi'_{\beta'}\partial_{\alpha'}\Gamma(\xi'-\xi))z'e_3\,d\alpha'\,d\beta'\\&
=2 \iint \Gamma(\xi'-\xi)(\xi'_{\alpha'} z_{\beta'}-\xi'_{\beta'}z_{\alpha'})e_3\,d\alpha'\,d\beta'
=2\iint \Gamma(\xi'-\xi)(N_1'e_1+N_2'e_2)\,d\alpha'\,d\beta'
\end{aligned}
\end{equation*}
So
\begin{equation}\label{hilbertz}
\frak H z e_3=\frak K z e_3+2\iint \Gamma(\xi'-\xi)(N_1'e_1+N_2'e_2)\,d\alpha'\,d\beta'
\end{equation}
This shows that the mapping $k$ defined in \eqref{k} has only the $e_1$ and $e_2$ components $k=(k_1, k_2)=k_1 e_1+k_2 e_2$. If $\Sigma(t)$ is a graph of small steepness, i.e. if $z_\alpha$ and $z_\beta$ are small, then the Jacobian of $k=k(\cdot,t)$: $J(k)=J(k(t))=\partial_\alpha k_1\partial_\beta k_2-
\partial_\alpha k_2\partial_\beta k_1>0$ and $k(\cdot,t) :\mathbb R^2\to \mathbb R^2$ defines a valid coordinate change. We will make this point  more precise in Lemma~\ref{diffeo}.

Denote $\nabla_\bot=(\partial_\alpha,\partial_\beta)$, $U_g f(\alpha,\beta, t)=f(g(\alpha,\beta, t), t)=f\circ g(\alpha,\beta, t)$. 
Assume that $k=k(\cdot,t):\mathbb R^2\to \mathbb R^2$ defined in \eqref{k} is a diffeomorphism satisfying $J(k(t))>0$. Let $k^{-1}$ be such that $k\circ k^{-1}(\alpha, \beta, t)=\alpha e_1+\beta e_2$. Define 
\begin{equation}\label{zeta}
\zeta=\xi\circ k^{-1}=\frak x e_1+\frak y e_2+\frak z e_3,\quad u= \xi_t\circ k^{-1},\quad \text{and}\quad w= \xi_{tt}\circ k^{-1} .
\end{equation}
Let 
\begin{equation}\label{bA}
b=k_t\circ k^{-1} \quad A\circ ke_3={\frak a}J(k)e_3=\frak a k_\alpha\times k_\beta, \quad\text{and}\quad \mathcal N=\zeta_\alpha\times\zeta_\beta.
\end{equation} 
By a simple  application of the chain rule, we have
\begin{equation}
U_k^{-1}\partial_t U_k=\partial_t +b\cdot \nabla_\bot , \quad\text{and}\quad U_k^{-1}(\frak{a}N\times \nabla)U_k=A\mathcal N\times \nabla=A(\zeta_\beta\partial_\alpha-\zeta_\alpha\partial_\beta),
\end{equation}
and $U_k^{-1}\frak H U_k =\mathcal H $, with 
\begin{equation}
\mathcal H f(\alpha,\beta,t)=\iint K(\zeta(\alpha',\beta',t)-\zeta(\alpha,\beta,t))(\zeta'_{\alpha'}\times\zeta'_{\beta'}) f(\alpha',\beta',t)\,d\alpha'\,d\beta'
\end{equation}
Let $\chi=\pi\circ k^{-1}$. Applying coordinate change $U_k^{-1}$ to equation \eqref{cubic1}. We get
\begin{equation}\label{cubic2}
\begin{aligned}
((\partial_t+ b\cdot \nabla_\bot)^2-A \mathcal N\times \nabla)\chi=\iint K(\zeta'-\zeta)\,(u-u')\times
(\zeta'_{\beta'}\partial_{\alpha'}-\zeta'_{\alpha'}\partial_{\beta'})\overline{u'}\,d\alpha'd\beta'\\
-\iint K(\zeta'-\zeta)\,(u-u')\times
(u'_{\beta'}\partial_{\alpha'}-u'_{\alpha'}\partial_{\beta'})\frak z'\,d\alpha'd\beta'e_3\\
-\iint ((u'-u)\cdot\nabla) K(\zeta'-\zeta)\,(u-u')\times
(\zeta'_{\beta'}\partial_{\alpha'}-\zeta'_{\alpha'}\partial_{\beta'})\frak z'\,d\alpha'd\beta'e_3
\end{aligned}
\end{equation}

We show in the following proposition that $b$, $A-1$ are consisting of only quadratic and higher order terms. Let $\mathcal K=\Re \mathcal H=U_k^{-1}\frak K U_k$, $P=\alpha e_1+\beta e_2$, and
\begin{equation}\label{lambda}
\Lambda^*=(I+\frak H)z e_3,\quad \Lambda=(I+\frak H)z e_3-\frak Kze_3,\quad \lambda^*= (I+\mathcal H)\frak z e_3, \quad \lambda=\lambda^*-\mathcal K\frak ze_3
\end{equation}
Therefore 
\begin{equation}\label{zeta1}
\zeta=P+\lambda.
\end{equation}
 Let the velocity $u=u_1 e_1+u_2 e_2+ u_3 e_3$.
\begin{proposition}\label{propAb}Let $b=k_t\circ k^{-1}$ and  $A\circ k={\frak a}J(k)$. We have \footnote{Formulas for $b$ and $A$ similar to those in 2D \cite{wu3} are also available and can be obtained in a similar way: 
\begin{equation*}
(I-\mathcal H) b= (I-\mathcal H)( [\partial_t+b\cdot\nabla_\bot,\mathcal K]\frak z e_3-[\partial_t+b\cdot\nabla_\bot,\mathcal H]\frak z e_3+ \mathcal K u_3 e_3)
\end{equation*}
\begin{equation*}
\begin{aligned}
(I-\mathcal H&)(A  e_3)=
e_3+[\partial_t+b\cdot\nabla_\bot, \mathcal H]u+[A \mathcal N\times \nabla, \mathcal H]\lambda^*\\&+
(I-\mathcal H)(-A \zeta_\beta\times (\partial_\alpha\mathcal K\frak z e_3)+A\zeta_\alpha\times (\partial_\beta\mathcal K \frak z e_3)+A\partial_\alpha\lambda\times \partial_\beta\lambda)
\end{aligned}
\end{equation*}
However we choose to use those in Proposition~\ref{propAb}.}
\begin{align}
b&= \frac12(\mathcal H-\overline{\mathcal H})\overline u-[\partial_t+b\cdot\nabla_\bot,\mathcal H]\frak z e_3+ [\partial_t+b\cdot\nabla_\bot,\mathcal K]\frak z e_3+ \mathcal K u_3 e_3\label{b}\\
(A-1) e_3 &=\frac12(-\mathcal H+\overline{\mathcal H})\overline w+
\frac12 ([\partial_t+b\cdot\nabla_\bot, \mathcal H]u-\overline{[\partial_t+b\cdot\nabla_\bot, \mathcal H]u})\label{A}\\&
+[A \mathcal N\times \nabla, \mathcal H]\frak z e_3
-A \zeta_\beta\times (\partial_\alpha\mathcal K\frak z e_3)+A\zeta_\alpha\times (\partial_\beta\mathcal K \frak z e_3)+A\partial_\alpha\lambda\times \partial_\beta\lambda\tag*{}
\end{align}
Here $\overline{\mathcal H}f=e_3\mathcal H (e_3f)=\iint e_3K\mathcal N' e_3 f'$.
\end{proposition}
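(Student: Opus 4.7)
My plan is to derive both identities from the defining relation $k = \xi - (I + \frak H - \frak K)(ze_3)$ together with the water wave system \eqref{ww1}, \eqref{ww2}, using the conjugation formulas $U_k^{-1}\partial_t U_k = \partial_t + b\cdot\nabla_\bot$, $U_k^{-1}\frak H U_k = \mathcal H$, and $U_k^{-1}\frak K U_k = \mathcal K$.

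For $b$: differentiate $k = \xi - \Lambda$ in $t$ and apply $U_k^{-1}$, using that $z_t e_3 = \tfrac12(\xi_t - \overline{\xi_t})$ gives $U_k^{-1}(z_t e_3) = u_3 e_3 = \tfrac12(u - \overline u)$. This produces
\[
b = u - (I + \mathcal H - \mathcal K)(u_3 e_3) - [\partial_t + b\cdot\nabla_\bot, \mathcal H](\frak z e_3) + [\partial_t + b\cdot\nabla_\bot, \mathcal K](\frak z e_3);
\]
the commutators already match the stated formula. It remains to show $u - u_3 e_3 - \mathcal H(u_3 e_3) = \tfrac12(\mathcal H - \overline{\mathcal H})\overline u$. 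Substituting $u_3 e_3 = \tfrac12(u - \overline u)$ and using $\mathcal H u = u$ (equation \eqref{ww2} after $U_k^{-1}$) collapses the left side to $\tfrac12(I+\mathcal H)\overline u$. The identity $(I + \overline{\mathcal H})\overline u = 0$ follows from $\mathcal H u = u$ combined with $\mathcal H(ue_3) = (\mathcal H u)e_3$ and the definition $\overline{\mathcal H}f = e_3\mathcal H(e_3 f)$, and turns $\tfrac12(I+\mathcal H)\overline u$ into $\tfrac12(\mathcal H - \overline{\mathcal H})\overline u$, giving the formula for $b$.

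For $(A-1)e_3$: From \eqref{hilbertz} the $e_3$-component of $\Lambda$ equals $z$, so $k = \xi - \Lambda$ is horizontal and $k_\alpha\times k_\beta = J(k)e_3$. Combining this with $\xi_\alpha = k_\alpha + \Lambda_\alpha$, $\frak a N = \xi_{tt} + e_3$, and applying $U_k^{-1}$ yields $A\mathcal N = w + e_3$, hence $(A-1)e_3 = w - A(\mathcal N - e_3)$. Using $\zeta = P + \lambda$ with $P_\alpha = e_1$, $P_\beta = e_2$,
\[
\mathcal N - e_3 = \zeta_\alpha\times\lambda_\beta + \lambda_\alpha\times\zeta_\beta - \lambda_\alpha\times\lambda_\beta.
\]
Substituting $\lambda = \lambda^* - \mathcal K\frak z e_3$ peels off exactly $F_4, F_5, F_6$, while the remaining combination $-A(\zeta_\alpha\times\partial_\beta\lambda^* - \zeta_\beta\times\partial_\alpha\lambda^*)$ equals $A\mathcal N\times\nabla\lambda^*$: expanding the Clifford product $\xi\eta = -\xi\cdot\eta + \xi\times\eta$ and invoking Lemma~\ref{lemma 1.1}(2) (with $\mathcal H\lambda^* = \lambda^*$) kills the dot-product contribution. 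Thus $(A-1)e_3 = w + A\mathcal N\times\nabla\lambda^* + F_4 + F_5 + F_6$.

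The last step converts $w + A\mathcal N\times\nabla\lambda^*$ into $F_1 + F_2 + F_3$. Writing $\lambda^* = (I+\mathcal H)(\frak z e_3)$ splits $A\mathcal N\times\nabla\lambda^* = [A\mathcal N\times\nabla, \mathcal H](\frak z e_3) + (I+\mathcal H)(A\mathcal N\times\nabla\frak z e_3) = F_3 + (I+\mathcal H)(A\mathcal N\times\nabla\frak z e_3)$. A direct coordinate computation with $\mathcal N = \zeta_\alpha\times\zeta_\beta$ and $A\mathcal N = w + e_3$ shows that $A\mathcal N\times\nabla\frak z e_3$ has vanishing scalar and $e_3$-parts and horizontal part $-(w_1 e_1 + w_2 e_2)$. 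Since $\Re w = 0$, this yields $w + (I+\mathcal H)(A\mathcal N\times\nabla\frak z e_3) = \tfrac12(w - \overline w) - \tfrac12 \mathcal H(w + \overline w)$. Substituting $\mathcal H w = w - [\partial_t + b\cdot\nabla_\bot, \mathcal H]u$ (from differentiating $\mathcal H u = u$) and $\overline{\mathcal H}\overline w = -\overline w + \overline{[\partial_t + b\cdot\nabla_\bot, \mathcal H]u}$ (from differentiating $(I+\overline{\mathcal H})\overline u = 0$) rearranges the expression into $F_1 + F_2$, completing the formula. The main obstacle is this last step: verifying $A\mathcal N\times\nabla\frak z e_3 = -\operatorname{horiz}(w)$, and then interlacing the $I\pm\mathcal H$ and $I\pm\overline{\mathcal H}$ projections so that the leftover pieces reassemble into the stated six-term form.
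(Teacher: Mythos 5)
Your proposal is correct, and it is essentially the same computation as the paper's proof. For $b$, both you and the paper differentiate $k$ in $t$, write $z_te_3=\tfrac12(\xi_t-\overline{\xi_t})$, and use the analyticity $\frak H\xi_t=\xi_t$ (equivalently $\mathcal Hu=u$) to collapse $\tfrac12(I+\mathcal H)\overline u$ to $\tfrac12(\mathcal H-\overline{\mathcal H})\overline u$; for $(A-1)e_3$ both proofs start from $\frak a\,k_\alpha\times k_\beta=\frak a J(k)e_3$, exploit that $\Lambda^*$ is a vector-valued trace of an analytic function (so Lemma~\ref{lemma 1.1}(2) removes the dot-product contributions and turns the cross products into $N\times\nabla\Lambda^*$), and then use $\frak aN\times\nabla ze_3=-\tfrac12(\xi_{tt}+\overline{\xi_{tt}})$ together with $\frak H\xi_{tt}=\xi_{tt}-[\partial_t,\frak H]\xi_t$ to reassemble the remaining pieces into $F_1+F_2+F_3$; the only difference from the paper is organizational --- you change coordinates to $\zeta,u,w$ immediately, establishing $A\mathcal N=w+e_3$ up front and expanding $\mathcal N-e_3$ directly, whereas the paper carries out the whole algebraic manipulation in Lagrangian variables and applies $U_k^{-1}$ only at the very end.
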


\begin{proof}
Taking derivative to $t$ to \eqref{k}, we get
\begin{equation}\label{139}
\begin{aligned}
k_t&=\xi_t-\partial_t(I+\frak H)z e_3+\partial_t\frak K z e_3\\&=
\xi_t-z_t e_3-\frak H z_t e_3-[\partial_t, \frak H] z e_3+\partial_t\frak K z e_3
\end{aligned}
\end{equation}
Now
\begin{equation}\label{140}
\xi_t-z_t e_3-\frak H z_t e_3=\frac12(\xi_t+\overline\xi_t)-\frac12\frak H(\xi_t-\overline\xi_t)=
\frac12\overline\xi_t+\frac12\frak H\overline\xi_t=\frac12(\frak H-\overline{\frak H})\overline\xi_t
\end{equation}
Combining \eqref{139}, \eqref{140} we get
\begin{equation}\label{-140}
k_t=\frac12(\frak H-\overline{\frak H})\overline\xi_t-[\partial_t, \frak H] z e_3+[\partial_t,\frak K] z e_3+\frak K z_t e_3
\end{equation}
Making the change of coordinate $U_k^{-1}$, we get \eqref{b}.

Notice that $A\circ k e_3=\frak a k_\alpha\times k_\beta$. From the definition $k=\xi-\Lambda^*+\frak K z e_3=\xi-\Lambda$, we get
\begin{align*} k_\alpha\times &k_\beta=\xi_\alpha\times\xi_\beta+\xi_\beta\times \partial_\alpha\Lambda^*-\xi_\alpha\times \partial_\beta\Lambda^*\\&-\xi_\beta\times (\partial_\alpha\frak Kze_3)+\xi_\alpha\times (\partial_\beta\frak K z e_3)+\partial_\alpha\Lambda\times \partial_\beta\Lambda
\end{align*}
Using \eqref{hilbertz} and \eqref{analytic1}, we have 
$$\xi_\beta\times \partial_\alpha\Lambda^*-\xi_\alpha\times \partial_\beta\Lambda^*=\xi_\beta \partial_\alpha\Lambda^*-\xi_\alpha\partial_\beta\Lambda^*=(N\times\nabla)\Lambda^*
$$
From \eqref{ww1}, and  the fact that $\frak a N\times \nabla z e_3=- \frak a N_1 e_1-\frak a N_2 e_2$, we obtain 
\begin{equation*}
\begin{aligned}
\frak a \xi_\alpha\times\xi_\beta +\frak a (N\times\nabla)\Lambda^*&=
\xi_{tt}+ e_3+(I+\frak H)(\frak a N\times\nabla)z e_3+[ \frak a N\times\nabla, \frak H]z e_3\\&
=
\xi_{tt}+ e_3-\frac12 (I+\frak H)(\xi_{tt}+\overline{\xi}_{tt})+[ \frak a N\times\nabla, \frak H]z e_3
\end{aligned}
\end{equation*}
and furthermore from \eqref{ww2},
\begin{equation*}
\begin{aligned}
&\xi_{tt}-\frac12 (I+\frak H)(\xi_{tt}+\overline{\xi}_{tt})=\frac12(\xi_{tt}-\frak H\xi_{tt})-\frac12(\overline \xi_{tt}+\frak H\overline \xi_{tt})\\&=\frac12[\partial_t, \frak H]\xi_t-\frac12(\overline \xi_{tt}-\overline{\frak H\xi_{tt}})-\frac12(\frak H\overline\xi_{tt}+\overline{\frak H\xi_{tt}})
=\frac12[\partial_t, \frak H]\xi_t-\frac12\overline{[\partial_t, \frak H]\xi_t}+\frac12(\overline {\frak H}-\frak H)\overline\xi_{tt}
\end{aligned}
\end{equation*}
Combining the above calculations and make the change of coordinates $U_k^{-1}$, we obtain \eqref{A}.

\end{proof}

From Proposition~\ref{propAb}, we see that $b$ and $A-1$ are consisting of terms of quadratic and higher orders. Therefore the left hand side of equation \eqref{cubic2} is
$$(\partial_{t}^2-e_2\partial_\alpha+e_1\partial_\beta)\chi-\partial_\beta\lambda\partial_\alpha\chi+\partial_\alpha\lambda\partial_\beta\chi+\text{cubic and higher order terms}
$$
The quadratic term $\partial_\beta\lambda\partial_\alpha\chi-\partial_\alpha\lambda\partial_\beta\chi$
is new in 3D. We notice that this is one of the null forms studied in \cite{kl4} and we find that it is also null for our equation and can be written as the factor $1/t$ times a quadratic expression involving some "invariant vector fields" for $\partial_{t}^2-e_2\partial_\alpha+e_1\partial_\beta$. Therefore this term is cubic in nature and  equation \eqref{cubic2} is of the type "linear + cubic and higher order perturbations".
 On the other hand, we are curious whether $\partial_\beta\lambda\partial_\alpha\chi-\partial_\alpha\lambda\partial_\beta\chi$ can be transformed into a term of cubic and higher orders by a physically meaningful and mathematically concise transformation.  As we know, although the quantity  
$(I-\frak H)ze_3$ is concise in expression and mathematically it is the projection of $ze_3$ into the space of analytic functions in the air region, we do not know its physical meaning.  Could the transformation for $\partial_\beta\lambda\partial_\alpha\chi-\partial_\alpha\lambda\partial_\beta\chi$, if exist, 
 together with $(I-\frak H)ze_3$ offer us some big picture that has a good physical meaning? Motivated by these questions, we looked further. However after taking into considerations of all possible cancellation properties and relations, we can conclude that this term actually cannot be transformed away by a
concise transformation in the physical space (although there is a transformation explicit in the Fourier space). Of course, we do not rule out the possibility that there are some possible relations we overlooked. On the other hand, our effort lead to some results that can be interesting for those readers interested in understanding the nature of a bilinear normal form change, or the quadratic resonance for the water wave equation. We will present this calculation in a separate paper.

We therefore treat the term $\partial_\beta\lambda\partial_\alpha\chi-\partial_\alpha\lambda\partial_\beta\chi$ by the method of invariant vector fields. In fact,  in section~\ref{energy}, we will see that it is more natural to treat $(\partial_t+b\cdot\nabla_\bot)^2-A\mathcal N\times \nabla $ as the main operator for the water wave equation than treating it as a perturbation of the linear operator $\partial_t^2-e_2\partial_\alpha+e_1\partial_\beta$. We obtain a uniform bound for all time of a properly constructed energy that involves invariant vector fields of $\partial_t^2-e_2\partial_\alpha+e_1\partial_\beta$ by combining  energy estimates for 
the equation \eqref{cubic2} and a generalized Sobolev inequality that gives a $L^2\to L^\infty$ estimate with the decay rate  $1/t$. We point out that not only does the projection $(I-\frak H)$ give us the quantity $(I-\frak H)ze_3$, but it is also used  in various ways to project away  "quadratic noises" in the course of deriving the energy estimates. 
The global in time existence follows from a local well-posedness result, the uniform boundedness of the energy  and a continuity argument. We state our main theorem.

Let $|D|=\sqrt{-\partial_\alpha^2-\partial_\beta^2}$, $H^{s}(\mathbb R^2)=\{ f\, |\, (I+|D|)^{s}f\in L^2(\mathbb R^2)\}$, with $\|f\|_{H^{s}}=\|f\|_{H^{s}(\mathbb R^2)}=\|(I+|D|)^{s}f\|_{L^2(\mathbb R^2)}.$

Let $s\ge 27$, $\max\{[\frac s2]+1, 17\}\le l\le s-10$. Assume that initially  
\begin{equation}\label{ic}
\xi(\alpha,\beta,0)=\xi^0=(\alpha,\beta, z^0(\alpha,\beta)),\quad \xi_t(\alpha,\beta,0)=\frak u^0(\alpha,\beta),\quad \xi_{tt}(\alpha,\beta,0)=\frak w^0(\alpha,\beta),
\end{equation}
 and the  data in \eqref{ic} satisfy the compatibility condition (5.29)-(5.30) of \cite{wu2}. Let $\Gamma=\partial_\alpha,\,\partial_\beta,\, \alpha\partial_\alpha+\beta\partial_\beta,\, \alpha\partial_\beta-\beta\partial_\alpha$. Assume that
\begin{equation}
\sum_{|j|\le s-1\atop\partial=\partial_\alpha,\partial_\beta}\|\Gamma^j |D|^{1/2} z^0\|_{L^2(\mathbb R^2)}+\|\Gamma^j\partial z^0\|_{H^{1/2}(\mathbb R^2)}+\|\Gamma^j \frak u^0\|_{H^{3/2}(\mathbb R^2)}+\|\Gamma^j\frak w^0\|_{H^1(\mathbb R^2)}<\infty
\end{equation}
Let
\begin{equation}
\epsilon=\sum_{|j|\le l+3\atop\partial=\partial_\alpha,\partial_\beta}\|\Gamma^j |D|^{1/2} z^0\|_{L^2(\mathbb R^2)}+\|\Gamma^j\partial z^0\|_{L^2(\mathbb R^2)}+\|\Gamma^j \frak u^0\|_{H^{1/2}(\mathbb R^2)}+\|\Gamma^j\frak w^0\|_{L^2(\mathbb R^2)}.
\end{equation}
\begin{theorem}[Main Theorem]
There exists $\epsilon_0>0$, such that for $0\le \epsilon\le \epsilon_0$, the initial value problem \eqref{ww1}-\eqref{ww2}-\eqref{ic} has a unique classical solution globally in time. For each time $0\le t<\infty$,  the interface is a graph, the solution has the same regularity as the initial data and remains small. Moreover the $L^\infty$ norm of the steepness and the acceleration of the interface, the derivative of the velocity on the interface decay at the rate $\frac 1t$. 

\end{theorem}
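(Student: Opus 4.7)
The plan is to prove the Main Theorem by combining the essentially cubic structure of \eqref{cubic2} with Klainerman's method of invariant vector fields, the local well-posedness result of \cite{wu2}, and a bootstrap/continuity argument. I would begin by establishing the forthcoming Lemma~\ref{diffeo}: for sufficiently small data, the map $k(\cdot,t):\mathbb R^2\to\mathbb R^2$ defined in \eqref{k} is a diffeomorphism with $J(k(t))>0$, so the $k$-coordinate quantities $\zeta,u,w,A,b,\chi$ are well-defined, and smallness of $\xi,\xi_t,\xi_{tt}$ in the Lagrangian norms of the statement is equivalent to smallness of the corresponding $k$-coordinate quantities. Proposition~\ref{propAb} then tells me that $b$ and $A-1$ are quadratic and higher order in small quantities, so the left-hand side of \eqref{cubic2} equals $\partial_t^2\chi-e_2\partial_\alpha\chi+e_1\partial_\beta\chi+(\partial_\beta\lambda\,\partial_\alpha\chi-\partial_\alpha\lambda\,\partial_\beta\chi)+R$, with $R$ of cubic and higher order.

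The four invariant vector fields $\Gamma\in\{\partial_\alpha,\partial_\beta,\alpha\partial_\alpha+\beta\partial_\beta,\alpha\partial_\beta-\beta\partial_\alpha\}$ commute, up to harmless constants, with the flat operator $\partial_t^2-e_2\partial_\alpha+e_1\partial_\beta$, and I would construct a high-order energy
\[
E_l(t)=\sum_{|j|\le l}\Big(\|(\partial_t+b\cdot\nabla_\bot)\Gamma^j\chi\|_{L^2}^2+\langle A\,\mathcal N\times\nabla\,\Gamma^j\chi,\,\Gamma^j\chi\rangle\Big)+\text{companion terms,}
\]
augmented by auxiliary $L^2$-bounds on $\Gamma^j z,\Gamma^j\xi_t,\Gamma^j\xi_{tt}$, all controlled at $t=0$ by $\epsilon^2$. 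Differentiating $E_l$ along the flow and commuting $\Gamma^j$ through $(\partial_t+b\cdot\nabla_\bot)^2-A\,\mathcal N\times\nabla$ via the commutator identities of Lemma~\ref{lemma 1.2}, the singular-integral bounds of \cite{wu2}, and the Clifford-algebraic identities used in the proof of Lemma~\ref{lemma 1.2}, should yield a differential inequality of schematic form $\dfrac{d}{dt}E_l(t)\lesssim\mathcal E_\infty(t)^2\,E_l(t)$, where $\mathcal E_\infty(t)$ denotes the $L^\infty$ norms of a fixed number of low-order $\Gamma$-derivatives of steepness, velocity, and acceleration.

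The essential gain is that $\mathcal E_\infty(t)$ decays like $1/t$. The apparently quadratic first integral on the right-hand side of \eqref{cubic2} contains the factor $\overline{u'}$, which is nearly anti-analytic in the air region, so that integral is essentially of type $(I+\mathcal H)(\text{quadratic})$, while $\chi=(I-\mathcal H)(\cdot)$; the orthogonality of the two projections (Proposition~\ref{propon}) absorbs one factor when paired against $(\partial_t+b\cdot\nabla_\bot)\Gamma^j\chi$ in the energy estimate, rendering this contribution effectively cubic. The leftover quadratic term $\partial_\beta\lambda\,\partial_\alpha\chi-\partial_\alpha\lambda\,\partial_\beta\chi$ is a null form for $\partial_t^2-e_2\partial_\alpha+e_1\partial_\beta$ and can be rewritten as $\frac{1}{t}$ times a quadratic expression in $\Gamma$-derivatives of $(\lambda,\chi)$, supplying the necessary decay. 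A Klainerman--Sobolev inequality adapted to $\partial_t^2-e_2\partial_\alpha+e_1\partial_\beta$ then gives $\mathcal E_\infty(t)\lesssim E_l(t)^{1/2}/t$ for $t\ge 1$, so $\dfrac{d}{dt}E_l\lesssim E_l^2/t^2$, and integration yields $E_l(t)\le 2E_l(0)\lesssim\epsilon^2$ for all time if $\epsilon_0$ is small enough. A continuity argument coupled with the local theory of \cite{wu2} then promotes the solution to $[0,\infty)$; persistence of the graph property, of the regularity, and of the $1/t$ decay of $\nabla z,\xi_{tt},\nabla\xi_t$ follows from $E_l\lesssim\epsilon^2$ together with the Klainerman--Sobolev inequality applied to these quantities.

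The main obstacle I anticipate is executing the null-form reduction for $\partial_\beta\lambda\,\partial_\alpha\chi-\partial_\alpha\lambda\,\partial_\beta\chi$ when $\lambda$ and $\chi$ are both nonlocal — each is built from $\mathcal H$ applied to unknowns living on the moving interface $\zeta=P+\lambda$ — so that pulling the scaling and rotation vector fields through Clifford Hilbert transforms whose kernels themselves depend on the solution, while keeping the $(I-\mathcal H)$ projection available at every stage to quarantine residual quadratic noises, is the most delicate bookkeeping. A closely related secondary difficulty is reconciling the Lagrangian weights $\alpha,\beta$ inside $\Gamma$ with the essentially Eulerian nature of the Klainerman--Sobolev decay estimate, which requires controlling $\xi-k$ and $k-P$ from Proposition~\ref{propAb}.
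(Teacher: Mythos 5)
Your high‑level picture — the cubic structure of \eqref{cubic2}, Proposition~\ref{propAb} making $b$ and $A-1$ quadratic, the null-form treatment of $\partial_\beta\lambda\partial_\alpha\chi-\partial_\alpha\lambda\partial_\beta\chi$ via Proposition~\ref{proprotation}, the role of the $(I-\mathcal H)$ projection against $(I+\mathcal H)(\text{quadratic})$ terms, the Klainerman--Sobolev inequality adapted to $\frak P$, and closing via a continuity argument coupled with local theory — all match the paper. But the central step of your proposal, a \emph{single} energy functional $E_l$ satisfying $\frac{d}{dt}E_l\lesssim \mathcal E_\infty^2\,E_l$, which you then close to $E_l(t)\le 2E_l(0)$, is not what the paper does, and the paper's analysis shows why it cannot be done so simply.

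The energy inequality with \emph{quadratic} dependence on $\mathcal E_\infty$ — which is the only thing that yields integrability in $t$ when $\mathcal E_\infty\sim\epsilon/t$ — is established in the paper (Proposition~\ref{proppartenergy}) only for the lower-order energy $\frak F_{l+2}$, and it requires the elaborate term-by-term use of the $(I-\mathcal H)$ projection, $\mathcal H^*$ duality, and the rotation identities of Steps 1--6 of its proof; the analogous manipulations for the full range of derivatives up to $n\le l+9$ are not carried out, and the full-range energy estimate (Proposition~\ref{propfullenergy}) gives only the \emph{linear} inequality $\frac{d}{dt}\mathcal F_n\lesssim \mathcal E_\infty\,\mathcal F_n$. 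This is not a minor bookkeeping issue: terms such as $G_{j,4}^\phi=A(\mathcal N\cdot\nabla_\xi^++\mathcal N\cdot\nabla_\xi^-)\Phi^j$, once paired against $(\partial_t+b\cdot\nabla_\bot)\Phi^j$ in the crude energy of Lemma~\ref{propbasicenergy1}, honestly contribute $\mathcal E_\infty\cdot\mathcal F_n$ and nothing better. At the same time the Klainerman--Sobolev inequality of Proposition~\ref{propgsobolev} loses about five orders of $\Gamma$-derivatives (and one invocation of $\frak P$), so the $L^\infty$ quantities controlling $\mathcal F_n$ must themselves be controlled by $L^2$ norms of strictly higher order; one energy level cannot simultaneously feed the $L^\infty$ estimate and be bounded by it. The paper resolves this with a genuine two-tier bootstrap (Theorem~\ref{mainestimate}): $\mathcal F_n$ ($n\le l+9$) is allowed to grow like $(1+t)^{M_1}$ with $M_1<\tfrac12$ so long as $\frak F_{l+2}$ stays small, while $\frak F_{l+2}$ stays bounded because $\frac{d}{dt}\frak F_{l+2}\lesssim E_{l+2}^{1/2}E_{l+3}^{1/2}E_{l+9}^{1/2}\,(\frac{1+\ln(1+t)}{1+t})^2$ is integrable even when $E_{l+9}\sim(1+t)^{1/2}$. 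Your proposal is missing this scheme entirely, and without it the argument does not close.

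A secondary omission worth noting: the companion quantity $\frak v=(\partial_t+b\cdot\nabla_\bot)\chi$ and its equation \eqref{cubic3} are essential in the paper's energy construction (both $\mathcal F_n$ and $\frak F_n$ pair $\chi$ and $\frak v$ contributions), and \eqref{cubic3} contains the term $Au_\beta\chi_\alpha-Au_\alpha\chi_\beta$ whose apparent derivative loss requires the specific commutator/projection trick of \eqref{322}. Your ``companion terms'' placeholder would need to be precisely this to make the local theory, the Klainerman--Sobolev estimate (which wants $\|\frak P\Gamma^k f\|_2$), and the energy functionals consistent.
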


A more detailed and precise statement of the main Theorem is given in Theorem~\ref{globalexistence} and the remarks at the end of this paper.

\section{Basic analysis preparations}

For a function $f=f(\alpha,\beta, t)$, we use the notation $f=f(\cdot, t)=f(t)$, 
$$\|f(t)\|_2=\|f(t)\|_{L^2}=\|f(\cdot,t)\|_{L^2(\mathbb R^2)},\quad\quad|f(t)|_\infty=\|f(t)\|_{L^\infty}=\|f(\cdot,t)\|_{L^\infty(\mathbb R^2)}.
$$

\subsection{Vector fields and a generalized Sobolev inequality}

As in \cite{wu3}, we will use the method of invariant vector fields. We know the linear part of the operator 
$\mathcal P=(\partial_t+ b\cdot \nabla_\bot)^2-A \mathcal N\times \nabla$ is $\frak P=\partial_t^2- e_2\partial_\alpha+e_1\partial_\beta$.
Although the invariant vector fields of $\frak P$ was not known, it is not difficult to find them.\footnote{One may find using the method in \cite{cz} that for the scaler operator $\partial_t^2+|D|$, the following are invariants: $\partial_t, \ \partial_\alpha,\ \partial_\beta,\  L_0,\ \Upsilon,\ \alpha\partial_t+\frac 12 t \partial_\alpha |D|^{-1}, \ \beta\partial_t+\frac 12 t \partial_\beta |D|^{-1}.$ Those for $\frak P$ are then obtained by properly modifying this set.} Using a combined method as that in \cite{cz, st}, we find that
the set of operators
\begin{equation}\label{vectorfields}
\Gamma=\{ \partial_t, \quad \partial_\alpha,  \quad \partial_\beta, \quad L_0= \frac 12 t\partial_t+\alpha\partial_\alpha+\beta\partial_\beta,\quad \text{and } 
 \varpi=\alpha\partial_\beta-\beta\partial_\alpha-\frac12 e_3\}
\end{equation}
 satisfy
\begin{equation}\label{com0}
[\partial_t, \frak P]=[\partial_\alpha, \frak P]=[\partial_\beta, \frak P]=[\varpi, \frak P]=0, \quad [L_0, \frak P]=-\frak P.
\end{equation}
Let $ \Upsilon=\alpha\partial_\beta-\beta\partial_\alpha$.  So $\varpi=\Upsilon-\frac12 e_3$. We have 
\begin{equation}\label{commutativity1}
\begin{aligned} 
&[\partial_t,\partial_\alpha]=[\partial_t,\partial_\beta]=[\partial_t,\Upsilon]=[\partial_\alpha, \partial_\beta]=[L_0,\Upsilon]= 0\\
&[\partial_t, L_0]=\frac12\partial_t, \quad[\partial_\alpha,L_0]=[\Upsilon, \partial_\beta]=\partial_\alpha,\quad [\partial_\beta,L_0]=[\partial_\alpha, \Upsilon]=\partial_\beta
\end{aligned}
\end{equation}
Furthermore, we have
\begin{equation}\label{comgt}
\begin{aligned}
& [\partial_t, \partial_t+b\cdot\nabla_\bot]=b_t\cdot\nabla_\bot,\quad [\partial, \partial_t+b\cdot\nabla_\bot]=(\partial b)\cdot\nabla_\bot,\quad \text{for }\partial=\partial_\alpha,\partial_\beta
\\& [L_0,\partial_t+b\cdot\nabla_\bot]=(L_0b-\frac12b)\cdot\nabla_\bot-\frac12(\partial_t+b\cdot\nabla_\bot),\\&
[\varpi, \partial_t+b\cdot\nabla_\bot]= (\varpi b-\frac 12 e_3b)\cdot \nabla_\bot
\end{aligned}
\end{equation}
Let $\mathcal P^\pm=(\partial_t+b\cdot\nabla_\bot )^2\pm A\mathcal N\times \nabla$. Notice that $\mathcal P=\mathcal P^-$.  
 We have
\begin{equation}\label{commgp}
\begin{aligned}
& [ \partial_t, \mathcal P^\pm]=\pm \{(A\zeta_\beta)_t\partial_\alpha
-(A\zeta_\alpha)_t\partial_\beta\}
+\{\partial_t(\partial_t+b\cdot\nabla_\bot )b-b_t\cdot\nabla_\bot b\}\cdot\nabla_\bot\\&+
b_t\cdot\{(\partial_t+b\cdot\nabla_\bot)\nabla_\bot+ \nabla_\bot(\partial_t+b\cdot\nabla_\bot)\}\\
&[\partial, \mathcal P^\pm]=\pm \{(\partial(A\zeta_\beta))\partial_\alpha
-(\partial(A\zeta_\alpha))\partial_\beta\}+\{\partial(\partial_t+b\cdot\nabla_\bot)b-(\partial b)\cdot\nabla_\bot b\}\cdot\nabla_\bot
\\
&+
(\partial b)\cdot\{  (\partial_t+b\cdot\nabla_\bot)\nabla_\bot+ \nabla_\bot(\partial_t+b\cdot\nabla_\bot)\},\qquad \text{for }\partial=\partial_\alpha,\partial_\beta\\
&[ L_0, \mathcal P^\pm]=-\mathcal P^\pm\pm \{L_0(A\zeta_\beta)\partial_\alpha
-L_0(A\zeta_\alpha)\partial_\beta\}+\{(\partial_t+b\cdot\nabla_\bot)(L_0 b-\frac12 b)\}\cdot\nabla_\bot
\\&
+(L_0 b-\frac12 b) 
\cdot\{  (\partial_t+b\cdot\nabla_\bot)\nabla_\bot+ \nabla_\bot(\partial_t+b\cdot\nabla_\bot)\}
\\&
[\varpi,  \mathcal P^\pm]= \pm (\Upsilon A)(\zeta_\beta\partial_\alpha-\zeta_\alpha\partial_\beta)\pm A (\partial_\beta(\varpi\lambda+\frac12 \lambda e_3)\partial_\alpha-\partial_\alpha(\varpi\lambda+\frac12 \lambda e_3)    \partial_\beta)\\&
+(\varpi b-\frac12 e_3 b) 
\cdot\{  (\partial_t+b\cdot\nabla_\bot)\nabla_\bot+ \nabla_\bot(\partial_t+b\cdot\nabla_\bot)\}
\\&+\{(\partial_t+b\cdot\nabla_\bot)(\varpi -\frac12 e_3) b)\}\cdot\nabla_\bot
\end{aligned}
\end{equation}
For any positive integer $m$, and any operator $P$,
\begin{equation}\label{comjp}
[\Gamma^m,  P]=\sum_{j=1}^m\Gamma^{m-j}[\Gamma, P]\Gamma^{j-1}.
\end{equation}

Let $$\text{\bf K}f(\alpha, \beta,t)=p.v.\iint k(\alpha,\beta, \alpha', \beta' ;t)f(\alpha',\beta',t)\,d\alpha'\,d\beta'$$
where  for some $\iota=0, 1$, or $2$, 
$|(\alpha,\beta)-(\alpha',\beta')|^\iota k(\alpha,\beta, \alpha',\beta';t)$ is  bounded, and $k$ is  smooth away from the diagonal $\Delta=\{(\alpha,\beta)=(\alpha', \beta')\}$. 
We have for $f$ vanish fast at spatial infinity,
\begin{equation}\label{comgk}
\begin{aligned}
&[\partial_t, \text{\bf K}]f(\alpha,\beta, t)=\iint\partial_t k(\alpha,\beta,\alpha', \beta';t)f(\alpha',\beta',t)\,d\alpha'\,d\beta'
\\&[\partial,\text{\bf K}]f(\alpha,\beta, t)=\iint(\partial+\partial')k(\alpha,\beta,\alpha', \beta';t)f(\alpha',\beta',t)\,d\alpha'\,d\beta',\qquad\partial=\partial_\alpha,\partial_\beta\\
&
[L_0,\text{\bf K}]f(\alpha,\beta, t)=2\text{\bf K} f(\alpha,\beta, t)\\&+
\iint(\alpha\partial_\alpha+\beta\partial_\beta+\alpha'\partial_\alpha'+\beta'\partial_\beta'+\frac12t\partial_t   )k(\alpha,\beta,\alpha', \beta';t)f(\alpha',\beta',t)\,d\alpha'\,d\beta'
\\&
[\Upsilon, \text{\bf K}]f(\alpha,\beta, t)=\iint (\Upsilon+\Upsilon') k(\alpha,\beta,\alpha', \beta';t)f(\alpha',\beta',t)\,d\alpha'\,d\beta'
\\&
[\partial_t+b\cdot\nabla_\bot, \text{\bf K}]f=\iint (\partial_t+b\cdot\nabla_\bot+b'\cdot\nabla'_\bot)k(\alpha,\beta,\alpha', \beta';t)f(\alpha',\beta',t)\,d\alpha'\,d\beta'\\&
+\iint k(\alpha,\beta,\alpha', \beta';t)\text{div}' b' \,f(\alpha',\beta',t)\,d\alpha'\,d\beta'
\end{aligned}
\end{equation}
One of the  operators  in  equations \eqref{cubic2}, \eqref{quasi1} and \eqref{quasi2} is of the following type: 
$$\bold B(g,f)=p.v.\iint K(\zeta'-\zeta)(g-g')\times (\zeta'_{\beta'}\partial_{\alpha'}-\zeta'_{\alpha'}\partial_{\beta'})f(\alpha',\beta',t)\,d\alpha'\,d\beta'$$
where $\Re g=0$. We have for $\Gamma=\partial_t,\partial_\alpha,\partial_\beta, L_0, \varpi$, 
\begin{equation}\label{comgq}
\begin{aligned}
\Gamma&\bold B(g,f)=\iint K(\zeta'-\zeta)(\dot \Gamma g-\dot \Gamma' g')\times (\zeta'_{\beta'}\partial_{\alpha'}-\zeta'_{\alpha'}\partial_{\beta'})f(\alpha',\beta',t)\,d\alpha'\,d\beta'\\&+\iint K(\zeta'-\zeta)(g-g')\times (\zeta'_{\beta'}\partial_{\alpha'}-\zeta'_{\alpha'}\partial_{\beta'})\Gamma' f(\alpha',\beta',t)\,d\alpha'\,d\beta'\\&+\iint K(\zeta'-\zeta)(g-g')\times (\partial_{\beta'}\dot \Gamma'\lambda'\partial_{\alpha'}-\partial_{\alpha'}\dot \Gamma'\lambda'\partial_{\beta'})f(\alpha',\beta',t)\,d\alpha'\,d\beta'\\&+\iint ((\dot \Gamma'\lambda'-\dot \Gamma\lambda)\cdot\nabla) K(\zeta'-\zeta)(g-g')\times (\zeta'_{\beta'}\partial_{\alpha'}-\zeta'_{\alpha'}\partial_{\beta'})f(\alpha',\beta',t)\,d\alpha'\,d\beta'
\end{aligned}
\end{equation}
where $\dot\Gamma g=\partial_t g,\ \partial_\alpha g,\ \partial_\beta g,\ (L_0-I)g, \ \varpi g+\frac12 g e_3$ respectively. \eqref{comgq} is straightforward with an application of \eqref{comgk}, the definition $\zeta=P+\lambda$, and in the case $\Gamma=L_0$, the fact $(\xi\cdot\nabla) K(\xi)=-2K(\xi)$ and \eqref{commutativity1}; in the case $\Gamma=\varpi$, the fact
$((e_3\times\xi)\cdot \nabla)K(\xi)=\frac12(e_3K(\xi)-K(\xi)e_3)$, \eqref{commutativity1} and $-e_3 \,a\times b+ a\times b\, e_3+2(e_3\times a)\times b+2a\times (e_3\times b)=0$, for $a,\, b\in \mathbb R^3$.

Before we derive the commutativity relations between $L_0$, $\varpi$ and $\mathcal H$, we record
\begin{lemma}\label{lemmavector} Let $\Omega$ be a  $C^2$ domain in $\mathbb R^2$, with its boundary $\Sigma=\partial\Omega$ being parametrized by 
$\xi=\xi(\alpha,\beta)$, $(\alpha,\beta)\in \mathbb R^2$. For  any vector $\eta$, and function $f$ on $\mathbb R^2$, we have
\begin{equation}\label{vector2}(\eta\times \xi_{\beta})f_{ \alpha}-(\eta\times
\xi_{\alpha})f_{\beta}= (\xi_\alpha\times\xi_\beta)(\eta\cdot\nabla_\xi )f- 
(\eta\cdot(\xi_\alpha\times\xi_\beta))\,\mathcal D_\xi f.
\end{equation} 
\begin{equation}\label{vector3}
-(\eta\cdot \nabla) K(\xi)\,
(\xi'_{\alpha'}\times\xi'_{\beta'})+(\xi'_{\alpha'}\cdot\nabla) K(\xi)\,(\eta\times
\xi'_{\beta'})+(\xi'_{\beta'}\cdot\nabla) K(\xi)\,(\xi'_{\alpha'}\times\eta)=0,
\end{equation}
for $\xi\ne 0$.
\end{lemma}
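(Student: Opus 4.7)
For identity \eqref{vector2}, I would first reduce to the case of real-valued $f$. Both sides are linear in $f$, and because $\nabla$ and $\mathcal D$ commute with right multiplication by the constant basis vectors, one may apply the identity to each real component $f_j$ of $f = \sum_j f_j e_j$ and then right-multiply by $e_j$; the observation that enables the reduction is that for a real scalar $g$, the Clifford vector $\mathcal D_\xi g$ coincides with the $\mathbb R^3$-vector $\nabla_\xi g$. Assuming $f$ real, the chain rule applied to its harmonic extension gives $f_\alpha = \xi_\alpha\cdot v$ and $f_\beta = \xi_\beta\cdot v$, where $v := \nabla_\xi f$, so the left-hand side of \eqref{vector2} becomes $\eta\times\bigl(\xi_\beta(\xi_\alpha\cdot v) - \xi_\alpha(\xi_\beta\cdot v)\bigr)$. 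Two successive applications of the BAC-CAB identity $x\times(y\times z) = y(x\cdot z) - z(x\cdot y)$ — first to rewrite the inner bracket as $(\xi_\alpha\times\xi_\beta)\times v$, then to expand $\eta\times[(\xi_\alpha\times\xi_\beta)\times v]$ — recover exactly the right-hand side.

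For identity \eqref{vector3}, my starting point is $K = -2\nabla\Gamma$ together with the harmonicity $\Delta\Gamma = 0$ on $\xi\ne 0$: writing $H$ for the Hessian of $\Gamma$, one has $\partial_j K_i = -2H_{ij}$ with $H$ symmetric and tracefree, so $(c\cdot\nabla)K = -2Hc$ for every vector $c$. Setting $a = \xi'_{\alpha'}$, $b = \xi'_{\beta'}$, $c = \eta$, and using $\eta\times b = -(b\times c)$ together with $a\times\eta = -(c\times a)$, the identity reduces to the cyclic statement
$$
(Hc)(a\times b) + (Ha)(b\times c) + (Hb)(c\times a) = 0
$$
in the Clifford algebra. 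I would then split each Clifford product via $PQ = -P\cdot Q + P\times Q$ and handle the two parts separately. The scalar part is a trilinear form $T(a,b,c) = \det(Hc,a,b) + \det(Ha,b,c) + \det(Hb,c,a)$, which a direct check shows is antisymmetric under transpositions of $(a,b,c)$ and is therefore a constant multiple of $\det(a,b,c)$; evaluating at $(e_1,e_2,e_3)$ identifies this constant as $\operatorname{tr}(H) = \Delta\Gamma = 0$. For the vector part, a single BAC-CAB expansion converts each $(Hx)\times(y\times z)$ into $y\,((Hx)\cdot z) - z\,((Hx)\cdot y)$, and after collecting, the coefficients of $a$, $b$, $c$ all take the form $(Hy)\cdot z - (Hz)\cdot y$, which vanishes by symmetry of $H$.

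The only subtle point will be the noncommutativity of Clifford multiplication in the reduction step for \eqref{vector2}: the basis factor $e_j$ must be inserted on the correct (right) side of the identity. Since $\nabla$ and $\mathcal D$ act only on real components and leave the constant $e_j$ untouched, the right-multiplication structure is preserved and the component-wise argument closes cleanly. Beyond this, both identities reduce to pure vector algebra combined with the two structural facts $H = H^T$ and $\operatorname{tr} H = 0$ for the Hessian of the harmonic fundamental solution $\Gamma$.
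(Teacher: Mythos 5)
Your proof is correct, and it is also genuinely useful context, because the paper itself gives \emph{no} proof: it merely refers to identities (5.17) and (3.5) of \cite{wu2}. You have supplied a clean, self-contained derivation. A few remarks. For \eqref{vector2}, your reduction to real scalar $f$ is the right move and the right-multiplication by a constant $e_j$ is indeed safe because it is untouched by $\mathcal D$, $\nabla$ and $\partial_\alpha,\partial_\beta$; after that, the chain rule $f_\alpha=\xi_\alpha\cdot\nabla_\xi f$, $f_\beta=\xi_\beta\cdot\nabla_\xi f$ and a double application of $x\times(y\times z)=y(x\cdot z)-z(x\cdot y)$ close the argument, using only that $\mathcal D_\xi f$ and $\nabla_\xi f$ coincide for real $f$. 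For \eqref{vector3}, your reduction via $K=-2\nabla\Gamma$, $\partial_j K_i=-2H_{ij}$, and the two structural facts $H=H^T$ and $\operatorname{tr}H=\Delta\Gamma=0$ to the cyclic Clifford identity
\[
(Hc)(a\times b)+(Ha)(b\times c)+(Hb)(c\times a)=0
\]
is exactly the right skeleton; splitting via $PQ=-P\cdot Q+P\times Q$ and handling the scalar part by total antisymmetry (hence proportionality to $\det(a,b,c)$ with coefficient $\operatorname{tr}H=0$) and the vector part by symmetry of $H$ is tight and correct. The one thing worth flagging as a presentational nuance: the original \eqref{vector3} is a Clifford-algebra identity, so the product $(\eta\cdot\nabla)K(\xi)\,(\xi'_{\alpha'}\times\xi'_{\beta'})$ is a Clifford product of two vectors, and one must keep the product order fixed when rearranging $c\times b=-(b\times c)$, $a\times c=-(c\times a)$; your computation respects this, but a reader unfamiliar with the noncommutativity might benefit from a one-line remark that the sign flips go only into the second factor, not the first. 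Overall, this is a valid proof by the same natural vector-algebraic route that the paper (via \cite{wu2}) is implicitly invoking.
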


\eqref{vector2} is proved in the same way as the identity (5.17) in \cite{wu2}. We omit the details. \eqref{vector3} is the identity (3.5) in \cite{wu2} .

We have the following  commutativity relations between $L_0$, $\varpi$ and $\mathcal H$.
\begin{proposition}\label{propcomgh} Let $f\in C^1(\mathbb R^2\times [0, T])$ be a $\mathcal C(V_2)$ valued  function vanishing at spatial infinity. Then
\begin{align}
[L_0, \mathcal H]f&=\iint K(\zeta'-\zeta)\,((L_0-I)\lambda-  (L'_0-I)\lambda'  )\times
(\zeta'_{\beta'}\partial_{\alpha'}-\zeta'_{\alpha'}\partial_{\beta'})f'\,d\alpha'd\beta'.\label{commutelh}\\
 [\varpi, \mathcal H]f&=\iint
K(\zeta'-\zeta)\,(  
\varpi\lambda+\frac12 \lambda e_3-\varpi'\lambda'-\frac12 \lambda' e_3
)\times 
(\zeta'_{\beta'}\partial_{\alpha'}-\zeta'_{\alpha'}\partial_{\beta'})f'\,d\alpha'd\beta'\label{commutevh}
\end{align}
\end{proposition}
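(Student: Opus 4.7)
The plan is to derive both identities in a unified way, treating $\mathcal{H}$ as the kernel operator $\iint k(\alpha,\beta,\alpha',\beta';t)\,f'$ with $k = K(\zeta'-\zeta)\,\mathcal{N}'$ and $\mathcal{N}' = \zeta'_{\alpha'}\times\zeta'_{\beta'}$, and then invoking the symmetric commutator formula \eqref{comgk}. For $L_0$ this gives
$$[L_0, \mathcal{H}]f = 2\mathcal{H}f + \iint \bigl(\alpha\partial_\alpha + \beta\partial_\beta + \alpha'\partial_{\alpha'} + \beta'\partial_{\beta'} + \tfrac{t}{2}\partial_t\bigr)k\cdot f'\,d\alpha'd\beta'.$$
Applying the chain rule to $K(\zeta'-\zeta)$, together with $L_0\zeta = \zeta + \nu$ (where $\nu = (L_0 - I)\lambda$, using $\zeta = P + \lambda$ and $L_0 P = P$), the symmetric derivative of $K(\zeta'-\zeta)$ becomes $(L_0'\zeta' - L_0\zeta)\cdot\nabla K = \bigl[(\zeta'-\zeta) + (\nu' - \nu)\bigr]\cdot\nabla K(\zeta'-\zeta)$. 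The $(\zeta'-\zeta)\cdot\nabla K(\zeta'-\zeta)$ part equals $-2K(\zeta'-\zeta)$ by the Euler identity for the $(-2)$-homogeneous kernel $K$, and its integral contribution cancels the $2\mathcal{H}f$ term exactly. Only the primed part of the dilation acts on $\mathcal{N}'$; using $[L_0', \partial_{\alpha'}] = -\partial_{\alpha'}$, $[L_0', \partial_{\beta'}] = -\partial_{\beta'}$ and Leibniz yields $L_0'\mathcal{N}' = \partial_{\alpha'}\nu'\times\zeta'_{\beta'} + \zeta'_{\alpha'}\times\partial_{\beta'}\nu'$.

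The next step is to rewrite the $(\nu' - \nu)\cdot\nabla K(\zeta'-\zeta)\,\mathcal{N}'$ piece via identity \eqref{vector3} with $\eta = \nu' - \nu$, converting it into $\partial_{\alpha'}K\bigl((\nu'-\nu)\times\zeta'_{\beta'}\bigr) + \partial_{\beta'}K\bigl(\zeta'_{\alpha'}\times(\nu'-\nu)\bigr)$. Integration by parts in $\alpha', \beta'$ moves these primed derivatives off $K$: the pieces in which the derivative hits $\nu'$ exactly cancel the $K\cdot L_0'\mathcal{N}'$ contribution from the previous step, while the pieces landing on $\zeta'_{\beta'}$ or $\zeta'_{\alpha'}$ combine to $K\bigl[(\nu'-\nu)\times\zeta'_{\alpha'\beta'} + \zeta'_{\alpha'\beta'}\times(\nu'-\nu)\bigr] = 0$ by antisymmetry of the cross product. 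What remains is the required
$$[L_0, \mathcal{H}]f = \iint K(\zeta'-\zeta)(\nu - \nu')\times(\zeta'_{\beta'}\partial_{\alpha'} - \zeta'_{\alpha'}\partial_{\beta'})f'\,d\alpha'd\beta',$$
which is \eqref{commutelh}.

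For $\varpi = \Upsilon - \frac{1}{2}e_3$ the same template applies, with two new algebraic ingredients. First, $\Upsilon P = e_3\times P$ and (by expansion in the basis $e_1, e_2, e_3$) $-\frac{1}{2}e_3 v + \frac{1}{2}v e_3 = -e_3\times v$ for every vector $v \in \mathcal{C}(V_2)$, which combine to give $\Upsilon\zeta = e_3\times\zeta + \mu$ with $\mu = \varpi\lambda + \frac{1}{2}\lambda e_3 = \Upsilon\lambda - e_3\times\lambda$. Second, the Jacobi identity $(e_3\times\zeta'_{\alpha'})\times\zeta'_{\beta'} + \zeta'_{\alpha'}\times(e_3\times\zeta'_{\beta'}) = e_3\times\mathcal{N}'$ produces $\Upsilon'\mathcal{N}' = e_3\times\mathcal{N}' + \partial_{\alpha'}\mu'\times\zeta'_{\beta'} + \zeta'_{\alpha'}\times\partial_{\beta'}\mu'$. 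Putting these into $[\Upsilon, \mathcal{H}]f - \frac{1}{2}[e_3, \mathcal{H}]f$, three $e_3$-generated contributions appear: $\frac{1}{2}(e_3 K - K e_3)\mathcal{N}'$ from the rotation formula $((e_3\times\xi)\cdot\nabla)K(\xi) = \frac{1}{2}(e_3 K(\xi) - K(\xi)e_3)$ quoted after \eqref{comgq}, $K(e_3\times\mathcal{N}')$ from $\Upsilon'\mathcal{N}'$, and $-\frac{1}{2}(e_3 K\mathcal{N}' - K\mathcal{N}' e_3)$ from $-\frac{1}{2}[e_3, \mathcal{H}]$. These collapse to zero once one applies $v e_3 - e_3 v = -2(e_3\times v)$ to the vector $\mathcal{N}'$. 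The remaining $\mu$-terms are structurally identical to the $L_0$ computation with $\nu$ replaced by $\mu$, and repeating the \eqref{vector3}-plus-IBP step delivers \eqref{commutevh}. The main obstacle is this $\varpi$ calculation: the noncommutativity of Clifford multiplication demands careful bookkeeping of left versus right multiplication by $e_3$, and the three-way cancellation is genuinely algebraic, resting on the interplay of Jacobi, the Clifford identity $v e_3 - e_3 v = -2(e_3\times v)$, and the rotation formula for $K$.
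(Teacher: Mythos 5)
Your argument is correct and uses essentially the same mechanism as the paper: both rely on the decomposition $L_0\zeta=\zeta+(L_0-I)\lambda$ and $\Upsilon\zeta=e_3\times\zeta+\bigl(\varpi\lambda+\tfrac12\lambda e_3\bigr)$, Euler's identity for the $(-2)$-homogeneous kernel $K$, the vector identity \eqref{vector3} followed by integration by parts, and (for $\varpi$) the rotation formula together with $e_3\times v=\tfrac12(e_3v-ve_3)$. The only real difference is organizational: you enter via the kernel commutation formula \eqref{comgk} (which puts the $2\mathcal Hf$ offset up front and lets Euler's identity cancel the $(\zeta'-\zeta)$-part immediately, reserving \eqref{vector3} plus IBP for the $\nu$- and $\mu$-parts), whereas the paper first establishes the cross-product form $\iint K(\zeta'-\zeta)\,(L_0\zeta-L_0'\zeta')\times(\zeta'_{\beta'}\partial_{\alpha'}-\zeta'_{\alpha'}\partial_{\beta'})f'$ by arguing as in the proof of \eqref{commutenh}, then shows separately that the $(\zeta-\zeta')$-contribution integrates to zero and that $\tfrac12[e_3,\mathcal H]f$ equals the integral with $e_3\times(\zeta-\zeta')$; your direct algebraic cancellation of the three $e_3$-generated kernel terms accomplishes the same thing as the paper's subtraction of that intermediate identity.
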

\begin{proof}
Using \eqref{commuteth}, \eqref{commuteah}, \eqref{commutebh} and argue similarly as in the proof of \eqref{commutenh}, we can show  that
\begin{equation*}
[L_0, \mathcal H]f=\iint K(\zeta'-\zeta)\,(L_0\zeta-  L'_0\zeta'  )\times
(\zeta'_{\beta'}\partial_{\alpha'}-\zeta'_{\alpha'}\partial_{\beta'})f'\,d\alpha'd\beta'.
\end{equation*}
Using integration by parts and \eqref{vector3}, we can check the following identity:
$$\iint K(\zeta'-\zeta) \, (\zeta-\zeta')\times(\zeta'_{\beta'}\partial_{\alpha'}-\zeta'_{\alpha'}\partial_{\beta'})f'\,d\alpha'd\beta'=0$$
\eqref{commutelh}  then follows from  the fact that $(L_0-I)\zeta=(L_0-I)\lambda$.

\eqref{commutevh} is obtained similarly. First we have by using \eqref{commuteah}, \eqref{commutebh} that
\begin{equation*}
[\Upsilon, \mathcal H]f=\iint K(\zeta'-\zeta)\,(\Upsilon\zeta-  \Upsilon'\zeta'  )\times
(\zeta'_{\beta'}\partial_{\alpha'}-\zeta'_{\alpha'}\partial_{\beta'})f'\,d\alpha'd\beta'.
\end{equation*}
We now check the identity
$$\frac12 [e_3, \mathcal H]f=\iint K(\zeta'-\zeta)\,(e_3\times (\zeta-\zeta'))\times(\zeta'_{\beta'}\partial_{\alpha'}-\zeta'_{\alpha'}\partial_{\beta'})f'\,d\alpha'd\beta'.$$
Using integration by parts, we have
\begin{equation*}
\begin{aligned}
&\iint K(\zeta'-\zeta)\,(e_3\times (\zeta-\zeta'))\times(\zeta'_{\beta'}\partial_{\alpha'}-\zeta'_{\alpha'}\partial_{\beta'})f'\,d\alpha'\,d\beta'
\\&=-
\iint (\partial_\alpha'K\,(e_3\times (\zeta-\zeta'))\times\zeta'_{\beta'}-\partial_{\beta'} K\,(e_3\times (\zeta-\zeta'))\times  \zeta'_{\alpha'})f'\,d\alpha'\,d\beta'
\\&+\iint K(\zeta'-\zeta)\,((e_3\times \zeta'_{\alpha'})\times\zeta'_{\beta'}- (e_3\times \zeta'_{\beta'})\times    \zeta'_{\alpha'})f'\,d\alpha'\,d\beta'
\\&=\iint((e_3\times (\zeta'-\zeta))\cdot \nabla)K(\zeta'-\zeta)\mathcal N' f'\,d\alpha'\,d\beta'+\iint K(\zeta'-\zeta) e_3\times \mathcal N' f'\,d\alpha'\,d\beta'
\\&=\frac12\iint (e_3 K \mathcal N' -K\mathcal N' e_3)f' \,d\alpha'\,d\beta'=\frac12 [e_3, \mathcal H]f
\end{aligned}
\end{equation*}
Here in the second step we used \eqref{vector3}, and the identity $(a\times b)\times c=b\,a\cdot c-a\,b\cdot c$. In the last step we used the fact $((e_3\times\xi)\cdot \nabla)K(\xi)=\frac12(e_3K(\xi)-K(\xi)e_3)$ and $e_3\times \mathcal N=\frac12 (e_3 \mathcal N- \mathcal N e_3)$. \eqref{commutevh} therefore follows since $\Upsilon \zeta-e_3\times\zeta=\Upsilon \lambda-e_3\times\lambda=\varpi\lambda+\frac12\lambda e_3$.

\end{proof}

In what follows, we denote the vector fields in \eqref{vectorfields} by $\Gamma_i$ $i=1, \dots, 5$, or simply suppress the subscript and write as $\Gamma$. We shall write 
$$\Gamma^k=\Gamma_1^{k_1}\Gamma_2^{k_2}\Gamma_3^{k_3}\Gamma_4^{k_4}\Gamma_5^{k_5}$$
for $k=(k_1, k_2, k_3, k_4, k_5)$. For a nonnegative integer $k$, we shall also use $\Gamma^k$ to indicate a $k-$product of $\Gamma_i$. $i=1,\dots,5$.

We now develop a generalized Sobolev inequality. Let $\alpha_1=\alpha$, $\alpha_2=\beta$. We introduce 
\begin{equation}\label{o0j}
\Omega_{0j}^\pm
=\pm\alpha_j\partial_t+\frac 12 t\partial_{\alpha_j} |D|^{-1} H,\quad j=1,2
\end{equation}
where     $H=(e_2\partial_{\alpha_1}-e_1\partial_{\alpha_2})|D|^{-1}$. Therefore $H^2=I$. We also denote $\Omega_{0j}^-=\Omega_{0j}$.\footnote{One may check that $[\Omega_{01}(e_2\partial_{\alpha_1}-e_1\partial_{\alpha_2})-\frac12 \partial_t e_2,\frak P]=0$, 
$[\Omega_{02}(e_2\partial_{\alpha_1}-e_1\partial_{\alpha_2})+\frac12 \partial_t e_1,\frak P]=0$. These are some of the invariant vector fields for $\frak P$, not included in \eqref{vectorfields}.}
 Let $\frak P^\pm=\partial_t^2\pm(e_2\partial_{\alpha_1}-e_1\partial_{\alpha_2})$.  Notice that $\frak P=\frak P^-$. We know
 \begin{equation}\label{371}
\mathcal P^\pm=\frak P^\pm+\partial_t (b\cdot\nabla_\bot)+b\cdot\nabla_\bot(\partial_t+b\cdot\nabla_\bot)\pm A(\lambda_\beta\partial_\alpha-\lambda_\alpha\partial_\beta)\pm (A-1)(e_2\partial_\alpha-e_1\partial_\beta)
\end{equation}
  Let $P_d(\partial)$ be a  
polynomial of $\partial_{\alpha_j}$, $j=1,2$, homogenous of degree $d$, with coefficients in $\mathbb R$. We have
\begin{lemma}\label{propbasicoj} 1.
\begin{equation}\label{eqbasicoj}
\begin{aligned}
&(\partial_{\alpha_1}^2+ \partial_{\alpha_2}^2)\Omega^\pm_{01}=\pm(\partial_{\alpha_1}(2\partial_t+L_0\partial_t-\frac12 t\frak P^\pm)+\partial_{\alpha_2}\Upsilon \partial_t)\\&
(\partial_{\alpha_1}^2+ \partial_{\alpha_2}^2)\Omega^\pm_{02}=\pm(\partial_{\alpha_2}(2\partial_t+L_0\partial_t-\frac12 t\frak P^\pm)-\partial_{\alpha_1}\Upsilon \partial_t)
\end{aligned}
\end{equation}
2.
\begin{equation}\label{basicoj} 
\||D|\Omega_{0j}^\pm F(t)\|_{L^2}\le 2\sum_{k\le 1}\|\partial_t\Gamma^k F(t)\|_{L^2}+t\|\frak P^\pm F(t)\|_{L^2}
\end{equation}
3.
\begin{equation}\label{comgp}
[\hat\Gamma, P_{d+l}(\partial)|D|^{-d}]=R |D|^l,\quad\text{for }\hat\Gamma=L_0, \Upsilon,\ l=0,1,\quad  [\Omega_{0j},  P_{d+1}(\partial)|D|^{-d}]=R\partial_t,
\end{equation}
where $R$ is a finite sum of operators of the type $P_{k}(\partial)|D|^{-k}$, and need not be the same for different  $\hat\Gamma$, $\Omega_{0j}$, $j=1,2$ or $l=0,1$.
\end{lemma}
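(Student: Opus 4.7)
Part 1 is a symbolic computation pivoting on $\partial_{\alpha_1}^2+\partial_{\alpha_2}^2=-|D|^2$, the commutator $[|D|^2,\alpha_j]=-2\partial_{\alpha_j}$, and the factorization $\frak P^\pm=\partial_t^2\pm H|D|$. Applying $-|D|^2$ to $\Omega_{0j}^\pm=\pm\alpha_j\partial_t+\frac12 t\partial_{\alpha_j}|D|^{-1}H$ yields
\[
-|D|^2\Omega_{0j}^\pm=\pm\alpha_j(\partial_{\alpha_1}^2+\partial_{\alpha_2}^2)\partial_t\pm 2\partial_{\alpha_j}\partial_t-\frac12 t\,\partial_{\alpha_j}|D|H.
\]
On the right side of \eqref{eqbasicoj}, expanding $L_0\partial_t$ and $\Upsilon\partial_t$ by Leibniz and then applying the outer derivatives $\partial_{\alpha_j}$ and $\pm\partial_{\alpha_{3-j}}$ (with the rotation-term sign chosen oppositely for $j=1,2$), all mixed $\alpha\partial\partial$-terms and stray $\partial_{\alpha_j}\partial_t$ pieces cancel, leaving $\pm\alpha_j(\partial_{\alpha_1}^2+\partial_{\alpha_2}^2)\partial_t\pm\frac12 t\partial_{\alpha_j}\partial_t^2$ plus the explicit $\pm 2\partial_{\alpha_j}\partial_t$ piece. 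The term $\pm\partial_{\alpha_j}(-\frac12 t\frak P^\pm)$ then kills the $\partial_t^2$ contribution and produces $-\frac12 t\partial_{\alpha_j}|D|H$ (the two $\pm$'s pairing to $+$ makes the $H|D|$ contribution sign-independent), and the two sides of \eqref{eqbasicoj} agree.

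Part 2 follows immediately by applying $|D|^{-1}$ to \eqref{eqbasicoj} and using that the Riesz transforms $|D|^{-1}\partial_{\alpha_j}$ have $L^2$-norm at most $1$:
\[
\bigl\||D|\Omega_{0j}^\pm F\bigr\|_2\le\bigl\|\bigl(2\partial_t+L_0\partial_t-\tfrac12 t\frak P^\pm\bigr)F\bigr\|_2+\|\Upsilon\partial_t F\|_2.
\]
Using $[\partial_t,L_0]=\frac12\partial_t$, $[\partial_t,\Upsilon]=0$ from \eqref{commutativity1}, and $\Upsilon=\varpi+\frac12 e_3$, the pieces $L_0\partial_t F$ and $\Upsilon\partial_t F$ get replaced by $\partial_t L_0 F$ and $\partial_t\varpi F$ up to lower-order multiples of $\partial_t F$; the resulting constants fit comfortably inside the stated bound $2\sum_{k\le 1}\|\partial_t\Gamma^k F\|_2+t\|\frak P^\pm F\|_2$.

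Part 3 is Fourier multiplier calculus. Under $\alpha_j\leftrightarrow i\partial_{\xi_j}$, the spatial part of $L_0$ corresponds to $-2-\xi\cdot\nabla_\xi$ on the Fourier side, so Euler's identity applied to the homogeneous symbol $m(\xi)=P_{d+l}(\xi)|\xi|^{-d}$ (degree $l$) gives $[L_0,m(\partial)]=-l\,m(\partial)$, rewritten as $R|D|^l$ with $R=-l\,P_{d+l}(\partial)|D|^{-(d+l)}$ of the stated type; the temporal part $\frac12 t\partial_t$ commutes with $m(\partial)$. For $\Upsilon$, the symbol of the commutator is $(\xi_1\partial_{\xi_2}-\xi_2\partial_{\xi_1})P_{d+l}(\xi)\cdot|\xi|^{-d}$ (the $|\xi|^{-d}$ factor being rotation-invariant), again homogeneous of degree $l$ and thus of the required form. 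For $\Omega_{0j}$, the piece $\frac12 t\partial_{\alpha_j}|D|^{-1}H$ commutes with $P_{d+1}(\partial)|D|^{-d}$ (real-coefficient Fourier multipliers commute with each other and with left multiplication by the constant Clifford elements $e_1,e_2$ inside $H$); the surviving contribution is $[-\alpha_j\partial_t,P_{d+1}(\partial)|D|^{-d}]=-i(\partial_{\xi_j}m)(\partial)\partial_t$, and $\partial_{\xi_j}(P_{d+1}(\xi)|\xi|^{-d})$ splits by the product rule into a degree-$d$ polynomial over $|\xi|^d$ plus a degree-$(d+2)$ polynomial over $|\xi|^{d+2}$, yielding $R\partial_t$ of the required form.

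The main obstacle is careful sign bookkeeping in Part 1: the paired $\pm$ in $\Omega^\pm_{0j}$ and in $\frak P^\pm=\partial_t^2\pm H|D|$ must conspire so the $H|D|$ piece emerges with sign-independent coefficient while all $\partial_t^2$ contributions cancel between $\pm\partial_{\alpha_j}L_0\partial_t$ and $\pm\partial_{\alpha_j}(-\frac12 t\frak P^\pm)$. Part 3 is otherwise routine once one checks that $P_{d+l}(\partial)|D|^{-d}$ acts component-wise on $\mathcal C(V_2)$-valued functions, letting the Clifford units in $H$ pass freely through.
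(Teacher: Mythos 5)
Your proposal is correct and follows essentially the same route as the paper: Part 3 by Fourier multiplier calculus, Part 1 by a direct symbol-level expansion of $-|D|^2\Omega_{0j}^\pm$, and Part 2 by applying $|D|^{-1}$ and invoking the unit $L^2$-bound of the Riesz transforms together with the commutator relations $[\partial_t,L_0]=\tfrac12\partial_t$, $[\partial_t,\Upsilon]=0$. The only cosmetic difference is that the paper organizes Part 1 by first recording the two divergence-curl identities $\partial_{\alpha_1}\Omega_{02}-\partial_{\alpha_2}\Omega_{01}=\Upsilon\partial_t$ and $\partial_{\alpha_1}\Omega_{01}+\partial_{\alpha_2}\Omega_{02}=-2\partial_t-L_0\partial_t+\tfrac12 t\frak P$ and then combining them, whereas you expand $-|D|^2\Omega_{0j}^\pm$ via $[|D|^2,\alpha_j]=-2\partial_{\alpha_j}$ and match against the right side; this is the same computation presented in a slightly different order.
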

\begin{proof}  \eqref{comgp} is straightforward  using Fourier transform. We prove \eqref{eqbasicoj} for $\Omega_{01}=\Omega_{01}^-$, 
the other cases follows similarly. We have
\begin{equation*}
\begin{aligned}
&\partial_{\alpha_1}\Omega_{02}-\partial_{\alpha_2}\Omega_{01}=\Upsilon \partial_t\\
&\partial_{\alpha_1}\Omega_{01}+\partial_{\alpha_2}\Omega_{02}=-2\partial_t-L_0\partial_t+\frac12 t\frak P
\end{aligned}
\end{equation*}
Therefore
$$(\partial_{\alpha_1}^2+ \partial_{\alpha_2}^2)\Omega_{01}=\partial_{\alpha_1}(-2\partial_t-L_0\partial_t+\frac12 t\frak P)-\partial_{\alpha_2}\Upsilon \partial_t.$$
\eqref{basicoj} is straightforward from here. 
\end{proof}

\begin{proposition}[\bf {generalized Sobolev inequality}]\label{propgsobolev}
Let $f\in C^\infty(\mathbb R^{2+1})$ be a $\mathcal C(V_2)$ valued function, vanishing at spatial infinity. We have for $l=1,2$,
\begin{equation}\label{gsobolev}
\begin{aligned}
&(1+t+|\alpha_1|+|\alpha_2|)|\partial_{\alpha_l} f(\alpha_1,\alpha_2,t)|\\&
\lesssim \sum_{k\le 4, j=1,2}(\|\Gamma^k \partial_t f(t)\|_{L^2} +\|\Gamma^k \partial_{\alpha_j} f(t)\|_{L^2}) +t\sum_{k\le 3}\|\frak P \Gamma^k f(t)\|_{L^2}
\end{aligned}
\end{equation}
Here $a\lesssim b$ means that there is a universal constant $c$, such that $a\le cb$.
\end{proposition}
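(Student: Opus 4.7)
The plan is a Klainerman-Sobolev-type argument: I split the weight $(1+t+|\alpha|)$ into the three pieces $1$, $t$, and $r = |\alpha|$, and bound $|\partial_{\alpha_l} f(\alpha,t)|$ weighted by each piece using 2D Sobolev embedding together with the algebraic structure of the vector fields $\Gamma$ and the auxiliary operators $\Omega_{0l}$.

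The weight-$1$ bound is immediate from $H^2(\mathbb R^2)\hookrightarrow L^\infty(\mathbb R^2)$: since $\partial_\alpha,\partial_\beta\in \Gamma$, we obtain $|\partial_{\alpha_l} f(\alpha,t)|\lesssim\sum_{|k|\le 2}\|\Gamma^k\partial_{\alpha_l} f\|_{L^2}$, subsumed in the right-hand side of \eqref{gsobolev}. For the weight-$t$ bound, the key identity is
$$ \frac{t}{2}\partial_{\alpha_l} f = (e_2\partial_{\alpha_1} - e_1\partial_{\alpha_2})(\Omega_{0l} f + \alpha_l\partial_t f), $$
obtained from \eqref{o0j} by applying $|D|H$ to both sides and using $H^2=I$ together with the commutativity of $|D|$, $H$ and $\partial_{\alpha_j}$. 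Expanding the right-hand side into $(e_2\partial_{\alpha_1}-e_1\partial_{\alpha_2})\Omega_{0l} f + \alpha_l(e_2\partial_{\alpha_1}-e_1\partial_{\alpha_2})\partial_t f + (\delta_{l,1}e_2-\delta_{l,2}e_1)\partial_t f$, the first piece is handled by 2D Sobolev combined with the estimate \eqref{basicoj} of Lemma \ref{propbasicoj}, iterated on $F = \partial^{k} f$ for $k\le 2$ using the commutator $[\partial_{\alpha_j},\Omega_{0l}]=-\delta_{jl}\partial_t$; this yields a pointwise bound of the form $\sum_{|k|\le 3}\|\Gamma^k\partial_t f\|_{L^2}+t\sum_{|k|\le 2}\|\Gamma^k\frak P f\|_{L^2}$. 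The second piece contributes $r\,|(e_2\partial_{\alpha_1}-e_1\partial_{\alpha_2})\partial_t f(\alpha,t)|$, which I control via the polar-coordinate identity $r\partial_{\alpha_l} g = \cos\theta_l(L_0-\frac{t}{2}\partial_t)g - \sin\theta_l\Upsilon g$ (valid since $r\partial_r = L_0-\frac{t}{2}\partial_t$ and $\partial_\theta = \Upsilon$) applied to $g = \partial_t f$, combined with 2D Sobolev on $L_0\partial_t f$ and $\Upsilon\partial_t f$ (bounded via the commutation relations of \eqref{commutativity1}) and the equation identity $\partial_t^2 f = \frak P f + (e_2\partial_{\alpha_1}-e_1\partial_{\alpha_2})f$ arising from the definition of $\frak P$.

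The $\partial_t^2 f$ contribution splits into $\frac{t}{2}|\frak P f(\alpha,t)|$, which is controlled by the allowed $t\|\Gamma^k \frak P f\|_{L^2}$ terms on the RHS, plus $\frac{t}{2}|(e_2\partial_{\alpha_1}-e_1\partial_{\alpha_2})f(\alpha,t)|$, which is itself a linear combination of the quantities $|\partial_{\alpha_j} f(\alpha,t)|$ being bounded. Summing over $l=1,2$ and absorbing this coupling term to the left yields $t|\partial_{\alpha_l} f(\alpha,t)|\lesssim$ the RHS of \eqref{gsobolev}. The weight-$r$ bound follows by the same polar identity applied directly to $f$: in the regime $r \le t$ it reduces to the weight-$t$ bound just proved, while in $r > t$ a rescaling on the annulus at scale $r$ (where $L_0 - \frac{t}{2}\partial_t$ and $\Upsilon$ act as order-one derivatives) gives the estimate, again using $\partial_t^2 = \frak P + |D|H$ to control the $\frac{t}{2}\partial_t f$ contribution.

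The main obstacle is the circular dependence: the weight-$t$ estimate requires a bound on $r|\partial\partial_t f|$, which via $\partial_t^2 = \frak P + |D|H$ re-introduces a $\frac{t}{2}|\partial_{\alpha_j} f|$ term, resolved by the absorption argument above (whose smallness of coupling constant follows from the explicit form of the polar identity and 2D Sobolev). Lemma \ref{propbasicoj} is the crucial algebraic input: it decouples $\||D|\Omega_{0l} F\|_{L^2}$ from high-order $\Gamma^k\partial_t F$ norms modulo the $t\|\frak P F\|_{L^2}$ error, which is precisely what enables the $t$-weight improvement. The final derivative counts ($|k|\le 4$ on $\partial f$ and $|k|\le 3$ on $\frak P f$) arise from the two derivatives lost to the 2D Sobolev embedding, one derivative from iterating \eqref{basicoj} on $\partial^2 f$, and one additional derivative from the polar-coordinate argument applied to $\partial_t f$.
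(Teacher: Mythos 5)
Your decomposition into weight-$1$, weight-$t$, weight-$r$ pieces is a reasonable skeleton, and your starting identity
\begin{equation*}
\tfrac{t}{2}\partial_{\alpha_l}f=(e_2\partial_{\alpha_1}-e_1\partial_{\alpha_2})(\Omega_{0l}f+\alpha_l\partial_t f)
\end{equation*}
is correct. The gap is at the absorption step in the weight-$t$ bound. After you apply the polar identity to $\alpha_l(e_2\partial_{\alpha_1}-e_1\partial_{\alpha_2})\partial_t f$ and substitute $\partial_t^2 f=\frak P f+(e_2\partial_{\alpha_1}-e_1\partial_{\alpha_2})f$, the offending coupling term in the $l$-th equation is $-\tfrac{t}{2}\tfrac{\alpha_l}{r}(\cos\theta\,e_2-\sin\theta\,e_1)(e_2\partial_{\alpha_1}-e_1\partial_{\alpha_2})f$. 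Since $\cos\theta\,e_2-\sin\theta\,e_1$ is a unit quaternion and the quaternion norm is multiplicative, its modulus is exactly $\tfrac{t}{2}\tfrac{|\alpha_l|}{r}|(e_2\partial_{\alpha_1}-e_1\partial_{\alpha_2})f|$. Summing over $l=1,2$, the total coupling has size $\tfrac{t}{2}(|\cos\theta|+|\sin\theta|)|(e_2\partial_{\alpha_1}-e_1\partial_{\alpha_2})f|$. For $f$ real-valued this equals $\tfrac{t}{2}(|\cos\theta|+|\sin\theta|)\sqrt{(\partial_{\alpha_1}f)^2+(\partial_{\alpha_2}f)^2}$, which at $\theta=\pi/4$, $\partial_{\alpha_1}f=\partial_{\alpha_2}f$ is exactly $\tfrac{t}{2}(|\partial_{\alpha_1}f|+|\partial_{\alpha_2}f|)$: the coupling coefficient is $1$, not $<1$, so there is nothing to absorb. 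Your claim that ``smallness of coupling constant follows from the explicit form of the polar identity and 2D Sobolev'' is where the argument breaks; no purely pointwise absorption can close a coefficient-$1$ coupling.

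The paper handles precisely this borderline coupling by complexifying. Setting $h=\partial_r|D|^{-1}Hg$, the identities \eqref{215}--\eqref{216} combine into the scalar ODE $r^2\partial_r(g+ih)-\tfrac14 t^2 i(g+ih)=F$ in the radial variable, with $F$ built from $L_0g$, $\Omega_{0j}g$, $L_0\partial_{\alpha_j}|D|^{-1}Hg$, $\Omega_{0j}\partial_{\alpha_j}|D|^{-1}Hg$. The key structural fact is that the troublesome term $\tfrac14 t^2 i(g+ih)$ is a \emph{purely imaginary} scalar multiple of the unknown, so it is annihilated by the unimodular integrating factor $e^{it^2/(4r)}$: integrating the ODE from $\infty$ gives $|g(re^{i\theta})|\le\int_r^\infty s^{-2}|F(se^{i\theta})|\,ds$ directly, with no absorption. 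Combined with the angular $L^2$ averaging (Lemma~1.2, p.\,40 of \cite{so}), Cauchy--Schwarz in $s$, Lemma~\ref{propbasicoj}, and the case split $t\lessgtr r$ (with \eqref{216} providing a second exact identity in the regime $r\le t$), this yields \eqref{gsobolev}. What your scheme encounters as a coefficient-$1$ obstruction, the ODE formulation removes by unitarity of the integrating factor; that is the essential mechanism missing from your proposal, and it is also what makes your weight-$r$ step (currently only a vague ``rescaling on the annulus'') close in the regime $r\le t$.
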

\begin{proof}
Let $r^2=\alpha_1^2+\alpha_2^2\,$, $r\partial_r=\alpha_1\partial_{\alpha_1}+\alpha_2\partial_{\alpha_2}$. We have
\begin{equation*}
\sum_1^2\frac{\alpha_j}r\Omega_{0j}=-r\partial_t+\frac12t\partial_r |D|^{-1}H,\qquad L_0=\frac12 t\partial_t+r\partial_r
\end{equation*}
therefore
\begin{equation}\label{215}
rL_0+\frac t2\sum_1^2\frac{\alpha_j}r\Omega_{0j}=r^2\partial_r+\frac{t^2}4\partial_r|D|^{-1}H.
\end{equation}
Also
\begin{equation*}
\sum_1^2\Omega_{0j}\partial_{\alpha_j}=-r\partial_r\partial_t-\frac12t|D|H,\qquad \sum_1^2\alpha_jL_0\partial_{\alpha_j}=\frac12 t\,r\partial_t\partial_r+r^2\partial_r^2
\end{equation*}
gives
\begin{equation}\label{216}
\frac12 t \sum_1^2\Omega_{0j}\partial_{\alpha_j}|D|^{-1}H+\sum_1^2\alpha_jL_0\partial_{\alpha_j}|D|^{-1}H=r^2\partial_r^2|D|^{-1}H-\frac14 t^2.
\end{equation}
Let $g$ be a $\mathcal C(V_2)$ valued function, $h=\partial_r |D|^{-1}Hg$.  From \eqref{215},\eqref{216} we have ( $i$ is the complex number in this proof)
\begin{equation}\label{217}
r^2\partial_r(g+ih)-\frac14t^2i\,(g+ih)=F,
\end{equation}
where
$$F=rL_0g+\frac t2\sum_1^2\frac{\alpha_j}r\Omega_{0j}g+i\,( \frac12 t \sum_1^2\Omega_{0j}\partial_{\alpha_j}|D|^{-1}Hg+\sum_1^2\alpha_jL_0\partial_{\alpha_j}|D|^{-1}Hg).
$$
Using \eqref{217} we get $r^2\partial_r(e^{\frac{it^2}{4r}}(g+i\,h))=e^{\frac{it^2}{4r}}F$,
therefore (Recall by definition, components of a $\mathcal C(V_2)$ valued function are real valued.)
\begin{equation}\label{218}
|g(re^{i\theta},t)|\le |(g+i\,h)(re^{i\theta},t)|\le \int_r^\infty\frac 1{s^2}|F(se^{i\theta},t)|\,ds
\end{equation}
and this implies that
\begin{equation}\label{219}
\begin{aligned}
 |g(re^{i\theta},t)|^2&\lesssim \frac1{r^2}\int_r^\infty s(|L_0 g|^2+\sum_1^2|L_0\partial_{\alpha_j}|D|^{-1}Hg|^2)\,ds\\&+
 \frac{t^2}{r^4}\sum_{j=1}^2\int_r^\infty s(|\Omega_{0j}g|^2+|\Omega_{0j}\partial_{\alpha_j}|D|^{-1}Hg|^2)\,ds.
 \end{aligned}
 \end{equation}
Now in \eqref{219} we let $g=\Upsilon^k\partial_{\alpha_l}f$. Using Lemma 1.2 on page 40 of \cite{so},  and \eqref{comgp}, \eqref{commutativity1},   
 we obtain
 \begin{equation}\label{220}
 \begin{aligned}
 |\partial_{\alpha_l}&f(re^{i\theta_0}, t)|^2\lesssim \sum_{k\le 2} \int_0^{2\pi} |\Upsilon^k \partial_{\alpha_l}f(re^{i\theta}, t)|^2\,d\theta\lesssim
 \frac 1{r^2}\sum_{j=1}^2\sum_{k\le 2, m\le 1} \|L_0^m\Upsilon^k \partial_{\alpha_j}f(t)\|^2_{L^2}\\&
 +\frac{t^2}{r^4}(\sum_{j=1}^2\sum_{k\le 2}\||D|\Omega_{0j}\Upsilon^k f(t)\|^2_{L^2}+\|\partial_t\Upsilon^k f(t)\|^2_{L^2})
 \end{aligned}
 \end{equation}
A further application of \eqref{basicoj} gives
\begin{equation}\label{221}
|\partial_{\alpha_l}f(re^{i\theta_0}, t)|^2\lesssim \frac 1{r^2}\sum_{k\le 3}\sum_{j=1}^2\|\Gamma^k\partial_{\alpha_j}f(t)\|^2_{2}+\frac{t^2}{r^4}\sum_{k\le 3}\|\Gamma^k \partial_t f(t)\|^2_{2}+ \frac{t^4}{r^4}\sum_{k\le 2}\|\frak P\Gamma^k f(t)\|_{2}^2.
\end{equation}
 From a similar argument we also have
 \begin{equation}\label{222}
 \begin{aligned}
 |\partial_{\alpha_m}&|D|^{-1}H\partial_{\alpha_l}f(re^{i\theta_0},t)|^2\lesssim \frac 1{r^2}\sum_{k\le 3}\sum_{j=1}^2\|\Gamma^k\partial_{\alpha_j}f(t)\|^2_{2}
 \\&
 +\frac{t^2}{r^4}\sum_{k\le 3}\|\Gamma^k \partial_t f(t)\|^2_{2}+ \frac{t^4}{r^4}\sum_{k\le 2}\|\frak P\Gamma^k f(t)\|_{2}^2.
 \end{aligned}
\end{equation}

Case 0: $|t|+r\le 1$. \eqref{gsobolev} follows from the standard Sobolev embedding.

Case 1: $t\le r$ and $|t|+r\ge 1$. \eqref{gsobolev} follows from \eqref{221}

Case 2: $r\le t$ and $|t|+r\ge 1$. We use \eqref{216}. We have 
\begin{equation}\label{223}
\frac14 t^2 \partial_{\alpha_l}f =r^2\partial_r^2|D|^{-1}H\partial_{\alpha_l}f-\frac12 t \sum_{j=1}^2\Omega_{0j}\partial_{\alpha_j}|D|^{-1}H\partial_{\alpha_l}f-\sum_{j=1}^2\alpha_jL_0\partial_{\alpha_j}|D|^{-1}H\partial_{\alpha_l}f
\end{equation}
Using \eqref{222} to estimate the first term, the standard Sobolev embedding and Lemma~\ref{propbasicoj} 
to estimate the second and third term on the right hand side of \eqref{223}. We obtain \eqref{gsobolev}.

\end{proof}
  
  \begin{proposition}\label{proprotation}
Let $f$, $g$ be real valued functions. We have
\begin{equation}\label{rotation}
\begin{aligned}
\partial_{\alpha_1}&f\partial_{\alpha_2}g- \partial_{\alpha_2}f\partial_{\alpha_1}g=\frac2t\{\mp\partial_t (e_2\partial_{\alpha_1}-e_1\partial_{\alpha_2}) f\,\Upsilon g\\&+\Omega_{01}^\pm(e_2\partial_{\alpha_1}-e_1\partial_{\alpha_2})f\,\partial_{\alpha_2}g-  \Omega_{02}^\pm(e_2\partial_{\alpha_1}-e_1\partial_{\alpha_2})f\,\partial_{\alpha_1}g\}
\end{aligned}
\end{equation}
\end{proposition}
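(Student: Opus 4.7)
The plan is to verify the identity by direct computation, exploiting the fact that the operator $(e_2\partial_{\alpha_1}-e_1\partial_{\alpha_2})$ is, up to the factor $|D|$, the inverse of $|D|^{-1}H$. Concretely, I first observe that since $H=(e_2\partial_{\alpha_1}-e_1\partial_{\alpha_2})|D|^{-1}$ and $H^2=I$, we have
\begin{equation*}
|D|^{-1}H\,(e_2\partial_{\alpha_1}-e_1\partial_{\alpha_2})=|D|^{-1}H\cdot H|D|=I,
\end{equation*}
and therefore $\partial_{\alpha_j}|D|^{-1}H\,(e_2\partial_{\alpha_1}-e_1\partial_{\alpha_2})=\partial_{\alpha_j}$ for $j=1,2$.

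Second, using this identity together with the definition of $\Omega_{0j}^\pm$, I would compute
\begin{equation*}
\Omega_{0j}^\pm(e_2\partial_{\alpha_1}-e_1\partial_{\alpha_2})f=\pm\alpha_j\,\partial_t(e_2\partial_{\alpha_1}-e_1\partial_{\alpha_2})f+\tfrac{t}{2}\partial_{\alpha_j}f.
\end{equation*}

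Third, I would substitute these two identities into the right-hand side of \eqref{rotation} and group terms by the two kinds of factors they produce, namely $\partial_t(e_2\partial_{\alpha_1}-e_1\partial_{\alpha_2})f$ and $\partial_{\alpha_j}f$. The $\partial_t(e_2\partial_{\alpha_1}-e_1\partial_{\alpha_2})f$ contributions collapse into $\mp\frac{2}{t}\partial_t(e_2\partial_{\alpha_1}-e_1\partial_{\alpha_2})f\cdot\bigl[\Upsilon g-(\alpha_1\partial_{\alpha_2}g-\alpha_2\partial_{\alpha_1}g)\bigr]$, which vanishes by the very definition $\Upsilon=\alpha_1\partial_{\alpha_2}-\alpha_2\partial_{\alpha_1}$. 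The remaining $\partial_{\alpha_j}f$ contributions combine to $\frac{2}{t}\cdot\frac{t}{2}(\partial_{\alpha_1}f\partial_{\alpha_2}g-\partial_{\alpha_2}f\partial_{\alpha_1}g)$, which is exactly the left-hand side.

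There is no real obstacle here; the statement is an algebraic identity and its proof is essentially the single observation that the Clifford operator $(e_2\partial_{\alpha_1}-e_1\partial_{\alpha_2})$ inverts $|D|^{-1}H$, together with the trivial cancellation against $\Upsilon$. The only point that warrants care is operator ordering within the Clifford algebra (in particular when writing $H^2=I$ one uses $(e_2\partial_{\alpha_1}-e_1\partial_{\alpha_2})^2=-(\partial_{\alpha_1}^2+\partial_{\alpha_2}^2)=|D|^2$, which relies on $e_1^2=e_2^2=-1$ and the anticommutation $e_1e_2=-e_2e_1$), and the fact that $f$, $g$ being real valued lets us freely move real scalars past Clifford operators in the final grouping step.
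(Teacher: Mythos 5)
Your proof is correct and carries out exactly the direct computation the paper omits as ``straightforward from definition.'' The key identity $|D|^{-1}H\,(e_2\partial_{\alpha_1}-e_1\partial_{\alpha_2})=I$ follows from $(e_2\partial_{\alpha_1}-e_1\partial_{\alpha_2})^2=|D|^2$, and substituting it into the definition of $\Omega_{0j}^\pm$ produces the clean cancellation against the $\Upsilon g$ term and leaves exactly the real-valued left-hand side.
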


The proof is straightforward from definition. We omit the details.

\subsection{Estimates of the Cauchy type integral operators} 
Let $J\in C^1(\mathbb R^d; \mathbb R^l)$, $A_i\in C^1(\mathbb R^d), i=1,\dots, m$, $F\in C^\infty(\mathbb R^l)$. Define (for $x, y\in \mathbb R^d$) 
\begin{equation}\label{defcauchy1}
C_1(J, A, f)(x)=p.v.\int F(\frac{J(x)-J(y)}{|x-y|})\frac{\Pi_{i=1}^m(A_i(x)-A_i(y))}{|x-y|^{d+m}}f(y)\,dy.
\end{equation}
Assume that $k_1(x,y)=F(\frac{J(x)-J(y)}{|x-y|})\frac{\Pi_{i=1}^m(A_i(x)-A_i(y))}{|x-y|^{d+m}}$ is odd, i.e. $k_1(x,y)=-k_1(y,x)$.
\begin{proposition}\label{propcauchy1}
  There exist constants $c_1=c_1(F, \|\nabla J\|_{L^\infty}) $, $c_2=c_2(F, \|\nabla J\|_{L^\infty})$, such that 
1.  For any $f\in L^2(\mathbb R^d),\ \nabla A_i\in L^\infty(\mathbb R^d), \ 1\le i\le m, $
\begin{equation}\label{cauchy1}
\|C_1(J, A, f)\|_{L^2(\mathbb R^d)}\le c_1\|\nabla A_1\|_{L^\infty(\mathbb R^d)}\dots\|\nabla A_m\|_{L^\infty(\mathbb R^d)}\|f\|_{L^2(\mathbb R^d)}. 
\end{equation}

2.  For any $ f\in L^\infty(\mathbb R^d), \ \nabla A_i\in L^\infty(\mathbb R^d), \ 2\le i\le m,\ \nabla A_1\in L^2(\mathbb R^d)$, 
\begin{equation}\label{cauchy2}
\|C_1(J, A, f)\|_{L^2(\mathbb R^d)}\le c_2\|\nabla A_1\|_{L^2(\mathbb R^d)}\|\nabla A_2\|_{L^\infty(\mathbb R^d)}\dots\|\nabla A_m\|_{L^\infty(\mathbb R^d)}\|f\|_{L^\infty(\mathbb R^d)}. 
\end{equation}

\end{proposition}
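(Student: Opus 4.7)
The plan is to reduce both estimates to the classical Coifman--McIntosh--Meyer $L^2$ boundedness theorem for multilinear Calder\'on commutators. Since $|J(x)-J(y)|/|x-y| \leq \|\nabla J\|_{L^\infty}$, one may Taylor-expand $F$ around $0$ on the ball of radius $\|\nabla J\|_{L^\infty}$:
\[
F\Bigl(\tfrac{J(x)-J(y)}{|x-y|}\Bigr) = \sum_{|\kappa| \geq 0} \frac{\partial^\kappa F(0)}{\kappa!}\, \frac{(J(x)-J(y))^\kappa}{|x-y|^{|\kappa|}},
\]
so that $C_1(J, A, f)$ becomes a series of $(m+|\kappa|)$-th order Calder\'on-type commutators with odd kernel in which each component of $J$ appears with multiplicity prescribed by $\kappa$ and each $A_i$ appears once. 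The coefficients $\partial^\kappa F(0)/\kappa!$ decay in $|\kappa|$ at a rate determined by the smoothness of $F$.

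For Part 1, I would apply the Coifman--McIntosh--Meyer theorem to each term: the odd commutator kernel $\prod_{j=1}^{m+|\kappa|}(B_j(x)-B_j(y))/|x-y|^{d+m+|\kappa|}$ defines an $L^2 \to L^2$ operator of norm $\leq C_0^{m+|\kappa|}\prod_j \|\nabla B_j\|_{L^\infty}$ for an absolute constant $C_0$. Summing over $\kappa$ using the Taylor decay produces \eqref{cauchy1} with $c_1$ depending only on $F$ and $\|\nabla J\|_{L^\infty}$.

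For Part 2, the idea is to transfer the derivative off of $A_1$ by writing
\[
A_1(x) - A_1(y) = (x-y)\cdot \int_0^1 \nabla A_1\bigl(y + s(x-y)\bigr)\, ds,
\]
substituting, and interchanging the order of integration. For each fixed $s \in (0, 1)$, change variables $z = y + s(x-y)$; after this change the inner operator acts on the $L^\infty$ function $f$ as the $A_1$-slot of a Part~1 operator, while $\nabla A_1$, now in the variable $z$, plays the role of the $L^2$-input. Minkowski's inequality in $s$, combined with the uniform $L^2 \to L^2$ estimate from Part 1 applied slicewise, gives \eqref{cauchy2}. The near-degenerate endpoint $s$ close to $1$ is handled by splitting the $s$-integral at $1/2$ and using the oddness $k_1(x,y)=-k_1(y,x)$ to interchange the roles of $x$ and $y$, reducing the $s \in [1/2, 1]$ piece to an $s \in [0,1/2]$ estimate.

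The principal obstacle is verifying that after the substitution $z = y + s(x-y)$ the transformed kernel still belongs to the Calder\'on commutator class with constants \emph{uniform in} $s$, so that the CMM bound applies slicewise; in particular one must re-express $A_i(x)-A_i(y)$ for $i\geq 2$ in the new variables and keep track of the $F$-factor without picking up factors of $(1-s)^{-1}$. Handling the $s\to 1$ regime by the symmetry argument above, together with the oddness of $k_1$, is the essential technical point that makes the reduction go through.
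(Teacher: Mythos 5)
Your Part 1 is essentially the paper's: for \eqref{cauchy1} the paper simply cites the Coifman--McIntosh--Meyer theorem, and the Taylor expansion of $F$ you describe is exactly the standard way the $F$-factor is absorbed into a convergent series of odd Calder\'on commutators, so the two arguments coincide.

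Part 2 has a genuine gap, located precisely at what you call ``the principal obstacle.'' After the substitution $z=y+s(x-y)$, i.e.\ $y=\frac{z-sx}{1-s}$, every remaining factor of the kernel --- $F\bigl(\tfrac{J(x)-J(y)}{|x-y|}\bigr)$, each $A_i(x)-A_i(y)$ for $i\ge 2$, and $f(y)$ --- is evaluated at the affine combination $\frac{z-sx}{1-s}$ of $x$ and $z$, which for $s\ne 0$ depends nontrivially on \emph{both} variables. In particular $A_i(x)-A_i\bigl(\frac{z-sx}{1-s}\bigr)$ is not of the form $\tilde A_i(x)-\tilde A_i(z)$ for any $\tilde A_i$: take $A_i(w)=|w|^2$ and $s=\tfrac12$, so that $\frac{z-sx}{1-s}=2z-x$ and $A_i(x)-A_i(2z-x)=4z\cdot x-4|z|^2$, which contains a cross term. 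The transformed sliced operator therefore does not lie in the Calder\'on-commutator class, so Part 1 cannot be applied slicewise; this is not a technical verification to be carried out, it simply fails. Nor can one fall back on a generic Calder\'on--Zygmund estimate: with $f\in L^\infty$ and no smoothness, the kernel $\frac{x-z}{|x-z|^{d+1}}\,f\bigl(\frac{z-sx}{1-s}\bigr)$ has no $z$-regularity, so $T(1)$-type theorems are also unavailable. Your symmetrization to tame the $s\to 1$ endpoint is fine but does not touch this structural problem.

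What the paper actually does for \eqref{cauchy2} is the method of rotations: since $k_1$ is odd, one writes $C_1(J,A,f)=\int_0^\pi R_\theta^{-1}K(R_\theta J,R_\theta A,R_\theta f)\,d\theta$, where $K$ is the one-dimensional Calder\'on-type operator along a ray, and then applies Minkowski's inequality together with the one-dimensional version of \eqref{cauchy2}, namely inequality (3.21) of \cite{wu3}. If you want to rescue your calculation, perform the length-parameter substitution \emph{inside} the one-dimensional operator $K$ (there the change of variable $r\mapsto(1-s)r$ keeps every factor a genuine difference along the same line); performing it directly in $\mathbb R^d$ destroys the commutator structure.
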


\begin{proof} \eqref{cauchy1} is a result of Coifman, McIntosh and Meyer \cite{cmm, cdm, ke}. 

We prove \eqref{cauchy2} by the method of rotations. We only write for $d=2,\ m=1$, the same argument applies to general cases. Let $R_\theta f(x)=f(e^{i\theta} x)$, $x=(x_1,x_2)=x_1+i\,x_2\in \mathbb R^2$, 
$$K(J, A, f)(x)=p.v.\int_{\mathbb R^1} F(\frac{J(x)-J(x+r)}{r})\frac{A(x)-A(x+r)}{r^2}f(x+r)\,dr.
$$
We have, from the assumption that $k_1(x,y)$ is odd, that
$$C_1(J, A, f)(x)=\int_0^{\pi} R_\theta^{-1} K(R_\theta J, R_\theta A, R_\theta f)(x)\,d\theta$$
\eqref{cauchy2} now follows from the inequality (3.21) in \cite{wu3}.

\end{proof}

Let $J$, $A_i$, $F$ be as above, define (for $x, y\in \mathbb R^d$)
\begin{equation}\label{defcauchy2}
C_2(J, A, f)(x)=p.v.\int F(\frac{J(x)-J(y)}{|x-y|})\frac{\Pi_{i=1}^m(A_i(x)-A_i(y))}{|x-y|^{d+m-1}}\partial_{y_k} f(y)\,dy.
\end{equation}
Assume that $k_2(x,y)=F(\frac{J(x)-J(y)}{|x-y|})\frac{\Pi_{i=1}^m(A_i(x)-A_i(y))}{|x-y|^{d+m-1}}$ is even, i.e. $k_2(x,y)=k_2(y,x)$. 
\begin{proposition}\label{propcauchy2}
  There exist constants $c_1=c_1(F, \|\nabla J\|_{L^\infty}) $, $c_2=c_2(F, \|\nabla J\|_{L^\infty})$, such that 
1.  For any $f\in L^2(\mathbb R^d),\ \nabla A_i\in L^\infty(\mathbb R^d), \ 1\le i\le m, $
\begin{equation}\label{cauchy3}
\|C_2(J, A, f)\|_{L^2(\mathbb R^d)}\le c_1\|\nabla A_1\|_{L^\infty(\mathbb R^d)}\dots\|\nabla A_m\|_{L^\infty(\mathbb R^d)}\|f\|_{L^2(\mathbb R^d)}. 
\end{equation}
2.  For any $ f\in L^\infty(\mathbb R^d), \ \nabla A_i\in L^\infty(\mathbb R^d), \ 2\le i\le m,\ \nabla A_1\in L^2(\mathbb R^d)$, 
\begin{equation}\label{cauchy4}
\|C_1(J, A, f)\|_{L^2(\mathbb R^d)}\le c_2\|\nabla A_1\|_{L^2(\mathbb R^d)}\|\nabla A_2\|_{L^\infty(\mathbb R^d)}\dots\|\nabla A_m\|_{L^\infty(\mathbb R^d)}\|f\|_{L^\infty(\mathbb R^d)}. 
\end{equation}

\end{proposition}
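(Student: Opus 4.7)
The strategy is to integrate by parts in $y_k$ to move the derivative off of $f$, converting $C_2(J,A,f)$ into a finite sum of operators already controlled by Proposition~\ref{propcauchy1}.

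First, for $f$ smooth with sufficient decay (then by density in general), I would establish the identity
\[
C_2(J,A,f)(x)\,=\,-\,\mathrm{p.v.}\!\int \partial_{y_k}\!\bigl[k_2(x,y)\bigr]\,f(y)\,dy,
\]
where $k_2(x,y)=F\!\bigl(\tfrac{J(x)-J(y)}{|x-y|}\bigr)\tfrac{\prod_{i=1}^m(A_i(x)-A_i(y))}{|x-y|^{d+m-1}}$. Integrating by parts on $|x-y|>\epsilon$ produces a boundary contribution on $|x-y|=\epsilon$ whose integrand is of size $O(\epsilon^{1-d})$, giving an $O(1)$ spherical total. The evenness hypothesis $k_2(x,y)=k_2(y,x)$ forces $F(-u)=(-1)^mF(u)$; Taylor-expanding on $|x-y|=\epsilon$ with $y=x+\epsilon\omega$, the leading integrand is proportional to $F(-\nabla J(x)\omega)\prod_i(-\nabla A_i(x)\cdot\omega)\,\omega_k\,f(x)$, which carries total parity $(-1)^{m+m+1}=-1$ in $\omega\in S^{d-1}$ and so has zero spherical average. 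The subleading terms in $\epsilon$ contribute an extra factor of $\epsilon$ and vanish as $\epsilon\to 0^+$, so no boundary term survives.

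Next, I would expand $\partial_{y_k} k_2(x,y)$ by the product rule into three families. (i) $\partial_{y_k}$ on $|x-y|^{-(d+m-1)}$ produces a kernel with one extra difference factor $(x_k-y_k)$, placing it in the $C_1$-form of \eqref{defcauchy1} with $m+1$ differences (the new $A_{m+1}(y):=y_k$ has $\|\nabla A_{m+1}\|_\infty=1$). (ii) $\partial_{y_k}$ on $(A_j(x)-A_j(y))$ produces a multiplier $-\partial_{y_k}A_j(y)$ times a $C_1$-form kernel with $m-1$ differences. (iii) $\partial_{y_k}$ on $F(\cdot)$ yields two pieces: one with multiplier $\partial_{y_k}J_r(y)$ and a $C_1$-form kernel with $m$ differences, and one with extra differences $(J_r(x)-J_r(y))(x_k-y_k)$ and a $C_1$-form kernel with $m+2$ differences. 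Differentiating $F(-u)=(-1)^mF(u)$ gives $(\nabla F)(-u)=(-1)^{m+1}(\nabla F)(u)$, and a direct symmetry check shows each resulting kernel is odd under $(x,y)\leftrightarrow(y,x)$, so Proposition~\ref{propcauchy1} is applicable piece-by-piece.

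Finally, I would assemble the estimates. For \eqref{cauchy3}, apply \eqref{cauchy1} to each piece; any multiplier of the form $\partial_{y_k}A_j(y)f(y)$ or $\partial_{y_k}J_r(y)f(y)$ is bounded in $L^2$ by $\|\nabla A_j\|_\infty\|f\|_{L^2}$ respectively $\|\nabla J\|_\infty\|f\|_{L^2}$ (the latter absorbed into the constant $c_1$, which already depends on $\|\nabla J\|_\infty$), yielding the claimed multilinear bound. For \eqref{cauchy4}, split cases on whether the distinguished index $A_1$ has been differentiated: if family (ii) is hit with $j=1$, the remaining $C_1$-operator involves only $L^\infty$-differences and acts on $\partial_{y_k}A_1\cdot f\in L^2$ with norm $\le\|\nabla A_1\|_{L^2}\|f\|_{L^\infty}$, so \eqref{cauchy1} applies; otherwise $A_1$ remains as one of the differences inside the kernel, any multiplier lies in $L^\infty$, and \eqref{cauchy2} gives the bound. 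The main obstacle is the careful vanishing of the boundary term using the precise parity forced by the evenness of $k_2$; the remaining kernel calculus and case analysis is mechanical once this reduction is justified.
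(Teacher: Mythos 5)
Your proposal is correct and matches the paper's proof, which is merely the one-line remark that Proposition~\ref{propcauchy2} ``follows from Proposition~\ref{propcauchy1} and integration by parts.'' You have filled in exactly the details that remark leaves implicit: the vanishing of the boundary sphere term via the parity forced by the evenness of $k_2$, the product-rule decomposition of $\partial_{y_k}k_2$ into odd kernels of the $C_1$ form of \eqref{defcauchy1}, and the case split deciding where the distinguished factor $\nabla A_1$ lands when invoking \eqref{cauchy1} versus \eqref{cauchy2}.
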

Proposition~\ref{propcauchy2} follows from Proposition~\ref{propcauchy1} and integration by parts. 
We also have the following $L^\infty$ estimate for $C_1(J, A, f)$ as defined in \eqref{defcauchy1}.
\begin{proposition}\label{propcauchy3}
There exists a constant  $c=c(F, \|\nabla J\|_{L^\infty}, \|\nabla^2 J\|_{L^\infty})$, such that for any real number $r>0$,
\begin{equation}\label{cauchy5}
\begin{aligned}
 \|C_1(J, A, f)\|_{L^\infty}\le& c\big(\prod_{i=1}^m(\|\nabla A_i\|_{L^\infty}+\|\nabla^2 A_i\|_{L^\infty})(\|f\|_{L^\infty}
+\|\nabla f\|_{L^\infty})\\&+
\prod_{i=1}^m\|\nabla A_i\|_{L^\infty}\|f\|_{L^\infty}\ln r
+\prod_{i=1}^m\|\nabla A_i\|_{L^\infty}\|f\|_{L^2}\,\frac1{r^{d/2}}\big).
\end{aligned}
\end{equation}
\end{proposition}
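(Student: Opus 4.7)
The plan is to decompose the integral defining $C_1(J,A,f)(x)$ according to distance from $x$. Assuming $r\ge 1$ (the case $r<1$ is handled by enlarging $r$ into the constant $c$), write
\[
C_1(J,A,f)(x)=\Big(\int_{|y-x|<1}+\int_{1\le|y-x|<r}+\int_{|y-x|\ge r}\Big)\, k_1(x,y)f(y)\,dy,
\]
where $k_1(x,y)$ is the kernel in \eqref{defcauchy1}. On all three regions I use the pointwise estimate
$|A_i(x)-A_i(y)|\le \|\nabla A_i\|_{L^\infty}|x-y|$, together with $\|\nabla J\|_{L^\infty}$ to control the argument of $F$, to obtain
\[
|k_1(x,y)|\lesssim \Big(\prod_{i=1}^m\|\nabla A_i\|_{L^\infty}\Big)\,|x-y|^{-d}.
\]

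For the far piece ($|y-x|\ge r$), Cauchy--Schwarz gives
\[
\Big|\int_{|y-x|\ge r}k_1(x,y)f(y)\,dy\Big|\lesssim \prod_{i}\|\nabla A_i\|_{L^\infty}\Big(\int_{|z|\ge r}|z|^{-2d}\,dz\Big)^{1/2}\|f\|_{L^2}\lesssim \prod_i\|\nabla A_i\|_{L^\infty}\|f\|_{L^2}\,r^{-d/2},
\]
which is the last term. For the middle piece, $\int_{1\le|z|<r}|z|^{-d}\,dz\lesssim \ln r$, so that piece contributes the $\prod\|\nabla A_i\|_{L^\infty}\|f\|_{L^\infty}\ln r$ term.

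The near piece is the delicate one, and must be handled via the assumed oddness $k_1(x,y)=-k_1(y,x)$. Set $z=y-x$ and write $\tilde k_1(x,z)=k_1(x,x+z)$. Taylor-expand in $z$:
\[
A_i(x+z)-A_i(x)=z\cdot\nabla A_i(x)+R_i(x,z),\qquad |R_i|\lesssim |z|^2\|\nabla^2 A_i\|_{L^\infty},
\]
and similarly for $J$. Denote by $\tilde k_1^{(0)}(x,z)$ the kernel obtained by replacing every difference with its first-order Taylor term; it is homogeneous of degree $-d$ in $z$, and the oddness hypothesis $k_1(x,y)=-k_1(y,x)$ forces $F(-u)=(-1)^{m+1}F(u)$, hence $\tilde k_1^{(0)}(x,-z)=-\tilde k_1^{(0)}(x,z)$. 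Decompose
\[
k_1(x,y)f(y)=\tilde k_1^{(0)}(x,z)f(x)+\tilde k_1^{(0)}(x,z)\bigl(f(y)-f(x)\bigr)+\bigl(k_1(x,y)-\tilde k_1^{(0)}(x,z)\bigr)f(y).
\]
The first summand integrates to zero in principal value by the oddness of $\tilde k_1^{(0)}$ in $z$. The second summand is bounded pointwise by $C\prod\|\nabla A_i\|_{L^\infty}\|\nabla f\|_{L^\infty}|z|^{-d+1}$, whose integral over $|z|<1$ is finite. In the third summand, expanding the product $\prod_i(z\cdot\nabla A_i(x)+R_i)$ and the Taylor remainder of $F$ produces a kernel bounded by
\[
C|z|^{-d+1}\Bigl(\|\nabla^2 J\|_{L^\infty}\prod_i\|\nabla A_i\|_{L^\infty}+\sum_{j}\|\nabla^2 A_j\|_{L^\infty}\prod_{i\ne j}\|\nabla A_i\|_{L^\infty}\Bigr),
\]
again integrable on $|z|<1$, and this combines with $\|f\|_{L^\infty}$ to yield the first term of \eqref{cauchy5} after absorbing $\|\nabla^2 J\|_{L^\infty}$ into the constant.

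The main obstacle is the near piece: translating the symmetric oddness $k_1(x,y)=-k_1(y,x)$ into a usable $z\to-z$ cancellation at the origin, and carefully bookkeeping which Hessian factor replaces which gradient factor in the Taylor remainder so that the bound assembles into the symmetric product $\prod_i(\|\nabla A_i\|_{L^\infty}+\|\nabla^2 A_i\|_{L^\infty})$ appearing in \eqref{cauchy5}. The other two pieces are routine pointwise estimates.
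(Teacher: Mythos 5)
Your proof is correct and gives a complete argument. The paper itself only says that the proof "is an easy modification of that of Proposition 3.4 in \cite{wu3}" and gives no details, so there is nothing in the present paper to compare against directly; but the near/middle/far decomposition, Cauchy--Schwarz on $|z|\ge r$, the trivial size bound on $1\le|z|<r$, and the Taylor-expansion-plus-oddness cancellation on $|z|<1$ is precisely the standard Calder\'on--Zygmund argument this family of lemmas is built on, and it matches what the cited proof in \cite{wu3} does.

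One small rigor point worth tightening in your near-region treatment. You state that the oddness $k_1(x,y)=-k_1(y,x)$ ``forces $F(-u)=(-1)^{m+1}F(u)$.'' Strictly speaking this identity on $F$ is only forced at those values of $u$ where the algebraic factor $\prod_i(A_i(x)-A_i(y))/|x-y|^{d+m}$ does not vanish, so it is not a consequence of the hypothesis as stated for arbitrary $F$. However, the conclusion you actually use --- that the leading $(-d)$-homogeneous Taylor piece $\tilde k_1^{(0)}(x,\cdot)$ is odd in $z$ --- does follow directly from $k_1(x,y)=-k_1(y,x)$, and a cleaner way to see this is to avoid any claim about $F$ altogether: write
\[
\tilde k_1(x,z)+\tilde k_1(x,-z)=k_1(x,x+z)-k_1(x-z,x),
\]
then Taylor-expand both $k_1(x,x+z)$ and $k_1(x-z,x)$ about $z=0$. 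Their first-order expansions agree (both give $F(-z\cdot\nabla J(x)/|z|)\prod_i(-z\cdot\nabla A_i(x))/|z|^{d+m}$), so the right-hand side is $O(|z|^{-d+1})$; since $\tilde k_1^{(0)}(x,z)+\tilde k_1^{(0)}(x,-z)$ is $(-d)$-homogeneous in $z$ and is $O(|z|^{-d+1})$, it vanishes. This establishes the cancellation you need without any parity claim on $F$, and the rest of your estimate for $|k_1-\tilde k_1^{(0)}|$, including the bookkeeping leading to the factor $\prod_i(\|\nabla A_i\|_{L^\infty}+\|\nabla^2 A_i\|_{L^\infty})$ with $\|\nabla^2 J\|_{L^\infty}$ absorbed into $c$, is sound.
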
 
The proof of Proposition~\ref{propcauchy3} is an easy modification of that of Proposition 3.4 in \cite{wu3}. We omit.

At last, we record the standard Sobolev embedding. 

\begin{proposition}\label{propsobolev} For any $f\in C^\infty(\mathbb R^2)$,
\begin{equation}\label{sobolev}
\|f\|_{L^\infty}\lesssim \|f\|_{L^2}+\|\nabla f\|_{L^2}+\|\nabla^2 f\|_{L^2}
\end{equation}
\end{proposition}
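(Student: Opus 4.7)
The statement is the standard Sobolev embedding $H^2(\mathbb R^2)\hookrightarrow L^\infty(\mathbb R^2)$, and my plan is to give a short Fourier-analytic proof, since nothing about the water wave structure is actually used. The main idea is that in dimension $2$ the weight $(1+|\xi|^2)^{-1}$ is square-integrable, so that $H^2$ functions have integrable Fourier transforms.

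First I would reduce to Schwartz $f$ by density and invoke Fourier inversion,
\begin{equation*}
\|f\|_{L^\infty}\le(2\pi)^{-2}\|\widehat f\|_{L^1(\mathbb R^2)}.
\end{equation*}
Then I would factor $\widehat f(\xi)=(1+|\xi|^2)^{-1}\cdot(1+|\xi|^2)\widehat f(\xi)$ and apply Cauchy--Schwarz:
\begin{equation*}
\|\widehat f\|_{L^1}\le\Bigl(\int_{\mathbb R^2}\frac{d\xi}{(1+|\xi|^2)^{2}}\Bigr)^{1/2}\,\bigl\|(1+|\xi|^2)\widehat f\bigr\|_{L^2}.
\end{equation*}
The first factor is a finite constant (this is exactly where $d=2$ enters: the integrand decays like $|\xi|^{-4}$, and in polar coordinates the integral becomes $\int_0^\infty r(1+r^2)^{-2}\,dr<\infty$). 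The second factor is comparable, by Plancherel, to $\|f\|_{L^2}+\|\Delta f\|_{L^2}$, and hence controlled by the right-hand side $\|f\|_{L^2}+\|\nabla f\|_{L^2}+\|\nabla^2 f\|_{L^2}$.

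There is no real obstacle here; the only ``sharpness'' point is that the argument barely works in $\mathbb R^2$ because $(1+|\xi|^2)^{-1}$ is square-integrable exactly when $d<4$, so two derivatives in $L^2$ suffice. One could equivalently derive the estimate from a Littlewood--Paley decomposition, summing $\|P_k f\|_{L^\infty}\lesssim 2^k\|P_k f\|_{L^2}$ against the dyadic bounds $\|P_k f\|_{L^2}\lesssim 2^{-2k}\|\nabla^2 f\|_{L^2}$ for $k\ge 0$ and $\|P_k f\|_{L^2}\lesssim\|f\|_{L^2}$ for $k\le 0$, but the Fourier/Cauchy--Schwarz route is cleaner and requires no additional machinery.
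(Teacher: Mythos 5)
Your argument is correct and complete: Fourier inversion, the factorization $\widehat f=(1+|\xi|^2)^{-1}\cdot(1+|\xi|^2)\widehat f$, Cauchy--Schwarz, the observation that $(1+|\xi|^2)^{-1}\in L^2(\mathbb R^2)$ precisely because $d=2<4$, and Plancherel to pass back to $\|f\|_{L^2}+\|\Delta f\|_{L^2}\lesssim \|f\|_{L^2}+\|\nabla^2 f\|_{L^2}$ are all sound, and the density reduction is standard. The paper does not actually prove this proposition; it merely ``records the standard Sobolev embedding'' $H^2(\mathbb R^2)\hookrightarrow L^\infty(\mathbb R^2)$ as known background, so there is no paper proof to compare against --- your Fourier/Cauchy--Schwarz route is one of the two or three canonical ways to establish it and would serve perfectly well if a proof were to be included.
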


\subsection{Regularities and relations among various quantities}\label{relations}
In this subsection, we study the relations and $L^2$, $L^\infty$ regularities of various quantities involved. 
We first present the quasi-linear equation for $u=\xi_t\circ k^{-1}$ and a formula for $\frak a_t$. These are very much the same as those derived in \cite{wu2}. We also give the equations for  $\lambda^*$ and $\frak v=(\partial_t+b\cdot\nabla_\bot )\chi$. 
\begin{proposition} 
We have 
1. \begin{equation}\label{quasi1}
((\partial_t+ b\cdot \nabla_\bot)^2+A \mathcal N\times \nabla)u=U_k^{-1}(\frak a_t N)
\end{equation}
where
\begin{equation}
\begin{aligned}\label{at}
(I-\mathcal H)&(U_k^{-1}(\frak a_t N))=2\iint
K(\zeta'-\zeta)(w-w')\times (
\zeta'_{\beta'}\partial_{ \alpha'}-
\zeta'_{\alpha'}\partial_{ \beta'})u'\,d\alpha'd\beta'\\&+
\iint K(\zeta'-\zeta)\,\{((u-u')\times
u'_{\beta'})u'_{ \alpha'}-((u-u')\times
u'_{\alpha'})u'_{ \beta'}\}\,d\alpha'd\beta'\\&+
2\iint K(\zeta'-\zeta)\,(u-u')\times(
\zeta'_{\beta'}\partial_{ \alpha'}-
\zeta'_{\alpha'}\partial_{ \beta'})w'\,d\alpha'd\beta'\\&+
\iint
((u'-u)\cdot\nabla)K(\zeta'-\zeta) (u-u')\times(\zeta'_{\beta'}\partial_\alpha'-\zeta'_{\alpha'}\partial_\beta')u'
\,d\alpha'd\beta' 
\end{aligned}
\end{equation}
2.
\begin{equation}\label{quasi2}
\begin{aligned}
((\partial_t&+ b\cdot \nabla_\bot)^2+A \mathcal N\times \nabla)\lambda^*=-(\mathcal H-\overline{\mathcal  H})\overline w\\&
-e_3  \iint K(\zeta'-\zeta)\,(u-u')\times
(\zeta'_{\beta'}\partial_{\alpha'}-\zeta'_{\alpha'}\partial_{\beta'})u'\,d\alpha'd\beta' e_3\\&
+2\iint
K(\zeta'-\zeta)(w-w')\times (
\zeta'_{\beta'}\partial_{ \alpha'}-
\zeta'_{\alpha'}\partial_{ \beta'})\frak z'\,d\alpha'd\beta'e_3\\&+
\iint K(\zeta'-\zeta)\,\{((u-u')\times
u'_{\beta'})\frak z'_{ \alpha'}-((u-u')\times
u'_{\alpha'})\frak z'_{ \beta'}\}\,d\alpha'd\beta'e_3\\&+
2\iint K(\zeta'-\zeta)\,(u-u')\times(
\zeta'_{\beta'}\partial_{ \alpha'}-
\zeta'_{\alpha'}\partial_{ \beta'})u_3'\,d\alpha'd\beta'e_3\\&+
\iint
((u'-u)\cdot\nabla)K(\zeta'-\zeta) (u-u')\times(\zeta'_{\beta'}\partial_\alpha'-\zeta'_{\alpha'}\partial_\beta')\frak z'
\,d\alpha'd\beta' e_3
\end{aligned}
\end{equation}
3. \begin{equation}\label{cubic3}
 \begin{aligned}
((\partial_t&+ b\cdot \nabla_\bot)^2-A \mathcal N\times \nabla)\frak v=\frac{\frak a_t}{\frak a}\circ k^{-1}A \mathcal N\times \nabla\chi\\&+A(u_\beta\chi_\alpha-u_\alpha\chi_\beta)+(\partial_t+ b\cdot \nabla_\bot)((\partial_t+ b\cdot \nabla_\bot)^2-A \mathcal N\times \nabla)\chi
\end{aligned}
\end{equation}
\end{proposition}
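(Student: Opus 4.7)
I would derive each identity in the original Lagrangian coordinates and then pull back via $U_k^{-1}$, using $U_k^{-1}\partial_tU_k=\partial_t+b\cdot\nabla_\bot$ and $U_k^{-1}(\mathfrak aN\times\nabla)U_k=A\mathcal N\times\nabla$. For \eqref{quasi1}, differentiate \eqref{ww1} in $t$: $\xi_{ttt}=\mathfrak a_tN+\mathfrak aN_t$. Expanding $N_t=\xi_{t\alpha}\times\xi_\beta+\xi_\alpha\times\xi_{t\beta}$ and $(N\times\nabla)\xi_t=\xi_\beta\xi_{t\alpha}-\xi_\alpha\xi_{t\beta}$ via \eqref{vector1}, and cancelling the dot-product pieces by \eqref{analytic1} applied to $\xi_t=\mathfrak H\xi_t$, gives $\mathfrak aN_t=-(\mathfrak aN\times\nabla)\xi_t$, so $\xi_{ttt}+(\mathfrak aN\times\nabla)\xi_t=\mathfrak a_tN$; pulling back yields \eqref{quasi1}. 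For \eqref{at} I would apply $(I-\mathfrak H)$ to this identity. Since $\mathfrak H\xi_t=\xi_t$ is preserved under $\partial_t$, one has $(I-\mathfrak H)\xi_{ttt}=[\partial_t^2,\mathfrak H]\xi_t$, and $(I-\mathfrak H)(\mathfrak aN\times\nabla)\xi_t=[\mathfrak aN\times\nabla,\mathfrak H]\xi_t$. Expanding both commutators through \eqref{commutetth}, \eqref{commutenh} and using $\mathfrak aN-\mathfrak a'N'=\xi_{tt}-\xi'_{tt}$ (from \eqref{ww1}) fuses the two $(\xi_{tt}-\xi'_{tt})$ integrals into the factor-$2$ term; rewriting $\partial_tK(\xi'-\xi)=((\xi'_t-\xi_t)\cdot\nabla)K$ produces the last line, and the pullback by $U_k^{-1}$ finishes \eqref{at}.

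For \eqref{quasi2}, compute $(\partial_t^2+\mathfrak aN\times\nabla)(I+\mathfrak H)ze_3$ and split it as $(I+\mathfrak H)(\partial_t^2+\mathfrak aN\times\nabla)ze_3+[\partial_t^2+\mathfrak aN\times\nabla,\mathfrak H]ze_3$. By \eqref{112} the first summand equals $-(I+\mathfrak H)\overline\xi_{tt}$, using $\xi_{tt}-\overline\xi_{tt}=2z_{tt}e_3$. The Clifford identity $\overline{\mathfrak Hf}=-\overline{\mathfrak H}\overline f$ (from $\overline\sigma=e_3\sigma e_3$ with $e_3^2=-1$), together with the conjugate of \eqref{111}, yields $(I+\overline{\mathfrak H})\overline\xi_{tt}=\overline{[\partial_t,\mathfrak H]\xi_t}$, and hence $(I+\mathfrak H)\overline\xi_{tt}=(\mathfrak H-\overline{\mathfrak H})\overline\xi_{tt}+\overline{[\partial_t,\mathfrak H]\xi_t}$; this supplies the first term $-(\mathcal H-\overline{\mathcal H})\overline w$ of \eqref{quasi2} and, via $\overline{(\cdots)}=e_3(\cdots)e_3$, the second term $-e_3\iint K(u-u')\times(\cdots)u'\,e_3$, which is the conjugate of the pulled-back $[\partial_t,\mathfrak H]\xi_t$. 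Expanding $[\partial_t^2+\mathfrak aN\times\nabla,\mathfrak H]ze_3$ via \eqref{commutetth}, \eqref{commutenh}, the two $(\xi_{tt}-\xi'_{tt})$ integrals combine (again through $\mathfrak aN-\mathfrak a'N'=\xi_{tt}-\xi'_{tt}$) into the $2\iint K(w-w')\times(\cdots)\mathfrak z'e_3$ term, while the remaining three pieces of \eqref{commutetth} (with $\partial_tK=((\xi'_t-\xi_t)\cdot\nabla)K$) supply the 4th, 5th, and 6th terms. Pullback by $U_k^{-1}$ yields \eqref{quasi2}.

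For \eqref{cubic3}, with $\mathcal P=(\partial_t+b\cdot\nabla_\bot)^2-A\mathcal N\times\nabla$,
\[
\mathcal P\,\mathfrak v=\mathcal P(\partial_t+b\cdot\nabla_\bot)\chi=(\partial_t+b\cdot\nabla_\bot)\mathcal P\chi+[\mathcal P,\partial_t+b\cdot\nabla_\bot]\chi.
\]
The commutator pulls back to $[\partial_t^2-\mathfrak aN\times\nabla,\partial_t]=\mathfrak a_tN\times\nabla+\mathfrak a(\xi_{t\beta}\partial_\alpha-\xi_{t\alpha}\partial_\beta)$, by the product rule applied to $\mathfrak a$ and to $N\times\nabla=\xi_\beta\partial_\alpha-\xi_\alpha\partial_\beta$; pulled back this is $\frac{\mathfrak a_t}{\mathfrak a}\circ k^{-1}A\mathcal N\times\nabla\chi+A(u_\beta\chi_\alpha-u_\alpha\chi_\beta)$, which is \eqref{cubic3}. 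I anticipate the main obstacle to be the Clifford bookkeeping in \eqref{quasi2}: conjugation is an anti-involution ($\overline{ab}=-\overline a\,\overline b$) that interacts with $\mathfrak H$ through a sign-flip, and a misplaced $e_3$ or sign anywhere cascades across the six Cauchy-type integrals. Apart from this, all three identities reduce to mechanical applications of the commutator identities in Lemma~\ref{lemma 1.2} together with \eqref{ww1}--\eqref{ww2}.
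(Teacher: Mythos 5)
Your proposal follows the paper's proof step for step: differentiating \eqref{ww1} in $t$, using \eqref{vector1} and \eqref{analytic1} to get $N_t=-N\times\nabla\xi_t$, applying $(I-\frak H)$ and the commutator identities \eqref{commutetth}, \eqref{commutenh} with $\frak aN-\frak a'N'=\xi_{tt}-\xi'_{tt}$ for part 1; the split $(I+\frak H)(\partial_t^2+\frak aN\times\nabla)ze_3+[\partial_t^2+\frak aN\times\nabla,\frak H]ze_3$ together with the conjugation trick $(I+\frak H)\overline\xi_{tt}=\overline{[\partial_t,\frak H]\xi_t}+(\frak H-\overline{\frak H})\overline\xi_{tt}$ for part 2; and applying $\partial_t$ to \eqref{cubic1} (equivalently, the commutator $[\mathcal P,\partial_t+b\cdot\nabla_\bot]$) for part 3. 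The derivations are correct and essentially identical to the paper's; the only cosmetic quibble is that $\overline{ab}=-\overline a\,\overline b$ is a sign-twisted \emph{homomorphism} rather than an anti-involution, though the formula you use is the right one.
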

\begin{proof} \eqref{quasi1} is derived from \eqref{ww1} \eqref{ww2}. Taking derivative to $t$ to \eqref{ww1},  we have $\xi_{ttt}-\frak a N_t=\frak a_t N$. Using \eqref{analytic1}, \eqref{ww2}, we derive $$N_t=-\xi_\beta\times\xi_{t\alpha}+\xi_\alpha\times\xi_{t\beta}=-\xi_\beta\xi_{t\alpha}+\xi_\alpha\xi_{t\beta}=-N\times\nabla \xi_t.$$
Therefore
\begin{equation}\label{142}
\xi_{ttt}+\frak aN\times \nabla \xi_t=\frak a_t N
\end{equation}
Now to derive an equation for $\frak a_tN$, we apply $(I-\frak H)$ to both sides of \eqref{142}. We get
$$(I-\frak H)(\frak a_t N)=(I-\frak H)(\xi_{ttt}+(\frak aN\times\nabla)\xi_t)=[\partial_t^2+\frak a N\times\nabla,\frak H]\xi_t$$
 \eqref{at} then follows from \eqref{commutetth}, \eqref{commutenh} and an application of the coordinate change $U_k^{-1}$.  An application of the coordinate change $U_k^{-1}$ to \eqref{142} gives  \eqref{quasi1} .
 
 We can  derive the equation for $\lambda^*$ in a similar way as that for $\chi$. We have
\begin{equation}\label{141}
(\partial_t^2+\frak a N\times\nabla)\Lambda^*=(I+\frak H)(\partial_t^2+\frak a N\times\nabla)z e_3+[ \partial_t^2+\frak a N\times\nabla, \frak H]ze_3
\end{equation}
Notice that $(\partial_t^2+\frak a N\times\nabla)z e_3=-\overline{\xi_{tt}} $ and 
$$(I+\frak H)\overline{\xi_{tt}}=e_3(\xi_{tt}-\frak H\xi_{tt})e_3+ (\frak H-e_3\frak H e_3)\overline{\xi_{tt}}=e_3[\partial_t, \frak H]\xi_t e_3+(\frak H-\overline{\frak H} )\overline{\xi_{tt}}$$
\eqref{quasi2} again follows from Lemma~\ref{lemma 1.2} and then an application of the change of coordinate $U_k^{-1}$ to \eqref{141}. We remark that the right hand sides of both \eqref{quasi1}, \eqref{quasi2} are of terms that are at least quadratic.

\eqref{cubic3} is obtained by taking derivative $\partial_t$ to \eqref{cubic1}, then make the change of variable $U_k^{-1}$. 
\end{proof}

We present some useful identities in the following. 
\begin{proposition}\label{propon} 
For $f^\pm$ satisfying $f^\pm=\pm\mathcal Hf^\pm$, and $g$ being vector valued, we have
\begin{equation}\label{on}
 \iint K(\zeta'-\zeta)(g-g')\times(\zeta'_{\beta}\partial_{\alpha'}-\zeta'_{\alpha'}\partial_{\beta'}){f'}^\pm\,d\alpha' d\beta'=(\pm I-\mathcal H)(g\cdot\nabla_\xi^\pm f)
\end{equation}
\end{proposition}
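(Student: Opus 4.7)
The plan is a direct calculation that splits $g-g' = g(\alpha,\beta)-g(\alpha',\beta')$ inside the integral and reduces each piece via the pointwise vector identity \eqref{vector2}. For fixed $(\alpha,\beta)$, treat $g=g(\alpha,\beta)$ as constant in the $(\alpha',\beta')$ integration; then \eqref{vector2}, applied with $\eta=g$ and $\xi=\zeta'$, converts the factor $(g\times\zeta'_{\beta'})\partial_{\alpha'}f^\pm-(g\times\zeta'_{\alpha'})\partial_{\beta'}f^\pm$ into $\mathcal N'(g\cdot\nabla_{\zeta'}^\pm f^\pm)-(g\cdot\mathcal N')\,\mathcal D_{\zeta'}^\pm f^\pm$. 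Applying the same identity pointwise in $(\alpha',\beta')$ with $\eta=g(\alpha',\beta')$ handles the $g'$ piece analogously.

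The hypothesis $f^\pm=\pm\mathcal H f^\pm$ is the crucial input: it says $f^\pm$ is the trace on $\Sigma$ of a Clifford analytic function $F^\pm$ on $\Omega(t)^\pm$ that decays at infinity. Since Clifford analyticity implies harmonicity, $F^\pm$ is the very harmonic extension used to define $\nabla_\zeta^\pm$ and $\mathcal D_\zeta^\pm$, so $\mathcal D_{\zeta'}^\pm f^\pm=\mathcal D F^\pm\big|_\Sigma=0$ and the $(\eta\cdot\mathcal N')\mathcal D_{\zeta'}^\pm f^\pm$ term drops out of both pieces. Integrating with the kernel $K(\zeta'-\zeta)$, the piece carrying $g'$ produces exactly $\mathcal H(g\cdot\nabla_\zeta^\pm f^\pm)(\alpha,\beta)$ and enters with a minus sign from the decomposition. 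The piece with $g=g(\alpha,\beta)$ fixed yields
\[
\sum_{j=1}^3 g_j(\alpha,\beta)\iint K(\zeta'-\zeta)\,\mathcal N'\,\partial_{\xi_j}^\pm f^\pm(\alpha',\beta')\,d\alpha'd\beta'=\sum_{j=1}^3 g_j(\alpha,\beta)\,\mathcal H(\partial_{\xi_j}^\pm f^\pm)(\alpha,\beta),
\]
after pulling the real scalars $g_j(\alpha,\beta)$ through the integral. Since $\partial_{\xi_j}$ commutes with $\mathcal D$, each $\partial_{\xi_j}F^\pm$ is again Clifford analytic on $\Omega(t)^\pm$, hence $\mathcal H(\partial_{\xi_j}^\pm f^\pm)=\pm\,\partial_{\xi_j}^\pm f^\pm$, and this contribution equals $\pm\,g\cdot\nabla_\zeta^\pm f^\pm(\alpha,\beta)$. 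Summing the two pieces gives $(\pm I-\mathcal H)(g\cdot\nabla_\zeta^\pm f^\pm)$, which is \eqref{on}.

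The main point of care is the Clifford non-commutativity: $K$, $\mathcal N'$, and the $g$-factors are all vectors whose products do not commute, so one must check that \eqref{vector2} applies with the same left-to-right factorization $((\eta\times\zeta'_{\bullet})\cdot\partial_{\bullet'}f^\pm)$ that appears in the integrand, and that $g_j$ is a real scalar and thus genuinely commutes through the Clifford products and through $\mathcal H$. Integrability of each split piece follows because each becomes a Hilbert transform applied to a component of $\nabla_\zeta^\pm f^\pm$, which is $L^2$-bounded by $\mathcal H^2=I$ together with the assumed decay of $F^\pm$ and its first derivatives at infinity.
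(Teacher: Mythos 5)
Your proof is correct and is essentially the paper's own argument: both reduce to Lemma~\ref{lemmavector} \eqref{vector2} with $\eta$ the $g$-difference, kill the $\mathcal D_\xi^\pm f^\pm$ term via Clifford analyticity of $F^\pm$, and then use that each $\partial_{\xi_j}F^\pm$ is again analytic so $\mathcal H(\partial_{\xi_j}^\pm f^\pm)=\pm\partial_{\xi_j}^\pm f^\pm$. The only cosmetic difference is ordering: you split $g-g'$ before invoking \eqref{vector2}, while the paper applies \eqref{vector2} to $\eta=g-g'$ in one stroke and splits afterward -- equivalent by linearity of the identity in $\eta$.
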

\begin{proof} We only prove for $f$ satisfying $f=\mathcal Hf$. We know
$f(\alpha, \beta,t)=F(\zeta(\alpha,\beta,t),t)$ for some $F$  analytic in $\Omega(t)$. 
From \eqref{vector2}, we have
\begin{equation*}
\iint K\,(g-g')\times(\zeta'_{\beta}\partial_{\alpha'}-\zeta'_{\alpha'}\partial_{\beta'}){f'}\,d\alpha' d\beta'=\iint K \mathcal N' (g-g')\cdot\nabla_\xi^+ f'=(I-\mathcal H)(g\cdot\nabla_\xi^+ f),
\end{equation*}
where in the last step we used the fact that $\partial_{\xi_i}^+ f=\mathcal H \partial_{\xi_i}^+ f$, since $\partial_{\xi_i}^+ f$ is the trace on $\Sigma(t)$ of the analytic function $\partial_{\xi_i} F$, $i=1,2,3$.
\end{proof}

Define 
\begin{equation}\label{hdual}
\mathcal H^*f=-\iint \zeta_\alpha\times\zeta_\beta K(\zeta'-\zeta) f(\alpha',\beta',t)\,d\alpha'\,d\beta'=-\iint\mathcal N K f'\,d\alpha'\,d\beta'
\end{equation}

\begin{proposition}\label{prophdual} For  $\mathcal C(V_2)$ valued smooth functions $f$ and $g$, 
we have 
\begin{equation}\label{hdualeq}
\iint f\cdot\{\mathcal H g\}=\iint \{\mathcal H^* f\}\cdot g\qquad\text{and}
\end{equation}
 \begin{equation}\label{hdualh}
(\mathcal H^*-\mathcal H)f=\iint \{K(\zeta'-\zeta)\cdot(\mathcal N+\mathcal N')+K(\zeta'-\zeta)\times (\mathcal N-\mathcal N')\}f'
\end{equation}

\end{proposition}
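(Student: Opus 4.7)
The plan is to establish both identities by elementary Clifford-algebraic manipulations together with Fubini's theorem and the oddness of $K$.

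For the duality \eqref{hdualeq}, I would exploit the standard quaternion conjugation $\sigma^\star := \sigma_0 - \sigma_1 e_1 - \sigma_2 e_2 - \sigma_3 e_3$ on $\mathcal C(V_2)$, which is distinct from the paper's involution $\overline\sigma = e_3\sigma e_3$. A direct check from \eqref{product} gives the two identities $\sigma\cdot\xi = \Re(\sigma^\star\xi)$ and the anti-homomorphism property $(\sigma\xi)^\star = \xi^\star \sigma^\star$, which together imply the adjoint relation $a\cdot(cb) = (c^\star a)\cdot b$ for arbitrary $a, b, c\in \mathcal C(V_2)$. Specializing $c = uv$ for vectors $u, v$ (so $u^\star = -u$, $v^\star = -v$, hence $(uv)^\star = vu$) yields the key intertwining formula
\[
a\cdot(uv\, b) = (vu\, a)\cdot b.
\]
The duality then follows in three steps: write $\iint f\cdot\{\mathcal H g\}$ as an iterated integral via Fubini, relabel $(\alpha,\beta)\leftrightarrow (\alpha',\beta')$ and invoke the antisymmetry $K(\zeta-\zeta') = -K(\zeta'-\zeta)$, then apply the intertwining formula with $u = K(\zeta'-\zeta)$, $v = \mathcal N(\alpha,\beta)$, $a = f(\alpha',\beta')$, $b = g(\alpha,\beta)$ to pull $\mathcal N K$ in front of $f$; the resulting expression is precisely $\iint (\mathcal H^* f)\cdot g$.

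For \eqref{hdualh} the approach is direct: from the definitions of $\mathcal H$ and $\mathcal H^*$ one reads off
\[
(\mathcal H^*-\mathcal H)f = -\iint (\mathcal N K + K\mathcal N')\,f'\,d\alpha'\,d\beta',
\]
so it suffices to compute $-(\mathcal N K + K\mathcal N')$ pointwise in the Clifford algebra. Since $K = K(\zeta'-\zeta)$, $\mathcal N$ and $\mathcal N'$ are all vectors, applying \eqref{vector1} twice gives
\[
\mathcal N K = -K\cdot\mathcal N - K\times\mathcal N, \qquad K\mathcal N' = -K\cdot\mathcal N' + K\times\mathcal N',
\]
and summing produces $K\cdot(\mathcal N+\mathcal N') + K\times(\mathcal N - \mathcal N')$, as claimed.

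There is no substantive obstacle here; both identities are essentially bookkeeping with Clifford products. The one conceptual point worth emphasizing is that the paper's involution $\overline{(\cdot)}$ is tailored to the $(1,e_3)$ versus $(e_1,e_2)$ splitting of $\mathcal C(V_2)$ and is \emph{not} adjoint to the Euclidean inner product on $\mathcal C(V_2)\cong\mathbb R^4$; the standard quaternion conjugation $\sigma^\star$ is the correct tool for \eqref{hdualeq}, after which the argument reduces to a couple of lines.
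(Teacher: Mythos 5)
Your proposal is correct, and both computations check out. The paper declines to write a proof (``Both identities are straightforward from definition. We omit the details''), so there is nothing to compare in detail, but your argument is precisely the elementary bookkeeping the paper has in mind. The two ingredients you isolate are the right ones: for \eqref{hdualeq}, Fubini plus the oddness $K(-\xi)=-K(\xi)$ swaps the roles of the primed and unprimed variables, and the intertwining identity $a\cdot(uv\,b)=(vu\,a)\cdot b$ for vectors $u,v$ (which you correctly derive from $\sigma\cdot\xi=\Re(\sigma^\star\xi)$ and $(\sigma\xi)^\star=\xi^\star\sigma^\star$, with $u^\star=-u$, $v^\star=-v$) moves $\mathcal N K$ from acting on $g$ to acting on $f$, matching the sign in the definition $\mathcal H^*f=-\iint \mathcal N K f'$; for \eqref{hdualh}, two applications of $\xi\eta=-\xi\cdot\eta+\xi\times\eta$ together with antisymmetry of the cross product reduce $-(\mathcal N K+K\mathcal N')$ to $K\cdot(\mathcal N+\mathcal N')+K\times(\mathcal N-\mathcal N')$. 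Your remark that the quaternion conjugation $\sigma^\star=\sigma_0-\sum_i\sigma_ie_i$, not the paper's $\overline\sigma=e_3\sigma e_3=-\sigma_0+\sigma_1e_1+\sigma_2e_2-\sigma_3e_3$, is the involution adjoint to the Euclidean inner product on $\mathcal C(V_2)$ is accurate and a worthwhile clarification, since both involutions appear in the paper and they are easy to confuse.
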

\begin{proof} Both identities are straightforward from definition. We omit the details.
\end{proof}
 Let $\sigma_i=\{\sigma\}_i$ denote the $e_i$ component of $\sigma$. 
\begin{lemma}\label{lemmadf}
 Let $\Omega$ be a $C^2$ domain in $\mathbb R^3$ with $\partial\Omega=\Sigma$ being parametrized by $\xi=\xi(\alpha,\beta)$, $(\alpha,\beta)\in \mathbb R^2$, and $N=\xi_\alpha\times\xi_\beta$, 
$\bold n=\frac{ N}{| N|}$. Assume that $F$ is a Clifford analytic function in $\Omega$. Then  the trace of $\nabla F_i
 \,: \, \nabla_\xi F_i=\nabla F_i (\xi(\alpha,\beta))$ 
 satisfies
 \begin{equation}\label{df}
\nabla_\xi F_i=\bold n(-\frac1{|N|}(\xi_\beta\partial_\alpha-\xi_\alpha\partial_\beta)F_i+\{\frac1{|N|}(\xi_\beta\partial_\alpha-\xi_\alpha\partial_\beta)F\}_i)\qquad i=1,2,3.
\end{equation}
\end{lemma}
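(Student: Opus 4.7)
The plan is to decompose $\nabla_\xi F_i$ into its normal and tangential parts along $\Sigma$ and to identify each part with one of the two terms inside the bracket of \eqref{df}; only the normal part will use analyticity. Starting from the general identity $v=\bold n(\bold n\cdot v)-\bold n\times(\bold n\times v)$ for $v\in\mathbb R^3$ and $|\bold n|=1$, applied to $v=\nabla F_i$, and using $\bold n\cdot(\bold n\times\nabla F_i)=0$ so that the Clifford product $\bold n(\bold n\times\nabla F_i)$ coincides with the cross product $\bold n\times(\bold n\times\nabla F_i)$, I get
\[
\nabla_\xi F_i=\bold n\bigl[(\bold n\cdot\nabla F_i)-(\bold n\times\nabla F_i)\bigr].
\]
It remains to identify the two pieces inside the bracket.

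The tangential piece does not require analyticity. Since $F_i$ is real, the chain rule gives $\nabla F_i\cdot\xi_\alpha=\partial_\alpha F_i$ and $\nabla F_i\cdot\xi_\beta=\partial_\beta F_i$, and BAC--CAB yields
\[
N\times\nabla F_i=(\xi_\alpha\times\xi_\beta)\times\nabla F_i=\xi_\beta\partial_\alpha F_i-\xi_\alpha\partial_\beta F_i,
\]
so $\bold n\times\nabla F_i=\tfrac1{|N|}(\xi_\beta\partial_\alpha-\xi_\alpha\partial_\beta)F_i$, which is the first term in \eqref{df}.

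The normal piece is where $\mathcal DF=0$ enters. I will prove the Clifford-valued identity
\[
(\bold n\cdot\nabla)F=\tfrac1{|N|}(\xi_\beta\partial_\alpha-\xi_\alpha\partial_\beta)F\qquad\text{on }\Sigma.
\]
By the chain rule $\partial_\alpha F=(\xi_\alpha\cdot\nabla_\xi)F$, $\partial_\beta F=(\xi_\beta\cdot\nabla_\xi)F$, and the elementary BAC--CAB identity $(\xi_\alpha)_j\xi_\beta-(\xi_\beta)_j\xi_\alpha=N\times e_j$, the right-hand side rewrites as $\sum_j(\bold n\times e_j)\partial_{\xi_j}F$. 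Splitting $\bold n\times e_j=\tfrac12(\bold n e_j-e_j\bold n)$ as a Clifford product, this becomes $\tfrac12\bold n\mathcal DF-\tfrac12\sum_j e_j\bold n\,\partial_{\xi_j}F$; the first summand vanishes by analyticity, while the anticommutation $e_j\bold n+\bold n e_j=-2n_j$ (immediate from $e_ie_j+e_je_i=-2\delta_{ij}$) turns the second into $-\tfrac12\bigl(-\bold n\mathcal DF-2(\bold n\cdot\nabla)F\bigr)=(\bold n\cdot\nabla)F$. Taking the $e_i$-component, and noting that the scalar operator $\bold n\cdot\nabla$ acts componentwise so $\{(\bold n\cdot\nabla)F\}_i=\bold n\cdot\nabla F_i$, gives $\bold n\cdot\nabla F_i=\bigl\{\tfrac1{|N|}(\xi_\beta\partial_\alpha-\xi_\alpha\partial_\beta)F\bigr\}_i$, the second term in \eqref{df}. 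Substituting both identifications into the decomposition above yields the claim.

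The one delicate point is the normal-derivative step: one has to keep track of when a quantity is real-scalar and when it is Clifford-valued, and to invoke $\mathcal DF=0$ at exactly the right moment after the $\bold n\times e_j=\tfrac12(\bold n e_j-e_j\bold n)$ expansion. The remaining manipulations are routine applications of BAC--CAB and the defining relations of $\mathcal C(V_2)$.
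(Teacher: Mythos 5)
Your proof is correct and takes essentially the same route as the paper: it decomposes $\nabla_\xi F_i$ via $\bold n\bold n=-1$ and uses left Clifford multiplication of $\mathcal D F=0$ by $\bold n$ to trade the normal derivative for a tangential one. Your explicit expansion of $\sum_j(\bold n\times e_j)\partial_{\xi_j}F$ is a more careful rendering of the paper's terser operator identity $\bold n\mathcal D=-(\bold n\cdot\nabla)+(\bold n\times\nabla)$, but the underlying mechanism is identical.
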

\begin{proof} 
We know $\mathcal D F=0$ in $\Omega$.
Therefore $\bold n\mathcal D_\xi F=- \bold n\cdot \mathcal D_\xi F+ \bold n\times \mathcal D_\xi F=0$. This implies
$$\bold n\cdot \nabla_\xi F_i=\{\bold n\times\nabla_\xi  F\}_i=\{\frac1{|N|}(\xi_\beta\partial_\alpha-\xi_\alpha\partial_\beta)F\}_i.$$
Therefore
\begin{equation}
\begin{aligned}
\nabla_\xi F_i&= -\bold n\bold n\nabla_\xi F_i=\bold n( \bold n\cdot \nabla_\xi F_i-\bold n\times \nabla_\xi F_i)\\&
= \bold n (\{\frac1{|N|}(\xi_\beta\partial_\alpha-\xi_\alpha\partial_\beta)F\}_i-\frac1{|N|}(\xi_\beta\partial_\alpha-\xi_\alpha\partial_\beta)F_i).
\end{aligned}
\end{equation}
\end{proof}

The following identities give relations among various quantities.
\begin{lemma}\label{identities} We have
\begin{gather}
\overline \lambda+\chi=(\overline{\mathcal H}-\mathcal H)\frak z e_3+\mathcal K\frak z e_3,\qquad \overline {\lambda^*}+\chi=(\overline{\mathcal H}-\mathcal H)\frak z e_3
\label{lambdachi}\\
\partial_\alpha\frak z =-\mathcal N\cdot e_1+(\partial_\alpha\lambda\times \partial_\beta\lambda)\cdot e_1,\qquad \partial_\beta\frak z =-\mathcal N\cdot e_2+(\partial_\alpha\lambda\times \partial_\beta\lambda)\cdot e_2\label{alphaz}\\
 \mathcal N=e_3+\partial_\alpha\lambda\times e_2-\partial_\beta\lambda\times e_1+ \partial_\alpha\lambda\times \partial_\beta\lambda\label{nlambda}\\
  A\mathcal N- e_3=w,
\label{aw}
\end{gather}
\begin{align}
2(\overline u+(\partial_t+b\cdot\nabla_\bot)\chi)&=(\mathcal H-\overline{\mathcal H})\overline u-2 [\partial_t+b\cdot\nabla_\bot, \mathcal H] \frak z e_3\label{uchi}\\
2(\overline w+(\partial_t+b\cdot\nabla_\bot)\frak v)&=(\mathcal H-\overline{\mathcal H})\overline w+[\partial_t+b\cdot\nabla_\bot, \mathcal H-\overline{\mathcal H}]\overline u\notag\\&-2(\partial_t+b\cdot\nabla_\bot)[\partial_t+b\cdot\nabla_\bot, \mathcal H] \frak z e_3\label{wchi}\\
(\mathcal H-\overline{\mathcal H})f&=-2\iint K\cdot\mathcal N' f'+2\iint (K_1\mathcal N'_2-K_2\mathcal N'_1)e_3 f'\label{hbarh}
\end{align}
where $K=K_1 e_1+ K_2 e_2+K_3 e_3$,  $\mathcal N=\mathcal N_1 e_1+ \mathcal N_2 e_2+\mathcal N_3 e_3$, and $f$ is a function.
\end{lemma}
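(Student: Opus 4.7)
The identities split into two groups: the purely geometric ones \eqref{lambdachi}--\eqref{aw} and \eqref{hbarh}, which follow from the definition $\zeta=P+\lambda$, the change of variables formula, and the Clifford algebra identity $e_3\sigma e_3$; and the dynamical ones \eqref{uchi}--\eqref{wchi}, which combine the kinematic identity $\xi_t=\frak H\xi_t$ with the commutator formula \eqref{comgt}. I plan to prove them in the following order.

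First I would establish the algebraic identity \eqref{hbarh}. Writing $K\mathcal N'=-K\cdot\mathcal N'+K\times\mathcal N'$ using \eqref{vector1}, and recalling that for any vector $V=V_1e_1+V_2e_2+V_3e_3$ one has $e_3Ve_3=V_1e_1+V_2e_2-V_3e_3$ while $e_3(\text{real})e_3=-(\text{real})$, a direct computation gives $K\mathcal N'-e_3K\mathcal N'e_3=-2K\cdot\mathcal N'+2(K\times\mathcal N')_3e_3=-2K\cdot\mathcal N'+2(K_1\mathcal N'_2-K_2\mathcal N'_1)e_3$; integrating yields \eqref{hbarh}. Next, \eqref{nlambda} follows from $\zeta_\alpha=e_1+\partial_\alpha\lambda$, $\zeta_\beta=e_2+\partial_\beta\lambda$ and bilinearity of $\times$, noting $e_1\times e_2=e_3$. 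Identity \eqref{alphaz} is then obtained by dotting \eqref{nlambda} with $e_1$ (resp.\ $e_2$) and using the elementary facts $(\partial_\alpha\lambda\times e_2)\cdot e_1=-\frak z_\alpha$ and $(\partial_\beta\lambda\times e_1)\cdot e_1=0$.

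For \eqref{aw}, I would pull back \eqref{ww1} under $U_k^{-1}$: this gives $w+e_3=(\frak a\circ k^{-1})(N\circ k^{-1})$. A chain-rule computation shows $\mathcal N=\zeta_\alpha\times\zeta_\beta=J(k^{-1})(N\circ k^{-1})=(N\circ k^{-1})/(J(k)\circ k^{-1})$, and combined with the definition $A\circ k=\frak a J(k)$ this yields $A\mathcal N=w+e_3$. For \eqref{lambdachi}, I would start from $\pi=(I-\frak H)ze_3$ so that $\chi=(I-\mathcal H)\frak z e_3$, and similarly $\lambda^*=(I+\mathcal H)\frak z e_3$. Observing the crucial fact that $\mathcal H$ commutes with right multiplication by the constant $e_3$, so $\mathcal H(\frak z e_3)=(\mathcal H\frak z)e_3$, I compute $\overline{(I+\mathcal H)\frak z e_3}=-\frak z e_3+e_3(\mathcal H\frak z)e_3e_3=-\frak z e_3-e_3\mathcal H\frak z$; on the other hand $\overline{\mathcal H}\frak z e_3=e_3\mathcal H(e_3\frak z e_3)=-e_3\mathcal H\frak z$. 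Adding $\chi=\frak z e_3-\mathcal H\frak z e_3$ gives $\overline{\lambda^*}+\chi=(\overline{\mathcal H}-\mathcal H)\frak z e_3$. The first identity in \eqref{lambdachi} follows because $\mathcal K\frak z$ is real-valued, so $\overline{\mathcal K\frak z e_3}=-\mathcal K\frak z e_3$ and $\overline{\lambda}=\overline{\lambda^*}+\mathcal K\frak z e_3$.

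The dynamical identities \eqref{uchi} and \eqref{wchi} are where the water wave structure really enters. For \eqref{uchi}, I would apply $\partial_t+b\cdot\nabla_\bot$ to $\chi=(I-\mathcal H)\frak z e_3$ and use $(\partial_t+b\cdot\nabla_\bot)\frak z=u_3$ (from $U_k^{-1}\partial_t U_k=\partial_t+b\cdot\nabla_\bot$) together with $u-\overline u=2u_3e_3$ to obtain $(\partial_t+b\cdot\nabla_\bot)\chi=\tfrac12(I-\mathcal H)(u-\overline u)-[\partial_t+b\cdot\nabla_\bot,\mathcal H]\frak z e_3$. Adding $\overline u$ and using $\mathcal H u=u$ gives $2(\overline u+(\partial_t+b\cdot\nabla_\bot)\chi)=(I+\mathcal H)\overline u-2[\partial_t+b\cdot\nabla_\bot,\mathcal H]\frak z e_3$. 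The hard part, and the main obstacle, is to see that $(I+\mathcal H)\overline u=(\mathcal H-\overline{\mathcal H})\overline u$, i.e.\ $\overline u+\overline{\mathcal H}\,\overline u=0$. This is the ``anti-analyticity of $\overline u$'' fact: using $e_3\overline u=-ue_3$ and again the commutation of $\mathcal H$ with right-multiplication by $e_3$, I compute $\overline{\mathcal H}\,\overline u=e_3\mathcal H(-ue_3)=-e_3(\mathcal H u)e_3=-\overline{\mathcal H u}=-\overline u$. For \eqref{wchi}, I apply $(\partial_t+b\cdot\nabla_\bot)$ to \eqref{uchi}, noting that $(\partial_t+b\cdot\nabla_\bot)\overline u=\overline w$ by the same chain-rule argument, and regroup using the same key identity $\overline w+\overline{\mathcal H}\,\overline w=0$ (which follows from $w=\mathcal H w+$commutator-type terms; more cleanly, $\overline w=(\partial_t+b\cdot\nabla_\bot)\overline u=-(\partial_t+b\cdot\nabla_\bot)\overline{\mathcal H}\,\overline u$, so the ``$(I+\mathcal H)\overline w$'' emerging from the calculation reorganizes to produce the stated right-hand side).
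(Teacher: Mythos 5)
Your proof is correct and follows essentially the paper's route: the paper derives \eqref{uchi} in the original Lagrangian frame from $\pi=(I-\frak H)ze_3$ and applies $U_k^{-1}$ at the end, whereas you work in the $k$-coordinates directly, which is the same computation with $\frak H\mapsto\mathcal H$, $\xi_t\mapsto u$; the remaining identities are the straightforward Clifford computations that the paper elides, and your versions are correct. One small caution on \eqref{wchi}: the identity ``$\overline w+\overline{\mathcal H}\,\overline w=0$'' that you first invoke is false, since $w\ne\mathcal Hw$ (there is a commutator defect, cf.\ \eqref{111}); however, your parenthetical ``more cleanly'' version, which differentiates $(I+\overline{\mathcal H})\overline u=0$ to obtain $(I+\overline{\mathcal H})\overline w=-[\partial_t+b\cdot\nabla_\bot,\overline{\mathcal H}]\overline u$, does repair this --- and, simplest of all, applying $\partial_t+b\cdot\nabla_\bot$ to \eqref{uchi} and distributing over $(\mathcal H-\overline{\mathcal H})\overline u$ by the Leibniz rule produces \eqref{wchi} directly with no regrouping at all.
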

\begin{proof}
\eqref{lambdachi}, 
\eqref{nlambda}, \eqref{hbarh} are straightforward from definition, \eqref{aw} is   \eqref{ww1} with a change of coordinate $U_k^{-1}$. Notice that the $e_3$ component of $\lambda$ is $\frak z$, therefore \eqref{alphaz} follows straightforwardly from \eqref{nlambda}. 

We now derive \eqref{uchi} from the definition of $\pi=(I-\frak H)ze_3$. We have
\begin{equation}\label{241}
\begin{aligned}
2\partial_t\pi&=2(I-\frak H)z_t e_3-2[\partial_t, \frak H] z e_3=(I-\frak H)(\xi_t-\overline\xi_t)-2[\partial_t, \frak H] z e_3\\&=-(I-\frak H)\overline\xi_t-2[\partial_t, \frak H] z e_3=-2\overline\xi_t+(\frak H-\overline{\frak H})\overline\xi_t-
2[\partial_t, \frak H] z e_3
\end{aligned}
\end{equation}
Here in the last step we used \eqref{ww2}. \eqref{uchi} follows from \eqref{241} with a change of coordinate $U_k^{-1}$.  \eqref{wchi} is obtained  by taking derivative $\partial_t+b\cdot\nabla_\bot$ to \eqref{uchi}. 
\end{proof}

In what follows, we let $l\ge 4$, $l+2\le q\le 2l$, $\xi=\xi(\alpha, \beta, t)$, $t\in [0,T]$ be a solution of the water wave system 
\eqref{ww1}-\eqref{ww2}. Assume that the mapping $k(\cdot, t):\mathbb R^2\to \mathbb R^2$ defined in \eqref{k} is a diffeomorphism and its Jacobian $J(k(t))>0$, for $t\in [0, T]$. Assume for $\partial=\partial_\alpha,\ \partial_\beta$,
\begin{equation}\label{assum}
\Gamma^j\partial \lambda, \ \Gamma^j\partial \frak z,\ \Gamma^j(\partial_t+b\cdot\nabla_\bot)\chi, \ \Gamma^j(\partial_t+b\cdot\nabla_\bot)\frak v\in C([0, T], L^2(\mathbb R^2)),\quad \text{for  } |j|\le q.
\end{equation}
Let $t\in [0, T]$ be fixed. Assume that at this time $t$,
\begin{equation}\label{assumm}
\begin{aligned}
&\sum_{|j|\le l+2\atop
\partial=\partial_\alpha,\partial_\beta}(\|\Gamma^j\partial \lambda(t)\|_2+\|\Gamma^j\partial \frak z(t)\|_2
+\|\Gamma^j\frak v(t)\|_2+\|\Gamma^j(\partial_t+b\cdot\nabla_\bot)\frak v(t)\|_2)\le M\\
&|\zeta(\alpha,\beta,t)-\zeta(\alpha', \beta', t)|\ge\frac 14(|\alpha-\alpha'|+|\beta-\beta'|)\qquad\text{for }\alpha,\beta,\alpha',\beta'\in \mathbb R
\end{aligned}
\end{equation}
For the rest of this paper, the inequality $a\lesssim b$ means that there is a constant $c=c(M_0)$ depending on $M_0$, or  a universal constant $c$, such that $a\le c\,b$. $a\simeq b$ means $a\lesssim b$ and $b\lesssim a$.

\begin{lemma}\label{lemmacomgt} We have for $m\le 2l$, and any function $\phi\in C_0^\infty( \mathbb R^2\times [0,T])$,
\begin{equation}\label{estcomgt}
\begin{aligned}
&\|[\partial_t+b\cdot\nabla_\bot, \Gamma^m]\phi(t)\|_2\lesssim \sum_{j\le l+2}\|\Gamma^j b(t)\|_2\sum_{|j|\le m-1\atop \partial=\partial_\alpha,\partial_\beta}\|\partial \Gamma^j\phi(t)\|_2\\&+\sum_{|j|\le m}\|\Gamma^j b(t)\|_2\sum_{|j|\le l+1\atop \partial=\partial_\alpha,\partial_\beta}\|\partial \Gamma^j\phi(t)\|_2+\sum_{|j|\le m-1}\|(\partial_t+b\cdot\nabla_\bot)\Gamma^j\phi(t)\|_2\\
&\|[\partial_t+b\cdot\nabla_\bot, \Gamma^m]\phi(t)\|_2\lesssim \sum_{j\le l+2}\|\Gamma^j b(t)\|_2\sum_{|j|\le m-1\atop\partial=\partial_\alpha,\partial_\beta}\|\partial \Gamma^j\phi(t)\|_2\\&
+\sum_{|j|\le m}\|\Gamma^j b(t)\|_2\sum_{|j|\le l+1\atop\partial=\partial_\alpha,\partial_\beta}\|\partial \Gamma^j\phi(t)\|_2+\sum_{|j|\le m-1}\|\Gamma^j(\partial_t+b\cdot\nabla_\bot)\phi(t)\|_2
\end{aligned}
\end{equation}
\end{lemma}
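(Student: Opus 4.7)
The plan is to expand the commutator by \eqref{comjp} and reduce every resulting piece, via the single-commutator formulas \eqref{comgt}, to a sum of products of the form $(\Gamma^a b)\cdot\nabla_\bot(\Gamma^c\phi)$ plus corrections of the form $(\partial_t+b\cdot\nabla_\bot)\Gamma^c\phi$. Concretely, from \eqref{comjp},
\begin{equation*}
[\partial_t+b\cdot\nabla_\bot,\Gamma^m]\phi=-\sum_{j=1}^m\Gamma^{m-j}[\Gamma,\partial_t+b\cdot\nabla_\bot]\Gamma^{j-1}\phi,
\end{equation*}
and \eqref{comgt} shows that each single commutator $[\Gamma,\partial_t+b\cdot\nabla_\bot]$ is of the form $(\tilde\Gamma b)\cdot\nabla_\bot$ with $\tilde\Gamma b\in\{b_t,\partial_\alpha b,\partial_\beta b,L_0 b-\tfrac12 b,\varpi b-\tfrac12 e_3 b\}$, together with (only in the $\Gamma=L_0$ case) an extra scalar multiple of $(\partial_t+b\cdot\nabla_\bot)$. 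Using the Leibniz rule for the first-order operators $\Gamma$, combined with the relations \eqref{commutativity1} to commute $\Gamma$ with $\nabla_\bot$ (which only produce terms of the same type), $\Gamma^{m-j}$ distributes across the product and leaves a finite linear combination of terms $(\Gamma^a b)(\nabla_\bot\Gamma^c\phi)$ with $a+c\le m$, $a\ge 1$, $c\le m-1$, plus a remainder involving $\Gamma^{m-j}(\partial_t+b\cdot\nabla_\bot)\Gamma^{j-1}\phi$ from the $L_0$ piece.

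The main estimate is then a product-type inequality obtained by putting the factor of smaller order into $L^\infty$ via the Sobolev embedding \eqref{sobolev}. If $a\le l$, then
\begin{equation*}
\|\Gamma^a b(t)\|_{L^\infty}\lesssim\!\!\sum_{|j|\le a+2}\!\!\|\Gamma^j b(t)\|_{L^2}\lesssim\!\!\sum_{|j|\le l+2}\!\!\|\Gamma^j b(t)\|_{L^2},
\end{equation*}
while $c\le m-1$, so $(\Gamma^a b)(\nabla_\bot\Gamma^c\phi)$ contributes to the first sum on the right-hand side. If instead $a\ge l+1$, then since $m\le 2l$ we have $c\le m-a\le l-1$, so $c+2\le l+1$ and by \eqref{sobolev}
\begin{equation*}
\|\nabla_\bot\Gamma^c\phi(t)\|_{L^\infty}\lesssim\!\!\sum_{|j|\le c+2}\!\!\|\partial\Gamma^j\phi(t)\|_{L^2}\lesssim\!\!\sum_{|j|\le l+1}\!\!\|\partial\Gamma^j\phi(t)\|_{L^2};
\end{equation*}
since $a\le m$, this contributes to the second sum. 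The remainder from the $L_0$ case, $\Gamma^{m-j}(\partial_t+b\cdot\nabla_\bot)\Gamma^{j-1}\phi$, is handled by writing it as $(\partial_t+b\cdot\nabla_\bot)\Gamma^{m-1}\phi+[\Gamma^{m-j},\partial_t+b\cdot\nabla_\bot]\Gamma^{j-1}\phi$: the first piece is exactly the last term of the first bound, and the commutator is absorbed by an induction on $m$. This yields the first inequality.

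The second inequality follows from the first by induction on $m$: write
\begin{equation*}
(\partial_t+b\cdot\nabla_\bot)\Gamma^j\phi=\Gamma^j(\partial_t+b\cdot\nabla_\bot)\phi-[\partial_t+b\cdot\nabla_\bot,\Gamma^j]\phi
\end{equation*}
for each $j\le m-1$ and invoke the already-proved first bound (at level $j<m$) to control the commutator on the right, noting that both terms fold into the same type of sums already present. The main delicate point in the argument is the bookkeeping of the distributional identity $a+c\le m$, $c\le m-1$ together with the threshold $a\le l$ versus $a\ge l+1$; once one observes that $m\le 2l$ forces the complementary index to lie below $l-1$, the Sobolev split works cleanly and no further ideas are required.
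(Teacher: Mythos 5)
Your proof is correct and follows essentially the same approach as the paper: expand the commutator via \eqref{comjp}, use the single-commutator identities \eqref{comgt}, distribute derivatives by the Leibniz rule, and split each resulting product $(\Gamma^a b)(\nabla_\bot\Gamma^c\phi)$ by putting the lower-order factor in $L^\infty$ via Proposition~\ref{propsobolev}, with the threshold $a\le l$ versus $a\ge l+1$ exploiting $m\le 2l$. The paper states this as a one-line consequence of \eqref{comgt}, \eqref{comjp} and Proposition~\ref{propsobolev}; you have simply supplied the details, including the inductive treatment of the $L_0$ remainder and of the passage between the two stated inequalities, all of which is sound.
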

\begin{proof}
\eqref{estcomgt} is an easy consequence of the identities \eqref{comgt}, \eqref{comjp} and Proposition~\ref{propsobolev}:
\begin{equation*}
[\partial_t+b\cdot\nabla_\bot, \Gamma^m]\phi=\sum_{j=1}^m\Gamma^{m-j}[\partial_t+b\cdot\nabla_\bot,\Gamma]\Gamma^{j-1}\phi
\end{equation*}

\end{proof}

The following proposition gives the $L^2$ estimates of various quantities in terms of that of $\chi$ and $\frak v$. Let $t\in [0, T]$ be the time when \eqref{assumm} holds.
\begin{proposition}\label{propl2est} Let $m\le q$. There is a $M_0>0$, sufficiently small, such that for $M\le M_0$,
\begin{align}
&\sum_{\partial=\partial_\alpha,\partial_\beta}(\|\Gamma^m\partial\lambda(t)\|_2+\|\Gamma^m\partial\lambda^*(t)\|_2+\|\Gamma^m\partial\chi(t)\|_2+\|\Gamma^m\partial\frak z(t)\|_2)\tag*{}\\&
+\|\Gamma^m u(t)\|_2+\|\Gamma^m w(t)\|_2+\|\Gamma^m (\partial_t+b\cdot\nabla_\bot)\lambda(t)\|_2+\|\Gamma^m (\partial_t+b\cdot\nabla_\bot)\lambda^*(t)\|_2
\tag*{}\\&\quad\lesssim \sum_{|j|\le m}(\|(\partial_t+b\cdot\nabla_\bot)\Gamma^j\chi(t)\|_2+\|(\partial_t+b\cdot\nabla_\bot)\Gamma^j\frak v(t)\|_2)\label{zlcl2}\\
&\|\Gamma^m b(t)\|_2+\|\Gamma^m(\partial_t+b\cdot\nabla_\bot)b(t)\|_2+
\|\Gamma^m (A-1)(t)\|_2
\tag*{}\\&\quad\lesssim M_0 \sum_{|j|\le m}(\|(\partial_t+b\cdot\nabla_\bot)\Gamma^j\chi(t)\|_2+\|(\partial_t+b\cdot\nabla_\bot)\Gamma^j\frak v(t)\|_2)\label{bl2}\\&
\sum_{|j|\le m}\|\Gamma^j(\partial_t+b\cdot\nabla_\bot)\chi (t)\|_2+\|\Gamma^j(\partial_t+b\cdot\nabla_\bot)\frak v (t)\|_2\tag*{}\\&\quad
\simeq \sum_{|j|\le m}(\|(\partial_t+b\cdot\nabla_\bot)\Gamma^j\chi(t)\|_2+\|(\partial_t+b\cdot\nabla_\bot)\Gamma^j\frak v(t)\|_2)\label{comgtc}
\end{align}
\end{proposition}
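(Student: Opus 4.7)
This is a coupled bootstrap. The plan is to let $E_m(t)$ denote the full left-hand side of \eqref{zlcl2} and $F_m(t)$ the right-hand side, and to establish a single master inequality
$$E_m \le C\,F_m + C\,M_0\,E_m$$
with a universal constant $C$. Taking $M_0$ small enough to make $CM_0 \le 1/2$ absorbs the second term and yields \eqref{zlcl2}; substitution back into the formulas of Proposition~\ref{propAb} then gives \eqref{bl2}, from which \eqref{comgtc} drops out by Lemma~\ref{lemmacomgt}.

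The engine for controlling $u$ and $w$ is to solve \eqref{uchi} and \eqref{wchi} for $\bar u$ and $\bar w$, giving
$$\bar u = -(\partial_t+b\cdot\nabla_\bot)\chi + \tfrac12(\mathcal H-\overline{\mathcal H})\bar u - [\partial_t+b\cdot\nabla_\bot,\,\mathcal H]\,\frak z\, e_3,$$
and analogously for $\bar w$. Both correction terms are quadratic in a precise sense: by \eqref{hbarh} the kernel of $\mathcal H-\overline{\mathcal H}$ is $K\cdot\mathcal N'$ plus a $(K_1\mathcal N_2'-K_2\mathcal N_1')e_3$ piece, and expanding $\mathcal N = e_3 + \partial_\alpha\lambda\times e_2 - \partial_\beta\lambda\times e_1 + \partial_\alpha\lambda\times\partial_\beta\lambda$ from \eqref{nlambda} supplies either an explicit $\partial\lambda$ factor in $\mathcal N'-e_3$, or the Cauchy-type difference $\frak z'-\frak z$ in the leading piece $K\cdot e_3 = -\tfrac{2}{\omega_3}(\frak z'-\frak z)/|\zeta'-\zeta|^3$. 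Applying $\Gamma^m$ via \eqref{comgk}--\eqref{comgq} and \eqref{commutelh}--\eqref{commutevh} then expresses every resulting summand as a Cauchy integral of the type covered by Propositions~\ref{propcauchy1}--\ref{propcauchy2}, with exactly one factor carrying top-order $\Gamma^m$-derivatives in $L^2$ while every remaining factor sits in $L^\infty$ and, by \eqref{assumm} and Proposition~\ref{propsobolev}, is controlled by $M\le M_0$. This yields $\|\Gamma^m u\|_2+\|\Gamma^m w\|_2 \lesssim F_m + M_0 E_m$.

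The remaining left-hand quantities come out by algebraic manipulation of Lemma~\ref{identities}. The identity $\overline{\lambda^*}+\chi = (\overline{\mathcal H}-\mathcal H)\frak z e_3$ gives $\|\Gamma^m\partial\lambda^*\|_2 \lesssim \|\Gamma^m\partial\chi\|_2 + M_0 E_m$; the definition $\lambda = \lambda^*-\mathcal K\frak z e_3$ (with $\mathcal K$ having the small kernel $K\cdot\mathcal N'$) transfers this to $\Gamma^m\partial\lambda$; \eqref{alphaz}--\eqref{nlambda} read off $\partial\frak z$ from $\partial\lambda$ modulo a pure $\partial\lambda$-cross-$\partial\lambda$ quadratic; and \eqref{aw} rewrites $w = (A-1)\mathcal N+(\mathcal N-e_3)$, pinning down $\Gamma^m(A-1)$ once $\Gamma^m w$ is in hand. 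The bounds on $(\partial_t+b\cdot\nabla_\bot)\lambda$ and $(\partial_t+b\cdot\nabla_\bot)\lambda^*$ come from applying $\partial_t+b\cdot\nabla_\bot$ to the same identities and using \eqref{comgt} together with \eqref{comgk}. Finally \eqref{bl2} follows directly from Proposition~\ref{propAb}, which writes $b$ and $(A-1)e_3$ as a sum of $\tfrac12(\mathcal H-\overline{\mathcal H})\bar u$ and $\tfrac12(\overline{\mathcal H}-\mathcal H)\bar w$, commutators $[\partial_t+b\cdot\nabla_\bot,\mathcal H]$ and $[A\mathcal N\times\nabla,\mathcal H]$, and the pure quadratic $A\,\partial_\alpha\lambda\times\partial_\beta\lambda$; each summand has an explicit small factor, and the same Cauchy-integral argument produces the $M_0$-smallness.

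\emph{Main obstacle.} The circularity: commutators like $[\Gamma^m,\mathcal H]$ from \eqref{commutelh}--\eqref{commutevh} produce integrals whose kernels contain the very top-order factors $\Gamma^m\lambda - {\Gamma'}^m\lambda'$, and commuting $\Gamma^m$ through $\partial_t+b\cdot\nabla_\bot$ produces $\Gamma^m b$; both of these quantities are themselves part of $E_m$. The resolution, and the reason the proposition holds, is the $L^2$--$L^\infty$ splitting enforced by Propositions~\ref{propcauchy1}--\ref{propcauchy2}: in every such Cauchy-type integral the top-order factor is placed in $L^2$ (contributing to $E_m$) while every companion factor is bounded in $L^\infty$ by a low-order Sobolev norm, controlled in turn by $M\le M_0$ via \eqref{assumm}. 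This guarantees that each apparent circular contribution comes multiplied by the prefactor $M_0$ and is absorbed on the left-hand side of the master inequality.
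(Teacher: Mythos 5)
Your overall strategy — a single master inequality $E_m\le CF_m+CM_0E_m$ closed by absorption, using the quadratic structure of \eqref{uchi}, \eqref{wchi}, Proposition~\ref{propAb}, and the $L^2$--$L^\infty$ splitting of the CMM estimates — is the right one and mirrors what the paper does (the paper packages it as five intermediate steps with a bootstrap at $m=l+2$ rather than one master inequality, but that is cosmetic). The use of $(\partial_t+b\cdot\nabla_\bot)\lambda=u-b$ and $(\partial_t+b\cdot\nabla_\bot)\lambda^*=2u_3e_3-(\partial_t+b\cdot\nabla_\bot)\chi$ (both immediate from the definitions) would simplify your handling of the last two terms in \eqref{zlcl2}, but differentiating the identities as you propose also works.

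There is, however, a genuine gap in the chain you use for $\chi$, $\lambda^*$, $\lambda$. You bound $\|\Gamma^m\partial\lambda^*\|_2\lesssim\|\Gamma^m\partial\chi\|_2+M_0E_m$ from the identity $\overline{\lambda^*}+\chi=(\overline{\mathcal H}-\mathcal H)\frak z e_3$, transfer that to $\lambda$, and then bound $\partial\frak z$ via \eqref{aw}. This leaves $\|\Gamma^m\partial\chi\|_2$ itself with no estimate: $\chi$ never appears on the left of an inequality in your chain. Combining the identity with $\lambda^*+\chi=2\frak z e_3$ does not repair this, because those two relations determine only $\lambda^*-\overline{\lambda^*}$ (the $e_3$-component) in terms of $\frak z$; the $1$, $e_1$, $e_2$ components of $\lambda^*$ and $\chi$ are governed by $\mathcal H\frak z e_3$, which is a bounded but not small operator, so neither relation alone pins them down. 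The paper's Step 1 closes this by going the other way: it uses the direct expressions $\chi=(I-\mathcal H)\frak z e_3$ and $\lambda=(I+\mathcal H)\frak z e_3-\mathcal K\frak z e_3$ (hence $\lambda^*=(I+\mathcal H)\frak z e_3$) to obtain $\|\Gamma^m\partial\lambda\|_2+\|\Gamma^m\partial\chi\|_2\lesssim\|\Gamma^m\partial\frak z\|_2$ with the help of Propositions~\ref{propcauchy1}--\ref{propcauchy2} and the commutation identities; Step 4 then controls $\|\Gamma^m\partial\frak z\|_2$ via $w$ and $A-1$, and Step 2 controls $w$ via $F_m$. Replacing your one-directional use of \eqref{lambdachi} with these direct definitions is what makes the master inequality actually close. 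Once that is done, the rest of your proposal (bounds for $u$, $w$, $b$, $A-1$, the absorption, and \eqref{comgtc} via Lemma~\ref{lemmacomgt}) is correct and matches the paper.
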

\begin{proof}  We prove Proposition~\ref{propl2est}  in  five steps. Notice that 
\begin{equation}\label{gbarh}
[\Gamma,\overline{\mathcal H}]=e_3[\Gamma,\mathcal H]e_3,\qquad [\Gamma,\mathcal K]=Re[\Gamma, \mathcal H]
\end{equation}

Step 1. We first show that for 
$m\le q$, there is a $M_0$ sufficiently small, such that if
$M\le M_0$, 
\begin{equation}\label{l2lz}
\sum_{|j|\le m\atop\partial=\partial_\alpha,\partial_\beta}\|\Gamma^j\partial \lambda(t)\|_2+\|\Gamma^j\partial\chi(t)\|_2\lesssim \sum_{|j|\le m\atop\partial=\partial_\alpha,\partial_\beta}\|\Gamma^j\partial \frak z(t)\|_2.
\end{equation}

Let $\partial=\partial_\alpha$ or $\partial_\beta$. From the definition $\lambda=(I+\mathcal H)\frak z e_3- \mathcal K\frak z e_3$, we have
$$\partial \lambda =(I+\mathcal H)\partial \frak z e_3+[\partial, \mathcal H]\frak z e_3-[\partial, \mathcal K]\frak z e_3-\mathcal K \partial \frak z e_3$$
Using \eqref{comjp} we get
\begin{equation*}
\begin{aligned}
\Gamma^m\partial\lambda&=\sum_{j=1}^m\Gamma^{m-j}[\Gamma,\mathcal H]\Gamma^{j-1}\partial\frak z e_3+(I+\mathcal H)\Gamma^m\partial \frak z e_3+\Gamma^m[\partial,\mathcal H]\frak z e_3\\&
-\sum_{j=1}^m\Gamma^{m-j}[\Gamma,\mathcal K]\Gamma^{j-1}\partial\frak z e_3-\mathcal K\Gamma^m\partial \frak z e_3-\Gamma^m[\partial,\mathcal K]\frak z e_3
\end{aligned}
\end{equation*}
Therefore from \eqref{comgk}, Lemma~\ref{lemma 1.2}, \eqref{gbarh}, Propositions~\ref{propcomgh}, ~\ref{propcauchy1},~\ref{propcauchy2}, ~\ref{propsobolev}, we have
\begin{equation*}
\begin{aligned}
\|\Gamma^m\partial\lambda(t)\|_2\lesssim &\sum_{|j|\le m\atop\partial=\partial_\alpha,\partial_\beta}\|\Gamma^j\partial \lambda(t)\|_2\sum_{|j|\le l+2}\|\Gamma^j\partial \frak z(t)\|_2\\&
+(1+\sum_{|j|\le l+2\atop\partial=\partial_\alpha,\partial_\beta}\|\Gamma^j\partial \lambda(t)\|_2)\sum_{|j|\le m}\|\Gamma^j\partial \frak z(t)\|_2
\end{aligned}
\end{equation*}
This gives us 
\begin{equation}\label{251}
\sum_{|j|\le m\atop\partial=\partial_\alpha,\partial_\beta}\|\Gamma^j\partial \lambda(t)\|_2\lesssim \sum_{|j|\le m\atop\partial=\partial_\alpha,\partial_\beta}\|\Gamma^j\partial \frak z(t)\|_2
\end{equation}
 when $M_0$ is sufficiently small.  The proof for the part of estimate for $\chi$ in \eqref{l2lz} follows from a similar calculation and an application of  \eqref{251}. We therefore obtain \eqref{l2lz}.
 
 Step 2. We show that for $m\le q$, there is a sufficiently small $M_0$, such that if $M\le M_0$,
 \begin{equation}\label{l2uzc}
 \sum_{|j|\le m}\|\Gamma^j u(t)\|_2\lesssim M_0\sum_{|j|\le m\atop\partial=\partial_\alpha,\partial_\beta}\|\Gamma^j\partial \frak z(t)\|_2
+\sum_{|j|\le m}\|\Gamma^j(\partial_t+b\cdot\nabla_\bot)\chi(t)\|_2.
\end{equation}
\begin{equation}\label{l2wzc}
\begin{aligned}
 \sum_{|j|\le m}\|\Gamma^j w(t)\|_2&\lesssim M_0\sum_{|j|\le m\atop\partial=\partial_\alpha,\partial_\beta}\|\Gamma^j\partial \frak z(t)\|_2\\&
+\sum_{|j|\le m}(\|\Gamma^j(\partial_t+b\cdot\nabla_\bot)\chi(t)\|_2 +  \|\Gamma^j(\partial_t+b\cdot\nabla_\bot)\frak v(t)\|_2).
\end{aligned}
\end{equation}

We first prove \eqref{l2uzc}. From \eqref{uchi}, similar to Step 1 by using \eqref{comjp}, then apply \eqref{comgk}, Lemma~\ref{lemma 1.2}, \eqref{gbarh}, Propositions~\ref{propcomgh}, ~\ref{propcauchy1},~\ref{propcauchy2}, ~\ref{propsobolev}, and furthermore \eqref{l2lz}, we have for $M_0$ sufficiently small,
\begin{equation}\label{252}
\begin{aligned}
\|\Gamma^m(\overline u+(\partial_t+b\cdot\nabla_\bot)\chi)(t)\|_2\lesssim &\sum_{|j|\le m}\|\Gamma^ju(t)\|_2\sum_{|j|\le l+2\atop\partial=\partial_\alpha,\partial_\beta}\|\Gamma^j\partial \frak z(t)\|_2\\&
+\sum_{|j|\le l+2}\|\Gamma^ju(t)\|_2\sum_{|j|\le m\atop\partial=\partial_\alpha,\partial_\beta}\|\Gamma^j\partial \frak z(t)\|_2
\end{aligned}
\end{equation}
Now in \eqref{252} let $m=l+2$. We get for $M_0$ sufficiently small,
\begin{equation}\label{253}
 \sum_{|j|\le l+2}\|\Gamma^j u(t)\|_2\lesssim \sum_{|j|\le l+2}\|\Gamma^j(\partial_t+b\cdot\nabla_\bot)\chi(t)\|_2.
\end{equation}
Applying \eqref{253} to the right hand side of \eqref{252} and we obtain \eqref{l2uzc}.

Similar to the proof of \eqref{l2uzc}, we start from \eqref{wchi}, and use furthermore the estimates \eqref{l2lz},\eqref{l2uzc}, and the fact that $(\partial_t+b\cdot \nabla_\bot )\frak z=u_3$, we have \eqref{l2wzc}.

Step 3. We have for $m\le q$, there is a sufficiently small $M_0$, such that if $M\le M_0$,
\begin{equation}\label{l2azc}
\begin{aligned}
 &\sum_{|j|\le m}(\|\Gamma^j (A-1)(t)\|_2+ \|\Gamma^j(\partial_t+b\cdot\nabla_\bot) b(t)\|_2
 +\|\Gamma^j b(t)\|_2)\\&
 \lesssim M_0\sum_{|j|\le m\atop\partial=\partial_\alpha,\partial_\beta}(\|\Gamma^j\partial \frak z(t)\|_2
+\|\Gamma^j(\partial_t+b\cdot\nabla_\bot)\chi(t)\|_2 +  \|\Gamma^j(\partial_t+b\cdot\nabla_\bot)\frak v(t)\|_2).
\end{aligned}
\end{equation}

Starting from Proposition~\ref{propAb}, the proof of 
\eqref{l2azc} is similar to that in Steps 1 and 2, and uses the results in Steps 1 \& 2. We omit the details. 

We have 

Step 4. There is $M_0$ sufficiently small, such that for $m\le q$, $M\le M_0$, 
\begin{equation}\label{zc}
 \sum_{|j|\le m\atop\partial=\partial_\alpha,\partial_\beta}\|\Gamma^j\partial \frak z(t)\|_2\lesssim \sum_{|j|\le m}(
\|\Gamma^j(\partial_t+b\cdot\nabla_\bot)\chi(t)\|_2 +  \|\Gamma^j(\partial_t+b\cdot\nabla_\bot)\frak v(t)\|_2).
\end{equation}

\eqref{zc} is obtained using \eqref{alphaz}, \eqref{aw}. We have
$$\partial_\alpha\frak z =-w\cdot e_1+(A-1)\mathcal N\cdot e_1+(\partial_\alpha\lambda\times \partial_\beta\lambda)\cdot e_1$$
therefore
\begin{equation*}
\begin{aligned}
\|\Gamma^m&\partial_\alpha \frak z(t)\|_2\lesssim \|\Gamma^m w(t)\|_2+\sum_{|j|\le m}\|\Gamma^j(A-1)(t)\|_2+\\&
\sum_{|j|\le m\atop\partial=\partial_\alpha,\partial_\beta}\|\Gamma^j\partial\lambda(t)\|_2\sum_{|j|\le l+2\atop\partial=\partial_\alpha,\partial_\beta}(\|\Gamma^j(A-1)(t)\|_2+    \|\Gamma^j\partial\lambda(t)\|_2)
\end{aligned}
\end{equation*}
Now we apply estimates in Steps 1-3, we get \eqref{zc}. Finally 

Step 5. Apply Lemma~\ref{lemmacomgt} to $\phi=\chi$ and $\phi=(\partial_t+b\cdot\nabla_\bot)\chi$, and use results in Steps 1-4, we obtain \eqref{comgtc}. From definition we know $\lambda^*=2\frak z e_3-\chi=2\lambda_3 e_3-\chi$ and $(\partial_t+b\cdot\nabla_\bot)\lambda=u-b$. Combine Steps 1-4 and apply \eqref{comgtc},   we obtain \eqref{zlcl2}, \eqref{bl2}. This finishes the proof of Proposition~\ref{propl2est}.

\end{proof}

We now give the $L^\infty$ estimates for various quantities in terms of that of $\nabla_\bot\chi$, and $\nabla_\bot\frak v$. 
Let $t\in [0, T]$ be the time when \eqref{assumm} holds. Define
\begin{equation}\label{energym}
E_m(t)=
\sum_{|j|\le m}(\|(\partial_t+b\cdot\nabla_\bot)\Gamma^j\chi(t)\|_2^2+\|(\partial_t+b\cdot\nabla_\bot)\Gamma^j\frak v(t)\|_2^2)
\end{equation}

\begin{proposition}\label{liest}  There exist a $M_0>0$ small enough, such that if $M\le M_0$,

1. for $2\le m\le l$ we have
\begin{equation}\label{lilzc}
\sum_{|j|\le m\atop \partial=\partial_\alpha,\partial_\beta}(|\Gamma^j\partial \lambda(t)|_\infty+|\Gamma^j\partial \lambda^*(t)|_\infty+|\Gamma^j\partial \frak z(t)|_\infty)\lesssim
\sum_{|j|\le m\atop \partial=\partial_\alpha,\partial_\beta}|\Gamma^j\partial \chi(t)|_\infty;
\end{equation}
2. for $2\le m\le l-1$, we have
\begin{equation}\label{liuc}
\sum_{|j|\le m\atop \partial=\partial_\alpha,\partial_\beta}|\Gamma^j\partial u(t)|_\infty\lesssim
\sum_{|j|\le m\atop \partial=\partial_\alpha,\partial_\beta}(|\Gamma^j\partial \chi(t)|_\infty+|\Gamma^j\partial \frak v(t)|_\infty);
\end{equation}
3. for $2\le m\le l-2$, we have
\begin{align}
 &\sum_{|j|\le m\atop }|\Gamma^j w(t)|_\infty\lesssim
\sum_{|j|\le m+1\atop \partial=\partial_\alpha,\partial_\beta}(|\Gamma^j\partial \chi(t)|_\infty+|\Gamma^j\partial \frak v(t)|_\infty)\label{liwc}\\&
\sum_{|j|\le m}(|\Gamma^j(A-1)(t)|_\infty+|\Gamma^j  (\partial_t+b\cdot\nabla_\bot)b(t)|_\infty)\tag*{}\\&\qquad
\lesssim
E_{m+2}^{1/2}(t)\sum_{|j|\le m+1\atop \partial=\partial_\alpha,\partial_\beta}(|\Gamma^j\partial \chi(t)|_\infty+|\Gamma^j\partial \frak v(t)|_\infty)\label{liatbc}
\end{align}
\begin{equation}\label{libc}
\text{and}\quad \sum_{|j|\le m}|\Gamma^j b(t)|_\infty
\lesssim
E_{m+2}^{1/2}(t)\sum_{|j|\le m+2\atop \partial=\partial_\alpha,\partial_\beta}|\Gamma^j\partial \chi(t)|_\infty;
\end{equation}
4. for $l+1\le m\le q-2$, 
we have
\begin{equation}\label{lilzc1}
\begin{aligned}
&\sum_{|j|\le m\atop\partial=\partial_\alpha,\partial_\beta}
|\Gamma^j\partial\lambda(t)|_\infty+|\Gamma^j\partial\lambda^*(t)|_\infty+|\Gamma^j\partial \frak z(t)|_\infty\lesssim 
\sum_{|j|\le m\atop\partial=\partial_\alpha,\partial_\beta}|\Gamma^j\partial\chi(t)|_\infty\\&\qquad\qquad+E^{1/2}_{m+2}(t)\{\frac1t+ \sum_{|j|\le [\frac{m+2}2]+1\atop\partial=\partial_\alpha,\partial_\beta}|\Gamma^j\partial\chi(t)|_\infty(1+\ln t)\};
\end{aligned}
\end{equation}
5. for $l\le m\le q-4$, 
we have
\begin{equation}\label{liuc1}
\begin{aligned}
&\sum_{|j|\le m\atop\partial=\partial_\alpha,\partial_\beta}
|\Gamma^j\partial u(t)|_\infty\lesssim 
\sum_{|j|\le m\atop\partial=\partial_\alpha,\partial_\beta}|\Gamma^j\partial\frak v(t)|_\infty\\&\qquad\qquad+E^{1/2}_{m+3}(t)\{\frac1t+ \sum_{|j|\le [\frac{m+2}2]+1\atop\partial=\partial_\alpha,\partial_\beta}(|\Gamma^j\partial\chi(t)|_\infty+|\Gamma^j\partial\frak v(t)|_\infty)(1+\ln t)\},
\end{aligned}
\end{equation}

here $[s]$ is the largest integer $<s$. 
\end{proposition}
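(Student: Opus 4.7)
The five parts are proved in sequence, following the same hierarchy used in Proposition~\ref{propl2est}: first bound $\lambda,\lambda^*,\frak z$, then $u$, then $w,A-1,(\partial_t+b\cdot\nabla_\bot)b,b$. The essential tool for parts 1--3 is the $L^\infty$ estimate of Cauchy-type operators, Proposition~\ref{propcauchy3}, combined with the standard Sobolev embedding Proposition~\ref{propsobolev} and the $L^2$ bounds already established in Proposition~\ref{propl2est}. The essential tool for parts 4--5 is the generalized Sobolev inequality Proposition~\ref{propgsobolev}, which provides the $1/t$ decay.

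For part 1, expand $\Gamma^j\partial\lambda$ using $\lambda=(I+\mathcal H-\mathcal K)\frak z e_3$ and the commutator identities \eqref{comjp}, \eqref{comgk}, \eqref{gbarh} together with Proposition~\ref{propcomgh}. Each commutator kernel carries a factor of the small quantity $\partial\lambda$, so Proposition~\ref{propcauchy3} with $r$ of order $1$ yields a bound of the form $\sum |\Gamma^j\partial\lambda|_\infty\lesssim \sum|\Gamma^j\partial\frak z|_\infty+M_0\sum|\Gamma^j\partial\lambda|_\infty+\text{l.o.t.}$; absorbing the second term gives \eqref{lilzc} for $\partial\lambda$; $\lambda^*$ is then handled through $\lambda-\lambda^*=-\mathcal K\frak z e_3$, $\chi$ through \eqref{lambdachi}, and $\frak z$ through $\frak z=\lambda_3$. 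Parts 2 and 3 are entirely analogous, starting from \eqref{uchi} for $u$, \eqref{wchi} for $w$, and Proposition~\ref{propAb} for $b,A-1,(\partial_t+b\cdot\nabla_\bot)b$. The right-hand sides of these formulas consist of $(\mathcal H-\overline{\mathcal H})$-type terms (whose kernel vanishes quadratically in $\mathcal N-e_3$ by \eqref{hbarh}) and commutators with $\mathcal H$ or $\mathcal K$; once expanded and estimated by Proposition~\ref{propcauchy3}, the appearance of $E^{1/2}_{m+2}$ in \eqref{liatbc}--\eqref{libc} comes from bounding the tail $L^2$ factor in \eqref{cauchy5} using Proposition~\ref{propl2est} at level $m+2$.

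For parts 4--5, direct Sobolev is too expensive, so I would apply \eqref{gsobolev} to $\Gamma^j\lambda$ (respectively $\Gamma^j\lambda^*$ and $\Gamma^j u$) for $|j|\le m$:
\[
(1+t)\,|\partial_{\alpha_l}\Gamma^j\lambda(t)|\lesssim \sum_{k\le 4,\,i}\bigl(\|\Gamma^{j+k}\partial_t\lambda(t)\|_2+\|\Gamma^{j+k}\partial_{\alpha_i}\lambda(t)\|_2\bigr)+t\sum_{k\le 3}\|\frak P\,\Gamma^{j+k}\lambda(t)\|_2.
\]
The first two sums are controlled by $E^{1/2}_{m+2}$ via Proposition~\ref{propl2est} after moving the extra $\Gamma^k$'s through with the commutator identities \eqref{commutativity1}, \eqref{comgt}; the range $m\le q-2$ (resp.\ $q-4$) is exactly what one needs to keep the total order $\le q$. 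For the $\frak P$-term, use the decomposition \eqref{371}, which expresses $\frak P=\mathcal P^+-[\text{operator with coefficients }b,A-1,\partial\lambda]$, and the equation \eqref{quasi2} giving $\mathcal P^+\lambda^*$ as a sum of terms that are either cubic or have a quadratic kernel; each such term carries a small factor that may be put in $L^\infty$ at order $[m/2]+1$ or lower using parts 1--3 already proved. The logarithm $\ln t$ is produced by the tail term $\ln r$ in the $L^\infty$ Cauchy estimate \eqref{cauchy5} when $r$ is chosen of order $t$, and the isolated $1/t$ comes from those contributions in which no $L^\infty$-factor is available and one uses only the $L^2$ decay of $E^{1/2}_{m+2}$ via the overall prefactor $(1+t)^{-1}$ from \eqref{gsobolev}.

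The main obstacle is the $\frak P\,\Gamma^{j+k}\lambda$ bookkeeping in parts 4--5: one must organize the proof so that every contribution is written either as a small $L^\infty$ factor (controlled by parts 1--3 at order $\le [m/2]+1$) times an $L^2$ factor bounded by $E^{1/2}_{m+2}$, or as a purely $E^{1/2}_{m+2}$-term divided by $t$. Because $\lambda$ itself does not satisfy a cubic equation (only $\lambda^*$ and $\chi$ do), the step $\lambda\leftrightarrow\lambda^*$ via $\mathcal K\frak z e_3$ must be carried out inside the $\frak P$ estimate, and one must check that the commutator $[\frak P,\mathcal K]$ produces only terms of the allowed form. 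Once this is done, estimates for $\partial u$ in part 5 follow by repeating the same argument with \eqref{quasi1} replacing \eqref{quasi2} and using \eqref{comgtc} of Proposition~\ref{propl2est} to absorb the extra $(\partial_t+b\cdot\nabla_\bot)$ that appears on the $\frak v$ side.
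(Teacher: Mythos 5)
Your strategy for parts 1--3 is essentially the paper's: reduce to the algebraic identities \eqref{lambdachi}, \eqref{uchi}, \eqref{wchi}, \eqref{aw}, \eqref{b}, \eqref{A}, expand $\Gamma^j$ through the commutators \eqref{comjp}, \eqref{comgk}, \eqref{gbarh}, Proposition~\ref{propcomgh}, then apply the Cauchy estimates and Sobolev embedding, and absorb the small prefactors using $M_0$ and the $L^2$ control from Proposition~\ref{propl2est}. That part is fine.

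For parts 4--5, however, your route diverges from the paper's and has a genuine gap. You propose to apply the generalized Sobolev inequality \eqref{gsobolev} to $\Gamma^j\lambda$, $\Gamma^j\lambda^*$, $\Gamma^j u$, extracting the $1/t$ from the overall prefactor of \eqref{gsobolev} and the $\ln t$ from a secondary use of Proposition~\ref{propcauchy3} inside the $\frak P$ estimate. The paper does not invoke \eqref{gsobolev} here at all: it simply takes the same identity \eqref{255} (i.e.\ $\partial\overline\lambda+\partial\chi=[\partial,\overline{\mathcal H}-\mathcal H]\frak z e_3+\cdots$) used in part~1 and estimates its $\Gamma^m$ in $L^\infty$ by Proposition~\ref{propcauchy3} with $r=t$. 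That choice of $r$ produces directly the factor $\|f\|_2\,t^{-1}$ (which is $E^{1/2}_{m+2}/t$ after \eqref{zlcl2}) and the factor $\|f\|_\infty\,\ln t$ (which is the $\sum|\Gamma^j\partial\chi|_\infty(1+\ln t)$ piece), while the leading $\sum_{|j|\le m}|\Gamma^j\partial\chi|_\infty$ is the untouched $\partial\chi$ from the left-hand side of the identity. No wave equation enters.

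Concretely, your approach fails to land on the stated bound for three reasons. First, \eqref{gsobolev} applied to $\Gamma^j\lambda$ with $|j|\le m$ needs $L^2$ control of $\Gamma^{j+k}\partial_t\lambda$ and $\Gamma^{j+k}\partial\lambda$ for $k\le 4$, i.e.\ control up to order $m+5$; after converting to $E$ via Proposition~\ref{propl2est}, you obtain $E^{1/2}_{m+4}$ or $E^{1/2}_{m+5}$, not $E^{1/2}_{m+2}$, and the stated range $m\le q-2$ shrinks to roughly $m\le q-5$. Second, the shape of the bound is wrong: \eqref{gsobolev} gives an estimate of the form $(1+t)^{-1}\times[\cdots]$, whereas \eqref{lilzc1} contains the term $\sum_{|j|\le m}|\Gamma^j\partial\chi|_\infty$ with \emph{no} $1/t$ prefactor; that term arises only by keeping $\partial\chi$ on one side of the algebraic identity, and cannot be produced by the generalized Sobolev route. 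Third, to control $\frak P\Gamma^{j+k}\lambda$ you would have to fall back on the quasilinear equation \eqref{quasi2} for $\lambda^*$, whose right-hand side begins with the genuinely quadratic term $-(\mathcal H-\overline{\mathcal H})\overline w$; this is precisely the obstruction that makes $\chi$ and $\frak v$ (which do satisfy cubic equations \eqref{cubic2}, \eqref{cubic3}) the correct objects to feed into \eqref{gsobolev} in Lemma~\ref{lemmaliest}, and $\lambda$, $\lambda^*$ the wrong ones. The fix is to drop the generalized Sobolev inequality here entirely and instead redo Step~1 verbatim but with Proposition~\ref{propcauchy3} (taking $r=t$) in place of the $r$-free $L^\infty$ bound.
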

\begin{proof} We will again use the identities in Lemma~\ref{identities}. 

Step 1. We use \eqref{lambdachi} to prove \eqref{lilzc} for $2\le m\le l$.  Taking derivative  $\partial$  to \eqref{lambdachi}, $\partial=\partial_\alpha$ or $\partial_\beta$, we get
\begin{equation}\label{255}
\partial\overline\lambda+\partial\chi=[\partial, \overline{\mathcal H}-\mathcal H]\frak z e_3+(\overline{\mathcal H}-\mathcal H)\partial \frak z e_3 +[\partial, \mathcal K]\frak z e_3+\mathcal K \partial\frak z e_3
\end{equation}
Using \eqref{commuteah},  \eqref{commutebh}, \eqref{gbarh},  \eqref{comjp}, \eqref{comgk} and Propositions~\ref{propcauchy1}, ~\ref{propsobolev}, we obtain
\begin{equation*}
\begin{aligned}
\sum_{|j|\le m\atop\partial=\partial_\alpha,\partial_\beta}|\Gamma^j&(\partial\overline\lambda+\partial\chi)(t)|_\infty\lesssim \sum_{|j|\le m\atop\partial=\partial_\alpha,\partial_\beta} |\Gamma^j\partial\lambda(t)|_\infty\sum_{|j|\le m+2\atop\partial=\partial_\alpha,\partial_\beta}\|\Gamma^j\partial\frak z(t)\|_2\\&+  \sum_{|j|\le m\atop\partial=\partial_\alpha,\partial_\beta} |\Gamma^j\partial\frak z(t)|_\infty\sum_{|j|\le m+2\atop\partial=\partial_\alpha,\partial_\beta}(\|\Gamma^j\partial\lambda(t)\|_2+\|\Gamma^j\partial\frak z(t)\|_2)
\end{aligned}
\end{equation*}
 Using Proposition~\ref{propl2est}, we have that for $M_0>0$ small enough,
\begin{equation}\label{256}
\sum_{|j|\le m\atop\partial=\partial_\alpha,\partial_\beta}|\Gamma^j\partial\lambda(t)|_\infty\lesssim \sum_{|j|\le m\atop\partial=\partial_\alpha,\partial_\beta}|\Gamma^j\partial\chi(t)|_\infty+M_0
\sum_{|j|\le m\atop\partial=\partial_\alpha,\partial_\beta} |\Gamma^j\partial\frak z(t)|_\infty
\end{equation}
Similar argument also gives that
\begin{equation}\label{257}
\sum_{|j|\le m\atop\partial=\partial_\alpha,\partial_\beta}|\Gamma^j\partial\lambda^*(t)|_\infty\lesssim \sum_{|j|\le m\atop\partial=\partial_\alpha,\partial_\beta}|\Gamma^j\partial\chi(t)|_\infty+M_0
\sum_{|j|\le m\atop\partial=\partial_\alpha,\partial_\beta} |\Gamma^j\partial\frak z(t)|_\infty
\end{equation}
On the other hand, from the definition we have $2\frak z e_3=\lambda^*+\chi$, this implies
\begin{equation}\label{258}
\sum_{|j|\le m\atop\partial=\partial_\alpha,\partial_\beta}|\Gamma^j\partial\frak z(t)|_\infty\lesssim \sum_{|j|\le m\atop\partial=\partial_\alpha,\partial_\beta}|\Gamma^j\partial\chi(t)|_\infty+
\sum_{|j|\le m\atop\partial=\partial_\alpha,\partial_\beta} |\Gamma^j\partial\lambda^*(t)|_\infty
\end{equation}
Combine \eqref{257}, \eqref{258}, we have for $M_0$ small enough, 
$$\sum_{|j|\le m\atop\partial=\partial_\alpha,\partial_\beta}|\Gamma^j\partial\lambda^*(t)|_\infty\lesssim \sum_{|j|\le m\atop\partial=\partial_\alpha,\partial_\beta}|\Gamma^j\partial\chi(t)|_\infty\qquad \text{and}$$
$$ \sum_{|j|\le m\atop\partial=\partial_\alpha,\partial_\beta}|\Gamma^j\partial\frak z(t)|_\infty\lesssim \sum_{|j|\le m\atop\partial=\partial_\alpha,\partial_\beta}|\Gamma^j\partial\chi(t)|_\infty$$
Applying to \eqref{256}, we obtain \eqref{lilzc}.

Step 2. We prove \eqref{liuc} for $2\le m\le l-1$. The argument is similar to Step 1.

Starting from \eqref{uchi}, using \eqref{commuteth}, \eqref{gbarh},  \eqref{comjp}, \eqref{comgk} and Propositions~\ref{propcauchy1}, ~\ref{propcauchy2},  ~\ref{propsobolev}, we have 
\begin{equation*}
\begin{aligned}
|\Gamma^m(\partial_\alpha \overline u+&\partial_\alpha\frak v)(t)|_\infty\lesssim 
\sum_{|j|\le m+2\atop\partial=\partial_\alpha,\partial_\beta} (\|\Gamma^j\partial\lambda(t)\|_2+\|\Gamma^j\partial\frak z(t)\|_2)\sum_{|j|\le m\atop\partial=\partial_\alpha,\partial_\beta}|\Gamma^j\partial u(t)|_\infty\\&
+\sum_{|j|\le m\atop\partial=\partial_\alpha,\partial_\beta} (|\Gamma^j\partial\lambda(t)|_\infty+|\Gamma^j\partial\frak z(t)|_\infty)\sum_{|j|\le m+2\atop\partial=\partial_\alpha,\partial_\beta}\|\Gamma^j\partial u(t)\|_2
\end{aligned}
\end{equation*}
Argue similarly for $\partial_\beta u$ and using \eqref{lilzc} and Proposition~\ref{propl2est}, we obtain for $2\le m\le l-1$ and $M_0$ small enough,
\begin{equation*}
\sum_{|j|\le m\atop \partial=\partial_\alpha,\partial_\beta}|\Gamma^j\partial u(t)|_\infty\lesssim
\sum_{|j|\le m\atop \partial=\partial_\alpha,\partial_\beta}(|\Gamma^j\partial \chi(t)|_\infty+|\Gamma^j\partial \frak v(t)|_\infty).
\end{equation*}

Step 3. We prove \eqref{liwc} and \eqref{liatbc} for $2\le m\le l-2$. 

From \eqref{aw}, $w=(A-1)\mathcal N+\mathcal N-e_3$, using \eqref{nlambda}, \eqref{lilzc} 
and Proposition~\ref{propl2est}, we get
\begin{equation}\label{259}
\sum_{|j|\le m}|\Gamma^j w(t)|_\infty\lesssim
\sum_{|j|\le m\atop\partial=\partial_\alpha,\partial_\beta}(|\Gamma^j(A-1)(t)|_\infty+|\Gamma^j\partial\lambda(t)|_\infty) 
\lesssim  \sum_{|j|\le m\atop\partial=\partial_\alpha,\partial_\beta}(|\Gamma^j(A-1)(t)|_\infty+|\Gamma^j\partial\chi(t)|_\infty).
\end{equation}
On the other hand, from \eqref{A}, using similar argument as in Steps 1 and 2 and using \eqref{lilzc}, \eqref{liuc}, Proposition~\ref{propl2est}, we have for $M_0$ small enough, $2\le m\le l-2$,
\begin{equation}\label{260}
\begin{aligned}
\sum_{|j|\le m}&|\Gamma^j(A-1)(t)|_\infty\lesssim E^{1/2}_{m+2}(t)\sum_{|j|\le m}|\Gamma^j w(t)|_\infty\\&\quad+ 
E^{1/2}_{m+2}(t) (\sum_{|j|\le m+1\atop\partial=\partial_\alpha,\partial_\beta}(|\Gamma^j \partial\chi(t)|_\infty+|\Gamma^j \partial\frak v(t)|_\infty)
\end{aligned}
\end{equation}
Combining \eqref{259}, \eqref{260}, we obtain for $M_0$ small enough,
\begin{equation}\label{259*}
\sum_{|j|\le m}|\Gamma^j(A-1)(t)|_\infty\lesssim E^{1/2}_{m+2}(t) (\sum_{|j|\le m+1\atop\partial=\partial_\alpha,\partial_\beta}(|\Gamma^j \partial\chi(t)|_\infty+|\Gamma^j \partial\frak v(t)|_\infty)
\end{equation}
\eqref{liwc} therefore follows from \eqref{259}, \eqref{259*}. Using \eqref{b}, the estimate for
$\sum_{|j|\le m}|\Gamma^j(\partial_t+b\cdot\nabla_\bot)b(t)|_\infty$  can be obtained similarly. We omit the details.

Step 4. We prove \eqref{libc}. We first put the terms $[\partial_t+b\cdot\nabla_\bot,\mathcal H]\frak z$, $[\partial_t+b\cdot\nabla_\bot,\mathcal K]\frak z$ in \eqref{b} in an appropriate form for carrying out our estimate. We know $2\frak z e_3=\lambda^*+\chi$
and $\lambda^*$ (or $\chi$) is the trace of an analytic function in $\Omega(t)$ (or $\Omega(t)^c$) respectively.  
Using \eqref{commuteth}, and Proposition~\ref{propon}, We have
\begin{equation}\label{261}
\begin{aligned}
2[\partial_t&+b\cdot\nabla_\bot,\mathcal H]\frak z e_3=[\partial_t+b\cdot\nabla_\bot,\mathcal H]\lambda^*+[\partial_t+b\cdot\nabla_\bot,\mathcal H]\chi
\\&=u\cdot\nabla_\xi^+\lambda^* -\mathcal H(u\cdot\nabla_\xi^+\lambda^*)-
u\cdot\nabla_\xi^-\chi -\mathcal H(u\cdot\nabla_\xi^-\chi)
\end{aligned}
\end{equation}
Notice that $[\partial_t+b\cdot\nabla_\bot,\mathcal K]\frak z=\Re [\partial_t+b\cdot\nabla_\bot,\mathcal H]\frak z$. Now using \eqref{b}, and Proposition~\ref{propsobolev}, Lemma~\ref{lemmadf}, Proposition~\ref{propl2est}, \eqref{lilzc}, we obtain
$$\sum_{|j|\le m}|\Gamma^jb(t)|_\infty
\lesssim E^{1/2}_{m+2}(t)\sum_{|j|\le m+2\atop\partial=\partial_\alpha,\partial_\beta}|\Gamma^j\partial\chi(t)|_\infty.
$$
Step 5. We prove \eqref{lilzc1}.  

Let $l+1\le m\le q-2$
 and $\partial=\partial_\alpha,\partial_\beta$. 
Applying Propositions~\ref{propsobolev},~\ref{propcauchy1},  ~\ref{propcauchy3} with $r=t$,  and \eqref{zlcl2},\eqref{lilzc} to \eqref{255}, we get the estimate for $\partial\lambda$:
\begin{equation*}|\Gamma^m\partial\lambda(t)|_\infty\lesssim |\Gamma^m\partial\chi(t)|_\infty+E^{1/2}_{m+2}(t)\{\frac1t+ \sum_{|j|\le [\frac{m+2}2]+1\atop\partial=\partial_\alpha,\partial_\beta}|\Gamma^j\partial\chi(t)|_\infty(1+\ln t)\}\end{equation*}
 Similar argument gives the estimate for $\partial\lambda^*$.
The estimate for $\partial \frak z$ follows since $\frak z=\lambda^*+\chi$. 

Step 6. \eqref{liuc1} is obtained similarly  by using \eqref{uchi}. We omit the details. 

\end{proof}

For the $L^2$, $L^\infty$ estimates of $\frac{\frak a_t}{\frak a}\circ k^{-1}$, we have the following Lemma.
\begin{lemma}\label{lemmaih}
Let $f$ be real valued such that 
\begin{equation}\label{eqih}
(I-\mathcal H)(f\mathcal N)=g,
\end{equation}
$t\in [0, T]$ be the time when \eqref{assumm} holds. There exists a $M_0>0$, such that if $M\le M_0$,

1. for  $0\le m\le l+2$, we have
\begin{equation}\label{ih1}
\sum_{|j|\le m}\|\Gamma^j f(t)\|_{2}\lesssim \sum_{|j|\le m} \|\Gamma^j g(t)\|_{2}.
\end{equation}
2. for $l+2< m\le q$,
\begin{equation}\label{ih2}
\sum_{|j|\le m}\|\Gamma^j f(t)\|_{2}\lesssim \sum_{|j|\le l+2} \|\Gamma^j g(t)\|_{2}\sum_{|j|\le m\atop\partial=\partial_\alpha,\partial_\beta} \|\Gamma^j \partial\lambda(t)\|_{2}     +\sum_{|j|\le m} \|\Gamma^j g(t)\|_{2}.
\end{equation}
3. for $0\le m\le l$,
\begin{equation}\label{ih3}
 \sum_{|j|\le m}|\Gamma^j f(t)|_{\infty}\lesssim \sum_{|j|\le m} |\Gamma^k g(t)|_{\infty}+\sum_{|j|\le 1\atop\partial=\partial_\alpha,\partial_\beta}| \Gamma^j\partial\lambda(t)|_{\infty}\sum_{|j|\le m+2}\|\Gamma^j g(t)\|_{2}
 \end{equation}

\end{lemma}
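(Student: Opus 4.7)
The strategy is to dot the vector equation $(I-\mathcal H)(f\mathcal N)=g$ with $\mathcal N(\alpha,\beta)$ to obtain a scalar Fredholm-type equation $(I+T)\phi=g\cdot\mathcal N$ for $\phi:=f|\mathcal N|^2$, in which $T$ is a finite sum of Calder\'on commutator operators with small operator norm, and then to invert $(I+T)$ by Neumann series and propagate through $\Gamma^j$.

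Since $f$ is real and $\mathcal N$ is a Clifford vector, $\mathcal N'(f'\mathcal N')=f'\mathcal N'\mathcal N'=-f'|\mathcal N'|^2$, so $\mathcal H(f\mathcal N)=-\iint K(\zeta'-\zeta)f'|\mathcal N'|^2\,d\alpha' d\beta'$ is itself a vector. Dotting with $\mathcal N(\alpha,\beta)$ converts the equation into
\begin{equation*}
(I+T)\phi = g\cdot\mathcal N, \qquad T\phi(\alpha,\beta) := \iint [K(\zeta'-\zeta)\cdot\mathcal N(\alpha,\beta)]\,\phi(\alpha',\beta')\,d\alpha' d\beta'.
\end{equation*}
Writing $\zeta=P+\lambda$ with $P=\alpha e_1+\beta e_2$ and using \eqref{nlambda} together with $\lambda\cdot e_3=\frak z$, one computes
\begin{equation*}
(\zeta'-\zeta)\cdot\mathcal N = (\frak z'-\frak z)+(P'-P)\cdot(\mathcal N-e_3)+(\lambda'-\lambda)\cdot(\mathcal N-e_3),
\end{equation*}
every summand carrying either a difference of $\frak z$ or $\lambda$, or a factor $\mathcal N-e_3$ which by \eqref{nlambda} is linear/quadratic in $\nabla_\bot\lambda$. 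Hence $T$ is a finite sum of operators of the form \eqref{defcauchy1} with odd kernels, and Proposition~\ref{propcauchy1} combined with Proposition~\ref{propsobolev} applied to the hypothesis \eqref{assumm} yields $\|T\phi\|_2\lesssim (\|\nabla_\bot\frak z\|_\infty+\|\nabla_\bot\lambda\|_\infty+\|\nabla_\bot\lambda\|_\infty^2)\|\phi\|_2\lesssim M_0\|\phi\|_2$. For $M_0$ sufficiently small, $(I+T)^{-1}$ exists on $L^2$ with $O(1)$ norm; since $|\mathcal N|^2 = 1+O(|\nabla_\bot\lambda|)$ stays uniformly comparable to $1$, we get $\|f\|_2\lesssim\|g\|_2$, i.e.\ the $m=0$ case of \eqref{ih1}.

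For $1\le m\le l+2$, apply $\Gamma^j$, $|j|\le m$, to obtain $(I+T)\Gamma^j\phi=\Gamma^j(g\cdot\mathcal N)-[\Gamma^j,T]\phi$. By \eqref{comjp} together with \eqref{comgk} and the argument of Proposition~\ref{propcomgh}, the commutator $[\Gamma^j,T]$ is a sum of integral operators in which at least one $\Gamma$ has landed on the kernel, producing factors $\Gamma^{j_1}\lambda$, $\Gamma^{j_1}\frak z$, or $\Gamma^{j_1}\mathcal N$ paired against $\Gamma^{j_2}\phi$ with $|j_1|+|j_2|\le m$ and $|j_1|\ge 1$; Propositions~\ref{propcauchy1} and \ref{propcauchy2} apply to each such summand. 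Since $m\le l+2$, Sobolev embedding on \eqref{assumm} places at least one $\lambda$-factor in $L^\infty$ with norm $\lesssim M_0$, so $\|[\Gamma^j,T]\phi\|_2\lesssim M_0\sum_{|k|\le m}\|\Gamma^k\phi\|_2$; induction on $m$ and absorption then give \eqref{ih1}. For $l+2<m\le q$, the one distribution of derivatives not controllable by Sobolev is when essentially all $m$ derivatives land on $\lambda$, yielding $\sum_{|j_1|\le m}\|\Gamma^{j_1}\partial\lambda\|_2\cdot\|\phi\|_\infty$; bounding $\|\phi\|_\infty\lesssim\sum_{|k|\le l+2}\|\Gamma^k g\|_2$ via Proposition~\ref{propsobolev} and the already-established \eqref{ih1} produces \eqref{ih2}.

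The $L^\infty$ estimate \eqref{ih3} is obtained by the same Neumann/commutator scheme, this time using Proposition~\ref{propcauchy3} to control $T$ and its $\Gamma$-derivatives in $L^\infty$; the extra term $|\Gamma^j\partial\lambda|_\infty\cdot\|\Gamma^j g\|_2$ records the situation where a couple of derivatives must hit the kernel while the rest land on $g$ in $L^\infty$ or $L^2$. The principal obstacle is the meticulous book-keeping of derivative distribution in $[\Gamma^j,T]$: each summand must be identified as a Calder\'on commutator operator so that Propositions~\ref{propcauchy1}-\ref{propcauchy3} apply, and the smallness of $M_0$ must be preserved all the way so that the Neumann inversion and the induction on $m$ close simultaneously.
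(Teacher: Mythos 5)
Your reduction is genuinely different from the paper's. The paper applies $\Gamma^j$ to the vector equation to get $(I-\mathcal H)((\Gamma^j f)\mathcal N)=R$, conjugates by $e_3$ using the reality of $f$ to obtain $(I+\overline{\mathcal H})((\Gamma^j f)\overline{\mathcal N})=\overline R$, and then combines these two into the identity (2.47), which isolates $2\Gamma^j f\,e_3$ on the left with right-hand side consisting of pointwise small multiples of $\Gamma^j f$, singular operators applied to $\text{small}\cdot\Gamma^j f$, the weakly singular operator $(\overline{\mathcal H}-\mathcal H)$ via \eqref{hbarh}, and the commutator remainder $R-\overline R$. You instead dot the original equation with $\mathcal N(\alpha,\beta)$; since $\mathcal N'^2=-|\mathcal N'|^2$ and $f$ is real, $\mathcal H(f\mathcal N)=-\iint K f'|\mathcal N'|^2$ is a vector, and dotting gives the scalar Fredholm equation $(I+\mathcal K^*)\phi=g\cdot\mathcal N$ for $\phi=f|\mathcal N|^2$, which you then invert by Neumann series after noting the smallness of $\mathcal K^*$ via the decomposition $(\zeta'-\zeta)\cdot\mathcal N=(\frak z'-\frak z)+(P'-P)\cdot(\mathcal N-e_3)+(\lambda'-\lambda)\cdot(\mathcal N-e_3)$. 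Both routes exploit the same two structural facts — the reality of $f$ and the near-verticality of $\mathcal N$ — but the paper's identity works directly with $\Gamma^j f$ while yours works with $\Gamma^j(f|\mathcal N|^2)$ and needs one more Leibniz step at the end; in exchange your argument is conceptually tighter (a single scalar Fredholm equation and its Neumann inverse) and avoids the two-sided $e_3$-conjugation bookkeeping. For \eqref{ih1} and \eqref{ih2} your scheme is sound: the $\|\mathcal K^*\|_{L^2\to L^2}\lesssim M_0$ estimate is correct, and the commutator $[\Gamma^j,T]$ analysis via \eqref{comjp}, \eqref{comgk}, and Propositions 2.7, 2.8 goes through.

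Where you are too quick is the $L^\infty$ estimate \eqref{ih3}. A Neumann iteration in $L^\infty$ does not close: Proposition 2.9 gives, for the Calder\'on parts of $\mathcal K^*$, a bound of the form $(\|\nabla A\|_\infty+\|\nabla^2 A\|_\infty)(\|\phi\|_\infty+\|\nabla\phi\|_\infty)+\cdots$, so each application of $T$ introduces a $\|\nabla\phi\|_\infty$ term, and iterating $T$ would cost an unbounded number of derivatives. The correct way to proceed from your scalar equation is a single rearrangement $\phi=g\cdot\mathcal N-\mathcal K^*\phi$: apply Proposition 2.9 once with an explicit choice of $r$, absorb the small multiple of $\|\phi\|_\infty$ on the left, and then control the residual $\|\nabla\Gamma^j\phi\|_\infty$ and the far-field $\|\Gamma^j\phi\|_2$ terms through Proposition 2.11 and the already-proved $L^2$ bound \eqref{ih1} — not by iteration. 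Your sentence that the extra $L^2$ term ``records the situation where a couple of derivatives must hit the kernel'' gestures at this but does not state the essential point that the derivative loss in \eqref{cauchy5} forces you out of a Neumann scheme and into a single absorption step backed by the $L^2$ theory. If you make that step explicit, and trace the number of derivatives more carefully (a naive Sobolev bound on $\|\nabla\Gamma^j\phi\|_\infty$ gives $\sum_{|k|\le m+3}\|\Gamma^k g\|_2$, one more than the $m+2$ in the statement, so the choice of $r$ and the use of the $m\ge 1$ prefactors must be exploited), the argument should close.
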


\begin{proof}
The proof follows similar idea as that of Lemma 3.8 in \cite{wu3}. From \eqref{eqih}, we have
\begin{equation}\label{271}
(I-\mathcal H)((\Gamma^j f)\,\mathcal N)=\Gamma^jg+[\Gamma^j, \mathcal H](f\mathcal N)-(I-\mathcal H)(\Gamma^j (f\mathcal N)-(\Gamma^j f)\mathcal N)
\end{equation}
Let $R=\Gamma^jg+[\Gamma^j, \mathcal H](f\mathcal N)-(I-\mathcal H)(\Gamma^j (f\mathcal N)-(\Gamma^j f)\mathcal N)$. Multiplying $e_3$ both left and right to both sides of \eqref{271}, we obtain
\begin{equation}\label{272}
(I+\overline{\mathcal H})((\Gamma^j f)\,\overline{\mathcal N})=\overline R
\end{equation}
Here we used the fact that $f$ is real valued. Therefore
\begin{equation}\label{273}
\begin{aligned}
2\Gamma^j f e_3&=\Gamma^j f(\overline{\mathcal N}+e_3)+\Gamma^j f(-{\mathcal N}+e_3)+\mathcal H(
\Gamma^j f(\overline{\mathcal N}+{\mathcal N}))\\&+(\overline{\mathcal H}-{\mathcal H})(\Gamma^j f\overline{\mathcal N})+R-\overline R
\end{aligned}
\end{equation}
Lemma~\ref{lemmaih} is then obtained by applying \eqref{comjp}, Lemma~\ref{lemma 1.2}, Proposition~\ref{propcomgh}, \eqref{hbarh}, Propositions~\ref{propcauchy2}, ~\ref{propsobolev}~\ref{propl2est} to \eqref{273} and by an inductive argument. We omit the details.

\end{proof}

Let $\mathcal K^*$ be the adjoint of the double layered potential operator $\mathcal K$:
\begin{equation}\label{dualdoublelayer}
\mathcal K^*f(\alpha,\beta,t)=\iint \mathcal N\cdot K(\zeta'-\zeta) f(\alpha',\beta',t)\,d\alpha'\,d\beta'
\end{equation}
\begin{proposition}\label{propdn}
Let $f(\cdot,t)$ be a real valued function on $\mathbb R^2$. Then 1.
\begin{equation}\label{dn}
(I\pm\mathcal K^*)(\mathcal N\cdot \nabla_\xi^\pm f)=\pm \iint (\mathcal N\times K(\zeta'-\zeta)) \cdot
(\zeta'_{\beta'}\partial_{\alpha'}f'-\zeta'_{\alpha'}
\partial_{\beta'}f')\,d\alpha'd\beta' .
\end{equation}
 2. At $t\in [0,T]$ when \eqref{assumm} holds, \begin{equation}\label{dn1}
 \|\mathcal N\cdot \nabla_\xi^+ f(t)+\mathcal N\cdot \nabla_\xi^- f(t)\|_2\lesssim \sum_{\partial=\partial_\alpha,\partial_\beta}  (|\partial \lambda(t)|_\infty+|\partial\frak z(t)|_\infty)\sum_{\partial=\partial_\alpha,\partial_\beta}\|\partial f(t)\|_2
 \end{equation}
 3. At $t\in [0,T]$ when \eqref{assumm} holds,
 \begin{equation}\label{dn2}
 \|\mathcal N\cdot \nabla_\xi^+ f(t)+\mathcal N\cdot \nabla_\xi^- f(t)\|_2\lesssim \sum_{1\le j\le 3\atop\partial=\partial_\alpha,\partial_\beta}  (|\partial ^j\lambda(t)_\infty+|\partial^j\frak z(t)|_\infty)\|f(t)\|_2
 \end{equation}
 \end{proposition}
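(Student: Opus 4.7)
For Part 1, since $f$ is real-valued, its harmonic extension $f^\hbar$ to $\Omega(t)$ (resp.\ $\Omega(t)^c$) has Clifford analytic gradient, so after applying $U_k^{-1}$ the trace satisfies $\nabla^\pm_\xi f=\pm\mathcal H\nabla^\pm_\xi f=\pm\iint K(\zeta'-\zeta)\mathcal N'\nabla'^\pm_\xi f\,d\alpha'd\beta'$. Taking the Clifford dot product with $\mathcal N$ and using the vector identity $\mathcal N\cdot(KBC)=-(\mathcal N\cdot K)(B\cdot C)+(\mathcal N\times K)\cdot(B\times C)$ (which follows from $BC=-B\cdot C+B\times C$ combined with $X\cdot(Y\times Z)=(X\times Y)\cdot Z$) applied to $B=\mathcal N'$, $C=\nabla'^\pm_\xi f$, the first term reproduces $\mp\mathcal K^{*}(\mathcal N\cdot\nabla^\pm_\xi f)$ which I move to the left side. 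The second term is converted to the stated form via the surface identity $\mathcal N'\times\nabla'^\pm_\xi f=\zeta'_{\beta'}\partial_{\alpha'}f-\zeta'_{\alpha'}\partial_{\beta'}f$, which comes from $(A\times B)\times C=B(A\cdot C)-A(B\cdot C)$ applied in Lemma~\ref{lemmavector} together with the chain rule $\zeta'_{\alpha'}\cdot\nabla'^\pm_\xi f=\partial_{\alpha'}f$. This yields \eqref{dn}.

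For Part 2, I add the $+$ and $-$ versions of \eqref{dn}, whose right-hand sides cancel, giving the key identity $\mathcal N\cdot\nabla^+_\xi f+\mathcal N\cdot\nabla^-_\xi f=-\mathcal K^{*}(\mathcal N\cdot\nabla^+_\xi f-\mathcal N\cdot\nabla^-_\xi f)$. It then suffices to establish two $L^2$ bounds. First, $\|\mathcal K^{*} h\|_2\lesssim(|\partial\lambda|_\infty+|\partial\frak z|_\infty)\|h\|_2$: decompose $\mathcal N\cdot K(\zeta'-\zeta)=e_3\cdot K(\zeta'-\zeta)+(\mathcal N-e_3)\cdot K(\zeta'-\zeta)$, observe that $e_3\cdot K(\zeta'-\zeta)=-\frac{2}{\omega_3}(\frak z'-\frak z)/|\zeta'-\zeta|^3$ is a Cauchy kernel of the form in Proposition~\ref{propcauchy1} with $A_1=\frak z$ (yielding the factor $|\partial\frak z|_\infty$), and use $\mathcal N-e_3=\partial_\alpha\lambda\times e_2-\partial_\beta\lambda\times e_1+\partial_\alpha\lambda\times\partial_\beta\lambda$ from \eqref{nlambda} to extract $|\partial\lambda|_\infty$ for the remaining piece. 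Second, \eqref{dn} combined with Proposition~\ref{propcauchy1} applied to its right-hand side (the kernel $(\mathcal N\times K)(\zeta'-\zeta)$ times the bounded factors $\zeta'_{\beta'},\zeta'_{\alpha'}$ is a standard Cauchy kernel) yields $\|\mathcal N\cdot\nabla^\pm_\xi f\|_2\lesssim\|\partial f\|_2+\|\mathcal K^{*}(\mathcal N\cdot\nabla^\pm_\xi f)\|_2$; for $M_0$ small the last term is absorbed, giving $\|\mathcal N\cdot\nabla^\pm_\xi f\|_2\lesssim\|\partial f\|_2$. The two bounds combine via the key identity to produce \eqref{dn1}.

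For Part 3, I again use the reduction of Part 2, but this time I integrate by parts in $(\alpha',\beta')$ to shift the $f'$ derivative off $f'$ inside the right-hand side of \eqref{dn}. Writing $g_1=\iint(\mathcal N\times K)\cdot(\zeta'_{\beta'}\partial_{\alpha'}f'-\zeta'_{\alpha'}\partial_{\beta'}f')$ and integrating by parts, the derivative falls on $\mathcal N\times K(\zeta'-\zeta)$, on $\zeta'_{\alpha'}$, or on $\zeta'_{\beta'}$; using $\partial_{\alpha'}K(\zeta'-\zeta)=(\zeta'_{\alpha'}\cdot\nabla)K(\zeta'-\zeta)$ (a Cauchy-type singular kernel handled by Propositions~\ref{propcauchy1} and \ref{propcauchy2}) and $\partial_{\alpha'}\zeta'_{\beta'}=\lambda'_{\alpha'\beta'}$, each configuration is controlled by a singular integral with coefficient involving at most three derivatives of $\lambda$ and $\frak z$ in $L^\infty$. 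The small-norm bound on $\mathcal K^{*}$ from Part 2 closes the argument under the smallness assumption \eqref{assumm}, giving \eqref{dn2}.

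The main technical obstacle is extracting the multiplicative smallness factor $|\partial\lambda|_\infty+|\partial\frak z|_\infty$ as a prefactor of $\|\partial f\|_2$ in Part 2: a crude $L^2$-boundedness of $\mathcal K^{*}$ alone would not suffice, so the careful decomposition of $\mathcal N\cdot K$ as a $\frak z$-difference Cauchy kernel plus a bounded multiplier times a kernel of size $1/|\zeta'-\zeta|^2$, together with the Coifman-McIntosh-Meyer type bound in Proposition~\ref{propcauchy1}, is essential. The second subtlety in Part 3 is organizing the integration by parts so that the resulting Cauchy kernels remain of the admissible type, which is ensured by the chord-arc condition in \eqref{assumm}.
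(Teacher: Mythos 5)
Parts 1 and 2 of your proposal are sound and run close to the paper's own argument. For Part 1 the paper merely cites identity (3.13) of \cite{wu2}, whereas you reconstruct it directly from the Clifford identity $\nabla^\pm_\xi f=\pm\mathcal H\nabla^\pm_\xi f$ and the decomposition $\mathcal N\cdot(K\mathcal N'C)=-(\mathcal N\cdot K)(\mathcal N'\cdot C)+(\mathcal N\times K)\cdot(\mathcal N'\times C)$; that derivation is correct and self-contained. For Part 2, your route (bound each $\mathcal N\cdot\nabla^\pm_\xi f$ separately via absorption, then compose with the small-norm operator $\mathcal K^*$) differs slightly in bookkeeping from the paper's, which subtracts the two cases to obtain the once-iterated identity \eqref{274} before applying Proposition~\ref{propcauchy1}; both routes need the same two $L^2$ facts, including the decomposition of $\mathcal N\cdot K(\zeta'-\zeta)$ that extracts the factor $|\partial\lambda|_\infty+|\partial\frak z|_\infty$, and both work.

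Part 3 has a genuine gap. You propose to integrate by parts directly in $g_1=\iint(\mathcal N\times K)\cdot(\zeta'_{\beta'}\partial_{\alpha'}f'-\zeta'_{\alpha'}\partial_{\beta'}f')$ and let the derivative fall on $K(\zeta'-\zeta)$, claiming the resulting kernel $(\zeta'_{\alpha'}\cdot\nabla)K(\zeta'-\zeta)$ is a Cauchy kernel covered by Propositions~\ref{propcauchy1}--\ref{propcauchy2}. It is not: a direct computation of $-[\mathcal N\times(\zeta'_{\alpha'}\cdot\nabla)K]\cdot\zeta'_{\beta'}+[\mathcal N\times(\zeta'_{\beta'}\cdot\nabla)K]\cdot\zeta'_{\alpha'}$ gives
$$\frac{2}{\omega_3}\Bigl(-\frac{\mathcal N\cdot\mathcal N'}{|\zeta'-\zeta|^3}+\frac{3(\mathcal N\cdot(\zeta'-\zeta))(\mathcal N'\cdot(\zeta'-\zeta))}{|\zeta'-\zeta|^5}\Bigr),$$
whose leading term $\mathcal N\cdot\mathcal N'/|\zeta'-\zeta|^3\approx |\zeta'-\zeta|^{-3}$ has no difference quotient in the numerator and is not integrable; the naive integration by parts drops boundary terms and produces a formally divergent expression. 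The paper's identity \eqref{275} avoids this by writing $\partial_{\alpha'}=-\partial_\alpha+(\partial_\alpha+\partial_{\alpha'})$, keeping the outer derivatives $\partial_\alpha,\partial_\beta$ in front of the whole inner integral (which is legitimate, since $(\partial_\alpha+\partial_{\alpha'})K(\zeta'-\zeta)=((\zeta'_{\alpha'}-\zeta_\alpha)\cdot\nabla)K$ has the admissible $|\zeta'-\zeta|^{-2}$ order). These outer derivatives are then fed into \eqref{274} through $\mathcal K^*$ via a second integration by parts, producing the kernel $\mathcal N\cdot[(\zeta'_{\alpha'}\cdot\nabla)K(\zeta'-\zeta)]$ inside $\mathcal K^*$; this is where the crucial observation $\mathcal N\cdot\zeta_\alpha=\mathcal N\cdot\zeta_\beta=0$ (which the paper explicitly flags and your proposal omits) kills the non-Cauchy part $\mathcal N\cdot\zeta'_{\alpha'}/|\zeta'-\zeta|^3$ by replacing it with the difference $\mathcal N\cdot(\zeta'_{\alpha'}-\zeta_\alpha)/|\zeta'-\zeta|^3$. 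You would also need to be explicit that the reduction used here is \eqref{274} with the $\mathcal K^{*2}$ absorption term, not the single identity $\text{sum}=-\mathcal K^*(\text{difference})$ from Part 2, since the latter does not reduce the problem to a bound by $\|f\|_2$.
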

 \begin{proof}
From definition, we know $ N\cdot \nabla_\xi^+ f$ and $ N\cdot \nabla_\xi^- f$  are the normal derivatives of the harmonic extensions of $f$ into $\Omega(t)$ and $\Omega(t)^c$ respectively. \eqref{dn} is basically the equality (3.13) in \cite{wu2}. Therefore
 $$\mathcal N\cdot \nabla_\xi^+ f+\mathcal N\cdot \nabla_\xi^- f=\mathcal K^*(-\mathcal N\cdot \nabla_\xi^+ f+\mathcal N\cdot \nabla_\xi^- f)\qquad\text{and}$$
 \begin{equation*}
 \begin{aligned}
 -\mathcal N\cdot \nabla_\xi^+ f&+\mathcal N\cdot \nabla_\xi^- f=\mathcal K^*(\mathcal N\cdot \nabla_\xi^+ f+\mathcal N\cdot \nabla_\xi^- f)\\&-2 
 \iint (\mathcal N\times K(\zeta'-\zeta)) \cdot
(\zeta'_{\beta'}\partial_{\alpha'}f'-\zeta'_{\alpha'}
\partial_{\beta'}f')\,d\alpha'd\beta' .
  \end{aligned}
  \end{equation*}
 This implies
 \begin{equation}\label{274}
 \begin{aligned}
 \mathcal N\cdot \nabla_\xi^+ f&+\mathcal N\cdot \nabla_\xi^- f=\mathcal K^{*2}(\mathcal N\cdot \nabla_\xi^+ f+\mathcal N\cdot \nabla_\xi^- f)\\&-2\mathcal K^* \iint (\mathcal N\times K(\zeta'-\zeta)) \cdot
(\zeta'_{\beta'}\partial_{\alpha'}f'-\zeta'_{\alpha'}
\partial_{\beta'}f')\,d\alpha'd\beta' .
  \end{aligned}
  \end{equation}
From \eqref{274}, \eqref{dn1} is straightforward with an application of Proposition~\ref{propcauchy1}. 

To prove \eqref{dn2}, we rewrite
\begin{equation}\label{275}
\begin{aligned}
&\iint (\mathcal N\times K(\zeta'-\zeta)) \cdot
(\zeta'_{\beta'}\partial_{\alpha'}f'-\zeta'_{\alpha'}
\partial_{\beta'}f')\,d\alpha'd\beta' \\&=\partial_\alpha \iint (\mathcal N\times K) \cdot
\zeta'_{\beta'}f'\,d\alpha'd\beta'-\partial_\beta \iint (\mathcal N\times K) \cdot
\zeta'_{\alpha'}f'\,d\alpha'd\beta'\\&
-\iint (\partial_\alpha +\partial_{\alpha'} )(\mathcal N\times K(\zeta'-\zeta)) \cdot
\zeta'_{\beta'}f'\,d\alpha'd\beta'\\&\quad+\iint(\partial_\beta+\partial_{\beta'}) (\mathcal N\times K(\zeta'-\zeta)) \cdot
\zeta'_{\alpha'}f'\,d\alpha'd\beta'\
\end{aligned}
\end{equation}
 Here we just used integration by parts. \eqref{dn2} now follows from \eqref{274}, \eqref{275} with a further application of integration by parts and an application of  Proposition~\ref{propcauchy1}. (Notice that $\mathcal N\cdot \zeta_\alpha=\mathcal N\cdot \zeta_\beta=0$)
 
\end{proof}

\section{Energy estimates}\label{energy}

In this section, we use the expanded set of vector fields $\Gamma=\{ \partial_t, \quad \partial_\alpha,  \quad \partial_\beta, \quad L_0= \frac 12 t\partial_t+\alpha\partial_\alpha+\beta\partial_\beta,\quad 
 \varpi=\alpha\partial_\beta-\beta\partial_\alpha-\frac12 e_3\}$ to construct energy functional and derive energy estimates for the water wave system \eqref{ww1}-\eqref{ww2}.
  Our strategy is to construct two energy estimates, the first one involves a full range of derivatives
 and we will show using Proposition~\ref{propgsobolev} that it grows no faster than $(1+t)^\epsilon$ provided the energy involving some lower orders of derivatives is bounded by $c\epsilon^2$.  The second one involves the aforementioned lower orders of derivatives, and we will show it stays bounded by $c\epsilon^2$ for all time provided initially it is bounded by $\frac c2\epsilon^2$ and the energy involving the full range of derivatives does not grow faster than $(1+t)^\delta$ for some $\delta<1$. The estimates will be carried out using \eqref{cubic2}, \eqref{cubic3}.

 We first present the following basic energy estimates. The first one will be used to derive the energy estimate for the full range of derivatives, and second one the lower orders of derivatives.
\begin{lemma}[Basic energy inequality I] \label{propbasicenergy1}
Assume that  $\theta$ is  real valued and satisfying
\begin{equation}\label{301}
(\partial_t+b\cdot\nabla_\bot)^2\theta+A\mathcal N\cdot \nabla_\xi^+\theta=\bold G
\end{equation}
and $\theta$ is smooth and decays fast at spatial infinity. Let
\begin{equation}\label{302}
E(t)=\iint\frac1A|(\partial_t+b\cdot\nabla_\bot)\theta(\alpha,\beta,t)|^2+\theta (\mathcal N\cdot \nabla_\xi^+)
\theta(\alpha,\beta,t)\,d\alpha\,d\beta
\end{equation}
Then
\begin{equation}\label{basic1}
\frac{dE}{dt}\le\iint \frac2A\bold G\,(\partial_t+b\cdot\nabla_\bot)\theta\,d\alpha d\beta+(\|\frac{\frak a_t}{\frak a}\circ k^{-1}\|_{L^\infty} +2\|\nabla \bold v(t)\|_{L^\infty(\Omega(t)})E(t)
\end{equation}
\end{lemma}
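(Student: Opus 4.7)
The plan is to pull everything back to Lagrangian coordinates, where the material derivative $\partial_t + b\cdot\nabla_\bot$ becomes simply $\partial_t$ and the geometric quantities carry no time-dependent Jacobians. Setting $\tilde\theta=\theta\circ k$ and using $d\alpha'd\beta' = J(k)\,d\alpha d\beta$, $A\circ k = \frak a J(k)$, together with the identity $(\mathcal N\circ k) J(k) = N = \xi_\alpha\times\xi_\beta$ (which simply expresses that $\mathcal N\,d\alpha'd\beta'$ and $N\,d\alpha d\beta$ represent the same vector area element on $\Sigma(t)$), the energy reads
\begin{equation*}
E(t) = \iint \tfrac1{\frak a}(\partial_t\tilde\theta)^2\,d\alpha d\beta + \iint \tilde\theta\,N\cdot\nabla_\xi^+\tilde\theta\,d\alpha d\beta,
\end{equation*}
and equation \eqref{301} becomes $\partial_t^2\tilde\theta + \frak a\,N\cdot\nabla_\xi^+\tilde\theta = \tilde{\bold G}:=\bold G\circ k$ in Lagrangian coordinates.

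I would differentiate each summand of $E$ separately. The first summand yields, after substituting the equation for $\partial_t^2\tilde\theta$,
\begin{equation*}
\tfrac{d}{dt}\iint \tfrac{(\partial_t\tilde\theta)^2}{\frak a}\,d\alpha d\beta = -\iint \tfrac{\frak a_t}{\frak a^2}(\partial_t\tilde\theta)^2 + 2\iint \tfrac{1}{\frak a}(\partial_t\tilde\theta)\tilde{\bold G} - 2\iint (\partial_t\tilde\theta)\,N\cdot\nabla_\xi^+\tilde\theta.
\end{equation*}
For the second summand I would use Green's identity, together with $\Delta \tilde\theta^\hbar = 0$ and decay of $\tilde\theta^\hbar$ at infinity, to rewrite it as the Dirichlet integral $\iint_{\Omega(t)}|\nabla\tilde\theta^\hbar|^2\,dV$, and then apply Reynolds transport to the moving domain $\Omega(t)$. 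Since the fluid velocity $\bold v$ is divergence-free, only the material-derivative piece survives. Commuting $\partial_t + \bold v\cdot\nabla$ past $\nabla$ produces a lower-order commutator $-(\nabla\bold v)\cdot\nabla\tilde\theta^\hbar$; integration by parts on the leading piece uses harmonicity, and the boundary term is computed via $(\partial_t+\bold v\cdot\nabla)\tilde\theta^\hbar|_{\Sigma(t)} = \partial_t\tilde\theta$. This gives
\begin{equation*}
\tfrac{d}{dt}\iint_{\Omega(t)}|\nabla\tilde\theta^\hbar|^2\,dV = 2\iint (\partial_t\tilde\theta)\,N\cdot\nabla_\xi^+\tilde\theta - 2\iint_{\Omega(t)}\nabla\tilde\theta^\hbar\cdot(\nabla\bold v)\nabla\tilde\theta^\hbar\,dV.
\end{equation*}

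Adding the two contributions, the cross terms $\pm 2\iint (\partial_t\tilde\theta)\,N\cdot\nabla_\xi^+\tilde\theta$ cancel exactly. Pulling the $\tilde{\bold G}$-term back to $(\alpha',\beta')$-coordinates recovers the claimed $\iint \tfrac2A\bold G\,(\partial_t+b\cdot\nabla_\bot)\theta\,d\alpha' d\beta'$, while the $\frak a_t/\frak a^2$ and $\nabla\bold v$ terms are bounded pointwise by $\|(\frak a_t/\frak a)\circ k^{-1}\|_{L^\infty}$ and $2\|\nabla\bold v\|_{L^\infty(\Omega(t))}$ times the two nonnegative summands of $E(t)$. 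The one delicate point is checking that the boundary flux produced by Green's identity and Reynolds transport cancels exactly against the $N\cdot\nabla_\xi^+$-term thrown off by substituting the equation; this cancellation is automatic in Lagrangian variables and is precisely what identifies $E(t)$ as the natural energy functional for the wave-type equation \eqref{301}.
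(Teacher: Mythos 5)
Your proposal reproduces the paper's own proof. The paper likewise pulls equation \eqref{301} and the energy \eqref{302} back through $k$ to Lagrangian coordinates, rewrites the second summand as the Dirichlet integral $\int_{\Omega(t)}|\nabla\theta^\hbar|^2\,dV$, and differentiates the latter by introducing the fluid map $X(\cdot,t)$ with $J(X(t))=1$ (your ``Reynolds transport'' step), using the same commutator identity $D_t\nabla\theta^\hbar-\nabla D_t\theta^\hbar=-\sum_j\nabla\bold v_j\,\partial_{\xi_j}\theta^\hbar$ and the divergence theorem; the cancellation of the boundary flux against the $N\cdot\nabla_\xi^+$ term from substituting the equation is exactly what the paper computes.
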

\begin{proof}
Let $\theta^\hbar$ be the harmonic extension of $\theta$ to $\Omega(t)$. Make a change of coordinate to \eqref{301} and \eqref{302}, and use the Green's identity, we have
$$(\partial_t^2+\frak a N\cdot\nabla^+_\xi)(\theta\circ k)=\bold G\circ k\qquad\text {and}$$
$$E(t)=\iint\frac1{\frak a}|\partial_t(\theta\circ k)|^2\,d\alpha\,d\beta+\int_{\Omega(t)}|\nabla\theta^\hbar|^2\,dV $$
We know
$$\frac{d}{dt}\iint\frac1{\frak a}|\partial_t(\theta\circ k)|^2\,d\alpha\,d\beta=\iint \frac2{\frak a}\partial_t(\theta\circ k)\, \partial^2_t(\theta\circ k)-\frac{\frak a_t}{\frak a^2}|\partial_t(\Theta\circ k)|^2\,d\alpha\,d\beta.$$
To calculate $\frac{d}{dt}\int_{\Omega(t)}|\nabla\theta^\hbar|^2\,dV$,
we introduce the fluid map $X(\cdot,t): \Omega(0)\to \Omega(t)$  satisfying $\partial_tX(\cdot,t)=\bold v(\cdot,t)$, $X(\cdot,0)=I$. From the incompressibility of $\bold v$ we know the Jacobian of $X(\cdot,t)$: $J(X(t))=1$. Let $D_t=\partial_t+\bold v\cdot \nabla$, we know
\begin{equation}\label{304}
D_t\nabla \theta^\hbar-\nabla D_t\theta^\hbar=-\sum_{j=1}^3\nabla\bold v_j\partial_{\xi_j}\theta^\hbar
\end{equation}
Now applying the above calculation, we have
\begin{equation}\label{305}
\begin{aligned}
\frac{d}{dt}&\int_{\Omega(t)}|\nabla\theta^\hbar|^2\,dV=\frac{d}{dt}\int_{\Omega(0)}|\nabla\theta^\hbar(X(\cdot,t),t)|^2\,dV\\&
=2\int_{\Omega(0)}D_t\nabla\theta^\hbar\cdot\nabla\theta^\hbar(X(\cdot,t),t)\,dV=2\int_{\Omega(t)}D_t\nabla\theta^\hbar\cdot\nabla\theta^\hbar\,dV\\&
=2\int_{\Omega(t)}\nabla D_t\theta^\hbar\cdot\nabla\theta^\hbar\,dV-2\sum_{j=1}^3\int_{\Omega(t)}\nabla\bold v_j\partial_{\xi_j}\theta^\hbar\cdot\nabla\theta^\hbar\,dV\\&
=2\iint \partial_t(\theta\circ k) ( N\cdot \nabla_\xi^+)
(\theta\circ k)\,d\alpha\,d\beta
-2\sum_{j=1}^3\int_{\Omega(t)}\nabla \bold v_j\partial_{\xi_j}\theta^\hbar\cdot\nabla\theta^\hbar\,dV
\end{aligned}
\end{equation}
In the last step we used the divergence Theorem. So
\begin{equation}
\frac{dE}{dt}=\int \frac2{\frak a}\partial_t(\theta\circ k)\, \bold G\circ k-\frac{\frak a_t}{\frak a^2}|\partial_t(\Theta\circ k)|^2\,d\alpha\,d\beta-2\sum_{j=1}^3\int_{\Omega(t)}\nabla \bold v_j\partial_{\xi_j}\theta^\hbar\cdot\nabla\theta^\hbar\,dV
\end{equation}
Making a change of variable $U_k^{-1}$ and using the Green's identity gives us \eqref{basic1}.
\end{proof}

\begin{lemma}[Basic energy equality II]\label{propbasicenergy2}
Assume that $\Theta$ is a smooth $\mathcal C(V_2)$ valued function satisfying $ \Theta=-\mathcal H\Theta$, and
\begin{equation}\label{equationenergy}
((\partial_t+ b\cdot \nabla_\bot)^2-A \mathcal N\times \nabla)\Theta=G
\end{equation}
Let
\begin{equation}\label{308}
E(t)=\iint\frac1A|(\partial_t+ b\cdot \nabla_\bot)\Theta|^2-\Theta\cdot\{ (\mathcal N\times \nabla)\Theta\}
(\alpha,\beta, t)\,d\alpha\,d\beta\,\end{equation} 
Then
\begin{equation}\label{basic2}
\begin{aligned}
&\frac{dE}{dt}=\iint\{\frac2AG\cdot\{(\partial_t+ b\cdot \nabla_\bot)\Theta\}-\frac{\frak a_t}{\frak a}\circ k^{-1}\frac1A |(\partial_t+ b\cdot \nabla_\bot)\Theta|^2\}d\alpha d\beta\\&-
\iint\{ (\Theta\cdot(u_\beta\Theta_\alpha)-\Theta\cdot(u_\alpha\Theta_\beta))+\mathcal N\times\nabla\Theta\cdot [\partial_t+b\cdot\nabla_\bot, \mathcal H]\Theta\}\,d\alpha\,d\beta\\&+
\frac12\iint\{ (\mathcal  N\cdot \nabla_\xi^++\mathcal  N\cdot \nabla_\xi^-)\Theta\}\cdot [\partial_t+b\cdot\nabla_\bot, \mathcal H]\Theta\,d\alpha\,d\beta
\end{aligned}
\end{equation}
\end{lemma}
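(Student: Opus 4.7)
My plan is to exploit the constraint $\Theta=-\mathcal H\Theta$ to split the energy into a kinetic and a Dirichlet piece, and then to mimic the proof of Lemma~\ref{propbasicenergy1} but in the exterior domain. Writing $D:=\partial_t+b\cdot\nabla_\bot$, the starting observation is that each component $\Theta^\hbar_i$ is harmonic in $\Omega^c(t)$ (since $\mathcal D\Theta^\hbar=0$ there) and that applying Lemma~\ref{lemmadf} component-wise to $\Theta^\hbar$ gives the key algebraic identity $\mathcal N\cdot\nabla_\xi^-\Theta=(\mathcal N\times\nabla)\Theta$. Green's identity on $\Omega^c(t)$ then yields $-\iint\Theta\cdot(\mathcal N\times\nabla)\Theta\,d\alpha d\beta=\int_{\Omega^c(t)}|\nabla\Theta^\hbar|^2\,dV$, so that
\[
E(t)=\iint\frac1{A}|D\Theta|^2\,d\alpha\, d\beta+\int_{\Omega^c(t)}|\nabla\Theta^\hbar|^2\,dV.
\]
This makes $E$ manifestly positive and points to an ``exterior'' version of Lemma~\ref{propbasicenergy1}.

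For the kinetic piece I would use $\partial_t(D\Theta)=D^2\Theta-(b\cdot\nabla_\bot)D\Theta$, integrate the drift term by parts to pick up $\nabla_\bot\cdot b$, and combine with $\partial_t(1/A)$ via the identity $DA/A=\frak a_t/\frak a\circ k^{-1}+\nabla_\bot\cdot b$ (a consequence of $A\circ k=\frak a J(k)$ and the Lagrangian transport relation $D\log J(k)=\nabla_\bot\cdot b$). The $\nabla_\bot\cdot b$ contributions cancel, and after substituting $D^2\Theta=G+A(\mathcal N\times\nabla)\Theta$ from \eqref{equationenergy} one obtains $\iint\frac2A G\cdot D\Theta-\iint\frac{\frak a_t}{\frak a}\circ k^{-1}\frac1A|D\Theta|^2+2\iint(\mathcal N\times\nabla)\Theta\cdot D\Theta$. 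For the Dirichlet piece, Reynolds' transport theorem on $\Omega^c(t)$, whose interface moves with velocity $u$, together with Green's identity in the bulk (each $\Theta^\hbar_i$ harmonic) and the chain-rule relation $\partial_t\Theta^\hbar|_\Sigma=\partial_t\Theta-\nabla_\xi^-\Theta\cdot\zeta_t$ expresses the time derivative as $-2\iint\partial_t\Theta\cdot(\mathcal N\times\nabla)\Theta+2\iint(\nabla_\xi^-\Theta\cdot\zeta_t)\cdot(\mathcal N\times\nabla)\Theta-\iint|\nabla_\xi^-\Theta|^2\,u\cdot\mathcal N$ (after using $\mathcal N\cdot\nabla_\xi^-\Theta=(\mathcal N\times\nabla)\Theta$ in the surface terms). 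Writing $\partial_t\Theta=D\Theta-b\cdot\nabla_\bot\Theta$, $\zeta_t=u-b\cdot\nabla_\bot\zeta$, and using the chain-rule identity $\nabla_\xi^-\Theta\cdot\zeta_\alpha=\partial_\alpha\Theta$, both the $D\Theta\cdot(\mathcal N\times\nabla)\Theta$ cross terms and the $(b\cdot\nabla_\bot\Theta)\cdot(\mathcal N\times\nabla)\Theta$ pieces between the two halves cancel exactly.

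The main obstacle is the final algebraic identification of what survives: the expression
\[
2\iint(u\cdot\nabla_\xi^-\Theta)\cdot(\mathcal N\times\nabla)\Theta\,d\alpha\, d\beta-\iint|\nabla_\xi^-\Theta|^2\,u\cdot\mathcal N\,d\alpha\, d\beta
\]
must be matched with $-\iint(\Theta\cdot u_\beta\Theta_\alpha-\Theta\cdot u_\alpha\Theta_\beta)$ plus the two commutator lines of \eqref{basic2}. Here I would apply Proposition~\ref{propon} to the anti-analytic $\Theta$ to recognize $[D,\mathcal H]\Theta=-(I+\mathcal H)(u\cdot\nabla_\xi^-\Theta)$, then combine the Clifford vector identities \eqref{vector2}--\eqref{vector3} of Lemma~\ref{lemmavector}, the dual-Hilbert relation \eqref{hdualh} of Proposition~\ref{prophdual}, and once more $\mathcal N\cdot\nabla_\xi^-\Theta=(\mathcal N\times\nabla)\Theta$. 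The algebra is lengthy but structured: integration by parts on one piece of the quadratic form converts it into $-\iint(\Theta\cdot u_\beta\Theta_\alpha-\Theta\cdot u_\alpha\Theta_\beta)$, while the remainder assembles into the Dirichlet-Neumann jump $(\mathcal N\cdot\nabla_\xi^+-\mathcal N\cdot\nabla_\xi^-)\Theta$ paired against $[D,\mathcal H]\Theta$. Using $\mathcal N\cdot\nabla_\xi^-\Theta=(\mathcal N\times\nabla)\Theta$ one final time splits this jump into precisely the combination appearing on the last two lines of \eqref{basic2}, completing the proof.
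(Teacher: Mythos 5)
Your route is genuinely different from the paper's and, after tracing the steps, I believe it is correct. The paper proves \eqref{basic2} entirely on the surface in Lagrangian coordinates: it changes variables by $U_k$ so the energy becomes $\iint\frac1{\frak a}|(\theta\circ k)_t|^2-\theta\circ k\cdot(N\times\nabla)(\theta\circ k)$, differentiates under the integral (the Lagrangian $(\alpha,\beta)$-measure is static, so no bulk integral and no $\nabla_\bot\cdot b$ terms ever appear), substitutes $(\theta\circ k)_{tt}$ from the equation, and then projects $(\theta\circ k)_t=\tfrac12(I-\frak H)(\theta\circ k)_t-\tfrac12[\partial_t,\frak H]\theta\circ k$ to split $\iint\theta\cdot(N\times\nabla)\theta_t$ into a term that cancels and the two commutator/Dirichlet--Neumann lines. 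You instead stay in the $k$-coordinates, write the trace integral $-\iint\Theta\cdot(\mathcal N\times\nabla)\Theta$ as the exterior Dirichlet energy $\int_{\Omega^c(t)}|\nabla\Theta^\hbar|^2\,dV$, and use Reynolds' transport on $\Omega^c(t)$ together with the flow identity $(\partial_t+b\cdot\nabla_\bot)A/A=\frac{\frak a_t}{\frak a}\circ k^{-1}+\nabla_\bot\cdot b$ (which I checked; it follows from $A\circ k=\frak a\,J(k)$ and $\partial_t\ln J(k)=(\nabla_\bot\cdot b)\circ k$, using $k=h\circ k_0$ with $h$ the flow of $b$). I verified that all the cross and drift terms cancel exactly as you claim, leaving $\frac{dE}{dt}=\text{(line 1)}+2\iint(u\cdot\nabla_\xi^-\Theta)\cdot(\mathcal N\times\nabla)\Theta-\iint|\nabla_\xi^-\Theta|^2\,u\cdot\mathcal N$, and I checked via Proposition~\ref{propon} that $[\partial_t+b\cdot\nabla_\bot,\mathcal H]\Theta=-(I+\mathcal H)(u\cdot\nabla_\xi^-\Theta)$. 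Combining the last two lines of \eqref{basic2} with $\mathcal N\cdot\nabla_\xi^-\Theta=\mathcal N\times\nabla\Theta$ they do simplify to $-\iint\Theta\cdot(u_\beta\Theta_\alpha-u_\alpha\Theta_\beta)+\tfrac12\iint(\mathcal N\cdot\nabla_\xi^+-\mathcal N\cdot\nabla_\xi^-)\Theta\cdot[\partial_t+b\cdot\nabla_\bot,\mathcal H]\Theta$, so your final identification is of the right form and must hold (both sides equal $dE/dt$ minus line 1). What your sketch does not do is actually carry out that last, genuinely intricate, Clifford-algebra calculation --- you assert the integration-by-parts produces $-\iint\Theta\cdot(u_\beta\Theta_\alpha-u_\alpha\Theta_\beta)$ plus the jump pairing, but do not exhibit it; in the paper this is also the heart of the proof, and it is handled more economically because the projection $\Theta=\tfrac12(I-\mathcal H)\Theta$ is applied at a stage where the result reads off directly. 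What your approach buys: it makes the positivity $E(t)\ge 0$ manifest from the outset, it isolates the $\frac{\frak a_t}{\frak a}\circ k^{-1}$ term by a clean flow identity in the $k$-coordinates rather than by an extra change of variables, and it parallels Lemma~\ref{propbasicenergy1} so the two energy lemmas become symmetric. One small caveat to flag: Green's identity on the unbounded exterior $\Omega^c(t)$ and Reynolds' transport there both implicitly require $\Theta^\hbar$ to decay at infinity; the paper's statement of this lemma also omits the decay hypothesis (unlike Lemma~\ref{propbasicenergy1}, which states it), so this is not a defect of your argument relative to the paper's, but it should be recorded. Also, your appeal to Lemma~\ref{lemmadf} for $\mathcal N\cdot\nabla_\xi^-\Theta=\mathcal N\times\nabla\Theta$ is not quite on point --- that lemma concerns $\nabla F_i$ for scalar components --- but the identity you need is the one the paper itself invokes (footnote to this proof): for $\Phi=\pm\frak H\Phi$, $N\times\nabla\Phi=N\cdot\nabla_\xi^\pm\Phi$, which follows directly from $\mathcal D_\xi\Phi=0$.
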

\begin{proof}
By making a change of coordinates $U_k$ we know $\Theta$ satisfies
$$(\partial_t^2-\frak a N\times \nabla )\Theta\circ k=G\circ k\qquad\text{and}$$
$$E(t)=\iint\frac 1{\frak a}(\Theta\circ k)_t\cdot(\Theta\circ k)_t-\Theta\circ k\cdot\{(\xi_\beta\partial_\alpha-\xi_\alpha\partial_\beta) (\Theta\circ k)\}\,d\alpha\,d\beta$$
Therefore
\begin{equation}
\begin{aligned}
\frac{dE}{dt}&=\iint\{\frac 2{\frak a}(\Theta\circ k)_{t}\cdot(\Theta\circ k)_{tt}-\frac{\frak a_t}{{\frak a}^2}|(\Theta\circ k)_t|^2-\Theta\circ k\cdot\{(\xi_{t\beta}\partial_\alpha-\xi_{t\alpha}\partial_\beta) (\Theta\circ k)\}\\&-
(\Theta\circ k)_t\cdot\{(N\times\nabla) (\Theta\circ k)\}
-\Theta\circ k\cdot\{(N\times\nabla) (\Theta\circ k)_t\}\}\,d\alpha\,d\beta
\end{aligned}
\end{equation}
Now from the assumption $\Theta=-\mathcal H\Theta=\frac12(I-\mathcal H)\Theta$, we have $$(\Theta\circ k)_t=\frac12(I-\frak H)(\Theta\circ k)_t-\frac12 [\partial_t, \frak H]\Theta\circ k$$ 
and 
\begin{equation}\label{hdth}
[\partial_t, \frak H]\Theta\circ k=\frak H([\partial_t, \frak H]\Theta\circ k)\end{equation}
Using integration by parts, and the fact that for $\Phi$ satisfying $\Phi=\pm\frak H\Phi$, 
$N\times \nabla \Phi=N\cdot \nabla_\xi^\pm \Phi$,\footnote{ We know $\Phi=\pm\frak H\Phi$ implies $\Phi$ is analytic in  $\Omega(t)$ or $\Omega(t)^c$, i.e. $\mathcal D_\xi \Phi=0$, 
therefore $ N\times \nabla\Phi= N\cdot \nabla_\xi^\pm\Phi$.} and $N\cdot\nabla_\xi^\pm$ is self-adjoint, we have 
\begin{equation*}
\begin{aligned}
&
\iint\Theta\circ k\cdot\{(N\times\nabla) (\Theta\circ k)_t\}\,d\alpha\,d\beta \\&
=\frac12\iint\Theta\circ k\cdot\{(N\times\nabla)   (I-\frak H)(\Theta\circ k)_t\}\,d\alpha\,d\beta -\frac12\iint\Theta\circ k\cdot\{(N\times\nabla)     [\partial_t, \frak H]\Theta\circ k\}\,d\alpha\,d\beta\\&
= \frac12\iint(N\times\nabla)\Theta\circ k\cdot  \{ (I-\frak H)(\Theta\circ k)_t\}\,d\alpha\,d\beta -\frac12\iint(N\cdot\nabla_\xi^+)\Theta\circ k\cdot  \{   [\partial_t, \frak H]\Theta\circ k\}\,d\alpha\,d\beta\\&
=\iint(N\times\nabla)\Theta\circ k\cdot  \{(\Theta\circ k)_t\}\,d\alpha\,d\beta
+\iint(N\times\nabla)\Theta\circ k\cdot  \{  [\partial_t, \frak H]\Theta\circ k  \}\,d\alpha\,d\beta
 \\&-\frac12\iint(N\cdot\nabla_\xi^++N\cdot\nabla_\xi^-)\Theta\circ k\cdot  \{   [\partial_t, \frak H]\Theta\circ k\}\,d\alpha\,d\beta\\&
\end{aligned}
\end{equation*}
Sum up the above calculation and make a change of variable $U_k^{-1}$ gives us \eqref{basic2}.
\end{proof}

In what follows in this section we make the following assumptions on the solution. Let $l\ge 6$, $l+2\le q\le 2l$, $\xi=\xi(\alpha, \beta, t)$, $t\in [0,T]$ be a solution of the water wave system 
\eqref{ww1}-\eqref{ww2}. Assume that the mapping $k(\cdot, t):\mathbb R^2\to\mathbb R^2$ defined in \eqref{k} is a diffeomorphism and its Jacobian $J(k(t))>0$, for $t\in [0, T]$. Assume for $\partial=\partial_\alpha,\ \partial_\beta$, 
\begin{equation}\label{assum1}
\Gamma^j\partial \lambda, \ \Gamma^j\partial \frak z,\ \Gamma^j(\partial_t+b\cdot\nabla_\bot)\chi, \ \Gamma^j(\partial_t+b\cdot\nabla_\bot)\frak v\in C([0, T], L^2(\mathbb R^2)),\quad \text{for  } |j|\le q.
\end{equation}
and
\begin{equation}\label{assumm1}
\begin{aligned}
&\sup_{[0,T]}\sum_{|j|\le l+2\atop
\partial=\partial_\alpha,\partial_\beta}(\|\Gamma^j\partial \lambda(t)\|_2+\|\Gamma^j\partial \frak z(t)\|_2
+\|\Gamma^j\frak v(t)\|_2+\|\Gamma^j(\partial_t+b\cdot\nabla_\bot)\frak v(t)\|_2)\le M\\
&|\zeta(\alpha,\beta,t)-\zeta(\alpha', \beta', t)|\ge\frac 14(|\alpha-\alpha'|+|\beta-\beta'|)\qquad\text{for }\alpha,\beta,\alpha'\beta'\in \mathbb R
\end{aligned}
\end{equation}
where $0<M\le M_0$, $M_0$ is the constant such that all the estimates derived in subsection~\ref{relations} holds and such that $|A-1|\le \frac 12$.

We have the following 
\begin{lemma}\label{lemmaliest}
Let $2\le m\le \min\{2l-7, q-5\}$, $t\in [0, T]$. There exists $M_0>0$ sufficiently small, such that for $M\le M_0$,
\begin{equation}\label{ineqliest}
t\sum_{|i|\le m\atop\partial=\partial_\alpha,\partial_\beta}(|\partial\Gamma^i \chi(t)|_\infty+ |\partial\Gamma^i \frak v(t)|_\infty)\lesssim E_{5+m}^{1/2}(t)(1+ t \sum_{|i|\le [\frac {m+3}2]+2\atop\partial=\partial_\alpha,\partial_\beta}|\partial\Gamma^i \chi(t)|_\infty+ |\partial\Gamma^i \frak v(t)|_\infty)
\end{equation}
In particular, for $5\le m\le \min\{2l-11, q-5\}$, we have
\begin{equation}\label{li}
\sum_{|i|\le m\atop\partial=\partial_\alpha,\partial_\beta}(|\partial\Gamma^i \chi(t)|_\infty+ |\partial\Gamma^i \frak v(t)|_\infty)\lesssim \frac 1{1+t} E_{5+m}^{1/2}(t)
\end{equation}
\end{lemma}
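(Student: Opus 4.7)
The central idea is to apply the generalized Sobolev inequality of Proposition~\ref{propgsobolev} to each of $\Gamma^i\chi$ and $\Gamma^i\frak v$ with $|i|\le m$. This reduces the $L^\infty$ bound on $\partial\Gamma^i\phi$ (with $\phi\in\{\chi,\frak v\}$ and $\partial\in\{\partial_\alpha,\partial_\beta\}$) to controlling three families of $L^2$ quantities: $\|\Gamma^k\partial_t\Gamma^i\phi(t)\|_2$ and $\|\Gamma^k\partial\Gamma^i\phi(t)\|_2$ for $k\le 4$, and the quasilinear wave residual $t\sum_{k\le 3}\|\frak P\Gamma^k\Gamma^i\phi(t)\|_2$.

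The two non-wave families I would dispose of directly. Using Lemma~\ref{lemmacomgt} to exchange $\partial_t$ for the material derivative $\partial_t+b\cdot\nabla_\bot$ modulo commutators that are quadratic in $b$, and then invoking \eqref{zlcl2} and \eqref{comgtc} of Proposition~\ref{propl2est}, I obtain $\|\Gamma^k\partial_t\Gamma^i\phi(t)\|_2+\|\Gamma^k\partial\Gamma^i\phi(t)\|_2\lesssim E_{m+5}^{1/2}(t)$ for all $|i|+k\le m+4$, once $M\le M_0$ is small.

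The hard part is estimating the wave residual. Because $\frak P$ commutes with $\partial_t,\partial_\alpha,\partial_\beta,\varpi$ and $[L_0,\frak P]=-\frak P$ by \eqref{com0}, the commutator identity \eqref{comjp} reduces $\frak P\Gamma^k(\Gamma^i\phi)$ to a finite linear combination of $\Gamma^{j'}\frak P\phi$ with $|j'|\le k+|i|\le m+3$. I then split $\frak P\phi=\mathcal P\phi-(\mathcal P-\frak P)\phi$: the first piece is given explicitly by \eqref{cubic2} for $\phi=\chi$ and by \eqref{cubic3} for $\phi=\frak v$, hence is a sum of Cauchy-type integrals and bilinear terms genuinely cubic in the unknowns---in the $\frak v$ case the coefficient $(\frak a_t/\frak a)\circ k^{-1}$ appears and is controlled in $L^2$ from \eqref{at} via Lemma~\ref{lemmaih}; the second piece is read off from \eqref{371} and is a product of the quadratic-small quantities $b,\,A-1,\,\lambda$ with a first derivative of $\phi$. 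For every such product, after expanding $\Gamma^{j'}$ by Leibniz, I would place in $L^\infty$ the factor that carries at most $[(m+3)/2]$ of the vector fields, and the rest in $L^2$. The $L^\infty$ factor, once passed through Proposition~\ref{liest} (specifically \eqref{liwc}--\eqref{libc}), costs two additional derivatives on $\partial\chi,\partial\frak v$---this is precisely the origin of the index $[(m+3)/2]+2$ in the statement; the $L^2$ factor costs $E_{m+5}^{1/2}(t)$ by Proposition~\ref{propl2est}; the singular-kernel Cauchy integrals are handled by Propositions~\ref{propcauchy1}--\ref{propcauchy3}. Summing these contributions and plugging back into Proposition~\ref{propgsobolev} yields \eqref{ineqliest}.

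For the sharper bound \eqref{li} I would bootstrap in $m$. Write $X_m(t)=\sum_{|i|\le m,\,\partial}(|\partial\Gamma^i\chi|_\infty+|\partial\Gamma^i\frak v|_\infty)$. Using the paper's convention that $[s]$ denotes the largest integer strictly less than $s$, one has $[(5{+}3)/2]+2=5$ and $[(6{+}3)/2]+2=6$, so at $m=5$ and $m=6$ the inequality \eqref{ineqliest} reads $tX_m\le CE_{m+5}^{1/2}(t)(1+tX_m)$; since $E_{m+5}^{1/2}(t)\lesssim M_0$, choosing $M_0$ small enough absorbs the right-hand nonlinearity and produces $tX_m\lesssim E_{m+5}^{1/2}(t)$, which is \eqref{li}. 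For $m\ge 7$ one has $[(m+3)/2]+2<m$ strictly, and the induction hypothesis provides $X_{[(m+3)/2]+2}(t)\lesssim E_{\,\cdot}^{1/2}(t)/(1+t)$, which substituted back into \eqref{ineqliest} yields \eqref{li} at the current $m$.
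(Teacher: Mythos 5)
Your overall architecture is the right one and matches the paper: apply Proposition~\ref{propgsobolev} to $\Gamma^i\chi$ and $\Gamma^i\frak v$, dispose of the $\|\Gamma^k\partial_t\Gamma^i\phi\|_2$ and $\|\Gamma^k\partial\Gamma^i\phi\|_2$ families via Lemma~\ref{lemmacomgt} and Proposition~\ref{propl2est}, reduce $t\|\frak P\Gamma^k\phi\|_2$ through the split $\frak P\Gamma^k\phi=\Gamma^k\mathcal P\phi+[\mathcal P,\Gamma^k]\phi+(\frak P-\mathcal P)\Gamma^k\phi$, and finish \eqref{li} by a bootstrap in $m$. Your induction for \eqref{li} is also essentially what the paper does, modulo the observation that the base case requires $5+m\le l+2$ so that $E_{5+m}\lesssim M_0^2$ is actually available.

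However, there is a genuine gap in the treatment of the wave residual, specifically in estimating $\Gamma^k\mathcal P\chi$. The right-hand side of \eqref{cubic2} is not ``genuinely cubic'': its \emph{first} term, $\iint K(\zeta'-\zeta)(u-u')\times(\zeta'_{\beta'}\partial_{\alpha'}-\zeta'_{\alpha'}\partial_{\beta'})\overline{u'}$, is \emph{quadratic}. Applying $\Gamma^k$ and using \eqref{comgq} produces bilinear singular integrals
\[
Q_j=\iint K(\zeta'-\zeta)(\dot\Gamma^j u-\dot\Gamma'^j u')\times(\zeta'_{\beta'}\partial_{\alpha'}-\zeta'_{\alpha'}\partial_{\beta'})\Gamma'^{k-j}\bar u'\,d\alpha'd\beta',
\]
and for $j>[\frac k2]+1$ your prescription ``put the factor with fewer vector fields in $L^\infty$ and the rest in $L^2$'' cannot be carried out. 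The Coifman--Meyer bounds in Propositions~\ref{propcauchy1}--\ref{propcauchy2} do not let you place the integrand factor $\Gamma^{k-j}\bar u$ in $L^\infty$ while putting only $\|\dot\Gamma^j u\|_2$ in $L^2$: the $L^\infty$-type estimate (part~2 of those propositions) forces an extra gradient onto the $L^2$ factor, and, more seriously, even if it didn't, the resulting $L^\infty$ factor would be $|\Gamma^{k-j}\bar u|_\infty$ \emph{without} a spatial derivative, which has no time decay and therefore cannot close \eqref{ineqliest}. The decay encoded in Proposition~\ref{liest} lives only on $\partial\chi,\partial\frak v,\partial u$, etc.

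The paper resolves this with Proposition~\ref{propon}: since $u=\mathcal Hu$, one splits $\bar u=\tfrac12(I+\mathcal H)\bar u+\tfrac12(I-\mathcal H)\bar u$ inside $Q_j$ and converts each half into a projected \emph{pointwise product},
\[
Q_j=\tfrac12(I-\mathcal H)\big(\dot\Gamma^j u\cdot\nabla_\xi^+(I+\mathcal H)\Gamma^{k-j}\bar u\big)-\tfrac12(I+\mathcal H)\big(\dot\Gamma^j u\cdot\nabla_\xi^-(I-\mathcal H)\Gamma^{k-j}\bar u\big).
\]
Only after this identity can one legitimately place $\dot\Gamma^j u$ in $L^2$ (with no extra derivative) and the $\nabla_\xi^\pm(I\pm\mathcal H)\Gamma^{k-j}\bar u$ factor in $L^\infty$, where Lemma~\ref{lemmadf} reduces it to tangential derivatives which do decay. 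Note also that $(I+\mathcal H)\bar u=(\mathcal H-\overline{\mathcal H})\bar u$ is itself a quadratic quantity, which is exactly how the ``orthogonality of the projections'' discussed after Proposition~\ref{prop:cubic1} enters. This device is the load-bearing step of the lemma and is absent from your argument; without it the estimate of $Q_j$ for $j>[\frac k2]+1$ does not go through.
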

\begin{proof}
Let  $t\in [0, T]$, $i\le m\le \min\{2l-7, q-5\}$. 
From  Propositions~\ref{propgsobolev} and~\ref{propl2est}, we have
\begin{equation}
t(|\partial\Gamma^i \chi(t)|_\infty+ |\partial\Gamma^i \frak v(t)|_\infty)\lesssim E_{5+i}^{1/2}(t)+t\sum_{|k|\le 3+i}(\|\frak P\Gamma^k\chi(t)\|_2+\|\frak P\Gamma^k\frak v(t)\|_2)
\end{equation}
We estimate $\|\frak P\Gamma^k\chi(t)\|_2$ and $\|\frak P\Gamma^k\frak v(t)\|_2$ using \eqref{cubic2}, \eqref{cubic3}. We know for $\phi=\chi,\ \frak v$, 
$$\frak P\Gamma^k\phi=\Gamma^k\mathcal P\phi+[\mathcal P, \Gamma^k]\phi+(\frak P-\mathcal P)\Gamma^k\phi$$
Let $k\le 3+m\le \min\{2l-4, q-2\}$. We know $E_{2+l}(t)\lesssim M_0^2$. Using \eqref{comjp}, \eqref{commgp}, \eqref{371}, Propositions~\ref{propl2est},~\ref{liest}, we have for $\phi=\chi,\ \frak v$, 
\begin{equation}\label{3-1}
\|[\mathcal P, \Gamma^k]\phi(t)\|_2\lesssim E_k^{1/2}(t) \sum_{|i|\le [\frac k2]+2\atop\partial=\partial_\alpha,\partial_\beta}(|\partial\Gamma^i \chi(t)|_\infty+ |\partial\Gamma^i \frak v(t)|_\infty),\qquad\text{and}
\end{equation}
\begin{equation}\label{3-2}
\begin{aligned}
&\|(\frak P-\mathcal P) \Gamma^k\chi(t)\|_2\lesssim E_{k+1}^{1/2}(t) \sum_{|i|\le 4\atop\partial=\partial_\alpha,\partial_\beta}(|\partial\Gamma^i \chi(t)|_\infty+ |\partial\Gamma^i \frak v(t)|_\infty)
\\&
\|(\frak P-\mathcal P) \Gamma^k\frak v(t)\|_2\lesssim E_{k+2}^{1/2}(t) \sum_{|i|\le 4\atop\partial=\partial_\alpha,\partial_\beta}(|\partial\Gamma^i \chi(t)|_\infty+ |\partial\Gamma^i \frak v(t)|_\infty).
\end{aligned}
\end{equation}
We now estimate $\|\Gamma^k\mathcal P\chi(t)\|_2$. From \eqref{cubic2}, \eqref{comgq}, we know there are two types of terms in $\Gamma^k\mathcal P\chi$. One are terms of cubic and higher orders. Collectively, we name such terms as $C$. Another type are quadratic terms of the following form:
$$Q_j=\iint K(\zeta'-\zeta)(\dot\Gamma^j u-{\dot\Gamma}'^j u')\times (\zeta'_{\beta'}\partial_{\alpha'}- \zeta'_{\alpha'}\partial_{\beta'}){\Gamma'}^{k-j}\bar {u'}\,d\alpha'\,d\beta'$$
For the cubic terms $C$, we use Propositions~\ref{propsobolev},~\ref{propcauchy1},~\ref{propcauchy2},~\ref{propl2est},~\ref{liest}. We have
\begin{equation}\label{3-3}
\|C(t)\|_2\lesssim E_{k}^{1/2}(t) \sum_{|i|\le [\frac k2]+2\atop\partial=\partial_\alpha,\partial_\beta}(|\partial\Gamma^i \chi(t)|_\infty+ |\partial\Gamma^i \frak v(t)|_\infty).
\end{equation}
For the quadratic terms $Q_j$ with $j\le [\frac k2]+1$, we use Proposition~\ref{propcauchy2}, and ~\ref{propl2est},~\ref{liest}. We have
\begin{equation}\label{3-4}
\|Q_j(t)\|_2\lesssim E_{k}^{1/2}(t) \sum_{|i|\le [\frac k2]+1\atop\partial=\partial_\alpha,\partial_\beta}(|\partial\Gamma^i \chi(t)|_\infty+ |\partial\Gamma^i \frak v(t)|_\infty).
\end{equation}
To estimate $\|Q_j(t)\|_2$ for $k\ge j> [\frac k2]+1$, we rewrite it by using Proposition~\ref{propon}:
\begin{equation*}
\begin{aligned}
&Q_j= \frac 12\iint K(\zeta'-\zeta)(\dot\Gamma^j u-{\dot\Gamma}'^j u')\times (\zeta'_{\beta'}\partial_{\alpha'}- \zeta'_{\alpha'}\partial_{\beta'}){((I+\mathcal H')+(I-\mathcal H'))\Gamma'}^{k-j}\bar {u'}\,d\alpha'\,d\beta'
\\&= \frac 12 (I-\mathcal H)((\dot\Gamma^j u\cdot\nabla^+_\xi)(I+\mathcal H)\Gamma^{k-j}\bar u)-\frac 12 (I+\mathcal H)((\dot\Gamma^j u\cdot\nabla^-_\xi)(I-\mathcal H)\Gamma^{k-j}\bar u)
\end{aligned}
\end{equation*}
Applying Proposition~\ref{propcauchy1}, Lemma~\ref{lemmadf}, we obtain
\begin{equation*}
\begin{aligned}
\|Q_j(t)\|_2\lesssim &\|\dot\Gamma^j u(t)\|_2 \sum_{\partial=\partial_\alpha,\partial_\beta}(|\partial(I+\mathcal H)\Gamma^{k-j}\bar u(t)|_\infty+|\partial(I-\mathcal H)\Gamma^{k-j}\bar u(t)|_\infty)\\&
\lesssim \|\dot\Gamma^j u(t)\|_2 \sum_{\partial=\partial_\alpha,\partial_\beta}(|\partial(I+\mathcal H)\Gamma^{k-j}\bar u(t)|_\infty+|\partial\Gamma^{k-j}\bar u(t)|_\infty)
\end{aligned}
\end{equation*}
We know from the fact $-\overline{\mathcal H}\bar u=\bar u$\,\footnote{ \eqref{ww2} gives $\mathcal Hu=u$, therefore  $-\overline{\mathcal H}\bar u=\bar u$.} that
$$(I+\mathcal H)\Gamma^{k-j}\bar u=\Gamma^{k-j} (-\bar{\mathcal H}+\mathcal H)\bar u-[\Gamma^{k-j},\mathcal H]\bar u$$
Applying \eqref{hbarh}, \eqref{comjp}, Propositions~\ref{propsobolev},~\ref{propcauchy1},~\ref{propcauchy2}, ~\ref{propl2est},~\ref{liest},
 we get
 $$|\partial(I+\mathcal H)\Gamma^{k-j}\bar u(t)|_\infty\lesssim E_{k-j+3}^{1/2}(t) \sum_{|i|\le k-j+2\atop\partial=\partial_\alpha,\partial_\beta}|\partial\Gamma^i \chi(t)|_\infty\lesssim \sum_{|i|\le k-j+2\atop\partial=\partial_\alpha,\partial_\beta}|\partial\Gamma^i \chi(t)|_\infty
 $$
 Here we used the fact $E_{l+2}(t)\lesssim M_0^2$. Therefore for $k\ge j> [\frac k2]+1$,
 \begin{equation}\label{3-5}
  \|Q_j(t)\|_2\lesssim  E_{k}^{1/2}(t) \sum_{|i|\le [\frac k2]+2\atop\partial=\partial_\alpha,\partial_\beta}(|\partial\Gamma^i \chi(t)|_\infty+ |\partial\Gamma^i \frak v(t)|_\infty)
  \end{equation}
  Sum up \eqref{3-3}-\eqref{3-5}, we have
  \begin{equation}\label{3-6}
  \|\Gamma^k\mathcal P\chi(t)\|_2\lesssim E_{k}^{1/2}(t) \sum_{|i|\le [\frac k2]+2\atop\partial=\partial_\alpha,\partial_\beta}(|\partial\Gamma^i \chi(t)|_\infty+ |\partial\Gamma^i \frak v(t)|_\infty)
  \end{equation}
A  similar argument gives that
  \begin{equation}\label{3-7}
  \|\Gamma^k\mathcal P\frak v(t)\|_2\lesssim E_{k+1}^{1/2}(t) \sum_{|i|\le [\frac k2]+2\atop\partial=\partial_\alpha,\partial_\beta}(|\partial\Gamma^i \chi(t)|_\infty+ |\partial\Gamma^i \frak v(t)|_\infty)
  \end{equation}
Combine \eqref{3-1}-\eqref{3-7}, we obtain 
\begin{equation}\label{3-10}
\begin{aligned}
&\|\frak P\Gamma^k\chi(t)\|_2\lesssim E_{k+1}^{1/2}(t)\sum_{|i|\le \max\{[\frac k2]+2,4\}\atop\partial=\partial_\alpha,\partial_\beta}(|\partial\Gamma^i \chi(t)|_\infty+ |\partial\Gamma^i \frak v(t)|_\infty)\\&
\|\frak P\Gamma^k\frak v(t)\|_2\lesssim E_{k+2}^{1/2}(t)\sum_{|i|\le \max\{[\frac k2]+2,4\}\atop\partial=\partial_\alpha,\partial_\beta}(|\partial\Gamma^i \chi(t)|_\infty+ |\partial\Gamma^i \frak v(t)|_\infty).
\end{aligned}
\end{equation}
This gives us
\eqref{ineqliest}. For $t\le 1$ \eqref{li} can be obtained from the Sobolev embedding and Proposition~\ref{propl2est}. For $t\ge 1$, \eqref{li} is obtained by first applying \eqref{ineqliest} to the case 
$5\le m\le l-3$, i.e. when $[\frac{m+3}2]+2\le m$ and $5+m\le l+2$ and using the fact $ E_{l+2}(t)\lesssim M_0^2$; then applying \eqref{ineqliest}  to the case
$m\le \min\{2l-11, q-5\}$. We know in this case $[\frac{m+3}2]+2\le l-3$.
  
 \end{proof}
 
 In what follows we establish two energy estimates. 
\subsection{The first energy estimate}
The first energy estimate concerns a full range of derivatives. We  use Lemma~\ref{propbasicenergy1} and \eqref{cubic2}, \eqref{cubic3}.

Assume that $\phi$ is a $\mathcal C(V_2)$ valued function satisfying the equation 
\begin{equation}\label{300}
((\partial_t+ b\cdot \nabla_\bot)^2-A \mathcal N\times \nabla)\phi=G^\phi.
\end{equation}
Let $\Phi^j=(I-\mathcal H)\Gamma^j\phi$. We know for $\mathcal P=(\partial_t+ b\cdot \nabla_\bot)^2-A \mathcal N\times \nabla$,
\begin{equation}\label{300j}
\begin{aligned}
((\partial_t+ b\cdot \nabla_\bot)^2-A \mathcal N\times \nabla)\Phi^j=-[\mathcal P,\mathcal H]\Gamma^j\phi+(I-\mathcal H)[\mathcal P, \Gamma^j]\phi+(I-\mathcal H)\Gamma^j G^\phi.
\end{aligned}
\end{equation}
Notice that $\Phi^j=-\mathcal H\Phi^j$ implies  $\mathcal N\times \nabla\Phi^j=\mathcal N\cdot \nabla_\xi^-\Phi^j$. Therefore  
\begin{equation}\label{309}
((\partial_t+ b\cdot \nabla_\bot)^2+A \mathcal N\cdot \nabla_\xi^+)\Phi^j=\bold G_j^\phi
\end{equation}
where
$$\bold G_j^\phi=-[\mathcal P,\mathcal H]\Gamma^j\phi+(I-\mathcal H)[\mathcal P, \Gamma^j]\phi+(I-\mathcal H)\Gamma^j G^\phi+A(\mathcal N\cdot \nabla_\xi^++\mathcal N\cdot \nabla_\xi^-)\Phi^j$$
Define
\begin{equation}\label{fj1}
F_j^\phi(t)=\iint\frac1A|(\partial_t+b\cdot\nabla_\bot)\Phi^j(\alpha,\beta,t)|^2+\Phi^j\cdot (\mathcal N\cdot \nabla_\xi^+)
\Phi^j(\alpha,\beta,t)\,d\alpha\,d\beta
\end{equation}
We know $ \iint\Phi^j\cdot (\mathcal N\cdot \nabla_\xi^+)
\Phi^j(\alpha,\beta,t)\,d\alpha\,d\beta=\int_{\Omega(t)}|\nabla\{\Phi^j\}^\hbar|^2\,dV    \ge 0$. 
Let
\begin{equation}\label{fullenergy}
\mathcal F_n(t)=\sum_{|j|\le n} (F_j^{\frak v}(t)+F_j^\chi(t))
\end{equation}
and $V^j=(I-\mathcal H)\Gamma^j\frak v$, $\Pi^j=(I-\mathcal H)\Gamma^j\chi$. We have
\begin{lemma}\label{lemmaef1} Let $n\le q$, $t\in [0,T]$.
There exists $M_0>0$ small enough, such that for $M\le M_0$,
\begin{equation}\label{ef1}
\sum_{|j|\le n} \iint \frac 1A(|(\partial_t+b\cdot\nabla_\bot)V^j(\alpha,\beta,t)|^2+|(\partial_t+b\cdot\nabla_\bot)\Pi^j(\alpha,\beta,t)|^2) \,d\alpha\,d\beta\simeq E_n(t)
\end{equation}
\end{lemma}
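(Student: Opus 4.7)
The plan is to reduce the claim to a norm equivalence. Since $|A-1|\le\tfrac12$, the weight $\tfrac1A$ in the integrand is bounded above and below by constants, so the statement is equivalent to
\begin{equation*}
\sum_{|j|\le n}\bigl(\|(\partial_t+b\cdot\nabla_\bot)V^j(t)\|_2^2+\|(\partial_t+b\cdot\nabla_\bot)\Pi^j(t)\|_2^2\bigr)\simeq E_n(t).
\end{equation*}
The structural key is that $\chi=\pi\circ k^{-1}$ with $\pi=(I-\frak H)ze_3$ satisfies $\frak H\pi=-\pi$ (from $\frak H^2=I$); transporting this through $U_k^{-1}\frak H U_k=\mathcal H$ yields $\mathcal H\chi=-\chi$, hence $(I-\mathcal H)\chi=2\chi$ and, after commuting $\partial_t+b\cdot\nabla_\bot$ past $I+\mathcal H$, also $(I+\mathcal H)\frak v=-[\partial_t+b\cdot\nabla_\bot,\mathcal H]\chi$. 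Both $\chi$ and $\frak v$ are thus analytic in the lower half up to a commutator remainder.

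For the $\lesssim E_n$ direction, decompose
\begin{equation*}
(\partial_t+b\cdot\nabla_\bot)V^j=(I-\mathcal H)(\partial_t+b\cdot\nabla_\bot)\Gamma^j\frak v-[\partial_t+b\cdot\nabla_\bot,\mathcal H]\Gamma^j\frak v.
\end{equation*}
The first summand is $L^2$-bounded by $\|(\partial_t+b\cdot\nabla_\bot)\Gamma^j\frak v\|_2\le E_n^{1/2}(t)$ through the Clifford--Cauchy estimate (Proposition~\ref{propcauchy1}). Evaluating the commutator via \eqref{commuteth} under $U_k^{-1}$ produces a kernel of exactly the type handled by Proposition~\ref{propcauchy2} with one ``coefficient'' slot occupied by $u$; the resulting bound $\lesssim\|\nabla_\bot u\|_{L^\infty}\|\Gamma^j\frak v\|_2$ is $\lesssim M_0E_n^{1/2}(t)$ by Sobolev embedding, Proposition~\ref{liest}, and Proposition~\ref{propl2est}. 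The same recipe applies to $\Pi^j$, yielding the upper bound.

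For the $\gtrsim E_n$ direction, the identities above combined with \eqref{comjp} give
\begin{equation*}
\Pi^j=2\Gamma^j\chi+[\Gamma^j,\mathcal H]\chi,\qquad V^j=2\Gamma^j\frak v+\Gamma^j[\partial_t+b\cdot\nabla_\bot,\mathcal H]\chi+[\Gamma^j,\mathcal H]\frak v.
\end{equation*}
Solving for $\Gamma^j\chi$ and $\Gamma^j\frak v$, applying $\partial_t+b\cdot\nabla_\bot$, and taking $L^2$ norms reduces matters to estimating the commutator remainders. Each nested commutator, expanded through \eqref{comjp} and the single-commutator formulas of Lemma~\ref{lemma 1.2} and Proposition~\ref{propcomgh}, is a finite sum of Cauchy-type integrals whose kernel carries at least one of the differences $\zeta-\zeta'$, $u-u'$, $\lambda-\lambda'$, $b-b'$. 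Propositions~\ref{propcauchy1}--\ref{propcauchy2} together with the a priori $L^2$ bounds on $\nabla_\bot\lambda,\ u,\ w,\ b,\ A-1$ from Proposition~\ref{propl2est} dominate the total remainder by $M_0\,E_n^{1/2}(t)$, which is absorbed into the left side for $M_0$ small.

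The main obstacle is the bookkeeping for the top-order commutators $[\Gamma^j,\mathcal H]$ at $|j|=n$: after expanding through \eqref{comjp}, one must execute a Leibniz split around $[n/2]$ so that in every resulting product at most one factor is taken in top-order $L^2$ (controlled by $E_n^{1/2}$ via Proposition~\ref{propl2est}) while all other factors are taken in $L^\infty$ (controlled via Sobolev and Proposition~\ref{liest}). This split avoids counting the same derivative twice and lets the smallness of $M_0$ close the estimate.
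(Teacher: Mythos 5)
Your proposal matches the paper's proof: both hinge on $\mathcal H\chi=-\chi$, the identities $(I+\mathcal H)\Gamma^j\chi=-[\Gamma^j,\mathcal H]\chi$ and $(I+\mathcal H)\Gamma^j\frak v=-[\Gamma^j,\mathcal H]\frak v-\Gamma^j[\partial_t+b\cdot\nabla_\bot,\mathcal H]\chi$, and absorbing the commutator remainders via Lemma~\ref{lemma 1.2}, Proposition~\ref{propcomgh}, \eqref{comjp}, Propositions~\ref{propcauchy1}--\ref{propcauchy2}, Proposition~\ref{propsobolev}, and the $L^2$ bounds of Proposition~\ref{propl2est}. The only stylistic difference is that you write out the two inequalities separately, whereas the paper compresses both directions into the single splitting $\Gamma^j\phi=\tfrac12(I+\mathcal H)\Gamma^j\phi+\tfrac12\Phi^j$.
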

\begin{proof} 
Notice that $\Gamma^j\phi=\frac12 (I+\mathcal H)\Gamma^j\phi+\frac12\Phi^j$. 
We know $\mathcal H\chi=-\chi$. So for $\phi=\chi,\ \frak v$,
\begin{equation}\label{310}(I+\mathcal H)\Gamma^j\chi=-[\Gamma^j, \mathcal H]\chi,\qquad (I+\mathcal H)\Gamma^j\frak v=-[\Gamma^j, \mathcal H]\frak v-\Gamma^j[\partial_t+b\cdot\nabla_\bot,\mathcal H]\chi
\end{equation}
\eqref{ef1} follows by applying Lemma~\ref{lemma 1.2}, Proposition~\ref{propcomgh}, \eqref{comjp},
Propositions~\ref{propcauchy1}, ~\ref{propcauchy2}, ~\ref{propsobolev}, and \eqref{zlcl2} to \eqref{310}.
\end{proof}
We now state the following energy estimate.
\begin{proposition}\label{propfullenergy}
Let $3\le n\le \min\{2l-4, q\}$, 
$t\in [0,T]$. There exists $M_0>0$ sufficiently small, such that for $M\le M_0$,
\begin{equation}\label{fullest}
\frac{d\mathcal F_n(t)}{dt}\lesssim  \sum_{|i|\le [\frac n2]+2\atop \partial=\partial_\alpha,\partial_\beta}(|\Gamma^i\partial \chi(t)|_\infty+|\Gamma^i\partial \frak v(t)|_\infty)
 \mathcal F_n(t)
\end{equation}
\end{proposition}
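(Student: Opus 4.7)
The plan is to apply the Basic Energy Inequality I (Lemma~\ref{propbasicenergy1}) componentwise to the equation (3.9) satisfied by each $\Phi^j=(I-\mathcal H)\Gamma^j\phi$ (with $\phi\in\{\chi,\frak v\}$, $|j|\le n$), and then to sum. This gives
\[
\frac{dF_j^\phi}{dt}\le \iint \tfrac{2}{A}\,\bold G_j^\phi\cdot(\partial_t+b\cdot\nabla_\bot)\Phi^j\,d\alpha\,d\beta+\Bigl(\bigl\|\tfrac{\frak a_t}{\frak a}\circ k^{-1}\bigr\|_{\infty}+2\|\nabla \bold v\|_{L^\infty(\Omega(t))}\Bigr)F_j^\phi(t).
\]
The two scalar coefficients are easy to handle: $\|\nabla\bold v\|_{L^\infty(\Omega(t))}$ reduces, via Lemma~\ref{lemmadf} applied to the Clifford-analytic extension of $\bold v$ and Proposition~\ref{liest}, to $L^\infty$ norms of $\partial u$ on the interface. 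The coefficient $\|\frac{\frak a_t}{\frak a}\circ k^{-1}\|_\infty$ is extracted from equation (2.34), whose right-hand side is purely quadratic in $u,w$; after inverting the projection $(I-\mathcal H)(\cdot\,\mathcal N)$ via Lemma~\ref{lemmaih} and using Propositions~\ref{propl2est} and~\ref{liest}, both coefficients sit inside the $L^\infty$ sum on the right of (3.17).

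The bulk of the work is the source integral. I would split $\bold G_j^\phi$ into its four pieces: $-[\mathcal P,\mathcal H]\Gamma^j\phi$, $(I-\mathcal H)[\mathcal P,\Gamma^j]\phi$, $(I-\mathcal H)\Gamma^j G^\phi$, and the boundary term $A(\mathcal N\cdot\nabla_\xi^++\mathcal N\cdot\nabla_\xi^-)\Phi^j$. The last is controlled by Proposition~\ref{propdn}. The two commutator terms unfold into Cauchy-integral kernels via Lemma~\ref{lemma 1.2}, (2.11)--(2.12), Proposition~\ref{propcomgh}, and (2.15), (2.17); Propositions~\ref{propcauchy1}--\ref{propcauchy2} then yield $L^2$ bounds of the form $\mathcal F_n^{1/2}(t)\cdot(L^\infty\text{ sum})$, where the top-order derivative of $\lambda$ gets absorbed into $\mathcal F_n^{1/2}$ by Proposition~\ref{propl2est} once $M_0$ is small enough. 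The cubic-and-higher part of $\Gamma^j G^\phi$ is handled the same way. Pairing with $(\partial_t+b\cdot\nabla_\bot)\Phi^j$ and invoking Cauchy--Schwarz together with Lemma~\ref{lemmaef1} (which identifies $\sum_{|j|\le n}\|(\partial_t+b\cdot\nabla_\bot)\Phi^j\|_2^2$ with $E_n\le \mathcal F_n$) finishes these contributions.

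The hard part is the single genuinely quadratic piece of $\Gamma^j G^\phi$, namely the integral $\iint K(\zeta'-\zeta)(u-u')\times(\zeta'_{\beta'}\partial_{\alpha'}-\zeta'_{\alpha'}\partial_{\beta'})\overline{u'}$ from (1.34) and its $\frak v$-analogue from (2.47). The strategy is to exploit the near-analyticity of $\overline u$ in $\Omega(t)^c$: since $\mathcal H u=u$ we have $\overline{\mathcal H}\,\overline u=-\overline u$, and Proposition~\ref{propon} rewrites this integral (modulo a commutator with $\Gamma^j$, which is itself cubic) as $(\pm I-\mathcal H)$ applied to a tangential-derivative expression, placing its principal part in the range of $(I+\mathcal H)$. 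On the other side, $(\partial_t+b\cdot\nabla_\bot)\Phi^j$ lies in the range of $(I-\mathcal H)$ up to the quadratic commutator $[\partial_t+b\cdot\nabla_\bot,\mathcal H]\Gamma^j\phi$. By Proposition~\ref{prophdual}, the $L^2$ pairing between the ranges of $(I+\mathcal H)$ and $(I-\mathcal H)$ collapses to a pairing against $\mathcal H^*-\mathcal H$, whose kernel in (2.43) carries the extra smallness via $\mathcal N\pm\mathcal N'$ and $K\times(\mathcal N-\mathcal N')$ factors, converting the apparently quadratic pairing into a genuinely cubic one that fits the bound of (3.17). Summing over $|j|\le n$ produces (3.17).
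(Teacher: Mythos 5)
Your high-level plan (apply Lemma~\ref{propbasicenergy1} componentwise to $\Phi^j=(I-\mathcal H)\Gamma^j\phi$, control $\|\nabla\bold v\|_{L^\infty(\Omega(t))}$ via Lemma~\ref{lemmadf}, extract $\frac{\frak a_t}{\frak a}$ from \eqref{at} via Lemma~\ref{lemmaih}, split $\bold G_j^\phi$ into the four pieces and estimate each in $L^2$) matches the paper's proof structure. But there is a genuine gap in how you inventory the quadratic sources, and your treatment of the one quadratic piece you do flag takes a different (heavier) route than the paper needs here.

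\textbf{The gap.} You identify "the single genuinely quadratic piece of $\Gamma^jG^\phi$" as $I_1=\iint K(\zeta'-\zeta)(u-u')\times(\cdots)\overline{u'}$ and "its $\frak v$-analogue" $(\partial_t+b\cdot\nabla_\bot)I_1$. This misses the term $A(u_\beta\chi_\alpha-u_\alpha\chi_\beta)$ in $G^{\frak v}$ from \eqref{cubic3}, which is genuinely quadratic at leading order ($A\approx 1$) and is the \emph{one place where a loss of derivative occurs}: $\Gamma^ju_\beta$ carries $|j|+1$ derivatives, which cannot be paid for by $E_n^{1/2}$ on the source side. The paper's Step 4 singles this out explicitly and resolves it by exploiting the projection $(I-\mathcal H)$ together with $u=\mathcal Hu$, rewriting
\[
(I-\mathcal H)(A\Gamma^ju_\beta\,\chi_\alpha-A\Gamma^ju_\alpha\,\chi_\beta)=[\Gamma^j\partial_\beta,\mathcal H]u\,A\chi_\alpha-[\Gamma^j\partial_\alpha,\mathcal H]u\,A\chi_\beta+\sum_i\big([A\partial_\alpha\chi_i,\mathcal H]\Gamma^j\partial_\beta u-[A\partial_\beta\chi_i,\mathcal H]\Gamma^j\partial_\alpha u\big)e_i,
\]
so that the top-order derivative sits inside a commutator and Proposition~\ref{propcauchy2} applies without loss. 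Without something like this, your $\|G_{j,3}^{\frak v}\|_2$ bound would produce $E_{n+1}^{1/2}$ rather than $E_n^{1/2}$, and the inequality \eqref{fullest} would not close at $n$ derivatives.

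\textbf{On the pairing argument for $I_1$.} Your proposed mechanism for the $I_1$ part — pass $(\partial_t+b\cdot\nabla_\bot)\Phi^j$ through $\mathcal H^*$ via Proposition~\ref{prophdual} and harvest cubic smallness from $\mathcal H^*-\mathcal H$ — is essentially the machinery of the \emph{second} energy estimate (Proposition~\ref{proppartenergy}), where one needs the sharper cubic-order bound. For this first estimate \eqref{fullest}, which only needs a quadratic right-hand side of the form $(L^\infty\text{ sum})\cdot\mathcal F_n$, the paper simply applies Cauchy--Schwarz and estimates $\|G_{j,3}^\chi\|_2$ directly via \eqref{3-6}. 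The near-analyticity of $\overline u$ is still used there, but at the $L^\infty$ level: for the high-$j$ parts $Q_j$ one splits $\overline u=\tfrac12(I+\mathcal H)\overline u+\tfrac12(I-\mathcal H)\overline u$, applies Proposition~\ref{propon} to turn the singular integral into the pointwise product $(\pm I-\mathcal H)(\dot\Gamma^ju\cdot\nabla_\xi^\pm(I\pm\mathcal H)\Gamma^{k-j}\overline u)$, and then uses $|\partial(I+\mathcal H)\Gamma^{k-j}\overline u|_\infty\lesssim$ low-order $L^\infty$ sum (since $(I+\mathcal H)\overline u=(\mathcal H-\overline{\mathcal H})\overline u$ is small by \eqref{hbarh}). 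Your pairing route would presumably also work but is more than is needed, adds the complication that the weight $2/A$ in the integrand destroys exact $\mathcal H$-orthogonality (one must first split $2/A=2+(2/A-2)$, as the paper does in \eqref{347} of the second estimate), and crucially does not by itself address the $u_\beta\chi_\alpha-u_\alpha\chi_\beta$ derivative-loss issue above.
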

\begin{proof}
Let $\phi=\chi,\ \frak v$, $|j|\le n$. From \eqref{309}, applying Lemma~\ref{propbasicenergy1} to each component of $\Phi^j$ then sum up, we get
\begin{equation}
\frac{dF_j^\phi(t)}{dt}\lesssim 
\| \bold G_j^\phi(t)\|_2 \{F_j^\phi(t)\}^{1/2}  +(\|\frac{\frak a_t}{\frak a}\circ k^{-1}\|_{L^\infty} +2\|\nabla \bold v(t)\|_{L^\infty(\Omega(t)})F_j^\phi(t)
\end{equation}
Notice that $\bold v$ is Clifford analytic in $\Omega(t)$. From Lemma~\ref{lemmadf}, and the maximum principle, we have 
\begin{equation}\label{311}
\|\nabla \bold v(t)\|_{L^\infty(\Omega(t))}\lesssim |\partial_\alpha u(t)|_\infty+|\partial_\beta u(t)|_\infty.
\end{equation}
Applying Lemma~\ref{lemmaih} \eqref{ih3}, Proposition~\ref{propsobolev},~\ref{propcauchy1} ~\ref{propcauchy2}, ~\ref{propl2est} ~\ref{liest} to \eqref{at}, we obtain
\begin{equation}\label{312}
|\frac{\frak a_t}{\frak a}\circ k^{-1}|_{\infty}\lesssim  \sum_{|i|\le 3\atop \partial=\partial_\alpha,\partial_\beta}(|\Gamma^i\partial \chi(t)|_\infty+|\Gamma^i\partial \frak v(t)|_\infty).
\end{equation}
We now estimate $\| \bold G_j^\phi(t)\|_2 $ for $\phi=\chi,\ \frak v$. We carry it out in four steps.
Let 
\begin{equation}\label{G}
\bold G_j^\phi=G_{j,1}^\phi+G_{j,2}^\phi+G_{j,3}^\phi+G_{j,4}^\phi
\end{equation}
where $G_{j,1}^\phi=-[\mathcal P,\mathcal H]\Gamma^j\phi$, $G_{j,2}^\phi=(I-\mathcal H)[\mathcal P, \Gamma^j]\phi$, $G_{j,3}^\phi=(I-\mathcal H)\Gamma^j G^\phi$, and $G_{j,4}^\phi=A(\mathcal N\cdot \nabla_\xi^++\mathcal N\cdot \nabla_\xi^-)\Phi^j$.

Step 1. We have 
\begin{equation}\label{313}
\begin{aligned}
&\|G_{j,1}^\chi(t)\|_2\lesssim \sum_{\partial=\partial_\alpha,\partial_\beta}
|\partial u(t)|_\infty(\|\partial \Gamma^j\chi(t)\|_2+\|(\partial_t+b\cdot\nabla_\bot)\Gamma^j\chi(t)\|_2)
\\&
\|G_{j,1}^{\frak v}(t)\|_2\lesssim \sum_{\partial=\partial_\alpha,\partial_\beta} 
|\partial u(t)|_\infty(\| \Gamma^j\frak v(t)\|_2+\|(\partial_t+b\cdot\nabla_\bot)\Gamma^j\frak v(t)\|_2)
\end{aligned}
\end{equation}
This is obtained by using Lemma~\ref{lemma 1.2} and applying Propositions~\ref{propcauchy1},~\ref{propcauchy2},~\ref{propsobolev}.

Step 2. We have that for $\phi=\chi,\ \frak v$,
\begin{equation}\label{314}
\|G_{j,2}^\phi(t)\|_2\lesssim \sum_{|i|\le [\frac n2]+2\atop\partial=\partial_\alpha,\partial_\beta}(|\Gamma^i\partial \chi(t)|_\infty+|\Gamma^i\partial \frak v(t)|_\infty)E_{|j|}(t)^{1/2}
\end{equation}
This is basically \eqref{3-1}.\footnote{Notice that \eqref{3-1}, \eqref{3-6} (used in Step 4.) in fact hold  for $k\le \min\{2l-4,q\}$.}
 The estimate for the operator $(I-\mathcal H)$ can be obtained by applying Proposition~\ref{propcauchy1}. 

Step 3. From Propositions~\ref{propdn}, ~\ref{propcauchy1}, we have
\begin{equation}\label{315}
\begin{aligned}
&\|G_{j,4}^\chi(t)\|_2\lesssim \sum_{\partial=\partial_\alpha,\partial_\beta}  (|\partial \lambda(t)_\infty+|\partial\frak z(t)|_\infty)\sum_{\partial=\partial_\alpha,\partial_\beta}\|\partial\Gamma^j \chi(t)\|_2\\&
\|G_{j,4}^\frak v(t)\|_2\lesssim \sum_{1\le i\le 3\atop\partial=\partial_\alpha,\partial_\beta}  (|\partial ^i\lambda(t)_\infty+|\partial^i\frak z(t)|_\infty)\|\Gamma^j\frak v(t)\|_2
\end{aligned}
\end{equation}

Step 4.  We have that
\begin{align}
&\|G_{j,3}^\chi(t)\|_2\lesssim \sum_{|i|\le [\frac n2]+2\atop\partial=\partial_\alpha,\partial_\beta}(|\Gamma^i\partial \chi (t)|_\infty+|\Gamma^i\partial \frak v(t)|_\infty)E^{1/2}_{|j|}(t).\label{318}\\&
\|G_{j,3}^\frak v(t)\|_2\lesssim \sum_{|i|\le [\frac n2]+2\atop\partial=\partial_\alpha,\partial_\beta}(|\Gamma^i\partial \chi (t)|_\infty+|\Gamma^i\partial \frak v(t)|_\infty)E_{|j|}^{1/2}(t)\label{319}
\end{align}

\eqref{318} is obtained by using Proposition~\ref{propcauchy1} and \eqref{3-6}.

However we cannot derive \eqref{319} from \eqref{3-7}, since there is a loss of derivative in \eqref{3-7}. This "loss of derivative" is due to the term $Au_\beta\chi_\alpha-A u_\alpha\chi_\beta$ in $G^\frak v$. To obtain \eqref{319} we need to take advantage of the projection operator $I-\mathcal H$.
We rewrite the term
\begin{equation}\label{322}
\begin{aligned}
&(I-\mathcal H)\Gamma^j(Au_\beta\chi_\alpha-Au_\alpha\chi_\beta)=
(I-\mathcal H)(A\Gamma^ju_\beta\,\chi_\alpha-A\Gamma^ju_\alpha\,\chi_\beta)\\&+
(I-\mathcal H)(\Gamma^j(Au_\beta\chi_\alpha-Au_\alpha\chi_\beta)
-A\Gamma^ju_\beta\,\chi_\alpha+A\Gamma^ju_\alpha\,\chi_\beta)
\end{aligned}
\end{equation}
in which we further rewrite, using the fact $u=\mathcal Hu$, 
\begin{equation}
\begin{aligned}
&(I-\mathcal H)(A\Gamma^ju_\beta\,\chi_\alpha-A\Gamma^ju_\alpha\,\chi_\beta)=[\Gamma^j\partial_\beta, \mathcal H]u \,A\chi_\alpha-[\Gamma^j\partial_\alpha, \mathcal H]u \,A\chi_\beta\\&+\sum_{i=1}^3([A\partial_\alpha\chi_i,\mathcal H]\Gamma^j\partial_\beta u -[A\partial_\beta\chi_i,\mathcal H]\Gamma^j\partial_\alpha u)e_i
\end{aligned}
\end{equation}
Here $\chi_i$ is the $e_i$ component of $\chi$. Now with all the terms in appropriate forms, \eqref{319} results by applying Propositions~\ref{propcauchy1},~\ref{propcauchy2},~\ref{propsobolev}, and Proposition~\ref{propl2est},~\ref{liest}.

Sum up Steps 1-4 and \eqref{311}, \eqref{312}, and applying Propositions~\ref{propl2est},~\ref{liest}, Lemma~\ref{lemmaef1}, we get \eqref{fullest}.
\end{proof} 

\subsection{The second energy estimate} We now give an estimate that involves some lower orders of derivatives. We use Lemma~\ref{propbasicenergy2} and \eqref{cubic2}, \eqref{cubic3}.

Assume that $\phi$ satisfies \eqref{300} and let $\Phi^j=(I-\mathcal H)\Gamma^j\phi$. We know   $\Phi^j$ satisfies \eqref{300j}, i.e.
$$\mathcal P\Phi^j=G_j^\phi$$
where 
\begin{equation}
G_j^\phi=-[\mathcal P,\mathcal H]\Gamma^j\phi+(I-\mathcal H)[\mathcal P, \Gamma^j]\phi+(I-\mathcal H)\Gamma^j G^\phi
\end{equation}
Define
\begin{equation}
\bold F^\phi_j(t)= \iint\frac1A|(\partial_t+ b\cdot \nabla_\bot)\Phi^j|^2-\Phi^j\cdot (\mathcal N\times\nabla)\Phi^j
(\alpha,\beta, t)\,d\alpha\,d\beta\label{fj2}
\end{equation}
We know $-\iint\Phi^j\cdot (\mathcal N\times\nabla)\Phi^j
(\alpha,\beta, t)\,d\alpha\,d\beta=\int_{\Omega(t)^c}|\nabla \{\Phi^j\}^\hbar|^2\,dV\ge 0$.
Let
\begin{equation}\label{partenergy}
\frak F_n(t)=\sum_{|j|\le n} (\bold F_j^{\frak v}(t)+\bold F_j^\chi(t))
\end{equation}
We have 
\begin{proposition}\label{proppartenergy}
Let $l\ge 15$, $q\ge l+9$, $t\in [0, T]$. There exists $M_0>0$ small enough, such that for $M\le M_0$,
\begin{equation}\label{partenergyineq}
\frac{d\frak F_{l+2}(t)}{dt}\lesssim E_{l+2}^{1/2}(t)E_{l+3}^{1/2}(t)E_{l+9}^{1/2}(t)(\frac{1+\ln(t+1)}{t+1})^2
\end{equation}
\end{proposition}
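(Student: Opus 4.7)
The plan is to apply the basic energy equality II of Lemma~\ref{propbasicenergy2} to $\Theta=\Phi^j=(I-\mathcal H)\Gamma^j\phi$ for each $\phi\in\{\chi,\frak v\}$ and each multi-index $|j|\le l+2$, then sum. The hypothesis $\Theta=-\mathcal H\Theta$ holds automatically, and \eqref{310} together with Lemma~\ref{lemmaef1} identifies the kinetic piece of $\bold F^\phi_j$ with the corresponding portion of $E_{l+2}$ modulo harmless commutator remainders; the Dirichlet-type term $-\iint\Phi^j\cdot(\mathcal N\times\nabla)\Phi^j=\int_{\Omega(t)^c}|\nabla\{\Phi^j\}^\hbar|^2$ is non-negative, so \eqref{basic2} furnishes an honest bound on $\tfrac{d}{dt}\frak F_{l+2}$. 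The target is then to show that each of the four integrals on the right of \eqref{basic2} is $\lesssim E^{1/2}_{l+2}E^{1/2}_{l+3}E^{1/2}_{l+9}((1+\ln(1+t))/(1+t))^2$; heuristically this means producing, for each integrand, one $L^2$ factor of energy type together with two $L^\infty$ factors that each decay like $(1+\ln(1+t))/(1+t)$, via Lemma~\ref{lemmaliest} for low orders and \eqref{lilzc1}/\eqref{liuc1} for high orders (the logarithm is the price of trading a low-order $L^\infty$ for a high-order one).

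For the source term $\iint\tfrac{2}{A}G^\phi_j\cdot(\partial_t+b\cdot\nabla_\bot)\Phi^j\,d\alpha d\beta$ with $G^\phi_j=-[\mathcal P,\mathcal H]\Gamma^j\phi+(I-\mathcal H)[\mathcal P,\Gamma^j]\phi+(I-\mathcal H)\Gamma^j G^\phi$, Cauchy--Schwarz reduces matters to bounding $\|G^\phi_j\|_2$, and the commutator pieces $[\mathcal P,\mathcal H]$ and $[\mathcal P,\Gamma^j]$ are controlled by the arguments of Steps~1--2 of Proposition~\ref{propfullenergy}, producing the $L^\infty$ factors $|\partial u|_\infty$ and $\sum_{|i|\le[(l+2)/2]+2}(|\Gamma^i\partial\chi|_\infty+|\Gamma^i\partial\frak v|_\infty)$, each of which is $O(E^{1/2}_{l+9}(1+\ln(1+t))/(1+t))$. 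The piece $(I-\mathcal H)\Gamma^j G^\phi$ coming from \eqref{cubic2} or \eqref{cubic3} contains cubic terms handled analogously, together with the single quadratic term in \eqref{cubic2}, which is reduced exactly as in Step~4 of Proposition~\ref{propfullenergy} via Proposition~\ref{propon} and the identity $u=\mathcal Hu$ to commutators $[A\partial_\alpha\chi_i,\mathcal H]$, $[A\partial_\beta\chi_i,\mathcal H]$ paired with an $L^\infty$ factor $|\partial(I+\mathcal H)\Gamma^{j-i}\bar u|_\infty$ that is converted to commutator form by \eqref{hbarh} and bounded by \eqref{lilzc1}/\eqref{liuc1}, supplying the second logarithmic decay. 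The $\frak a_t/\frak a$ integrand is handled in the same spirit using \eqref{at}, Lemma~\ref{lemmaih} and Lemma~\ref{lemmaliest} to give $|\frak a_t/\frak a|_\infty\lesssim E^{1/2}_{l+9}((1+\ln(1+t))/(1+t))^2$, while the two commutator integrands at the end of \eqref{basic2} are rewritten via Lemma~\ref{lemma 1.2} together with Proposition~\ref{propdn} (to control $(\mathcal N\cdot\nabla^+_\xi+\mathcal N\cdot\nabla^-_\xi)\Phi^j$ by $\sum|\partial\lambda|_\infty\|\partial\Phi^j\|_2$) and bounded by Propositions~\ref{propcauchy1}--\ref{propcauchy3}.

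The main obstacle is the null-form term $-\iint\Phi^j\cdot(u_\beta\Phi^j_\alpha-u_\alpha\Phi^j_\beta)\,d\alpha d\beta$, for which the naive estimate $|\partial u|_\infty\|\partial\Phi^j\|_2\|\Phi^j\|_2$ supplies only a single $1/(1+t)$ factor and therefore cannot close the estimate. I would apply Proposition~\ref{proprotation} component-wise with $f$ a real-valued component of $u$ and $g$ the corresponding real-valued component of $\Phi^j$, extracting an explicit factor of $2/t$ at the cost of replacing one $\partial$ on $u$ by one of $\partial_t,\Omega_{01}^\pm,\Omega_{02}^\pm$ and one $\partial$ on $\Phi^j$ by one of $\Upsilon,\partial_\alpha,\partial_\beta$. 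The $L^2$ norm of the $\Omega_{0j}^\pm$-factor is then controlled by Lemma~\ref{propbasicoj}, in which the $t\|\frak P^\pm u\|_2$ contribution is handled by the same mechanism that produced \eqref{3-10}, applied now to the quasi-linear equation \eqref{quasi1} for $u$ whose nonlinearity is essentially cubic by \eqref{at} and Lemma~\ref{lemmaih}; this yields the factor $E^{1/2}_{l+3}$. Placing the remaining low-order factor in $L^\infty$ via \eqref{liuc1} supplies $E^{1/2}_{l+9}(1+\ln(1+t))/(1+t)$, and combining with the explicit $1/t$ and with the energy factor $E^{1/2}_{l+2}$ from $\Phi^j$ itself produces the claimed bound. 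Summing all the contributions over $\phi\in\{\chi,\frak v\}$ and $|j|\le l+2$ yields \eqref{partenergyineq}.
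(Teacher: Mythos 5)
Your high-level plan (apply Lemma~\ref{propbasicenergy2} to $\Phi^j=(I-\mathcal H)\Gamma^j\phi$, sum, and show each integrand supplies two $L^\infty$ decay factors) is the right one, and the treatment of the quadratic term $I_1$ in $G^\chi$ via Proposition~\ref{propon}, $u=\mathcal Hu$, and \eqref{hbarh} is correct and matches the paper. However there is a genuine gap in how you propose to handle the source integral $\iint G^\phi_j\cdot(\partial_t+b\cdot\nabla_\bot)\Phi^j$. Applying plain Cauchy--Schwarz and then bounding $\|G^\phi_j\|_2$ cannot work for the leading commutator $G^\phi_{j,1}=-[\mathcal P,\mathcal H]\Gamma^j\phi$: by \eqref{313} that $L^2$ norm is only $\lesssim|\partial u|_\infty E^{1/2}_{|j|}$, with a \emph{single} $L^\infty$ factor, and pairing with $\|(\partial_t+b\cdot\nabla_\bot)\Phi^j\|_2\lesssim E^{1/2}$ yields a contribution to $d\frak F/dt$ of size $E\cdot E^{1/2}/(1+t)$, whose time-integral diverges. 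The paper produces the missing second decay factor \emph{not} from $\|G^\phi_{j,1}\|_2$ but from the inner product itself: it uses the duality operator $\mathcal H^*$ (Proposition~\ref{prophdual}, eq.~\eqref{350}) to replace $(\partial_t+b\cdot\nabla_\bot)\Phi^j$ by $(\partial_t+b\cdot\nabla_\bot)\Gamma^j\phi$ at the cost of commutators with $I-\mathcal H$ and $\mathcal H-\mathcal H^*$, and then applies Proposition~\ref{propon} to rewrite $(I-\mathcal H)J^\phi_1$ so that the quantity $(I+\mathcal H)(\partial_t+b\cdot\nabla_\bot)\Gamma^j\phi$ appears, which is itself of lower order (cubic) because $\mathcal H\chi=-\chi$ (eq.~\eqref{352}). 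Without this projection/orthogonality mechanism the estimate does not close. The same $\mathcal H^*$ device, which your outline omits, is also essential for the last two integrals in \eqref{basic2}: the naive bound on $\iint\mathcal N\times\nabla\Phi^j\cdot[\partial_t+b\cdot\nabla_\bot,\mathcal H]\Phi^j$ gives only one decay factor, and the paper gains the second one by recognizing that $(I+\mathcal H^*)\mathcal N\times\nabla\Phi^j$ is small (eq.~\eqref{343}).

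A second gap concerns your treatment of the null-form term $\iint\Phi^j\cdot(u_\beta\Phi^j_\alpha-u_\alpha\Phi^j_\beta)$. Applying Proposition~\ref{proprotation} to the inner bracket introduces the $2/t$ factor, but the outer undifferentiated $\Phi^j$ remains and must be placed in some norm. For $\phi=\frak v$ this is fine since $\|\Gamma^j\frak v\|_2\lesssim E^{1/2}$, but for $\phi=\chi$ the energy controls only $\|\partial\Gamma^j\chi\|_2$, not $\|\Gamma^j\chi\|_2$ (let alone the $\Upsilon\Gamma^{k-1}\chi$ factors that the rotation identity generates). The paper explicitly flags this ("we do not have estimates for $\|\Phi^j(t)\|_2$ for $\phi=\chi$") and instead integrates by parts first to move a derivative onto $\Phi^j$, and then reorganizes $(I-\mathcal H)(u\,\partial\Phi^j_i)$ into commutators $[\partial\Phi^j_i,\mathcal H]u$ together with further commutator substitutes for $\Upsilon\Phi^j_i$ and $\alpha,\beta$ multipliers (eqs.~\eqref{377}--\eqref{379}, \eqref{381}--\eqref{383}). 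Your rotation argument needs this preliminary reorganization for $\phi=\chi$ to be applicable at all.
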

\begin{proof}
 Let $\phi=\chi,\frak v$, $|j|\le l+2$. We also use $j$ to indicate $|j|$ in this proof. Assume $t\ge 1$. The argument can be easily modified for $t\le 1$.
Applying Lemma~\ref{propbasicenergy2} to $\Phi^j$, we have 
\begin{equation}\label{340}
\begin{aligned}
&\frac{d\bold F_j^\phi(t)}{dt}=\iint\{\frac2AG_j^\phi\cdot\{(\partial_t+ b\cdot \nabla_\bot)\Phi^j\}-\frac{\frak a_t}{\frak a}\circ k^{-1}\frac1A |(\partial_t+ b\cdot \nabla_\bot)\Phi^j|^2\}d\alpha d\beta\\&-
\iint\{ (\Phi^j\cdot(u_\beta\Phi^j_\alpha)-\Phi^j\cdot(u_\alpha\Phi^j_\beta))+\mathcal N\times\nabla\Phi^j\cdot [\partial_t+b\cdot\nabla_\bot, \mathcal H]\Phi^j\}\,d\alpha\,d\beta\\&+
\frac12\iint\{ (\mathcal  N\cdot \nabla_\xi^++\mathcal  N\cdot \nabla_\xi^-)\Phi^j\}\cdot [\partial_t+b\cdot\nabla_\bot, \mathcal H]\Phi^j\,d\alpha\,d\beta
\end{aligned}
\end{equation}
Using Lemma~\ref{lemmaih}, \eqref{at}, Propositions~\ref{propcauchy1}, ~\ref{propcauchy2},~\ref{propsobolev}, ~\ref{propcauchy3} with $r=t$, and Propositions~\ref{propl2est},~\ref{liest}, we obtain
\begin{equation}\label{341}
\begin{aligned}
&|\frac{\frak a_t}{\frak a}\circ k^{-1}A(t)|_\infty\le \sum_{|i|\le 1\atop\partial=\partial_\alpha,\partial_\beta}|\Gamma^i\partial u(t)|_\infty \sum_{|i|\le 1}|\Gamma^i w(t)|_\infty(1+\ln t)\\&
+\sum_{|i|\le 1\atop\partial=\partial_\alpha,\partial_\beta}(|\Gamma^i\partial u(t)|_\infty +|\Gamma^i w(t)|_\infty)(\|\partial u(t)\|_2+\| w(t)\|_2) \frac 1t \\&
+\sum_{|i|\le 2\atop\partial=\partial_\alpha,\partial_\beta}|\Gamma^i\partial u(t)|^2_\infty +\sum_{|i|\le 1\atop\partial=\partial_\alpha,\partial_\beta}|\Gamma^i\partial\lambda(t)|_\infty( \sum_{|i|\le 2}|\Gamma^i w(t)|_\infty+\sum_{|i|\le 2\atop\partial=\partial_\alpha,\partial_\beta}|\Gamma^i\partial u(t)|^2_\infty)\\&\le
\sum_{|i|\le 3\atop\partial=\partial_\alpha,\partial_\beta}(|\Gamma^i\partial\chi(t)|_\infty+|\Gamma^i\partial\frak v(t)|_\infty)\{(|\Gamma^i\partial\chi(t)|_\infty+|\Gamma^i\partial\frak v(t)|_\infty)(1+\ln t)+ E_1(t)^{1/2}\frac 1t\}
\end{aligned}
\end{equation}
We now estimate $\iint \mathcal N\times\nabla\Phi^j\cdot [\partial_t+b\cdot\nabla_\bot, \mathcal H]\Phi^j\,d\alpha\,d\beta$. We know 
$
[\partial_t+b\cdot\nabla_\bot, \mathcal H]\Phi^j=(I+\mathcal H)[\partial_t+b\cdot\nabla_\bot, \mathcal H]\Gamma^j \phi
$.
Therefore using Proposition~\ref{prophdual}, we have 
\begin{equation}\label{342}
\iint \mathcal N\times\nabla\Phi^j\cdot [\partial_t+b\cdot\nabla_\bot, \mathcal H]\Phi^j\,d\alpha\,d\beta=\iint\{ (I+\mathcal H^*)\mathcal N\times\nabla\Phi^j\}\cdot [\partial_t+b\cdot\nabla_\bot, \mathcal H]\Gamma^j \phi\,d\alpha\,d\beta
\end{equation}
Now 
\begin{equation*}
\begin{aligned}
 (I+\mathcal H^*)\mathcal N\times\nabla\Phi^j&=(\mathcal H^*-\mathcal H)\mathcal N\times\nabla\Phi^j+(I+\mathcal H)\mathcal N\times\nabla\Phi^j\\&=(\mathcal H^*-\mathcal H)\mathcal N\times\nabla\Phi^j-[\mathcal N\times\nabla, \mathcal H]\Phi^j.
 \end{aligned}
 \end{equation*}
Using \eqref{commuteah}, Proposition~\ref{propcauchy1}, we get
 \begin{equation}\label{343}
 \|(I+\mathcal H^*)\mathcal N\times\nabla\Phi^j(t)\|_2\lesssim \sum_{\partial=\partial_\alpha,\partial_\beta}(|\partial \lambda(t)|_\infty+|\partial\frak z(t)|_\infty)\|\partial \Gamma^j\phi(t)\|_2.
 \end{equation}
On the other hand, we have from \eqref{commuteth}, Proposition~\ref{propcauchy3} with $r=t$, 
\begin{equation}\label{344}
\|[\partial_t+b\cdot\nabla_\bot, \mathcal H]\Gamma^j \phi(t)\|_2\lesssim \|u(t)\|_2(\sum_{|i|\le j+1\atop\partial=\partial_\alpha,\partial_\beta}|\partial\Gamma^i\phi(t)|_\infty(1+\ln t)+\sum_{|i|\le j\atop\partial=\partial_\alpha,\partial_\beta}\|\partial\Gamma^i\phi(t)\|_2\frac1t).
\end{equation}
Combining \eqref{342}-\eqref{344}, and further use Propositions~\ref{propl2est},\ref{liest}, we obtain
\begin{equation}\label{345}
\begin{aligned}
&|\iint \mathcal N\times\nabla\Phi^j\cdot [\partial_t+b\cdot\nabla_\bot, \mathcal H]\Phi^j\,d\alpha\,d\beta|\\&\lesssim E^{1/2}_{j+1}(t)E^{1/2}_1(t)\sum_{|i|\le 2\atop\partial=\partial_\alpha,\partial_\beta}|\Gamma^i\partial\chi(t)|_\infty
\\&\qquad\qquad\times
(\sum_{|i|\le j+1\atop\partial=\partial_\alpha,\partial_\beta}(|\Gamma^i\partial\chi(t)|_\infty+|\Gamma^i\partial\frak v(t)|_\infty)(1+\ln t)+E_{j+1}^{1/2}(t)\frac 1t).
\end{aligned}
\end{equation}
The estimate of the term $\iint\{ (\mathcal  N\cdot \nabla_\xi^++\mathcal  N\cdot \nabla_\xi^-)\Phi^j\}\cdot [\partial_t+b\cdot\nabla_\bot, \mathcal H]\Phi^j\,d\alpha\,d\beta$ can be obtained from Proposition~\ref{propdn} and \eqref{344}. We have
\begin{equation}\label{346}
\begin{aligned}
&|\iint\{ (\mathcal  N\cdot \nabla_\xi^++\mathcal  N\cdot \nabla_\xi^-)\Phi^j\}\cdot [\partial_t+b\cdot\nabla_\bot, \mathcal H]\Phi^j\,d\alpha\,d\beta|
\\&\lesssim E^{1/2}_{j}(t)E^{1/2}_1(t)\sum_{|i|\le 2\atop\partial=\partial_\alpha,\partial_\beta}|\Gamma^i\partial\chi(t)|_\infty\\&\qquad\qquad\times
(\sum_{|i|\le j+1\atop\partial=\partial_\alpha,\partial_\beta}(|\Gamma^i\partial\chi(t)|_\infty+|\Gamma^i\partial\frak v(t)|_\infty)(1+\ln t)+E_{j+1}^{1/2}(t)\frac 1t).
\end{aligned}
\end{equation}
Now 
\begin{equation}\label{347}
\begin{aligned}
\iint\frac1AG_j^\phi\cdot\{(\partial_t+ b\cdot \nabla_\bot)\Phi^j\}&d\alpha d\beta=\iint(\frac1A-1)G_j^\phi\cdot\{(\partial_t+ b\cdot \nabla_\bot)\Phi^j\}d\alpha d\beta\\&+\iint G_j^\phi\cdot\{(\partial_t+ b\cdot \nabla_\bot)\Phi^j \}d\alpha d\beta.
\end{aligned}
\end{equation}
The term $\iint(\frac1A-1)G_j^\phi\cdot\{(\partial_t+ b\cdot \nabla_\bot)\Phi^j\}d\alpha d\beta$ can be estimated as the following:
$$|\iint(\frac1A-1)G_j^\phi\cdot\{(\partial_t+ b\cdot \nabla_\bot)\Phi^j\}d\alpha d\beta|\lesssim |A-1|_\infty\|G_j^\phi(t)\|_2\|(\partial_t+ b\cdot \nabla_\bot)\Phi^j(t)\|_2$$
We know $G_j^\phi=G_{j,1}^\phi+G_{j,2}^\phi+G_{j,3}^\phi$, where $G_{j,i}^\phi$ $i=1,2,3$ are as defined in \eqref{G}. Using \eqref{313}, \eqref{314}, \eqref{318}, \eqref{319}, and notice that  the $n$ in these inequalities can be replaced by $j$. We have by further applying Proposition~\ref{liest} that
\begin{equation}\label{348}
\begin{aligned}
&|\iint(\frac1A-1)G_j^\phi\cdot\{(\partial_t+ b\cdot \nabla_\bot)\Phi^j\}d\alpha d\beta|\\&\lesssim E_{j}(t)\sum_{|i|\le 3\atop\partial=\partial_\alpha,\partial_\beta}(|\Gamma^i\partial\chi(t)|_\infty+|\Gamma^i\partial\frak v(t)|_\infty)\sum_{|i|\le [\frac {j}2]+2\atop\partial=\partial_\alpha,\partial_\beta}(|\Gamma^i\partial\chi(t)|_\infty+|\Gamma^i\partial\frak v(t)|_\infty).
\end{aligned}
\end{equation}
We now estimate the terms $\iint G_j^\phi\cdot\{(\partial_t+ b\cdot \nabla_\bot)\Phi^j \}d\alpha d\beta$ and  $\iint\{ (\Phi^j\cdot(u_\beta\Phi^j_\alpha)-\Phi^j\cdot(u_\alpha\Phi^j_\beta))\}\,d\alpha\,d\beta$ for $\phi=\chi, \ \frak v$.  We carry out the estimates through six steps.

Step 1. We consider the term $\iint G_{j,1}^\phi\cdot\{(\partial_t+ b\cdot \nabla_\bot)\Phi^j \}d\alpha d\beta$ for $\phi=\chi,\ \frak v$.

We first put the term $\iint G_{j,1}^\phi\cdot\{(\partial_t+ b\cdot \nabla_\bot)\Phi^j \}d\alpha d\beta$ in the right form for estimates. We know $(\partial_t+ b\cdot \nabla_\bot)\Phi^j=(I-\mathcal H)(\partial_t+ b\cdot \nabla_\bot)\Gamma^j\phi-[\partial_t+ b\cdot \nabla_\bot,\mathcal H]\Gamma^j\phi$. Using Proposition~\ref{prophdual}, we have 
\begin{equation}\label{350}
\begin{aligned}
&\iint G_{j,1}^\phi\cdot\{(\partial_t+ b\cdot \nabla_\bot)\Phi^j \}d\alpha d\beta= \iint \{(I-\mathcal H)G_{j,1}^\phi\}\cdot\{(\partial_t+ b\cdot \nabla_\bot)\Gamma^j\phi \}d\alpha d\beta\\&
+\iint \{(\mathcal H-\mathcal H^*)G_{j,1}^\phi\}\cdot\{(\partial_t+ b\cdot \nabla_\bot)\Gamma^j\phi \}d\alpha d\beta\\&-
\iint G_{j,1}^\phi\cdot\{[\partial_t+ b\cdot \nabla_\bot,\mathcal H]\Gamma^j\phi \}d\alpha d\beta
\end{aligned}
\end{equation}
where by applying \eqref{commutetth}, \eqref{commutenh}, and the change of variable $U_k^{-1}$, we know
\begin{equation}\label{349}
\begin{aligned}
&-G_{j,1}^\phi=2\iint K(\zeta'-\zeta)\,(u-u')\times(\zeta'_{\beta'}\partial_{ \alpha'}-
\zeta'_{\alpha'}\partial_{ \beta'})(\partial_t'+b'\cdot\nabla'_\bot){\Gamma'}^j\phi'\,d\alpha'd\beta'\\&+\iint K(\zeta'-\zeta)\,\{(u-u')\times(u'_{\beta'}\partial_{ \alpha'}-u'_{\alpha'}\partial_{\beta'}){\Gamma'}^j\phi'\}\,d\alpha'd\beta'\\&+\iint
((u'-u)\cdot\nabla)K(\zeta'-\zeta) (u-u')\times(\zeta'_{\beta'}\partial_\alpha'-\zeta'_{\alpha'}\partial_\beta'){\Gamma'}^j\phi'
\,d\alpha'd\beta' 
\end{aligned}
\end{equation}
Let $J_1^\phi=2\iint K(\zeta'-\zeta)\,(u-u')\times(\zeta'_{\beta'}\partial_{ \alpha'}-
\zeta'_{\alpha'}\partial_{ \beta'})(\partial_t'+b'\cdot\nabla'_\bot){\Gamma'}^j\phi'\,d\alpha'd\beta'$. 
To estimate the term $\iint \{(I-\mathcal H)G_{j,1}^\phi\}\cdot\{(\partial_t+ b\cdot \nabla_\bot)\Gamma^j\phi \}d\alpha d\beta$,   we use Proposition~\ref{propon} to further rewrite 
\begin{equation}\label{351}
(I-\mathcal H)J_1^\phi= (I-\mathcal H)\iint K\,(u-u')\times(\zeta'_{\beta'}\partial_{ \alpha'}-
\zeta'_{\alpha'}\partial_{ \beta'})(I+\mathcal H')(\partial_t'+b'\cdot\nabla'_\bot){\Gamma'}^j\phi'\,d\alpha'd\beta',
\end{equation}
and notice that $\mathcal H \chi=-\chi$, so for $\phi=\chi, \frak v$, 
\begin{equation}\label{352}
\begin{aligned}
&(I+\mathcal H)(\partial_t+b\cdot\nabla_\bot){\Gamma}^j\chi=-[\partial_t+b\cdot\nabla_\bot,\mathcal H]\Gamma^j\chi-(\partial_t+b\cdot\nabla_\bot)[\Gamma^j, \mathcal H]\chi,
\\& (I+\mathcal H)(\partial_t+b\cdot\nabla_\bot){\Gamma}^j\frak v=(I+\mathcal H)[\partial_t+b\cdot\nabla_\bot, \Gamma^j]\frak v\\&\qquad\qquad\qquad+
[\mathcal H,\Gamma^j](\partial_t+b\cdot\nabla_\bot)\frak v -\Gamma^j[(\partial_t+b\cdot\nabla_\bot)^2,\mathcal H]\chi.
\end{aligned}
\end{equation}

With \eqref{350}--\eqref{352}, $ \iint G_{j,1}^\phi\cdot\{(\partial_t+ b\cdot \nabla_\bot)\Phi^j \}d\alpha d\beta$ is in the right form for estimates. Using Lemma~\ref{lemma 1.2}, Proposition~\ref{propcomgh}, \eqref{comjp}, Propositions~\ref{propcauchy1},~\ref{propcauchy2},~\ref{propcauchy3} with $r=t$, and ~\ref{propl2est}, we get
\begin{equation}
\begin{aligned}
&\|(I-\mathcal H)J_1^\chi(t)\|_2\lesssim \sum_{\partial=\partial_\alpha,\partial_\beta}|\partial u(t)|_\infty
\|(I+\mathcal H)(\partial_t+b\cdot\nabla_\bot){\Gamma}^j\chi(t)\|_2 \\&\lesssim
\sum_{\partial=\partial_\alpha,\partial_\beta}|\partial u(t)|_\infty E_j^{1/2}(t)\sum_{|i|\le j\atop\partial=\partial_\alpha,\partial_\beta}|\Gamma^i\partial\lambda(t)|_\infty\\&
+\sum_{\partial=\partial_\alpha,\partial_\beta}|\partial u(t)|_\infty E_j^{1/2}(t)(\sum_{|i|\le j+1\atop\partial=\partial_\alpha,\partial_\beta}|\partial\Gamma^i\chi(t)|_\infty(1+\ln t)+E^{1/2}_j(t)\frac1t)
\end{aligned}
\end{equation}
and 
\begin{equation}
\begin{aligned}
&\|(I-\mathcal H)J_1^\frak v(t)\|_2\lesssim \sum_{\partial=\partial_\alpha,\partial_\beta}|\partial u(t)|_\infty E_j^{1/2}(t)\sum_{|i|\le j\atop\partial=\partial_\alpha,\partial_\beta} |\partial\Gamma^i\lambda(t)|_\infty+\\&
\sum_{\partial=\partial_\alpha,\partial_\beta}|\partial u(t)|_\infty E_j^{1/2}(t)(\sum_{|i|\le [\frac{ j}2]+2\atop\partial=\partial_\alpha,\partial_\beta}(|\partial\Gamma^i\chi(t)|_\infty+|\partial\Gamma^i\frak v(t)|_\infty)(1+\ln t)+E^{1/2}_{j}(t)\frac1t)
\end{aligned}
\end{equation}
Applying Propositions~\ref{propcauchy1},~\ref{propcauchy2},~\ref{propcauchy3} with $r=t$, and ~\ref{propl2est},~\ref{liest} to other terms in \eqref{350} and using \eqref{313}, \eqref{344}, we obtain for $\phi=\chi,\ \frak v$,
\begin{equation}\label{355}
\begin{aligned}
&|\iint G_{j,1}^\phi\cdot\{(\partial_t+ b\cdot \nabla_\bot)\Phi^j \}d\alpha d\beta|\lesssim E_j(t)\sum_{|i|\le 2\atop\partial=\partial_\alpha,\partial_\beta}(|\partial\Gamma^i\chi(t)|_\infty+|\partial\Gamma^i\frak v(t)|_\infty)\times\\&
 \{(E_{j+2}^{1/2}(t)\sum_{|i|\le[\frac {j}2]+2 \atop\partial=\partial_\alpha,\partial_\beta}|\partial\Gamma^i\chi(t)|_\infty  +\sum_{|i|\le j+1\atop\partial=\partial_\alpha,\partial_\beta}(|\partial\Gamma^i\chi(t)|_\infty+|\partial\Gamma^i\frak v(t)|_\infty))(1+\ln t)+E^{1/2}_{j+2}(t)\frac1t\}
\end{aligned}
\end{equation}

Step 2. We consider the term $\iint G_{j,3}^\phi\cdot\{(\partial_t+ b\cdot \nabla_\bot)\Phi^j \}d\alpha d\beta$ for $\phi=\chi$.

From \eqref{cubic2}, we know $G^\chi$ consists of three terms $G^\chi=I_1+I_2+I_3$. In particular, the first term
\begin{equation}\label{316}
\begin{aligned}
&I_1=\iint K(\zeta'-\zeta)\,(u-u')\times
(\zeta'_{\beta'}\partial_{\alpha'}-\zeta'_{\alpha'}\partial_{\beta'})\overline{u'}\,d\alpha'd\beta'\\&=\frac12
\iint K(\zeta'-\zeta)\,(u-u')\times
(\zeta'_{\beta'}\partial_{\alpha'}-\zeta'_{\alpha'}\partial_{\beta'})\{(I+\mathcal H')\overline{u'}+(I-\mathcal H')\overline {u'}\}\,d\alpha'd\beta'.
\end{aligned}
\end{equation}
 Rewriting
\begin{equation}\label{317}
(I-\mathcal H)\Gamma^j I_1=[\Gamma^j,\mathcal H]I_1+\Gamma^j(I-\mathcal H) I_1,
\end{equation}
where using Proposition~\ref{propon}, we deduce
\begin{equation}\label{320}
\begin{aligned}
(I-\mathcal H)I_1&=(I-\mathcal H)\frac12
\iint K(\zeta'-\zeta)\,(u-u')\times
(\zeta'_{\beta'}\partial_{\alpha'}-\zeta'_{\alpha'}\partial_{\beta'})\{(I+\mathcal H')\overline {u'}\}\,d\alpha'd\beta'\\&
=\iint K(\zeta'-\zeta)\,(u-u')\times
(\zeta'_{\beta'}\partial_{\alpha'}-\zeta'_{\alpha'}\partial_{\beta'})\{(I+\mathcal H')\overline {u'}\}\,d\alpha'd\beta';
\end{aligned}
\end{equation}
furthermore from \eqref{ww2}, we know $(I+\mathcal H)\overline u=(-\overline{\mathcal H}+\mathcal H)\overline u$.
Now with \eqref{316}-\eqref{320}, all the terms in $ G_{j,3}^\chi(t)=(I-\mathcal H)\Gamma^j G^\chi$ are in appropriate forms for carrying out estimates.
Using  Propositions~\ref{propcauchy1},~\ref{propcauchy2},~\ref{propcauchy3}, and ~\ref{propl2est}, we obtain
\begin{equation*}
\begin{aligned}
\|G_{j,3}^\chi(t)\|_2
\lesssim &E_j^{1/2}(t)\sum_{|i|\le j\atop\partial=\partial_\alpha,\partial_\beta}(|\Gamma^i\partial\lambda(t)|_\infty+|\Gamma^i\partial\frak z(t)|_\infty)\\&\qquad\qquad\times
(\sum_{|i|\le [\frac{j}2]+2\atop\partial=\partial_\alpha,\partial_\beta}|\Gamma^i\partial u(t)|_\infty(1+\ln t)+E_{j}^{1/2}(t)\frac1t)
\end{aligned}
\end{equation*}
Further using Proposition~\ref{liest}, we get for $\phi=\chi$,
\begin{equation}\label{356}
\begin{aligned}
&|\iint G_{j,3}^\phi\cdot\{(\partial_t+ b\cdot \nabla_\bot)\Phi^j \}d\alpha d\beta|\\&\lesssim E_j(t)\{\sum_{|i|\le j\atop\partial=\partial_\alpha,\partial_\beta}|\Gamma^i\partial\chi(t)|_\infty+E_{j+2}^{1/2}(t)(\sum_{|i|\le [\frac{j}2]+2\atop\partial=\partial_\alpha,\partial_\beta}|\Gamma^i\partial \chi(t)|_\infty(1+\ln t)+\frac1t)\}\\&\qquad\qquad\times
(\sum_{|i|\le [\frac{j}2]+2\atop\partial=\partial_\alpha,\partial_\beta}(|\Gamma^i\partial \chi(t)|_\infty+|\Gamma^i\partial\frak v(t)|_\infty)(1+\ln t)+E_{j}^{1/2}(t)\frac1t)
\end{aligned}
\end{equation}

Step 3. We consider the term $\iint G_{j,3}^\phi\cdot\{(\partial_t+ b\cdot \nabla_\bot)\Phi^j \}d\alpha d\beta$ for $\phi=\frak v$.

From \eqref{cubic3}, we know
\begin{equation}\label{357}
G_{j,3}^{\frak v}=(I-\mathcal H)\Gamma^j( \frac{\frak a_t}{\frak a}\circ k^{-1}A \mathcal N\times \nabla\chi+A(u_\beta\chi_\alpha-u_\alpha\chi_\beta)+(\partial_t+ b\cdot \nabla_\bot)G^\chi)
\end{equation}
In this step, we will only consider the estimates of $\|(I-\mathcal H)\Gamma^j( \frac{\frak a_t}{\frak a}\circ k^{-1}A \mathcal N\times \nabla\chi)(t)\|_2$, $\|(I-\mathcal H)\Gamma^j( (\partial_t+ b\cdot \nabla_\bot)G^\chi)(t)\|_2$ 
and $\|(I-\mathcal H)\Gamma^j((A-1)(u_\beta\chi_\alpha-u_\alpha\chi_\beta))(t)\|_2$. We will
leave the estimate of $\|(I-\mathcal H)\Gamma^j(u_\beta\chi_\alpha-u_\alpha\chi_\beta)(t)\|_2$ to Step 5.

First, we have by using \eqref{at}, Lemma~\ref{lemmaih}, and Propositions~\ref{propcauchy1},~\ref{propcauchy2},~\ref{propcauchy3}, ~\ref{propsobolev}, and ~\ref{propl2est},~\ref{liest} that
\begin{equation}\label{358}
\begin{aligned}
\|(I-\mathcal H)\Gamma^j( \frac{\frak a_t}{\frak a}\circ k^{-1}A &\mathcal N\times \nabla\chi)(t)\|_2\lesssim E_j^{1/2}(t)\sum_{|i|\le [\frac{j}2]+2\atop\partial=\partial_\alpha,\partial_\beta}(|\Gamma^i\partial\chi(t)|_\infty+|\Gamma^i\partial\frak v(t)|_\infty)\\&\times
(\sum_{|i|\le [\frac{j}2]+2\atop\partial=\partial_\alpha,\partial_\beta}(|\Gamma^i\partial\chi(t)|_\infty+|\Gamma^i\partial\frak v(t)|_\infty)(1+\ln t)+E_j^{1/2}(t)\frac1t)
\end{aligned}
\end{equation}
And using Propositions~\ref{propl2est},~\ref{liest}, we have
\begin{equation}\label{-364}
\begin{aligned}
&\|(I-\mathcal H)\Gamma^j((A-1)(u_\beta\chi_\alpha-u_\alpha\chi_\beta))(t)\|_2\lesssim \\&\qquad
E_{j+1}^{1/2}(t) \sum_{|i|\le [\frac{j}2]+2\atop\partial=\partial_\alpha,\partial_\beta}(|\Gamma^i\partial\chi(t)|_\infty+|\Gamma^i\partial\frak v(t)|_\infty)^2
\end{aligned}
\end{equation}
We handle the estimate of $\|(I-\mathcal H)\Gamma^j( (\partial_t+ b\cdot \nabla_\bot)G^\chi)(t)\|_2$  similar to Step 2 by rewriting
 the term $(I-\mathcal H)\Gamma^j(\partial_t+ b\cdot \nabla_\bot)I_1$, where $I_1$ is as defined in \eqref{316}, as the following:
\begin{equation}\label{321}
\begin{aligned}
(I-\mathcal H)\Gamma^j(\partial_t+b\cdot\nabla_\bot)I_1&=[\Gamma^j,\mathcal H](\partial_t+b\cdot\nabla_\bot)I_1+\Gamma^j[\partial_t+b\cdot\nabla_\bot, \mathcal H]I_1\\&+\Gamma^j(\partial_t+b\cdot\nabla_\bot)(I-\mathcal H) I_1
\end{aligned}
\end{equation}
and use \eqref{320} to calculate $(I-\mathcal H) I_1$. 
 We get, by using Propositions~\ref{propcauchy1},,~\ref{propcauchy2},~\ref{propcauchy3} with $r=t$, and ~\ref{propl2est},~\ref{liest} that
 \begin{equation}\label{359}
 \begin{aligned}
 &\|(I-\mathcal H)\Gamma^j( (\partial_t+ b\cdot \nabla_\bot)G^\chi)(t)\|_2\lesssim E_{j+1}^{1/2}(t)\\&\qquad\qquad\times(\sum_{|i|\le [\frac{j}2]+2\atop\partial=\partial_\alpha,\partial_\beta}(|\Gamma^i\partial\chi(t)|_\infty+|\Gamma^i\partial\frak v(t)|_\infty)(1+\ln t)+E_j^{1/2}(t)\frac1t)^2
  \end{aligned}
 \end{equation}

Step 4. We consider the term $\iint G_{j,2}^\phi\cdot\{(\partial_t+ b\cdot \nabla_\bot)\Phi^j \}d\alpha d\beta$ for $\phi=\chi$ and $\frak v$.

We know $$G_{j,2}^\phi=(I-\mathcal H)[\mathcal P, \Gamma^j]\phi=\sum_{k=1}^j(I-\mathcal H)\Gamma^{j-k}[\mathcal P, \Gamma]\Gamma^{k-1}\phi.$$
A further expansion of \eqref{commgp} gives that for $\Gamma=\partial_t,\partial_\alpha,\partial_\beta, \varpi$, 
\begin{equation}
\begin{aligned}
& [ \Gamma, \mathcal P]=- \{(\dot\Gamma(A-1)(\zeta_\beta\partial_\alpha
-\zeta_\alpha\partial_\beta)+A(\partial_\beta\dot\Gamma\lambda\partial_\alpha-\partial_\alpha\dot\Gamma\lambda\partial_\beta) \}+\\&
\{\ddot\Gamma(\partial_t+b\cdot\nabla_\bot )b-\ddot\Gamma b\cdot\nabla_\bot b\}\cdot\nabla_\bot+
\ddot\Gamma b\cdot\{(\partial_t+b\cdot\nabla_\bot)\nabla_\bot+ \nabla_\bot(\partial_t+b\cdot\nabla_\bot)\}
\end{aligned}
\end{equation}
where  $\dot\Gamma f=\partial_t f,\partial_\alpha f,\partial_\beta f, \varpi f+\frac 12 f e_3$, $\ddot\Gamma f=\partial_t f,\partial_\alpha f,\partial_\beta f, (\varpi -\frac 12 e_3)f$ respectively.  Also 
\begin{equation}
\begin{aligned}
&[ L_0, \mathcal P]=-\mathcal P- \{L_0(A-1)(\zeta_\beta\partial_\alpha
-\zeta_\alpha\partial_\beta)+ A (\partial_\beta(L_0-I)\lambda\partial_\alpha-\partial_\alpha(L_0-I)\lambda\partial_\beta)   \}\\&
+\{L_0(\partial_t+b\cdot\nabla_\bot)b-  (L_0 b-\frac12 b)\cdot\nabla_\bot b\}\cdot\nabla_\bot
\\&
+(L_0 b-\frac12 b) 
\cdot\{  (\partial_t+b\cdot\nabla_\bot)\nabla_\bot+ \nabla_\bot(\partial_t+b\cdot\nabla_\bot)\}.
\end{aligned}
\end{equation}
Therefore typically there are three types of terms in $[\mathcal P, \Gamma^j]\phi=\sum_{k=1}^j\Gamma^{j-k}[\mathcal P, \Gamma]\Gamma^{k-1}\phi  $. 
The first type $(C)$ are of cubic and higher orders and are consists of the following:
\begin{equation}\label{360}
\begin{aligned}
&\Gamma^{j-k}\{\Gamma^i(A-1)(\zeta_\beta\partial_\alpha
-\zeta_\alpha\partial_\beta)\}\Gamma^{k-1}\phi,\qquad{i=0,1},\  k=1,\dots, j\\&
\Gamma^{j-k}\{\Gamma^i(\partial_t+b\cdot\nabla_\bot)b\}\cdot \nabla_\bot\Gamma^{k-1}\phi,\quad \Gamma^{j-k}\{\Gamma^ib\cdot\nabla_\bot b\}\cdot \nabla_\bot\Gamma^{k-1}\phi\\&
\Gamma^{j-k}\{\Gamma^i b\cdot((\partial_t+b\cdot\nabla_\bot)\nabla_\bot+ \nabla_\bot(\partial_t+b\cdot\nabla_\bot))\}\Gamma^{k-1}\phi,
\end{aligned}
\end{equation}
The second type $(Q)$ are quadratic and are consists of the following:
\begin{equation}\label{361}
\begin{aligned}
&\Gamma^{j-k}\{A(\partial_\beta\Gamma^i\lambda\partial_\alpha\Gamma^{k-1}\phi-\partial_\alpha\Gamma^i\lambda\partial_\beta\Gamma^{k-1}\phi)\}\quad i=0, 1, \ k=1,\dots, j\\&
\Gamma^{j-k}\{A(\partial_\beta\lambda e_3\partial_\alpha\Gamma^{k-1}\phi-\partial_\alpha\lambda e_3\partial_\beta\Gamma^{k-1}\phi)\}
\end{aligned}
\end{equation}
And the third type is  of the form $\Gamma^{j-k}\mathcal P\Gamma^{k-1}\phi$ for some $1\le k \le j$, which can be treated in the same way as in Steps 2--6. We first consider the terms of the first type $(C)$ and let the sum of these terms  be $C(t)$.
We have, by using Propositions~\ref{propAb},~\ref{propcauchy1}, ~\ref{propcauchy2}, ~\ref{propcauchy3} with $r=t$, ~\ref{propl2est}, ~\ref{liest} that
\begin{equation}\label{362}
\begin{aligned}
&\|(I-\mathcal H)C(t)\|_2\lesssim \\& \sum_{|i|\le j}(\|\Gamma^i(A-1)(t)\|_2+\|\Gamma^i(\partial_t+b\cdot\nabla_\bot)b(t)\|_2+\|\Gamma^i b(t)\|_2)\sum_{|i|\le[\frac{j}2]+2\atop\partial=\partial_\alpha,\partial_\beta}|\partial\Gamma^i\phi(t)|_\infty\\&+
 \sum_{|i|\le [\frac{j}2]}(\|\Gamma^i(A-1)(t)\|_2+\|\Gamma^i(\partial_t+b\cdot\nabla_\bot)b(t)\|_2+\|\Gamma^i b(t)\|_2)\sum_{|i|\le j\atop\partial=\partial_\alpha,\partial_\beta}|\partial\Gamma^i\phi(t)|_\infty+
 \\&\sum_{|i|\le [\frac{j}2]+1}\|\Gamma^i b(t)\|_\infty(\sum_{|i|\le j}\|\Gamma^i b(t)\|_2\sum_{|i|\le[\frac{j}2]+2\atop\partial=\partial_\alpha,\partial_\beta}|\partial\Gamma^i\phi(t)|_\infty+
 \sum_{|i|\le [\frac{j}2]}\|\Gamma^i b(t)\|_2\sum_{|i|\le j\atop\partial=\partial_\alpha,\partial_\beta}|\partial\Gamma^i\phi(t)|_\infty)
 \\&
 \lesssim 
 E^{1/2}_j(t)\{(1+\ln t)\sum_{|i|\le [\frac{j}2]+2\atop\partial=\partial_\alpha,\partial_\beta}(|\Gamma^i\partial\chi(t)|_\infty+|\Gamma^i\partial\frak v(t)|_\infty)+E_j^{1/2}(t)\frac 1t\}\\&\times
 \{\sum_{|i|\le j+1\atop\partial=\partial_\alpha,\partial_\beta}|\Gamma^i\partial\chi(t)|_\infty+|\Gamma^i\partial\frak v(t)|_\infty+E_{j+3}^{1/2}(t)(\sum_{|i|\le [\frac{j+1}2]+2\atop\partial=\partial_\alpha,\partial_\beta}|\Gamma^i\partial \chi(t)|_\infty(1+\ln t)+\frac1t)\}
 \end{aligned}
\end{equation}
We also give the estimates of the following two cubic and higher order terms in \eqref{361}. 
First we have  for $\phi=\chi,\ \frak v$, $k=1,\dots, j$, $i=0,1$,
\begin{equation}\label{363}
\begin{aligned}
&\|(I-\mathcal H)\Gamma^{j-k}\{(A-1)(\partial_\beta\Gamma^i\lambda\partial_\alpha\Gamma^{k-1}\phi-\partial_\alpha\Gamma^i\lambda\partial_\beta\Gamma^{k-1}\phi)(t)\}\|_2\lesssim \\&\qquad
E_j^{1/2}(t) \sum_{|i|\le [\frac{j}2]+2\atop\partial=\partial_\alpha,\partial_\beta}(|\Gamma^i\partial\chi(t)|_\infty+|\Gamma^i\partial\frak v(t)|_\infty)^2
\end{aligned}
\end{equation}
Recall definition \eqref{lambda}: $\lambda=\lambda^*-\mathcal K\frak z e_3$. We have for $\phi=\chi,\ \frak v$, $k=1,\dots, j$, $i=0,1$,
\begin{equation}\label{365}
\begin{aligned}
&\|(I-\mathcal H)\Gamma^{j-k}\{\partial_\beta\Gamma^i\mathcal K\frak z e_3\partial_\alpha\Gamma^{k-1}\phi-\partial_\alpha\Gamma^i \mathcal K\frak z e_3    \partial_\beta\Gamma^{k-1}\phi)(t)\}\|_2\lesssim \\&
E_j^{1/2}(t) \sum_{|i|\le [\frac{j}2]+2\atop\partial=\partial_\alpha,\partial_\beta}(|\Gamma^i\partial\chi(t)|_\infty+|\Gamma^i\partial\frak v(t)|_\infty)^2(1+\ln t)\\&+E_j(t)\frac 1t \sum_{|i|\le [\frac{j}2]+2\atop\partial=\partial_\alpha,\partial_\beta}(|\Gamma^i\partial\chi(t)|_\infty+|\Gamma^i\partial\frak v(t)|_\infty)
\end{aligned}
\end{equation}
Therefore the only terms in \eqref{361} that are left to be estimated are the following
\begin{equation}\label{364}
\begin{aligned}
&\Gamma^{j-k}\{\partial_\beta\Gamma^i\lambda^*\partial_\alpha\Gamma^{k-1}\phi-\partial_\alpha\Gamma^i\lambda^*\partial_\beta\Gamma^{k-1}\phi\}\\&
\Gamma^{j-k}\{\partial_\beta\lambda^* e_3\partial_\alpha\Gamma^{k-1}\phi-\partial_\alpha\lambda^* e_3\partial_\beta\Gamma^{k-1}\phi\},
\quad i=0,1,\  k=1,\dots, j.
\end{aligned}
\end{equation}

Step 5. We consider the term $(I-\mathcal H)\Gamma^j(u_\beta\chi_\alpha-u_\alpha\chi_\beta)$ in $G_{j,3}^\frak v$, the term 
$\iint\{ \Phi^j\cdot(u_\beta\Phi^j_\alpha-u_\alpha\Phi^j_\beta)\}\,d\alpha\,d\beta$ for $\phi=\frak v$ in \eqref{340}, and those terms in \eqref{364} for $\phi=\frak v$.
 Without loss of generality, for terms in \eqref{364} with $\phi=\frak v$, we will only write for  $$\Gamma^{j-k}\{\partial_\beta\Gamma\lambda^*\partial_\alpha\Gamma^{k-1}\frak v-\partial_\alpha\Gamma\lambda^*\partial_\beta\Gamma^{k-1}\frak v\}.$$

Using \eqref{rotation}, we rewrite 
\begin{equation}\label{366}
\begin{aligned}
 \partial_{\beta}&u\partial_{\alpha}\chi-\partial_{\alpha} u\partial_{\beta}\chi=\frac2t\{\Upsilon u\,\partial_t (e_2\partial_{\alpha}-e_1\partial_{\beta}) \chi\\&+ \partial_{\beta}u\,\Omega^-_{01}(e_2\partial_{\alpha}-e_1\partial_{\beta})\chi- \partial_{\alpha}u\, \Omega^-_{02}(e_2\partial_{\alpha}-e_1\partial_{\beta})\chi\},
\end{aligned}
\end{equation}
\begin{equation}\label{367}
\begin{aligned}
 \partial_{\beta}&u\partial_{\alpha}\Phi^j-\partial_{\alpha} u\partial_{\beta}\Phi^j=\frac2t\{\Upsilon u\,\partial_t (e_2\partial_{\alpha}-e_1\partial_{\beta}) \Phi^j\\&+ \partial_{\beta}u\,\Omega^-_{01}(e_2\partial_{\alpha}-e_1\partial_{\beta})\Phi^j- \partial_{\alpha}u\, \Omega^-_{02}(e_2\partial_{\alpha}-e_1\partial_{\beta})\Phi^j\},
\end{aligned}
\end{equation}
and
\begin{equation}\label{368}
\begin{aligned}
\partial_{\alpha}&\Gamma\lambda^*\partial_{\beta}\Gamma^{k-1}\frak v- \partial_{\beta}\Gamma\lambda^*\partial_{\alpha}\Gamma^{k-1}\frak v=\frac2t\{-\partial_t (e_2\partial_{\alpha}-e_1\partial_{\beta}) \Gamma\lambda^*\,\Upsilon \Gamma^{k-1}\frak v\\&+\Omega_{01}^+(e_2\partial_{\alpha}-e_1\partial_{\beta})\Gamma\lambda^*\,\partial_{\beta}\Gamma^{k-1}\frak v-  \Omega_{02}^+(e_2\partial_{\alpha}-e_1\partial_{\beta})\Gamma\lambda^*\,\partial_{\alpha}\Gamma^{k-1}\frak v\}
\end{aligned}
\end{equation}
Notice that $[\Omega^\pm_{01}, e_2\partial_\alpha-e_1\partial_\beta]= \mp e_2\partial_t$, $[\Omega^\pm_{02}, e_2\partial_\alpha-e_1\partial_\beta]= \pm e_1\partial_t$.
Using Propositions~\ref{propl2est},~\ref{liest} and \eqref{eqbasicoj},\eqref{comgp}, we get
\begin{equation*}
\begin{aligned}
&\|(I-\mathcal H)\Gamma^j(\partial_{\beta}u\partial_{\alpha}\chi-\partial_{\alpha} u\partial_{\beta}\chi)(t)\|_2\lesssim \\&\frac 1t(E_{j+1}^{1/2}(t)+t\sum_{|i|\le j}\|\Gamma^i\frak P^-\chi(t)\|_2)\sum_{|i|\le [\frac{j}2]+2\atop\partial=\partial_\alpha,\partial_\beta}(|\partial \Gamma^i\chi(t)|_\infty+|\partial \Gamma^i\frak v(t)|_\infty)\\&+\frac 1t(E_{j}^{1/2}(t)+t\sum_{|i|\le [\frac {j}2]}\|\Gamma^i\frak P^-\chi(t)\|_2)
\sum_{|i|\le j\atop\partial=\partial_\alpha,\partial_\beta}(| \Gamma^i\partial_t\partial\chi(t)|_\infty+|\Gamma^i\partial u(t)|_\infty)
\end{aligned}
\end{equation*}
Further applying \eqref{3-10} and \eqref{com0}, we obtain
\begin{equation}\label{369}
\begin{aligned}
&\|(I-\mathcal H)\Gamma^j(\partial_{\beta}u\partial_{\alpha}\chi-\partial_{\alpha} u\partial_{\beta}\chi)(t)\|_2\lesssim \\& 
E^{1/2}_{j+1}(t)(\sum_{|i|\le [\frac{j}2]+2\atop\partial=\partial_\alpha,\partial_\beta}(|\partial \Gamma^i\chi(t)|_\infty+|\partial \Gamma^i\frak v(t)|_\infty)+\frac 1t)\sum_{|i|\le [\frac{j}2]+2\atop\partial=\partial_\alpha,\partial_\beta}(|\partial \Gamma^i\chi(t)|_\infty+|\partial \Gamma^i\frak v(t)|_\infty)
\\&+
E^{1/2}_{j}(t)(\sum_{|i|\le [\frac{j}2]+2\atop\partial=\partial_\alpha,\partial_\beta}|\partial \Gamma^i\chi(t)|_\infty+|\partial \Gamma^i\frak v(t)|_\infty+\frac 1t)\times\\&(
\sum_{|i|\le j+1\atop\partial=\partial_\alpha,\partial_\beta}|\partial \Gamma^i\chi(t)|_\infty+|\partial \Gamma^i\frak v(t)|_\infty+E^{1/2}_{j+3}(t)
(\sum_{|i|\le [\frac{j}2]+2\atop\partial=\partial_\alpha,\partial_\beta}(|\partial \Gamma^i\chi(t)|_\infty+|\partial \Gamma^i\frak v(t)|_\infty)(1+\ln t)+\frac 1t)
\end{aligned}
\end{equation}
Similarly,
\begin{equation*}
\begin{aligned}
\| (\partial_{\beta}u&\partial_{\alpha}\Phi^j-\partial_{\alpha} u\partial_{\beta}\Phi^j)(t)\|_2\lesssim \\&
\sum_{i=1,2\atop\partial=\partial_\alpha,\partial_\beta}\frac 1t(\|\Upsilon u(t)\|_2|\partial_t\partial\Phi^j(t)|_\infty+|\partial u(t)|_\infty\|\Omega_{0i} (e_2\partial_{\alpha}-e_1\partial_{\beta})  \Phi^j(t)\|_2)
\end{aligned}
\end{equation*}
For $\phi=\frak v$, $\partial=\partial_\alpha,\partial_\beta$, we know 
\begin{equation}
\begin{aligned}
&\partial_t\partial\Phi^j=\partial_t\partial(I-\mathcal H)\Gamma^j\frak v=\partial_t(I-\mathcal H)\partial\Gamma^j\frak v-\partial_t[\partial, \mathcal H]\Gamma^j\frak v\\&=(I-\mathcal H)\partial_t\partial\Gamma^j\frak v-[\partial_t,\mathcal H]\partial\Gamma^j\frak v-\partial_t[\partial, \mathcal H]\Gamma^j\frak v
\end{aligned}
\end{equation}
 Therefore using Propositions~\ref{propcauchy3},~\ref{propsobolev},~\ref{propl2est}, we have
 \begin{equation}
|\partial_t\partial\Phi^j(t)|_\infty\lesssim \sum_{i\le j+2\atop\partial=\partial_\alpha,\partial_\beta}(|\Gamma^i\partial v(t)|_\infty(1+\ln t) +E^{1/2}_{j+2}(t)\frac1t)
 \end{equation}
Using further Propositions~\ref{propl2est}, ~\ref{liest},~\ref{propbasicoj}, we obtain
\begin{equation}
 \begin{aligned}
\| (\partial_{\beta}u&\partial_{\alpha}\Phi^j-\partial_{\alpha} u\partial_{\beta}\Phi^j)(t)\|_2\lesssim \\&\frac 1tE_j^{1/2}(t)
\sum_{i\le j+2\atop\partial=\partial_\alpha,\partial_\beta}(|\Gamma^i\partial v(t)|_\infty(1+\ln t) +E^{1/2}_{j+2}(t)\frac1t)\\&+
\sum_{i\le 2\atop\partial=\partial_\alpha,\partial_\beta}(|\Gamma^i\partial \chi(t)|_\infty+|\Gamma^i\partial v(t)|_\infty)(\frac 1t E^{1/2}_{j+2}(t)+\|\frak P^-\Phi^j(t)\|_2)
\end{aligned}
\end{equation}
We know from \eqref{313},\eqref{314}, \eqref{319} the estimate of $\|\mathcal P^-\Phi^j(t)\|_2=\|G_j^\frak v(t)\|_2$. Using further \eqref{371}, Propositions~\ref{propl2est},~\ref{liest}, we have
\begin{equation}
\begin{aligned}
\|\frak P^-\Phi^j(t)\|_2\lesssim &E_j^{1/2}(t)
\sum_{i\le [\frac{ j}2]+2\atop\partial=\partial_\alpha,\partial_\beta}(|\Gamma^i\partial \chi(t)|_\infty+|\Gamma^i\partial v(t)|_\infty)\\&+
\sum_{i\le 4\atop\partial=\partial_\alpha,\partial_\beta}(|\Gamma^i\partial \chi(t)|_\infty+|\Gamma^i\partial v(t)|_\infty) E^{1/2}_{j+2}(t)
\end{aligned}
\end{equation}
Therefore
\begin{equation}\label{373}
\begin{aligned}
&\| (\partial_{\beta}u\partial_{\alpha}\Phi^j-\partial_{\alpha} u\partial_{\beta}\Phi^j)(t)\|_2\lesssim
\\&\frac 1tE_j^{1/2}(t)
\sum_{i\le j+2\atop\partial=\partial_\alpha,\partial_\beta}(|\Gamma^i\partial v(t)|_\infty(1+\ln t) +E^{1/2}_{j+2}(t)\frac1t)\\&+
\sum_{i\le 2\atop\partial=\partial_\alpha,\partial_\beta}(|\Gamma^i\partial \chi(t)|_\infty+|\Gamma^i\partial v(t)|_\infty) E^{1/2}_{j+2}(t)
(\frac 1t+\sum_{i\le  [\frac{ j}2]+2   \atop\partial=\partial_\alpha,\partial_\beta}|\Gamma^i\partial \chi(t)|_\infty+|\Gamma^i\partial v(t)|_\infty) 
\end{aligned}
\end{equation}
The estimate for $\|(I-\mathcal H)\Gamma^{j-k}\{\partial_\beta\Gamma\lambda^*\partial_\alpha\Gamma^{k-1}\frak v-\partial_\alpha\Gamma\lambda^*\partial_\beta\Gamma^{k-1}\frak v\}(t)\|_2$ is similar: we have from \eqref{368} that for $k=1,\dots, j$,
\begin{equation*}
\begin{aligned}
&\|(I-\mathcal H)\Gamma^{j-k}\{\partial_\beta\Gamma\lambda^*\partial_\alpha\Gamma^{k-1}\frak v-\partial_\alpha\Gamma\lambda^*\partial_\beta\Gamma^{k-1}\frak v\}(t)\|_2\lesssim \\&\frac1t E_j^{1/2}(t)\sum_{i\le j+1\atop\partial=\partial_\alpha,\partial_\beta}|\Gamma^i\partial\lambda^*(t)|_\infty+
\frac 1t(E_{j+1}^{1/2}(t)+t\sum_{|i|\le j-1}\|\Gamma^i\frak P^-\Gamma\lambda^*(t)\|_2)\sum_{|i|\le [\frac{j}2]+2\atop\partial=\partial_\alpha,\partial_\beta}|\partial \Gamma^i\frak v(t)|_\infty\\&+\frac 1t(E_{j}^{1/2}(t)+t\sum_{|i|\le [\frac {j}2]-1}\|\Gamma^i\frak P^-\Gamma\lambda^*(t)\|_2)
\sum_{|i|\le j-1\atop\partial=\partial_\alpha,\partial_\beta}|\Gamma^i\partial \frak v(t)|_\infty
\end{aligned}
\end{equation*}
Notice that
$$\frak P^+\Gamma\lambda^*=(\frak P^+-\mathcal P^+)\Gamma\lambda^*+[\mathcal P^+,\Gamma]\lambda^*+\Gamma\mathcal P^+\lambda^*$$
Using \eqref{quasi2}, \eqref{371},\eqref{commgp}, Propositions~\ref{propcauchy1}, ~\ref{propcauchy2}, ~\ref{propcauchy3}, ~\ref{propsobolev}, ~\ref{propl2est}, ~\ref{liest}, we get for $k\le j-1$,
\begin{equation}\label{374}
\|\Gamma^{k}\frak P^+\Gamma\lambda^*(t)\|_2\lesssim E_{k+2}^{1/2}(t)(\sum_{i\le [\frac j2]+2\atop\partial=\partial_\alpha,\partial_\beta}(|\Gamma^i\partial \chi(t)|_\infty+|\Gamma^i\partial v(t)|_\infty)(1+\ln t) + E^{1/2}_{j}(t)\frac1t)
\end{equation}
Therefore
\begin{equation}\label{375}
\begin{aligned}
&\|(I-\mathcal H)\Gamma^{j-k}\{\partial_\beta\Gamma\lambda^*\partial_\alpha\Gamma^{k-1}\frak v-\partial_\alpha\Gamma\lambda^*\partial_\beta\Gamma^{k-1}\frak v\}(t)\|_2\lesssim \\&\frac1t E_j^{1/2}(t)\{\sum_{i\le j+1\atop\partial=\partial_\alpha,\partial_\beta}|\Gamma^i\partial\chi(t)|_\infty+E_{j+3}^{1/2}(t)(\sum_{i\le [\frac {j+1}2]+2\atop\partial=\partial_\alpha,\partial_\beta}|\Gamma^i\partial\chi(t)|_\infty (1+\ln t)+\frac 1t)\}\\&+(\sum_{i\le [\frac j2]+2\atop\partial=\partial_\alpha,\partial_\beta}(|\Gamma^i\partial\chi(t)|_\infty +|\Gamma^i\partial v(t)|_\infty)(1+\ln t)+\frac 1t)\\&\quad\quad\times(E_j^{1/2}\sum_{|i|\le j-1\atop \partial=\partial_\alpha,\partial_\beta}|\Gamma^i\partial\frak v(t)|_\infty+ E_{j+1}^{1/2}\sum_{|i|\le  [\frac j2]+2 \atop \partial=\partial_\alpha,\partial_\beta}|\Gamma^i\partial\frak v(t)|_\infty)
\end{aligned}
\end{equation}

Step 6. Finally, we consider the term 
$\iint\{ \Phi^j\cdot(u_\beta\Phi^j_\alpha-u_\alpha\Phi^j_\beta)\}\,d\alpha\,d\beta$ for $\phi=\chi$ in \eqref{340}, and the remaining terms
\begin{equation}\label{376}
\begin{aligned}
&\Gamma^{j-k}\{\partial_\beta\Gamma^i\lambda^*\partial_\alpha\Gamma^{k-1}\chi-\partial_\alpha\Gamma^i\lambda^*\partial_\beta\Gamma^{k-1}\chi\}\\&
\Gamma^{j-k}\{\partial_\beta\lambda^* e_3\partial_\alpha\Gamma^{k-1}\chi-\partial_\alpha\lambda^* e_3\partial_\beta\Gamma^{k-1}\chi\}
\quad i=0,1,\  k=1,\dots, j
\end{aligned}
\end{equation}
in \eqref{364}. Without loss of generality, among terms in \eqref{376}, we will only write for  $$\Gamma^{j-k}\{\partial_\beta\Gamma\lambda^*\partial_\alpha\Gamma^{k-1}\chi-\partial_\alpha\Gamma\lambda^*\partial_\beta\Gamma^{k-1}\chi\}.$$ Notice that the ideas as that in \eqref{367},\eqref{368} doesn't work here, since we do not have estimates for $\|\Phi^j(t)\|_2$ for $\phi=\chi$, and for  $\|\Upsilon \Gamma^{k-1}\chi(t)\|_2$. We resolve these issue by using commutators. 

We first consider $\iint\{ \Phi^j\cdot(u_\beta\Phi^j_\alpha-u_\alpha\Phi^j_\beta)\}\,d\alpha\,d\beta$ for $\phi=\chi$. Using integration by parts, we have
$$\iint\{ \Phi^j\cdot(u_\beta\Phi^j_\alpha-u_\alpha\Phi^j_\beta)\}\,d\alpha\,d\beta=-\iint\{ \Phi^j_\beta\cdot (u\Phi^j_\alpha)-\Phi^j_\alpha\cdot(u\Phi^j_\beta)\}\,d\alpha\,d\beta$$
Now for $\phi=\chi$, we have, by using Proposition~\ref{prophdual},
\begin{equation}\label{377}
\begin{aligned}
&\iint\{ \partial_\alpha(I-\mathcal H)\Gamma^j\chi\cdot(u\Phi^j_\beta)- \partial_\beta(I-\mathcal H)\Gamma^j\chi  \cdot (u\Phi^j_\alpha)\}\,d\alpha\,d\beta\\&=-
\iint\{ [\partial_\alpha,\mathcal H]\Gamma^j\chi\cdot(u\Phi^j_\beta)-[\partial_\beta,\mathcal H]\Gamma^j\chi
\cdot (u\Phi^j_\alpha)\}\,d\alpha\,d\beta\\&
+
\iint\{  (\mathcal H^*-\mathcal H)\partial_\alpha\Gamma^j\chi\cdot(u\Phi^j_\beta)-  (\mathcal H^*-\mathcal H)  \partial_\beta\Gamma^j\chi
\cdot (u\Phi^j_\alpha)\}\,d\alpha\,d\beta\\&
+
\iint\{ \partial_\alpha\Gamma^j\chi\cdot((I-\mathcal H)(u\Phi^j_\beta))- \partial_\beta\Gamma^j\chi  \cdot ((I-\mathcal H)(u\Phi^j_\alpha))\}\,d\alpha\,d\beta
\end{aligned}
\end{equation}
We further rewrite the term $\iint\{ \partial_\alpha\Gamma^j\chi\cdot((I-\mathcal H)(u\Phi^j_\beta))- \partial_\beta\Gamma^j\chi  \cdot ((I-\mathcal H)(u\Phi^j_\alpha))\}\,d\alpha\,d\beta
$. We know $\mathcal H u= u$. Let $\Phi^j_i$ be the $e_i$ component of $\Phi^j$, for $i=0,\dots, 3$. We have
\begin{equation}\label{378}
\begin{aligned}
& \partial_\alpha\Gamma^j\chi\cdot\{(I-\mathcal H)(u\,\partial_\beta\Phi^j_i e_i)\}- \partial_\beta\Gamma^j\chi  \cdot \{(I-\mathcal H)(u\,\partial_\alpha\Phi^j_i e_i)\}\\&
=\partial_\alpha\Gamma^j\chi\cdot([ \partial_\beta\Phi^j_i ,\mathcal H]u e_i)- \partial_\beta\Gamma^j\chi  \cdot ([\partial_\alpha\Phi^j_i,\mathcal H]u e_i)\\&
=\frac2t \partial_t (e_2\partial_{\alpha}-e_1\partial_{\beta}) \Gamma^j\chi\cdot\{
[\Upsilon \Phi^j_i,\mathcal H]u e_i-[\alpha,\mathcal H]( u     \partial_\beta\Phi^j_i e_i)+ [\beta,\mathcal H] (u     \partial_\alpha\Phi^j_i e_i)\}
\\&+\frac 2t\{ \Omega_{01}^-(e_2\partial_{\alpha}-e_1\partial_{\beta})\Gamma^j\chi\cdot [\partial_\beta\Phi^j_i,\mathcal H]u e_i-
 \Omega_{02}^-(e_2\partial_{\alpha}-e_1\partial_{\beta})\Gamma^j\chi\cdot [\partial_\alpha\Phi^j_i,\mathcal H]u e_i\}
\end{aligned}
\end{equation}
Further applying integration by parts gives us:
\begin{equation}\label{379}
\begin{aligned}
&\iint \partial_t (e_2\partial_{\alpha}-e_1\partial_{\beta}) \Gamma^j\chi\cdot\{
[\Upsilon \Phi^j_i,\mathcal H]u e_i-[\alpha,\mathcal H]( u     \partial_\beta\Phi^j_i e_i)+ [\beta,\mathcal H] (u     \partial_\alpha\Phi^j_i e_i)\}
\,d\alpha\,d\beta\\&=-
\iint  e_2\partial_t\Gamma^j\chi\cdot\partial_{\alpha}\{
[\Upsilon \Phi^j_i,\mathcal H]u e_i-[\alpha,\mathcal H]( u     \partial_\beta\Phi^j_i e_i)+ [\beta,\mathcal H] (u     \partial_\alpha\Phi^j_i e_i)\}
\,d\alpha\,d\beta\\&+
\iint  e_1\partial_t \Gamma^j\chi\cdot \partial_{\beta}\{
[\Upsilon \Phi^j_i,\mathcal H]u e_i-[\alpha,\mathcal H]( u     \partial_\beta\Phi^j_i e_i)+ [\beta,\mathcal H] (u     \partial_\alpha\Phi^j_i e_i)\}
\,d\alpha\,d\beta.
\end{aligned}
\end{equation}
Through \eqref{377}-\eqref{379} we have  put $\iint\{ \Phi^j\cdot(u_\beta\Phi^j_\alpha-u_\alpha\Phi^j_\beta)\}\,d\alpha\,d\beta$ for $\phi=\chi$ in the right form for estimates.  Using Propositions~\ref{propcauchy1}, ~\ref{propl2est}, Lemma~\ref{propbasicoj},  we obtain
\begin{equation*}
\begin{aligned}
&|\iint\{ \Phi^j\cdot(u_\beta\Phi^j_\alpha-u_\alpha\Phi^j_\beta)\}\,d\alpha\,d\beta|\lesssim \sum_{\partial=\partial_\alpha,\partial_\beta}\{(|\partial\lambda(t)|_\infty+|\partial\frak z(t)|_\infty)E_j(t)|\partial\Phi^j(t)|_\infty\\&+\frac 1t E_j^{1/2}(t) E_{j+1}^{1/2}(t)|\partial\Phi^j(t)|_\infty+\|\frak P^-\Gamma^j\chi(t)\|_2E_j^{1/2}(t)|\partial\Phi^j(t)|_\infty+\frac 1t E_j(t) |\partial\Upsilon\Phi^j(t)|_\infty\}
\end{aligned}
\end{equation*}
We know for $\phi=\chi$, $\partial=\partial_\alpha,\partial_\beta$,
$\partial\Phi^j=(I-\mathcal H)\partial\Gamma^j\chi-[\partial,\mathcal H]\Gamma^j\chi$. Using Proposition~\ref{propcauchy3},~\ref{propsobolev},~\ref{propl2est}, we have
$$|\partial\Phi^j(t)|_\infty+|\Upsilon\partial\Phi^j(t)|_\infty\lesssim \sum_{i\le j+2}|\Gamma^i\partial\chi(t)|_\infty(1+\ln t)+E_{j+1}^{1/2}(t)\frac 1t$$
Therefore by further using \eqref{3-10}, we arrive at
\begin{equation}\label{380}
\begin{aligned}
|\iint\{ \Phi^j\cdot(u_\beta\Phi^j_\alpha&-u_\alpha\Phi^j_\beta)\}\,d\alpha\,d\beta|\lesssim  (\sum_{i\le j+2}|\Gamma^i\partial\chi(t)|_\infty(1+\ln t)+E_{j+1}^{1/2}(t)\frac 1t)\\&\qquad\times E_j^{1/2} E_{j+1}^{1/2}
(\sum_{|i|\le [\frac j2]+2\atop\partial=\partial_\alpha,\partial_\beta}(|\Gamma^i\partial\chi(t)|_\infty+|\Gamma^j\partial\frak v(t)|_\infty)+\frac{1}t)
\end{aligned}
\end{equation}

At last we give the estimate of $\|(I-\mathcal H)(\Gamma^{j-k}\{\partial_\beta\Gamma\lambda^*\partial_\alpha\Gamma^{k-1}\chi-\partial_\alpha\Gamma\lambda^*\partial_\beta\Gamma^{k-1}\chi\})(t)\|_2$ for $k=1,\dots,j$.  We first rewrite, 
\begin{equation}\label{381}
\begin{aligned}
(I-&\mathcal H)(\Gamma^{j-k}\{\partial_\beta\Gamma\lambda^*\partial_\alpha\Gamma^{k-1}\chi-\partial_\alpha\Gamma\lambda^*\partial_\beta\Gamma^{k-1}\chi\})\\&=\Gamma^{j-k}(I-\mathcal H)\{\partial_\beta\Gamma\lambda^*\partial_\alpha\Gamma^{k-1}\chi-\partial_\alpha\Gamma\lambda^*\partial_\beta\Gamma^{k-1}\chi\}\\&+[\Gamma^{j-k}, \mathcal H]\{\partial_\beta\Gamma\lambda^*\partial_\alpha\Gamma^{k-1}\chi-\partial_\alpha\Gamma\lambda^*\partial_\beta\Gamma^{k-1}\chi\}
\end{aligned}
\end{equation}
Let $\Gamma^{k-1}\chi_i$ be the $e_i$ component of $\Gamma^{k-1}\chi$. Using the fact $\mathcal H\lambda^*=\lambda^*$, we rewrite further   
\begin{equation}\label{382}
\begin{aligned}
(I-&\mathcal H)\{\partial_\beta\Gamma\lambda^*\partial_\alpha\Gamma^{k-1}\chi_i-\partial_\alpha\Gamma\lambda^*\partial_\beta\Gamma^{k-1}\chi_i\}\\&=\partial_\alpha\Gamma^{k-1}\chi_i[\partial_\beta\Gamma, \mathcal H]\lambda^*-\partial_\beta\Gamma^{k-1}\chi_i[\partial_\alpha\Gamma,\mathcal H]\lambda^*\\&+
[\partial_\alpha\Gamma^{k-1}\chi_i, \mathcal H-\mathcal H^*]\partial_\beta\Gamma\lambda^*-[\partial_\beta\Gamma^{k-1}\chi_i,\mathcal H-\mathcal H^*]\partial_\alpha\Gamma\lambda^*\\&+[\partial_\alpha\Gamma^{k-1}\chi_i, \mathcal H^*]\partial_\beta\Gamma\lambda^*-[\partial_\beta\Gamma^{k-1}\chi_i,\mathcal H^*]\partial_\alpha\Gamma\lambda^*
\end{aligned}
\end{equation}
and in which we rewrite $[\partial_\alpha\Gamma^{k-1}\chi_i, \mathcal H^*]\partial_\beta\Gamma\lambda^*-[\partial_\beta\Gamma^{k-1}\chi_i,\mathcal H^*]\partial_\alpha\Gamma\lambda^*$ using the idea of \eqref{rotation}:
\begin{equation}\label{383}
\begin{aligned}
-\frac t2&\{[\partial_\alpha\Gamma^{k-1}\chi_i, \mathcal H^*]\partial_\beta\Gamma\lambda^*-[\partial_\beta\Gamma^{k-1}\chi_i,\mathcal H^*]\partial_\alpha\Gamma\lambda^*\}\\&
=(-[\Upsilon \Gamma^{k-1}\chi_i,\mathcal H^*]
+\partial_\beta\Gamma^{k-1}\chi_i[\alpha,\mathcal H^*]-\partial_\alpha\Gamma^{k-1}\chi_i[\beta,\mathcal H^*]) \partial_t (e_2\partial_{\alpha}-e_1\partial_{\beta}) \Gamma\lambda^*
\\&+[\partial_{\beta}\Gamma^{k-1}\chi_i,\mathcal H^*]\Omega_{01}^+(e_2\partial_{\alpha}-e_1\partial_{\beta})\Gamma\lambda^*- [\partial_{\alpha}\Gamma^{k-1}\chi_i,\mathcal H^*] \Omega_{02}^+(e_2\partial_{\alpha}-e_1\partial_{\beta})\Gamma\lambda^*\}
\end{aligned}
\end{equation}
Using \eqref{381}-\eqref{383}, and Propositions~\ref{propcauchy1},~\ref{propcauchy2},~\ref{propl2est},~\ref{liest}, we have
\begin{equation*}
\begin{aligned}
&\|(I-\mathcal H)(\Gamma^{j-k}\{\partial_\beta\Gamma\lambda^*\partial_\alpha\Gamma^{k-1}\chi-\partial_\alpha\Gamma\lambda^*\partial_\beta\Gamma^{k-1}\chi\})(t)\|_2\lesssim \\& E_j^{1/2}(t)\sum_{|i|\le [\frac j2]+2\atop\partial=\partial_\alpha,\partial_\beta}|\Gamma^i\partial\chi(t)|_\infty(\sum_{|i|\le [\frac j2]+2\atop\partial=\partial_\alpha,\partial_\beta}|\Gamma^i\partial\chi(t)|_\infty(1+\ln t)+E_j^{1/2}(t)\frac{1}t)\\&
+
\frac1t E_j^{1/2}(t)\sum_{i\le j\atop\partial=\partial_\alpha,\partial_\beta}|\Gamma^i\partial\chi(t)|_\infty+
\frac 1t(E_{j+1}^{1/2}(t)+t\sum_{|i|\le j-1}\|\Gamma^i\frak P^-\Gamma\lambda^*(t)\|_2)\sum_{|i|\le [\frac{j}2]+2\atop\partial=\partial_\alpha,\partial_\beta}|\partial \Gamma^i\chi(t)|_\infty\\&+\frac 1t(E_{j}^{1/2}(t)+t\sum_{|i|\le [\frac {j}2]-1}\|\Gamma^i\frak P^-\Gamma\lambda^*(t)\|_2)
\sum_{|i|\le j-1\atop\partial=\partial_\alpha,\partial_\beta}|\Gamma^i\partial \chi(t)|_\infty
\end{aligned}
\end{equation*}
Further using \eqref{374}, we arrive at
\begin{equation}\label{384}
\begin{aligned}
\|(I-&\mathcal H)(\Gamma^{j-k}\{\partial_\beta\Gamma\lambda^*\partial_\alpha\Gamma^{k-1}\chi-\partial_\alpha\Gamma\lambda^*\partial_\beta\Gamma^{k-1}\chi\})(t)\|_2\lesssim \\&
(\sum_{i\le [\frac j2]+2\atop\partial=\partial_\alpha,\partial_\beta}(|\Gamma^i\partial\chi(t)|_\infty+|\Gamma^i\partial v(t)|_\infty) (1+\ln t)+\frac 1t)\\&\times\quad(E_j^{1/2}\sum_{|i|\le j\atop \partial=\partial_\alpha,\partial_\beta}|\Gamma^i\partial\chi(t)|_\infty+ E_{j+1}^{1/2}\sum_{|i|\le  [\frac j2]+2 \atop \partial=\partial_\alpha,\partial_\beta}|\Gamma^i\partial\chi(t)|_\infty)
\end{aligned}
\end{equation}

Combine ~\eqref{340},
~\eqref{341},~\eqref{345},~\eqref{346},~\eqref{348},
~\eqref{355},~\eqref{356},~\eqref{358},~\eqref{-364},
~\eqref{359}, 
~\eqref{362}, 
\newline
~\eqref{363}, ~\eqref{365}, ~\eqref{373},~\eqref{375},~\eqref{380},~\eqref{384}, notice that for $l\ge 15$, $q\ge l+9$,
$[\frac l2]+3+5\le l+2$ and $l+4\le \min\{2l-11, q-5\}$. Applying \eqref{li}, we obtain \eqref{partenergyineq}.
\end{proof}

\subsection{A conclusive estimate}
We now sum up the results in Lemma~\ref{lemmaliest}, Propositions~\ref{propfullenergy},~\ref{proppartenergy}. 

Let $\epsilon, \, L >0$,  $M_0$ be the constant such that Lemmas~\ref{lemmaliest},~\ref{lemmaef1}, Propositions~\ref{propfullenergy},~\ref{proppartenergy} hold.
Assume that $\frak F_{l+2}(0)\le \epsilon^2, \mathcal F_{l+3}(0)\le \epsilon^2$, and $\mathcal F_{l+9}(0)\le L^2$. 
 \begin{theorem}\label{mainestimate}
 Let $l\ge 17$, $q\ge l+9$, $l\le n\le l+9$. There exists $\varepsilon_0>0$, depending on $M_0$, $L$, such that for  $\epsilon\le \varepsilon_0$, we have 1.
 \begin{equation}\label{polynormialgrowth}
 \mathcal F_n(t)\le \mathcal F_n(0) (1+t)^{1/2},\qquad\text{for }t\in [0, T];
 \end{equation}
 2.  \begin{equation}\label{global}
 \frak F^{1/2}_{l+2}(t)\le (c\, L+1)\epsilon,\qquad\text{for }t\in [0, T],
 \end{equation}
 where $c$ is a constant depending on $M_0$.
\end{theorem}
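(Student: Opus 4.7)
The plan is a simultaneous bootstrap for the two energies, driven by Propositions~\ref{propfullenergy} and \ref{proppartenergy} and the $1/t$ decay of Lemma~\ref{lemmaliest}. Fix a constant $c>0$ to be determined at the end, and let $T^{*}\in[0,T]$ be the supremum of times for which
\begin{equation*}
\mathcal F_n(t)\le 2\mathcal F_n(0)(1+t)^{1/2}\quad\text{for all }l\le n\le l+9,\qquad \frak F_{l+2}(t)\le 4(cL+1)^2\epsilon^2
\end{equation*}
both hold. By the hypotheses these are satisfied with strict inequality at $t=0$, so $T^{*}>0$; it therefore suffices to improve the factors $2$ and $4$ on $[0,T^{*}]$ for $\epsilon_0$ small enough, which by continuity forces $T^{*}=T$.

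First I would upgrade the top-order bound. The restrictions $l\ge 17$ and $n\le l+9$ guarantee the arithmetic $[n/2]+2+5\le l+2$ (this is exactly where $l\ge 17$ is used), so Lemma~\ref{lemmaliest} combined with Lemma~\ref{lemmaef1} and the bootstrap bound on $\frak F_{l+2}$ yields
\begin{equation*}
\sum_{|i|\le [n/2]+2,\ \partial=\partial_\alpha,\partial_\beta}\bigl(|\Gamma^i\partial\chi(t)|_\infty+|\Gamma^i\partial\frak v(t)|_\infty\bigr)\lesssim \frac{E_{l+2}^{1/2}(t)}{1+t}\lesssim \frac{\frak F_{l+2}^{1/2}(t)}{1+t}\le \frac{C_1(cL+1)\epsilon}{1+t}.
\end{equation*}
Inserting this into Proposition~\ref{propfullenergy} and applying Gronwall gives $\mathcal F_n(t)\le \mathcal F_n(0)(1+t)^{C_2(cL+1)\epsilon}$. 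Choosing $\epsilon_0$ so small that $C_2(cL+1)\epsilon_0\le 1/4$ produces $\mathcal F_n(t)\le \mathcal F_n(0)(1+t)^{1/2}$, which is \eqref{polynormialgrowth}.

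Next I would improve the lower-order bound using Proposition~\ref{proppartenergy}. By Lemma~\ref{lemmaef1} each $E$ factor is dominated by the corresponding energy: $E_{l+2}^{1/2}\lesssim \frak F_{l+2}^{1/2}$, while the just-upgraded control on $\mathcal F_n$ together with $\mathcal F_{l+3}(0)\le\epsilon^2$ and $\mathcal F_{l+9}(0)\le L^2$ yields
\begin{equation*}
E_{l+3}^{1/2}(t)\lesssim \mathcal F_{l+3}^{1/2}(t)\le \epsilon(1+t)^{1/4},\qquad E_{l+9}^{1/2}(t)\lesssim \mathcal F_{l+9}^{1/2}(t)\le L(1+t)^{1/4}.
\end{equation*}
Substituting into Proposition~\ref{proppartenergy} gives
\begin{equation*}
\frac{d\frak F_{l+2}}{dt}\lesssim L\epsilon\,\frak F_{l+2}^{1/2}(t)\cdot \frac{(1+\ln(1+t))^{2}}{(1+t)^{3/2}},
\end{equation*}
so $\frac{d}{dt}\frak F_{l+2}^{1/2}(t)\lesssim L\epsilon(1+\ln(1+t))^{2}/(1+t)^{3/2}$. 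Since the right side is integrable on $[0,\infty)$, integration yields $\frak F_{l+2}^{1/2}(t)\le \epsilon+C_3L\epsilon$, and setting $c=C_3$ produces $\frak F_{l+2}^{1/2}(t)\le (cL+1)\epsilon$, strictly improving the second bootstrap bound and establishing \eqref{global}.

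The main obstacle is not the bootstrap mechanism itself but the requirement that the two polynomial factors $(1+t)^{1/4}$ produced by $E_{l+3}^{1/2}$ and $E_{l+9}^{1/2}$ be more than compensated by the decay $(1+\ln(1+t))^{2}/(1+t)^{2}$ in Proposition~\ref{proppartenergy}, leaving the just-barely-integrable rate $(1+\ln(1+t))^{2}/(1+t)^{3/2}$. That this margin exists is precisely the payoff of the cubic-and-higher structure of \eqref{cubic2}, the repeated use of the projection $I-\mathcal H$ to kill quadratic noise, and the null-form identity \eqref{rotation}, all of which were invoked in the proof of Proposition~\ref{proppartenergy}; were the effective decay rate only $(1+t)^{-1}$, the continuity argument would fail to close.
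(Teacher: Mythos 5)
Your proof is correct and follows essentially the same two-stage argument as the paper: use the smallness of $\frak F_{l+2}$ together with the $1/t$ decay of Lemma~\ref{lemmaliest} to Gronwall the top-order energy into a slow polynomial growth, then feed that growth back into Proposition~\ref{proppartenergy} and close by integrating the just-barely-integrable rate $(1+\ln(1+t))^2/(1+t)^{3/2}$. The only cosmetic difference is that the paper bootstraps on the single scalar quantity $M_1(\tau)=c_1(M_0)\sup_{[0,\tau]}\frak F_{l+2}^{1/2}$ rather than on a pair of conditions, and derives the $\mathcal F_n$ bound unconditionally from the resulting differential inequality; your first bootstrap hypothesis on $\mathcal F_n$ is never actually used in the improvement step, so it is harmless but superfluous.
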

\begin{proof}
We know for $l\ge 17$, $l\le n\le l+9\le q$, $5\le [\frac n2]+2\le \min\{2l -11, q-5\}$ and $[\frac n2]+7\le l+2$. From Lemmas~\ref{lemmaliest},~\ref{lemmaef1}, Proposition~\ref{propfullenergy}, we get for $t\in [0, T]$,
$$\frac {d\mathcal F_n(t)}{dt}\le c_0(M_0)\frac{E_{l+2}^{1/2}(t)}{1+t}\mathcal F_n(t)\le c_1(M_0)\frac{\frak F_{l+2}^{1/2}(t)}{1+t}\mathcal F_n(t),$$
where $c_0(M_0), c_1(M_0)$ are constants depending on $M_0$. Therefore
\begin{equation}\label{3101}
\mathcal F_n(t)\le \mathcal F_n(0) (1+t)^{M_1(\tau)} \qquad\text{for }t\in [0, \tau],\ \tau\le T,
\end{equation}
where $M_1(\tau)=c_1(M_0)\sup_{[0, \tau]}\frak F_{l+2}^{1/2}(t)$. Applying \eqref{3101}, Lemma~\ref{lemmaef1} to Proposition~\ref{proppartenergy}, we obtain,
\begin{equation}\label{3102}
\frac{d\frak F_{l+2}(t)}{dt}\le c_2(M_0)\,\epsilon L\,\frak F_{l+2}^{1/2}(t) (1+t)^{M_1(\tau)}(\frac {1+\ln (1+t)}{t+1})^2\qquad\text{for $t\in [ 0, \tau]$}
\end{equation}
where $c_2(M_0)$ is a constant depending on $M_0$. 
Let $\varepsilon_0=\min\{\frac1{4c_1(M_0)}, \frac 1{2c_1(M_0)(Lc+1)}\}$, where $c=c_2(M_0)\int_0^\infty (1+t)^{-3/2}(1+\ln (1+t))^2\,dt$, and $\epsilon\le \varepsilon_0$. Therefore $c_1(M_0)\frak F_{l+2}^{1/2}(0)\le \frac 14$. 
 Let $0< T_1\le T$ be the largest such that $M_1(T_1)\le \frac 12$. From \eqref{3102} we get
$$ \frak F_{l+2}^{1/2}(t)\le \frac 12 \epsilon L c+\epsilon\qquad\text{for }t\in [0, T_1]$$
This implies $M_1(T_1)\le c_1(M_0) (\frac 12  L c+1)\varepsilon_0<\frac 12$. So $T_1=T$ or otherwise $T_1$ is not the largest. Therefore \eqref{polynormialgrowth},\eqref{global} holds for $t\in [0, T]$.
\end{proof}

\section{Global wellposedness of the 3D full water wave equation}

In this section we prove that the 3D full water wave equation \eqref{euler}, or equivalently \eqref{ww1}-\eqref{ww2} is uniquely solvable globally in time for small data.  
This is achieved by combining a local wellposedness result for the quasilinear system \eqref{quasi1}-\eqref{at}-\eqref{b} and  Theorem~\ref{mainestimate}.

In what follows all the constants $c(p)$, $c_i(p)$ etc. satisfy $c(p)\le c(p_0)$, $c_i(p)\le c_i(p_0)$  for some $p_0>0$ and all $0\le p\le p_0$.

We first present two lemmas. The first shows that for interface that is a graph small in its steepness (and two more derivatives), the change of coordinate $k$ defined in \eqref{k} is a diffeomorphism. The second gives the regularity relation on quantities
before and after  change of coordinates.

\begin{lemma}\label{diffeo} Let $\xi=(\alpha,\beta, z(\alpha, \beta))$, $k=\xi-(I+\frak H)ze_3+\frak K ze_3$ be defined as in \eqref{k}. 

1. Assume that $N=\sum_{|i|\le 2\atop\partial=\partial_\alpha,\partial_\beta}\|\partial^i \partial z\|_2<\infty$. Then for $\partial=\partial_\alpha,\,\partial_\beta$, 
\begin{equation}\label{-401}
|\partial(k-P)|_\infty\le c(N)N
\end{equation}
 for some constant $c(N)$ depending on $N$. In particular, there exists a $N_0>0$, such that for $N\le N_0$,  we have $|\partial(k-P)|_\infty\le \frac 14$, $\frac 14\le J(k)\le 2$, $k:\mathbb R^2\to \mathbb R^2$ is a diffeomorphism and
$$\frac 14(|\alpha-\alpha'|+|\beta-\beta'|)\le |k(\alpha,\beta)-k(\alpha',\beta')|\le 2 (|\alpha-\alpha'|+|\beta-\beta'|).$$

2. Let $q\ge 5$ be an integer, and $\Gamma=\{\partial_\alpha,\partial_\beta, L_0,\varpi\}$. Assume $\sum_{|j|\le q-1\atop\partial=\partial_\alpha,\partial_\beta}\|\Gamma^j\partial z\|_{H^{1/2}}=L<\infty$. Then 
\begin{equation}\label{-402}
\sum_{|j|\le q-1\atop\partial=\partial_\alpha,\partial_\beta}\|\Gamma^j\partial(k-P)\|_{H^{1/2}}\le c(L)L
\end{equation}
 for some constant $c(L)$ depending on L.
\end{lemma}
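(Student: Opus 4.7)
The starting point is an explicit formula for $k-P$. Since the interface is a graph, $\xi - P = z e_3$; combining this with \eqref{hilbertz} and the graph identities $N_1 = -z_\alpha$, $N_2 = -z_\beta$ (immediate from $N = \xi_\alpha \times \xi_\beta$) gives
\begin{equation*}
k - P \;=\; -(\frak H - \frak K)(z e_3) \;=\; 2\iint \Gamma(\xi'-\xi)\,(z_{\alpha'}' e_1 + z_{\beta'}' e_2)\,d\alpha'\,d\beta'.
\end{equation*}
Thus $k - P$ is a Cauchy-type integral of $\nabla z$ whose kernel, in the flat-interface limit, coincides with that of $-\tfrac{2}{\omega_3}|D|^{-1}$ on $\mathbb R^2$.

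For part 1, I would differentiate this formula and use the identity $(\partial + \partial')\Gamma(\xi'-\xi) = (\partial' z' - \partial z)(\partial_{\xi_3}\Gamma)(\xi'-\xi)$ to trade $\partial$ for $-\partial'$ modulo a term quadratic in $\partial z$; integrating by parts in $(\alpha',\beta')$ then places $\partial(k-P)$ into the form handled by Proposition~\ref{propcauchy3}. Applying that proposition with $r = 1$, together with the standard Sobolev embedding (Proposition~\ref{propsobolev}) to dominate the required $L^\infty$ norms of $\nabla z,\nabla^2 z$ by $N$, yields $|\partial(k-P)|_\infty \le c(N)N$. Shrinking $N_0$ so that $c(N_0)N_0 \le \tfrac14$ then gives the Jacobian bound $\tfrac14 \le J(k) \le 2$ directly from $J(k) = 1 + \partial_\alpha(k-P)_1 + \partial_\beta(k-P)_2 + O(|\partial(k-P)|^2)$, and the bilipschitz estimate follows at once from $k = P + (k-P)$ and $|\partial(k-P)|_\infty \le \tfrac14$. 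Since the same argument also bounds $|k-P|_\infty$ (so $k$ is proper) and $J(k) > 0$ everywhere, $k:\mathbb R^2 \to \mathbb R^2$ is a proper local diffeomorphism, hence a global diffeomorphism by the classical Hadamard invertibility theorem.

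For part 2, I would apply a monomial $\Gamma^j$, $|j|\le q-1$, to $\partial(k-P)$ and commute it through the integral using \eqref{comgk} together with the $L_0$ and $\varpi$ analogues derived exactly as in Proposition~\ref{propcomgh} (note $\varpi = \Upsilon - \tfrac12 e_3$). This writes $\Gamma^j\partial(k-P)$ as a finite sum of Cauchy-type integrals whose kernels are derivatives of $\Gamma(\xi'-\xi)$ and whose inputs are $\Gamma^i \partial z$ with $|i|\le |j| \le q-1$. Each such term splits into a leading piece, which is a Fourier multiplier of Riesz-transform type applied to $\Gamma^i \nabla z$ (arising from the flat-interface limit of $\Gamma(\xi'-\xi)$) and hence bounded on $H^{1/2}$ by Plancherel, plus a remainder that carries at least one extra factor of $\partial z$; the remainders are estimated in $H^{1/2}$ by moving $|D|^{1/2}$ onto the input $\Gamma^i\partial z$ and invoking Propositions~\ref{propcauchy1} and~\ref{propcauchy2} in $L^2$, with all low-order $L^\infty$ factors of $\partial z$ absorbed by part~1 together with Sobolev embedding. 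The condition $q \ge 5$ ensures that at least $\lceil q/2\rceil$ derivatives of $z$ remain available in $L^\infty$, so that every product estimate closes.

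The main technical obstacle is that the Cauchy-integral estimates Propositions~\ref{propcauchy1}--\ref{propcauchy3} are stated only in $L^p$, whereas part 2 demands an $H^{1/2}$ bound. The resolution is to extract the $|D|^{-1}$-smoothing implicit in the kernel $\Gamma$, reducing the leading-order bound to a Riesz-transform estimate and the nonlinear corrections to $L^2$ Cauchy-integral estimates after placing one $|D|^{1/2}$ on the input. Once this is in place, the proof proceeds by induction on $|j|$ in the spirit of Proposition~\ref{propl2est}.
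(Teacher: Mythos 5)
Your explicit formula $k - P = 2\iint\Gamma(\xi'-\xi)(z'_{\alpha'}e_1 + z'_{\beta'}e_2)\,d\alpha'\,d\beta'$ (from \eqref{hilbertz} with $N_1'=-z'_{\alpha'}$, $N_2'=-z'_{\beta'}$) and the identity $(\partial+\partial')\Gamma(\xi'-\xi)=(z'_{\partial'}-z_\partial)\partial_{\xi_3}\Gamma(\xi'-\xi)$ are both correct. The paper, however, works at the operator level: it records $\partial(k-P)=-[\partial,\frak H]ze_3-\frak H\partial z e_3+[\partial,\frak K]ze_3+\frak K\partial z e_3$, commutes up to two more derivatives through via Lemma~\ref{lemma 1.2}, \eqref{comjp}, \eqref{comgk}, bounds everything in $L^2$ by Propositions~\ref{propcauchy1} and~\ref{propcauchy2}, and only then invokes Sobolev embedding (Proposition~\ref{propsobolev}) to produce the $L^\infty$ bound \eqref{-401}. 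Proposition~\ref{propcauchy3} is not used for this lemma.

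The step ``integrating by parts in $(\alpha',\beta')$ then places $\partial(k-P)$ into the form handled by Proposition~\ref{propcauchy3}'' does not go through. After the integration by parts the leading contribution is $2\iint\Gamma(\xi'-\xi)\,\partial'(\nabla'z')\,d\alpha'\,d\beta'$, and the kernel $\Gamma\sim|\cdot|^{-1}$ is smoothing of order $-1$, not a kernel of homogeneity $-d$ as required by the hypotheses of Propositions~\ref{propcauchy1}--\ref{propcauchy3}; those results simply do not apply to this term. More importantly, even for the piece whose kernel is of the right homogeneity, Proposition~\ref{propcauchy3} cannot close the argument: the bound \eqref{cauchy5} carries the factor $\|\nabla A_i\|_{L^\infty}+\|\nabla^2 A_i\|_{L^\infty}$, and with the $A_i$'s that appear here ($z$ or $\partial z$) this calls for $\|\nabla^2 z\|_{L^\infty}$ or even $\|\nabla^3 z\|_{L^\infty}$. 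In two dimensions, Sobolev embedding does \emph{not} dominate $\|\nabla^2 z\|_{L^\infty}$ by $N=\sum_{|i|\le 2}\|\partial^i\partial z\|_2$ --- one would need four or five $L^2$-derivatives of $z$, whereas $N$ supplies only three. Your claim that Sobolev dominates ``the required $L^\infty$ norms of $\nabla z,\nabla^2 z$ by $N$'' is therefore false for $\nabla^2 z$. The paper avoids this entirely by staying in $L^2$ and exploiting \eqref{cauchy2}, \eqref{cauchy4}, which let one place a single factor $\nabla A$ in $L^2$ (controlled by $\|\nabla^2 z\|_2\lesssim N$ or $\|\nabla^3 z\|_2\lesssim N$), and only afterwards applying Sobolev to the differentiated quantity $\partial^i\partial(k-P)$, $|i|\le 2$, which keeps the derivative count within the available budget.

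For part 2 your plan (peeling off the flat Riesz piece, and handling the remainder in $L^2$ after transferring $|D|^{1/2}$ onto the input) is a plausible alternative, but it is a different route from the paper's, which cites Lemma 6.2 of \cite{wu2} (a half-derivative commutator estimate for Cauchy-type operators) together with interpolation; your decomposition would essentially re-derive a version of that lemma. The invocation of Hadamard's theorem at the end of part~1 is harmless but unnecessary --- once $|\partial(k-P)|_\infty\le\tfrac14$ one has the stated bilipschitz bound directly, which gives global injectivity, and the paper treats this as immediate.
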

\begin{proof}
Notice that for $P=(\alpha,\beta)$, $k-P=-\frak H z e_3+\frak K z e_3$, and for $\partial=\partial_\alpha,\,\partial_\beta$, 
$$\partial(k-P)=-[\partial, \frak H]ze_3-\frak H\partial z e_3+[\partial, \frak K]ze_3+\frak K\partial z e_3.$$
\eqref{-401} follows directly from applying Lemma~\ref{lemma 1.2}, \eqref{comjp}, \eqref{comgk}, Propositions~\ref{propcauchy1},~\ref{propcauchy2},~\ref{propsobolev};  the inequality \eqref{-402} follows with a further application of 
Lemma 6.2  of \cite{wu2} and interpolation. The rest of the statements in Lemma~\ref{diffeo} part 1 follows straightforwardly from \eqref{-401}.
\end{proof}

\begin{lemma}\label{coordinate} Let $q\ge 5$ be an integer, $0<T<\infty$. Assume that for each $t\in [0, T]$,
 $k(\cdot,t):\mathbb R^2\to\mathbb R^2$ is a diffeomorphism, and there are constants $0<c_1,\, c_2,\ \mu_1,\, \mu_2<\infty$, such that $\mu_1\le J(k(t))\le \mu_2$ and $c_1|(\alpha,\beta)-(\alpha',\beta')|\le |k(\alpha,\beta,t)-
k(\alpha',\beta',t)|\le c_2|(\alpha,\beta)-(\alpha',\beta')|$ for $(\alpha,\beta),\, (\alpha',\beta')\in \mathbb R^2$, $t\in [0, T]$. Let $s=0$ or $\frac12$.

1. Let  $\Gamma=\{\partial_\alpha,\,\partial_\beta,\,L_0,\,\varpi\}$, and assume $\sum_{|j|\le q-1\atop\partial=\partial_\alpha,\partial_\beta}\|\Gamma^j\partial (k-P)(0)\|_{H^{s}}\le L<\infty$. Then 
\begin{equation*}
\begin{aligned}
&\sum_{|j|\le q}\|\Gamma^j(f\circ k)(0)\|_{H^{s}}\le c(L)\sum_{|j|\le q}\|\Gamma^j(f)(0)\|_{H^{s}},\\&\sum_{|j|\le q}\|\Gamma^j(f\circ k^{-1})(0)\|_{H^{s}}\le c(L)\sum_{|j|\le q}\|\Gamma^j(f)(0)\|_{H^{s}}.
\end{aligned}
\end{equation*}

2. Let $\Gamma=\{\partial_t,\, \partial_\alpha,\,\partial_\beta,\,L_0,\,\varpi\}$. Assume for $t\in [0, T]$, 
$\sum_{|j|\le q-1\atop\partial=\partial_\alpha,\partial_\beta}\|\Gamma^j\partial (k-P)(t)\|_{H^{s}}\le L$, $\sum_{|j|\le q-1}\|\Gamma^j k_t(t)\|_{H^{s}}\le L$, $L<\infty$. Then for $t\in [0, T]$,
\begin{equation*}
\begin{aligned}
&\sum_{|j|\le q}\|\Gamma^j(f\circ k)(t)\|_{H^{s}}\le c(L)\sum_{|j|\le q}\|\Gamma^j(f)(t)\|_{H^{s}},\\&\sum_{|j|\le q}\|\Gamma^j(f\circ k^{-1})(t)\|_{H^{s}}\le c(L)\sum_{|j|\le q}\|\Gamma^j(f)(t)\|_{H^{s}}.
\end{aligned}
\end{equation*}
Here $c(L)$ is a constant depending on $L$ and $c_1,c_2,\mu_1,\mu_2$, and need not be the same in different contexts.
\end{lemma}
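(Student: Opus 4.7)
The plan is to combine three ingredients: (a) the change-of-variables formula together with the bi-Lipschitz and Jacobian bounds on $k$, which yield $\|g\circ k\|_{L^2}\simeq\|g\|_{L^2}$; (b) chain-rule identities for the vector fields $\Gamma$, which express $\Gamma^j(f\circ k)$ as a finite sum of products of $\Gamma^{j_i}$-derivatives of $k-P$ with an innermost factor $(\Gamma^{j_0}\partial^m f)\circ k$; and (c) interpolation, or a direct argument on the Gagliardo seminorm, to pass from $s=0$ to $s=1/2$.

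I first treat Part 1 in the case $s=0$. For $\Gamma=\partial_\alpha,\partial_\beta$ the chain rule gives $\partial_\alpha(f\circ k)=(\nabla f)\circ k\cdot\partial_\alpha k$ with $\partial_\alpha k=e_1+\partial_\alpha(k-P)$. For $L_0$ a direct calculation yields $L_0(f\circ k)=(L_0 f)\circ k+\bigl((L_0-I)(k-P)\bigr)\cdot(\nabla f\circ k)$, and an analogous identity holds for $\varpi$ modulo the harmless zeroth-order piece $-\tfrac12 e_3$. Iterating, $\Gamma^j(f\circ k)$ expands as a finite sum of terms of the form $\bigl(\prod_{i=1}^N\Gamma^{a_i}\partial^{b_i}(k-P)\bigr)\cdot(\Gamma^{j_0}\partial^m f)\circ k$ with $\sum_i(|a_i|+b_i)+|j_0|+m-N\le |j|$. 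The $L^2$ estimate then follows by placing the top-order factor in $L^2$ (using change of variables when it is composed with $k$) and the remaining factors in $L^\infty$ via the Sobolev embedding $H^2(\mathbb R^2)\hookrightarrow L^\infty$ together with the hypothesis on $k-P$; the requirement $q\ge 5$ is exactly what leaves two derivatives in reserve for that embedding.

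The $s=1/2$ case I handle by interpolating between the already-established $s=0$ estimate and the corresponding $H^1$ estimate (obtained by applying the $s=0$ bound to $\partial(f\circ k)$ and then reassembling $(\partial f)\circ k$ from the chain rule). An equivalent route is to bound the Gagliardo seminorm $[f\circ k]_{H^{1/2}}$ directly using $c_1|x-y|\le |k(x)-k(y)|\le c_2|x-y|$ and the $L^\infty$ bound on $\nabla k$. For $f\circ k^{-1}$, I first derive the analogous bounds on $k^{-1}-P$ by differentiating $k\circ k^{-1}=P$ and inductively solving for $\Gamma^j(k^{-1}-P)$ in terms of data involving $\Gamma^{\le j}(k-P)$ composed with $k^{-1}$; with those bounds in hand, the previous argument applies with $k$ replaced by $k^{-1}$.

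Part 2 is obtained by the same scheme after adding $\partial_t$ into $\Gamma$, using the extra identity $\partial_t(f\circ k)=f_t\circ k+k_t\cdot(\nabla f\circ k)$; the hypothesis $\sum_{|j|\le q-1}\|\Gamma^j k_t\|_{H^s}\le L$ supplies exactly the factor needed to close the induction whenever $\partial_t$ appears. The main technical obstacle throughout is bookkeeping in the multi-factor chain-rule expansion once the unbounded fields $L_0,\varpi$ are iterated: one must verify that every lower-order factor $\Gamma^{a_i}\partial^{b_i}(k-P)$ satisfies $|a_i|+b_i+2\le q-1$ so that Sobolev embedding furnishes an $L^\infty$ bound, and one must arrange the $H^{1/2}$ step so that all constants depend only on $L,c_1,c_2,\mu_1,\mu_2$ and are uniform in $t$.
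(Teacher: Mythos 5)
Your overall plan (chain-rule expansion, change of variables for the $L^2$ bound, Sobolev embedding for lower-order factors, interpolation for $s=\tfrac12$) is the right skeleton, but there is a genuine gap in the central step: the lower-order factors produced by your chain-rule identities for $L_0$ and $\varpi$ are \emph{not} controlled by the hypothesis, and in fact are generically unbounded.

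Concretely, your identity $L_0(f\circ k)=(L_0 f)\circ k+\bigl((L_0-I)(k-P)\bigr)\cdot(\nabla f\circ k)$ (and the analogous one for $\Upsilon$, which produces $\bigl(\Upsilon(k-P)+R(k-P)\bigr)\cdot(\nabla f\circ k)$ with $R$ a rotation) introduces the factor $(L_0-I)(k-P)$, which contains no pure $\partial$-derivative. The hypothesis only bounds $\sum_{|j|\le q-1}\|\Gamma^j\partial(k-P)\|_{H^s}$, with $\partial$ forced; it gives no $L^2$ control of $(k-P)$, $L_0(k-P)$ or $\Upsilon(k-P)$ themselves, so Sobolev embedding cannot place $(L_0-I)(k-P)$ in $L^\infty$. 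Worse, the only pointwise information available is $|\nabla(k-P)|_\infty\lesssim 1$ (from the bi-Lipschitz and Jacobian bounds), so $r\partial_r(k-P)$ and $\alpha\partial_\beta(k-P)-\beta\partial_\alpha(k-P)$ generically grow like $r$ at spatial infinity; the proposed pairing ``unbounded factor in $L^\infty$, main factor in $L^2$'' fails, and there is also no way to absorb the linear growth into $\nabla f\circ k\in L^2$.

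The missing idea — and the one the paper singles out — is to express $\nabla_\bot f$ in terms of the invariant vector fields with \emph{bounded} coefficients,
\begin{equation*}
\nabla_\bot f=\frac{(-\beta,\alpha)}{\alpha^2+\beta^2}\,\Upsilon f+\frac{(\alpha,\beta)}{\alpha^2+\beta^2}\bigl(L_0 f-\tfrac12 t\,\partial_t f\bigr),
\end{equation*}
so that quantities of the form $|x|\nabla_\bot f$ are controlled directly by $\|\Upsilon f\|_2$, $\|L_0 f\|_2$, $t\|\partial_t f\|_2$. Combined with composition identities of the type
\begin{equation*}
\Upsilon(f\circ k^{-1})=\partial_\beta k^{-1}\cdot\bigl(\alpha\,\nabla f\circ k^{-1}\bigr)-\partial_\alpha k^{-1}\cdot\bigl(\beta\,\nabla f\circ k^{-1}\bigr),
\end{equation*}
and its $L_0$-analogue, the $k$-dependent factors that appear are $\partial k^{-1}$ (i.e.\ with a $\partial$), which are controlled by the bi-Lipschitz assumption and the hypothesis $\Gamma^j\partial(k-P)\in H^s$. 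Without this step your induction cannot close once $L_0$ or $\varpi$ occurs in $\Gamma^j$, and the estimate you claim does not follow. Your treatment of $\partial_\alpha,\partial_\beta$ alone, the change-of-variables $L^2$ comparison, the inversion $k\mapsto k^{-1}$, the insertion of $\partial_t$ for Part 2, and the interpolation route to $s=\tfrac12$ are all sound once this gap is repaired.
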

\begin{proof}
The proof of Lemma~\ref{coordinate} is similar to that of Lemma 5.4 in \cite{wu3}. The main difference is here we use
the relations
$$\alpha\partial_\beta f-\beta\partial_\alpha f=\Upsilon f,\qquad \alpha\partial_\alpha f+\beta\partial_\beta f= L_0 f-\frac 12 t\partial_t f$$
to derive that
$$\nabla_\bot f=\frac{(-\beta,\alpha)}{\alpha^2+\beta^2}\Upsilon f+\frac{(\alpha,\beta)}{\alpha^2+\beta^2}(L_0f-\frac 12 t\partial_t f)$$
and  for $\Gamma=\varpi,\, L_0$, we use the identities
\begin{equation*}
\begin{aligned}
\Upsilon (f\circ k^{-1})&=\partial_\beta k^{-1}\cdot(\alpha\,\nabla f\circ k^{-1})-\partial_\alpha k^{-1}\cdot(\beta\,\nabla f\circ k^{-1}), \\ L_0 (f\circ k^{-1})&=\partial_\alpha k^{-1} \cdot(\alpha\,\nabla f\circ k^{-1})+\partial_\beta k^{-1}\cdot(\beta\,\nabla f\circ k^{-1})+\frac 12 t\partial_t(f\circ k^{-1})
\end{aligned}
\end{equation*}
The proof follows an inductive argument similar to that of Lemma 5.4 of \cite{wu3}, and in the case  $s=\frac 12$, the proof further uses Lemma 6.2  of \cite{wu2} and interpolation. We omit the details.

\end{proof}

We now present a local well-posedness result. Similar to (5.21)-(5.22) in \cite{wu2}, we first rewrite the quasilinear system \eqref{quasi1}-\eqref{at}-\eqref{b} in a format for which local wellposedness can be proved by using  energy estimates and iterative scheme. Let ${\bold n}=\frac{\mathcal N}{|\mathcal N|}$. From \eqref{ww1} we know ${\bold n}=\tilde{\bold n}=\frac{w+e_3}{|w+e_3|}$.
From \eqref{ww1}-\eqref{ww2}, and the fact that 
 $A>0$ for nonself-intersecting interface (i.e. the Taylor sign condition holds, see \cite{wu2}), we know $A|\mathcal N|=|w+e_3|$ and $(A\mathcal N\times \nabla) u= |w+e_3|{\bold n}\cdot \nabla_\xi^+ u$. Let $f=(I-\mathcal H)(U_k^{-1}(\frak a_t N))$. From the fact that $\frak a_t$ is real valued,  we know $U_k^{-1}(\frak a_t N)=-\tilde{\bold n}(I+ \tilde{\mathcal K^*})^{-1}(\Re \{\tilde{\bold n} f\})$, where  $ \tilde{\mathcal K^*} =\Re  \tilde{\bold n}\mathcal H \tilde{\bold n}$.   Therefore \eqref{quasi1}-\eqref{at}-\eqref{b} can be rewritten as the following:
\begin{equation}\label{401}
(\partial_t+b\cdot \nabla_\bot)^2 u+a {\bold n}\cdot \nabla_\xi^+ u=-\tilde {\bold n}(I+\tilde {\mathcal K^*})^{-1}(\Re \{\tilde{\bold n} f\})
\end{equation}
where
\begin{equation}\label{402}
\begin{aligned}
&a=|w+e_3|\qquad \tilde {\bold n}=\frac{w+e_3}{|w+e_3|},\qquad w=(\partial_t+b\cdot\nabla_\bot)u,\\
&b= \frac12(\mathcal H-\overline{\mathcal H})\overline u-[\partial_t+b\cdot\nabla_\bot,\mathcal H]\frak z e_3+ [\partial_t+b\cdot\nabla_\bot,\mathcal K]\frak z e_3+ \mathcal K u_3 e_3\\
&(\partial_t+b\cdot \nabla_\bot)\zeta=u,\qquad \frak U=\frac12( u+\mathcal H u)\\
&f=2\iint
K(\zeta'-\zeta)(w-w')\times (
\zeta'_{\beta'}\partial_{ \alpha'}-
\zeta'_{\alpha'}\partial_{ \beta'})\frak U'\,d\alpha'd\beta'\\&+
\iint K(\zeta'-\zeta)\,\{((u-u')\times
u'_{\beta'})\frak U'_{ \alpha'}-((u-u')\times
u'_{\alpha'})\frak U'_{ \beta'}\}\,d\alpha'd\beta'\\&+
2\iint K(\zeta'-\zeta)\,(u-u')\times(
\zeta'_{\beta'}\partial_{ \alpha'}-
\zeta'_{\alpha'}\partial_{ \beta'})(\partial_t'+b'\cdot\nabla'_\bot)\frak U'\,d\alpha'd\beta'\\&+
\iint
((u'-u)\cdot\nabla)K(\zeta'-\zeta) (u-u')\times(\zeta'_{\beta'}\partial_\alpha'-\zeta'_{\alpha'}\partial_\beta')\frak U'
\,d\alpha'd\beta'.
\end{aligned}
\end{equation}
 \eqref{401}-\eqref{402} is a well-defined quasilinear system. We give in the following  the initial data for \eqref{401}-\eqref{402}. 
As we know the initial data describing the water wave motion should satisfy the compatibility conditions given on pages 464-465 of \cite{wu2}.

Assume that the initial interface $\Sigma(0)$ separates $\mathbb R^3$ into two simply connected, unbounded $C^2$ domains, 
$\Sigma(0)$ approaches the $xy$-plane at infinity, and assume that the water occupies the lower region $\Omega(0)$. 
Take a parameterization for $\Sigma(0): \xi^0=\xi^0(\alpha,\beta), (\alpha,\beta)\in \mathbb R^2$, such that $N_0=\xi_{\alpha}^0\times\xi_\beta^0$ is an outward normal of $\Omega(0)$, $|\xi^0_\alpha\times\xi^0_\beta|\ge \mu$, and $|\xi^0(\alpha,\beta)-\xi^0(\alpha',\beta')|\ge C_0|(\alpha,\beta)-(\alpha',\beta')|$ for $(\alpha,\beta), (\alpha',\beta')\in \mathbb R^2$ and some constants $\mu,\ C_0>0$.  Let 
\begin{equation}\label{406}
\xi(\alpha,\beta, 0)=(x^0, y^0, z^0)=\xi^0(\alpha,\beta), \quad \xi_t(\alpha,\beta,0)=\frak u^0(\alpha,\beta),\quad \xi_{tt}(\alpha,\beta, 0)=\frak w^0(\alpha,\beta)
\end{equation}
Assume that the data in \eqref{406} satisfy the compatibility conditions (5.29)-(5.30) of \cite{wu2}, that is $\frak u^0=\frak H_{\Sigma(0)}\frak u^0$, and
\begin{equation}\label{403}
\frak w^0=-e_3+(\bold n_0\cdot e_3)\bold n_0 - \bold n_0 (I+\mathcal
K_0^*)^{-1}(\Re\{\bold 
n_0\,[\partial_t, 
\frak H_{\Sigma(0)}]\frak u_0+ \frak H^*_{\Sigma(0)}(\bold n_0\times 
e_3)\})
\end{equation}
where $\bold n_0=\frac{ N_0}{|N_0|}$, $\frak H^*_{\Sigma(0)}=\bold n_0\frak H_{\Sigma(0)}\bold n_0$ and $\mathcal K_0^*=\Re \frak H^*_{\Sigma(0)}$.
 Assume that $k(0)=k_0=\xi^0-(I+\frak H_{\Sigma(0)})z^0 e_3+\frak K^0 z^0e_3$, where $\frak K^0=\Re \frak H_{\Sigma(0)}$,  as defined in \eqref{k} is a diffeomorphism with its Jacobian $\nu_1\le J(k_0)\le \nu_2$, and 
$c_1|(\alpha,\beta)-(\alpha',\beta')|\le |k_0(\alpha,\beta)-k_0(\alpha',\beta')|\le c_2|(\alpha,\beta)-(\alpha',\beta')|$ for $(\alpha,\beta), (\alpha',\beta')\in \mathbb R^2$ and some constants $0<\nu_1,\,\nu_2,\, c_1, \, c_2<\infty$. Let
\begin{equation}\label{404}
\zeta(\cdot, 0)=\zeta^0(\cdot)=P+\lambda^0(\cdot),\qquad u(\cdot,0)=u^0(\cdot), \qquad (\partial_t+b\cdot\nabla_\bot)u(\cdot, 0)=w^0(\cdot)
\end{equation}
where
\begin{equation}\label{405}
\lambda^0(\cdot)=\xi^0\circ k_0^{-1}(\cdot)-P,\quad u^0(\cdot)=\frak u^0\circ k_0^{-1}(\cdot),\quad w^0(\cdot)=\frak w^0\circ k_0^{-1}(\cdot)
\end{equation}
 Let $s\ge 5$ be an integer. Assume that for $\Gamma=\partial_\alpha,\,\partial_\beta, \,L_0,\, \varpi$, 
\begin{equation}\label{4066}
\sum_{|j|\le s-1\atop\partial=\partial_\alpha,\partial_\beta}\|\Gamma^j\partial\lambda^0\|_{H^{1/2}}+\|\Gamma^j u^0\|_{H^{1/2}}+\|\Gamma^j\partial u^0\|_{H^{1/2}}+ \|\Gamma^j w^0\|_{L^2}+\|\Gamma^j\partial w^0\|_{L^2} <\infty
\end{equation}
We have the following local well-posedness result for the initial value problem \eqref{401}-\eqref{402}-\eqref{404} with a non-blow-up criteria.
\begin{theorem}[Local existence]\label{local}
1. There exists $T>0$, depending on the norm of the initial data, so that the initial value problem \eqref{401}-\eqref{402}-\eqref{404} has a unique solution $(u, \zeta)=(u(\alpha,\beta, t), \zeta(\alpha,\beta, t))$ for $t\in [0, T]$, satisfying for  $|j|\le s-1$, $\Gamma=\partial_\alpha,\partial_\beta, L_0, \varpi$, $\partial=\partial_\alpha,\partial_\beta$,
\begin{equation}\label{regularity}
\Gamma^j\partial (\zeta-P), \Gamma^j u, \Gamma^j\partial u \in C([0, T], H^{1/2}(\mathbb R^2)),\quad \Gamma^j w, \Gamma^j\partial w \in C([0, T], L^2(\mathbb R^2)),
\end{equation}
and $|\zeta_\alpha\times\zeta_\beta|\ge \nu$, $|\zeta(\alpha,\beta, t)-\zeta(\alpha', \beta' ,t)|\ge C_1 |(\alpha,\beta)-(\alpha',\beta')|$ for all $(\alpha,\beta), (\alpha',\beta')\in \mathbb R^2$ and $t\in [0, T]$, for some constants $C_1,\, \nu > 0$. 

Moreover, if $T^*$ is the supremum over all such times $T$, then either $T^*=\infty$ or
\begin{equation}\label{criteria}
\begin{aligned}
\sum_{|j|\le [\frac s2]+3}&\|\Gamma^j w(t)\|_{L^2}+\|\Gamma^j u(t)\|_{L^2}\\&
+\sup_{(\alpha,\beta)\ne (\alpha',\beta')}\frac {|(\alpha,\beta)-(\alpha',\beta')|}{|\zeta(\alpha,\beta,t)-\zeta(\alpha',\beta',t)|}+\big|\frac 1{|(\zeta_\alpha\times\zeta_\beta)(t)|}\big|_{L^\infty}\notin L^\infty[0, T^*)
\end{aligned}
\end{equation}
2. Let 
$P:\mathbb R^2\to \mathbb R^2$ be the identity map: $P(\alpha,\beta)=(\alpha,\beta)$ for $(\alpha,\beta)\in \mathbb R^2$,
\begin{equation}\label{h}
h_t(\cdot, t)=b(h(\cdot, t),t),\qquad h(\cdot, 0)=P(\cdot),
\end{equation}
 and $T<T^*$. Then for $t\in [0, T]$,  $h(\cdot, t): \mathbb R^2\to \mathbb R^2$ exists and is a diffeomorphism, with its Jacobian $c_1(T)\le J(h(t))\le c_2(T)$ for some constants $c_1(T), c_2(T)>0$; and $\xi(\cdot, t)=\zeta(h(k_0(\cdot),t),t)$ is the solution of the water wave system \eqref{ww1}-\eqref{ww2}, satisfying the initial condition \eqref{406}.
 Furthermore, $h(k_0(\cdot),t)=k(\cdot,t)$ for $t\in [0, T^*)$, where $k(\cdot,t)$ is as defined in \eqref{k}, and $\zeta\circ k=\xi$, $u\circ k=\xi_t,\, w\circ k=\xi_{tt}$.
\end{theorem}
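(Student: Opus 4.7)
My plan is to solve \eqref{401}--\eqref{402} by Picard iteration with $u$ as the principal unknown. Given $(u^{(n)},\zeta^{(n)},w^{(n)},b^{(n)})$, one computes the coefficients $a^{(n)},\mathbf n^{(n)},f^{(n)},\mathcal H^{(n)}$ from \eqref{402} and solves the linear second-order equation
\[
(\partial_t+b^{(n)}\!\cdot\!\nabla_\bot)^2 u^{(n+1)} +a^{(n)}\mathbf n^{(n)}\!\cdot\!\nabla_\xi^{+,(n)}u^{(n+1)}=\mathrm{RHS}^{(n)}
\]
with initial data \eqref{404}, then updates $\zeta^{(n+1)}$, $w^{(n+1)}$, $b^{(n+1)}$ from the remaining relations in \eqref{402}. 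The regularity \eqref{4066} of the initial data is produced from the hypothesis on $(\xi^0,\frak u^0,\frak w^0)$ by applying Lemmas~\ref{diffeo}--\ref{coordinate}; the compatibility \eqref{403}, together with the Taylor-sign result of \cite{wu2}, supplies $a^0>0$ at $t=0$.

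\textbf{Energy, contraction, persistence of structure.} Apply the vector fields $\Gamma^j$, $\Gamma\in\{\partial_\alpha,\partial_\beta,L_0,\varpi\}$, $|j|\le s-1$, componentwise to $u^{(n+1)}$ and use Lemma~\ref{propbasicenergy1} as the basic energy identity. The commutators with $\Gamma^j$, with $\mathcal H$, and with $a\mathbf n\cdot\nabla_\xi^+$ are controlled by \eqref{comgt}, \eqref{commgp}, Proposition~\ref{propcomgh}, and the Cauchy-integral bounds of Propositions~\ref{propcauchy1}--\ref{propcauchy3}. The quadratic form $\int\theta\cdot\mathbf n\cdot\nabla_\xi^+\theta$ of \eqref{302} equals the Dirichlet energy $\int_{\Omega^{(n)}(t)}|\nabla\theta^\hbar|^2$, which, combined with Lemma~\ref{lemmadf} and the chord-arc bound, delivers the $H^{1/2}$ control in \eqref{regularity}. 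Gronwall on a short interval $[0,T]$ yields uniform-in-$n$ bounds on the iterates; the same energy applied to the difference $u^{(n+1)}-u^{(n)}$ gives a contraction in one fewer derivative, hence convergence and uniqueness. Throughout one verifies that the positivity of $a^{(n)}$, the chord-arc constant, and $|\mathcal N^{(n)}|^{-1}$ persist on $[0,T]$ using the ODE $(\partial_t+b\!\cdot\!\nabla_\bot)\zeta=u$ and its analogue for $w$, with Lipschitz right-hand sides for short time. A subtle point is that the constraint $u^{(n)}=\mathcal H^{(n)}u^{(n)}$ (and the analogous normality $w+e_3\parallel\mathcal N$) is propagated by the iteration; this follows by applying $I-\mathcal H$ to the equation and seeing that the resulting error satisfies a homogeneous evolution with zero initial data by \eqref{403}.

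\textbf{Blow-up criterion.} This is the main obstacle in Part~1. Assuming the norms in \eqref{criteria} are bounded on $[0,T^*)$, one would show that the top-order energy stays bounded, contradicting maximality of $T^*$. The bootstrap is the one compiled in Subsection~\ref{relations} and Proposition~\ref{propfullenergy}: Propositions~\ref{propl2est} and~\ref{liest} are structured so that all higher-order $L^2$ and $L^\infty$ norms are dominated by the chord-arc constant, $|\mathcal N|^{-1}$, and the lower-order $L^2$ norms of $u,w$ with $|j|\le[s/2]+3$---precisely the quantities controlled by \eqref{criteria}. The delicate point is uniform invertibility of $I-\mathcal H$ on the normal component, i.e. Lemma~\ref{lemmaih}; its constants depend only on the chord-arc constant and $|\mathcal N|^{-1}$, both in \eqref{criteria}, so the bootstrap closes and the Gronwall estimate of Proposition~\ref{propfullenergy} propagates the top energy across $[0,T^*)$.

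\textbf{Part 2, passage to the water wave.} By Proposition~\ref{liest} applied to \eqref{b}, $b$ is Lipschitz in $(\alpha,\beta)$ uniformly on $[0,T]$, so by Cauchy--Lipschitz the ODE \eqref{h} has a unique flow $h$, a diffeomorphism whose Jacobian solves $\partial_t J(h)=(\mathrm{div}\,b)\circ h\cdot J(h)$ and stays pinched in some $[c_1(T),c_2(T)]$. Set $\tilde k:=h\circ k_0$, so that $\tilde k_t=b\circ\tilde k$ and $\tilde k(\cdot,0)=k_0$. Define $\xi:=\zeta\circ\tilde k$; the chain rule combined with \eqref{402} gives $\xi_t=u\circ\tilde k$ and $\xi_{tt}=w\circ\tilde k$. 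The propagated constraints $u=\mathcal H u$ and $w+e_3=a\tilde{\mathbf n}$ with $\tilde{\mathbf n}=\mathcal N/|\mathcal N|$ pull back to \eqref{ww2} and \eqref{ww1} respectively. Finally, the change of variable \eqref{k} built from the reconstructed $\xi$ satisfies the same ODE $\partial_t k=b\circ k$ with $k(\cdot,0)=k_0$ as $\tilde k$, so by ODE uniqueness $k=\tilde k$; the identities $\zeta\circ k=\xi$, $u\circ k=\xi_t$, $w\circ k=\xi_{tt}$ then hold on $[0,T^*)$, completing the reconstruction.
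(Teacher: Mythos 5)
Your proposal takes essentially the same route as the paper: Part 1 is handled by a Picard iteration with vector-field energy estimates adapted from \cite{wu2} (the paper itself simply cites \cite{wu2} and notes only the substitution of the full vector-field set $\{\partial_\alpha,\partial_\beta,L_0,\varpi\}$ and of $\partial_t+b\cdot\nabla_\bot$ for $\partial_t$), and Part 2 is handled by integrating the ODE \eqref{h}, pulling back to Lagrangian coordinates $\xi=\zeta\circ h\circ k_0$, and identifying $h\circ k_0$ with $k$ via ODE uniqueness. One small caution: the constraint $u=\mathcal H u$ and the normality $w+e_3\parallel\mathcal N$ are usually established for the limiting solution rather than iterate-by-iterate (since the iterates need not exactly satisfy them, only the limit does); the paper sidesteps this entirely by invoking the equivalence result of \cite{wu2} for the Lagrangian quasilinear system rather than reproving constraint propagation in the $k$-coordinates.
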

\begin{proof} The proof of part 1 is very much the same as that in \cite{wu2}. The main modification is to use the vector fields 
$\Gamma=\partial_\alpha,\partial_\beta, L_0, \varpi$ instead of using only $\partial_\alpha,\partial_\beta$ as in \cite{wu2}, and
use $\partial_t+b\cdot\nabla_\bot$ instead of $\partial_t$. We omit the details.

Let $T<T^*$. Notice that for the solution obtained in part 1, $b=b(\cdot, t)$ is defined for $t\in[0, T]$. Furthermore by applying Lemma~\ref{lemma 1.2}, Proposition~\ref{propcomgh}, \eqref{comjp}, \eqref{comgk}, Propositions~\ref{propcauchy1},~\ref{propcauchy2},~\ref{propsobolev},  Lemma 6.2  of \cite{wu2} and interpolation, and \eqref{regularity}, we know for $\partial=\partial_\alpha,\partial_\beta$,  and $|j|\le s-1$,  $\Gamma^j b,\, \Gamma^j\partial b\in C([0, T], H^{1/2}(\mathbb R^2))$. Therefore for $|j|\le 3$, 
$ \partial^j b\in C([0, T], C(\mathbb R^2)\cap L^\infty(\mathbb R^2))$.  Thus from the classical ODE theory we know \eqref{h}
has a unique solution $h(\cdot, t)$ on $[0, T]$,  $h(\cdot, t): \mathbb R^2\to \mathbb R^2$ is a diffeomorphism with $c_1\le J(h(t))\le c_2$, $c_3|(\alpha,\beta)-(\alpha',\beta')|\le |h(\alpha,\beta,t)-h(\alpha',\beta',t)|\le c_4|(\alpha,\beta)-(\alpha',\beta')|$  for $(\alpha,\beta),\,(\alpha',\beta')\in\mathbb R^2$, 
$t\in [0, T]$ and some constants $0<c_i<\infty$, $i=1,\dots,4$; and $\partial^j (h-P)\in C([0, T], H^{1/2}(\mathbb R^2))$, for $|j|\le s$. 
Let  $\frak u=u\circ h\circ k_0$, $\xi=\zeta\circ h\circ k_0$. From the chain rule we know $\frak u=\xi_t$, and for $t\in [0, T^*)$, $(\frak u, \xi)$ satisfies the quasilinear system (5.21)-(5.22) in \cite{wu2}. Therefore as was proved in \cite{wu2}, $\xi$ solves the water wave system \eqref{ww1}-\eqref{ww2} with initial data satisfying \eqref{406}. Furthermore, for $k$ as defined in \eqref{k}, we know $k_t=(h\circ k_0)_t$ (see \eqref{-140}), and $k(0)=(h\circ k_0)(0)$. Therefore $k(\cdot,t)=h(k_0(\cdot), t)$ for $t\in [0, T^*)$, so $k(t): \mathbb R^2\to \mathbb R^2$ is a diffeomorphism and $J(k(t))>0$ for each $t\in [0, T^*)$. $w\circ k=\xi_{tt}$ follows straightforwardly from the chain rule.

\end{proof}

\begin{remark}\label{remark41}
 Let  $\xi$ be the solution obtained in Theorem~\ref{local}. As a consequence of Theorem~\ref{local} part 2,  we know for  $t\in [0, T^*)$, the mapping $k=k(\cdot,t)$  defined in \eqref{k} is a diffeomorphism and 
 the solution $(u, \zeta)$ for \eqref{401}-\eqref{402}-\eqref{404}   
 coincides with those defined in 
 \eqref{zeta}. 
Let $\lambda=\zeta-P$. Notice that $\partial_t\lambda=u-b-b\cdot\nabla_\bot\lambda$, $\partial_t u= w-b\cdot\nabla_\bot u$. By taking successive derivatives to $t$  to \eqref{quasi1}(or equivalently \eqref{401}), we know that in fact  for  $|j|\le s-1$, and $\Gamma=\partial_t, \,\partial_\alpha,\,\partial_\beta,\, L_0, \,\varpi$, $\partial=\partial_\alpha,\,\partial_\beta$, 
\begin{equation}\label{407}
\begin{aligned}
&\Gamma^j \partial_t\lambda,\Gamma^j\partial\lambda, \Gamma^j u,\Gamma^j\partial_t u,\Gamma^j\partial u\in C([0, T^*), H^{1/2}(\mathbb R^2)),\\&
\Gamma^j w,\Gamma^j\partial_t w,\Gamma^j\partial w\in C([0, T^*), L^2(\mathbb R^2)).
\end{aligned}
\end{equation}
\end{remark}
\begin{remark}\label{remark42}
 Notice that $\eta=\xi(k_0^{-1}(\cdot), t)=\zeta\circ h(\cdot, t)$ is a solution of the water wave equation \eqref{ww1}-\eqref{ww2} with data $\eta(\cdot, 0)=\xi^0\circ k_0^{-1}(\cdot)$, $\eta_t(\cdot,0)=\frak u^0\circ k_0^{-1}(\cdot)$. Let  $|j|\le s-1$, and $\Gamma=\partial_t, \,\partial_\alpha,\,\partial_\beta, \,L_0, \,\varpi$, $\partial=\partial_\alpha,\,\partial_\beta$. Using  \eqref{407}, Lemma~\ref{lemma 1.2}, Proposition~\ref{propcomgh}, \eqref{comjp}, \eqref{comgk}, Propositions~\ref{propcauchy1},~\ref{propcauchy2},~\ref{propsobolev},  and  
Lemma 6.2  of \cite{wu2} and interpolation, we know that the function $b$ defined in \eqref{402} satisfies $\Gamma^j b$, $\Gamma^j\partial b\in C([0, T^*), H^{1/2}(\mathbb R^2))$. Therefore we have for $h$ the solution of \eqref{h}, 
$\Gamma^j (h-P), \Gamma^j\partial_t (h-P), \Gamma^j\partial(h-P)\in C([0, T^*), H^{1/2}(\mathbb R^2))$. This implies the solution $\eta$  satisfies 
\begin{equation}\label{-407}
\begin{aligned}
&\Gamma^j \partial_t\eta, \Gamma^j \partial (\eta-P), \Gamma^j\partial_t\eta_t, \Gamma^j\partial\eta_t\in C([0, T^*), H^{1/2}(\mathbb R^2)),\\&
 \Gamma^j\partial\eta_{tt}, \Gamma^j\partial_t\eta_{tt}\in C([0, T^*), L^2(\mathbb R^2)).
\end{aligned}
\end{equation}

\end{remark}
From Proposition~\ref{propsobolev}, we know there is $N_1>0$ small enough, such that whenever $\sum_{|i|\le 2\atop\partial=\partial_\alpha,\partial_\beta}\|\partial^i\partial\lambda(t)\|_2\le N_1$, 
$|\partial_\alpha\lambda(t)|_\infty+|\partial_\beta\lambda(t)|_\infty\le\frac 14$; this in turn implies that 
$$|\zeta(\alpha,\beta, t)-\zeta(\alpha',\beta',t)|\ge \frac 14(|\alpha-\alpha'|+|\beta-\beta'|),\qquad |\zeta_\alpha\times\zeta_\beta|\ge \frac 14,$$
and $\Sigma(t): \zeta=\zeta(\alpha,\beta,t), \ (\alpha,\beta)\in \mathbb R^2$ is a graph. 

We now present a global in time well-posedness result. Let $s\ge 27$, $\max\{[\frac s2]+1, 17\}\le l\le s-10$, and the initial interface $\Sigma(0)$ be a graph given by $\xi^0=(\alpha,\beta, z^0(\alpha,\beta))$, satisfying $N=\sum_{|i|\le 2\atop\partial=\partial_\alpha,\partial_\beta}\|\partial^i\partial z^0\|_2\le N_0$, where $N_0$ is the constant in Lemma~\ref{diffeo}, part 1.   Therefore the corresponding mapping $k(0)=k_0$ defined in \eqref{k} is a diffeomorphism with its Jacobian $1/4\le J(k_0)\le 2$ and $\frac14(|\alpha-\alpha'|+|\beta-\beta'|)\le |k_0(\alpha,\beta)-k_0(\alpha',\beta')|\le 2(|\alpha-\alpha'|+|\beta-\beta'|)$. Assume that the initial data satisfies \eqref{406}-\eqref{4066}, and for $\Gamma=\partial_\alpha,\partial_\beta, L_0,\varpi$,
\begin{equation}\label{420}
L=\sum_{|j|\le l+9\atop\partial=\partial_\alpha,\partial_\beta}\|\Gamma^j |D|^{1/2} \frak z^0\|_{2}+\|\Gamma^j\partial \lambda^0\|_{2}+\|\Gamma^j  u^0\|_{H^{1/2}}+\|\Gamma^j w^0\|_{2}<\infty.
\end{equation}
here $\frak z^0=z^0\circ k_0^{-1}$. Let
\begin{equation}\label{421}
\epsilon=\sum_{|j|\le l+3\atop\partial=\partial_\alpha,\partial_\beta}\|\Gamma^j |D|^{1/2} \frak z^0\|_{2}+\|\Gamma^j\partial \lambda^0\|_{2}+\|\Gamma^j  u^0\|_{H^{1/2}}+\|\Gamma^j w^0\|_{2}.
\end{equation}
and assume $\epsilon\le N_1$. An argument as that in Remark~\ref{remark41} and an application of Lemma~\ref{lemma 1.2}, Proposition~\ref{propcomgh}, \eqref{comgk}, \eqref{comjp},  Propositions~\ref{propcauchy1},~\ref{propcauchy2},~\ref{propsobolev} gives that for $\Gamma=\partial_t,\,\partial_\alpha,\,\partial_\beta,\,L_0,\,\varpi$, 
\begin{equation}\label{411}
\mathcal M_0=\sum_{|j|\le l+2\atop
\partial=\partial_\alpha,\partial_\beta}(\|\Gamma^j\partial \lambda^0\|_2+\|\Gamma^j\partial \frak z^0\|_2
+\|\Gamma^j\frak v(0)\|_2+\|\Gamma^j(\partial_t+b\cdot\nabla_\bot)\frak v(0)\|_2)\le c_1(\epsilon)\epsilon<\infty
\end{equation}
and a further application of Lemma 6.2  of \cite{wu2} and interpolation gives that (for $\epsilon>0$ small enough such that $c_1(\epsilon)\epsilon\le M_0$)
\begin{equation}\label{412}
\frak F_{l+2}(0)\le c_2(\epsilon)\epsilon^2,\qquad \mathcal F_{l+3}(0)\le c_3(\epsilon)\epsilon^2,\qquad\mathcal F_{l+9}(0)= c_4(L)<\infty
\end{equation}
Here $c_i(p)$, $i=1,2,3,4$ are constants depending on $p$. 

Take $M_0$ such that $0<M_0\le N_1$ and all the estimates derived in section~\ref{energy} holds. 
\begin{theorem}[Global well-posedness]\label{globalexistence}
There exists $\epsilon_0>0$, depending on $M_0$, $L$,  where $L$ is as in \eqref{420}, such that for $0\le \epsilon\le \epsilon_0$, the initial value problem \eqref{ww1}-\eqref{ww2}-\eqref{406} has a unique classical solution globally in time. For $0\le t<\infty$, the solution satisfies \eqref{407}, \eqref{-407}, the interface is a graph, and 
\begin{equation}\label{413}(1+t)\sum_{|j|\le l-3\atop\partial=\partial_\alpha,\partial_\beta}(|\partial\Gamma^j \chi(t)|_\infty+|\partial\Gamma^j\frak v(t)|_\infty)\lesssim \frak F^{1/2}_{l+2}(t)\le C( M_0, L)\epsilon.\end{equation}
Here $C(M_0, L)$ is a constant depending on $M_0, L$.
\end{theorem}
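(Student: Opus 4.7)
The plan is a standard bootstrap/continuity argument, combining the local existence result (Theorem~\ref{local}) with the uniform a priori estimate of Theorem~\ref{mainestimate}. First I would translate the initial data \eqref{406}, via the diffeomorphism $k_0$ (whose invertibility is guaranteed by Lemma~\ref{diffeo} once $\epsilon$ is small enough), into initial data \eqref{404} for the quasilinear system \eqref{401}–\eqref{402}. Using Lemma~\ref{coordinate}, the assumption \eqref{420}–\eqref{421} together with the compatibility condition yields the control \eqref{411}–\eqref{412} for the energies $\frak F_{l+2}(0)$, $\mathcal F_{l+3}(0)$, $\mathcal F_{l+9}(0)$, as well as the regularity hypotheses needed for Theorem~\ref{local}. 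Local existence then furnishes a maximal time $T^*>0$ on which $(u,\zeta)$ satisfies \eqref{regularity}; applying the chain rule and taking $\partial_t$-derivatives of \eqref{401} gives the full regularity \eqref{407}, and pulling back via the flow $h$ of \eqref{h} yields the water-wave solution $\xi$ with regularity \eqref{-407}.

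Next I would define
\[
T_\star=\sup\{\,T\in[0,T^*)\,:\,\text{the hypothesis \eqref{assumm1} holds on }[0,T]\text{ with }M=M_0\,\},
\]
where $M_0$ is the small constant fixed in Section~3. By continuity and \eqref{411}, $T_\star>0$ provided $\epsilon$ is small. On $[0,T_\star]$, every estimate in Section~\ref{relations} and Section~\ref{energy} is available, so Theorem~\ref{mainestimate} applies and delivers
\[
\frak F_{l+2}(t)^{1/2}\le (cL+1)\epsilon,\qquad \mathcal F_{l+9}(t)\le \mathcal F_{l+9}(0)(1+t)^{1/2},\qquad t\in[0,T_\star].
\]
Combining Lemma~\ref{lemmaef1} with Proposition~\ref{propl2est} (both estimate directions), this translates into the bound
\[
\sum_{|j|\le l+2\atop \partial=\partial_\alpha,\partial_\beta}\bigl(\|\Gamma^j\partial\lambda(t)\|_2+\|\Gamma^j\partial\frak z(t)\|_2+\|\Gamma^j\frak v(t)\|_2+\|\Gamma^j(\partial_t+b\cdot\nabla_\bot)\frak v(t)\|_2\bigr)\le C(L)\,\epsilon.
\]
A separate application of Lemma~\ref{lemmaliest} and Proposition~\ref{liest} gives
\[
\sum_{\partial=\partial_\alpha,\partial_\beta}|\partial\lambda(t)|_\infty\lesssim \tfrac{1}{1+t}E_{l-3}^{1/2}(t)\lesssim \tfrac{C(L)\epsilon}{1+t},
\]
so $|\partial\lambda(t)|_\infty\le\tfrac14$, which both keeps $\zeta$ non-self-intersecting with the separation constant $\tfrac14$ required in \eqref{assumm1}, and keeps $J(\zeta)\simeq 1$.

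The continuity step closes as follows: choosing $\epsilon_0$ so small that $C(L)\epsilon_0\le \tfrac12 M_0$, we obtain at $t=T_\star$ the strict inequality $M(T_\star)\le\tfrac12 M_0<M_0$, so by continuity of $t\mapsto M(t)$ the defining property of $T_\star$ persists for a bit longer; hence $T_\star=T^*$. It remains to rule out $T^*<\infty$. For this I would invoke the non-blow-up criterion \eqref{criteria}: the bounds just derived, pulled back through the diffeomorphism $k$ of \eqref{k} via Lemma~\ref{coordinate} and the Sobolev embedding, show that
\[
\sum_{|j|\le [s/2]+3}\|\Gamma^j w(t)\|_2+\|\Gamma^j u(t)\|_2+\bigl|\tfrac{1}{|\zeta_\alpha\times\zeta_\beta|}\bigr|_\infty+\sup\tfrac{|(\alpha,\beta)-(\alpha',\beta')|}{|\zeta-\zeta'|}
\]
stays bounded on $[0,T^*)$. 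By \eqref{criteria}, this forces $T^*=\infty$. The decay statement \eqref{413} is then exactly the combination of Lemma~\ref{lemmaliest} (with $m=l-3$) and the uniform bound on $\frak F_{l+2}$; that the interface remains a graph follows from $|\partial\lambda|_\infty\le\tfrac14$ and the definition \eqref{k} of $k$. Uniqueness is inherited from Theorem~\ref{local}.

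The main obstacle is the bookkeeping of the bootstrap: one must verify that the smallness quantity $M$ appearing in \eqref{assumm1} is actually controlled by the energy $\frak F_{l+2}$, which requires Proposition~\ref{propl2est}—itself proved under the assumption \eqref{assumm}—and one must also confirm that the non-self-intersection hypothesis in \eqref{assumm1}, and the diffeomorphism property of $k(\cdot,t)$ (built into all Section~\ref{energy} estimates), can be extracted from the smallness of $|\partial\lambda|_\infty$ provided by Lemma~\ref{lemmaliest}. The circularity is broken precisely because the $L^\infty$ decay that Lemma~\ref{lemmaliest} delivers is $O(\epsilon/(1+t))$, whereas the $L^2$ quantity that Theorem~\ref{mainestimate} controls is $O(\epsilon)$ uniformly, so both the geometric and analytic smallness assumptions can be closed with one strict gap.
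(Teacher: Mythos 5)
Your argument is correct and is essentially the paper's own proof: you set up the same bootstrap time $T_\star$ (the paper's $T_1$) on which \eqref{assumm1} holds with $M=M_0$, close the assumption by chaining Theorem~\ref{mainestimate}, Lemma~\ref{lemmaef1}, and Proposition~\ref{propl2est} to improve $M$ to something strictly below $M_0$ for $\epsilon$ small, and then rule out finite-time breakdown via the non-blow-up criterion \eqref{criteria}, with \eqref{413} coming from Lemma~\ref{lemmaliest}. The only slip is cosmetic: the decay bound from Lemma~\ref{lemmaliest} with $m=l-3$ involves $E_{5+m}^{1/2}=E_{l+2}^{1/2}$ on the right, not $E_{l-3}^{1/2}$, but this does not affect the argument since $E_{l+2}$ is exactly what you have controlled.
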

\begin{proof}
From Theorem~\ref{local}, Remarks~\ref{remark41}, ~\ref{remark42}, we know there exists a unique solution $\xi=\xi(\cdot, t)$ for $ t\in [0, T^*)$ of \eqref{ww1}-\eqref{ww2}-\eqref{406}, with $k(\cdot, t):\mathbb R^2\to \mathbb R^2$ as defined in \eqref{k} being a diffeomorphism, $\lambda$, $u$, $w$ as defined in \eqref{zeta}, \eqref{lambda} satisfying \eqref{407} for $t\in [0, T^*)$, and $\eta=\xi\circ k_0^{-1}$ satisfying \eqref{-407}.  Applying Lemma~\ref{lemma 1.2}, Proposition~\ref{propcomgh}, \eqref{comjp}, \eqref{comgk}, Propositions~\ref{propcauchy1},~\ref{propcauchy2},~\ref{propsobolev},
Lemma 6.2  of \cite{wu2} and interpolation, and the fact that 
$\frak z(\cdot,t)=\frak z^0(\cdot)+\int_0^t (u_3- b\cdot\nabla_\bot \frak z)(\cdot, s)\,ds$,
here $u_3$ is the $e_3$ component of $u$,  we have $\mathcal F_n(t),\ \frak F_n(t)\in C^1[0, T^*)$ for $n\le l+9$. 
Let $0<\epsilon_1\le N_1$ be small enough such that for $\epsilon\le \epsilon_1$, $\mathcal M_0\le c_1(\epsilon)\epsilon\le \frac {M_0}2$. 
Let $T_1\le T_*$ be the largest such that for $t\in [0, T_1)$, \eqref{assumm1} holds. From Theorem~\ref{polynormialgrowth},  Lemma~\ref{lemmaef1}, we know there is a $0<\epsilon_2\le \epsilon_1$, such that when $0<\epsilon\le \epsilon_2$,  $\sup_{[0, T_1)}E_{l+2}(t)\lesssim \frak F_{l+2}(t)\le c(M_0, L)^2\epsilon^2$ 
for some constant $c(M_0, L)$ depending on $M_0, L$.  On the other hand from Proposition~\ref{propl2est} we have that for $t\in [0, T_1)$, 
$$\sum_{|j|\le l+2\atop
\partial=\partial_\alpha,\partial_\beta}(\|\Gamma^j\partial \lambda(t)\|_2+\|\Gamma^j\partial \frak z(t)\|_2
+\|\Gamma^j\frak v(t)\|_2+\|\Gamma^j(\partial_t+b\cdot\nabla_\bot)\frak v(t)\|_2)\le C(M_0)E_{l+2}(t)^{1/2}$$
where  $C(M_0)$ is a constant depending on $M_0$. Taking $\epsilon_0\le\epsilon_2$, such that $C(M_0)c(M_0, L)\epsilon_0\le \frac{3M_0}4$. Therefore when $\epsilon\le \epsilon_0$, we have for $t\in [0, T_1)$,
$$\sum_{|j|\le l+2\atop
\partial=\partial_\alpha,\partial_\beta}(\|\Gamma^j\partial \lambda(t)\|_2+\|\Gamma^j\partial \frak z(t)\|_2
+\|\Gamma^j\frak v(t)\|_2+\|\Gamma^j(\partial_t+b\cdot\nabla_\bot)\frak v(t)\|_2)\le   \frac{3M_0}4 $$
This implies that $T_1=T^*$ or otherwise it contradicts with the assumption that $T_1$ is the largest. Applying Proposition~\ref{propl2est} again we deduce that
\begin{equation}
\sum_{|j|\le l+2}\|\Gamma^j w(t)\|_{L^2}+\|\Gamma^j u(t)\|_{L^2}\in L^\infty[0, T^*).
\end{equation}
Furthermore from $M_0\le N_1$ we have
\begin{equation}
\sup_{(\alpha,\beta)\ne (\alpha',\beta')}\frac {|(\alpha,\beta)-(\alpha',\beta')|}{|\zeta(\alpha,\beta,t)-\zeta(\alpha',\beta',t)|}+\big|\frac 1{|\zeta_\alpha\times\zeta_\beta(t)|}\big|_{L^\infty}\in L^\infty[0, T^*).
\end{equation}
and $\Sigma(t): \zeta=\zeta(\cdot,t)$ defines a graph for $t\in [0, T^*)$. 
Now from our assumption  we know $[\frac s2]+3\le l+2$. Applying \eqref{criteria}, we obtain  $T^*=\infty$.  \eqref{413} is a consequence of Lemma~\ref{lemmaliest}. 

\end{proof}
\begin{remark}
As a consequence of \eqref{413} and Proposition~\ref{liest}, the steepness, the acceleration of the interface and the derivative of the velocity on the interface decay at the rate $\frac 1t$.
\end{remark}
\begin{remark}
Instead of \eqref{4066},\eqref{420},\eqref{421}, we may assume
 for $|j|\le s-1$, and $\Gamma=\partial_\alpha,\,\partial_\beta,\, L_0,\, \varpi$, 
\begin{equation}\label{-409-}
\sum_{|j|\le s-1\atop\partial=\partial_\alpha,\partial_\beta}\|\Gamma^j\partial z^0\|_{H^{1/2}}+\|\Gamma^j \frak u^0\|_{H^{3/2}}+\|\Gamma^j\frak w^0\|_{H^1}<\infty;
\end{equation}
\begin{equation}\label{409}
L=\sum_{|j|\le l+9\atop\partial=\partial_\alpha,\partial_\beta}\|\Gamma^j |D|^{1/2} z^0\|_{2}+\|\Gamma^j\partial z^0\|_{2}+\|\Gamma^j \frak u^0\|_{H^{1/2}}+\|\Gamma^j\frak w^0\|_{2}<\infty; 
\end{equation}
and let 
\begin{equation}\label{-409}
\epsilon=\sum_{|j|\le l+3\atop\partial=\partial_\alpha,\partial_\beta}\|\Gamma^j |D|^{1/2} z^0\|_{2}+\|\Gamma^j\partial z^0\|_{2}+\|\Gamma^j \frak u^0\|_{H^{1/2}}+\|\Gamma^j\frak w^0\|_{2}.
\end{equation}
We know from Lemma~\ref{diffeo} and 
Lemma~\ref{coordinate} that \eqref{-409-},\eqref{409},\eqref{-409} implies
\begin{equation}\label{-410}
\sum_{|j|\le s-1\atop\partial=\partial_\alpha,\partial_\beta}\|\Gamma^j\partial \lambda^0\|_{H^{1/2}}+\|\Gamma^j  u^0\|_{H^{3/2}}+\|\Gamma^j w^0\|_{H^1}<\infty\qquad\text{and}
\end{equation}
\begin{equation}\label{410}
\sum_{|j|\le l+9\atop\partial=\partial_\alpha,\partial_\beta}\|\Gamma^j |D|^{1/2} \frak z^0\|_{2}+\|\Gamma^j\partial \lambda^0\|_{2}+\|\Gamma^j  u^0\|_{H^{1/2}}+\|\Gamma^j w^0\|_{2}\le c_5(L)L<\infty
\end{equation}
\begin{equation}\label{410-}
\sum_{|j|\le l+3\atop\partial=\partial_\alpha,\partial_\beta}\|\Gamma^j |D|^{1/2} \frak z^0\|_{2}+\|\Gamma^j\partial \lambda^0\|_{2}+\|\Gamma^j  u^0\|_{H^{1/2}}+\|\Gamma^j w^0\|_{2}\le c_5(\epsilon)\epsilon<\infty
\end{equation}
for some constants $c_5(L)$, $c_6(\epsilon)$  depending on $L$, $\epsilon$ respectively. Therefore the same conclusions of
Theorem~\ref{globalexistence} hold, and furthermore by using Lemmas~\ref{diffeo},~\ref{coordinate}, we have for $\xi=\eta\circ k_0$ the solution of the initial value problem of the water wave equations \eqref{ww1}-\eqref{ww2}-\eqref{406}, and $|j|\le s-1$, $\Gamma=\partial_t,\partial_\alpha,\partial_\beta,L_0,\varpi$, (notice that $k_0=k_0(\alpha,\beta)$ is independent of $t$.)
\begin{equation*}
\begin{aligned}
&\Gamma^j \partial_t\xi, \Gamma^j \partial (\xi-P), \Gamma^j\partial_t\xi_t, \Gamma^j\partial\xi_t\in C([0, T^*), H^{1/2}(\mathbb R^2)),\\&
 \Gamma^j\partial\xi_{tt}, \Gamma^j\partial_t\xi_{tt}\in C([0, T^*), L^2(\mathbb R^2)).
\end{aligned}
\end{equation*}

\end{remark}


\begin{thebibliography}{10}
\newcommand{\msn}[1]{\href{http://www.ams.org/mathscinet-getitem?mr=#1}{\sc MR#1}}

\bibitem{am}
D. Ambrose, N. Masmoudi 
\emph{The zero surface tension limit of two-dimensional water waves}. Comm. Pure Appl. Math. 58 (2005), no. 10, 1287-1315

\bibitem{al}
B. Alvarez-Samaniego \& D. Lannes 
{\it Large time existence for 3D water-waves and asymptotics}  Invent. Math. Vol. 171, No. 3, 2008, p. 485-541.

\bibitem{bhl} 
T. Beale, T. Hou \& J. Lowengrub {\it Growth rates for the linearized
		  motion of fluid interfaces away from equilibrium}
		  Comm. Pure Appl. Math. 46 (1993), no.9, 1269-1301.


\bibitem{bi}
G. Birkhoff {\it Helmholtz and Taylor instability} Proc. Symp. in
Appl. Math. Vol. XIII, pp.55-76.

\bibitem{cz}
S. Chen \& Y. Zhou {\it Decay rate of solutions to hyperbolic system of first order} Acta. Math. Sinica, English series. 1999, Vol. 15, no. 4, p.471-484

\bibitem{cl} 
D. Christodoulou, H. Lindblad {\it On the motion of the free surface of a liquid} Comm. Pure Appl. Math. 53 (2000) no. 12, 1536-1602

\bibitem{cs} 
D. Coutand, S. Shkoller {\it Wellposedness of the free-surface incompressible Euler equations with or without surface tension} 
J. AMS. 20 (2007), no. 3, 829-930.

\bibitem{cm} 
R. Coifman \& Y.Meyer {\it Nonlinear harmonic analysis}  Beijing
		  Lectures In Harmonic Analysis, ed. Stein, 1-45

\bibitem{cmm} 
R. Coifman, A. McIntosh and Y. Meyer {\it L\'integrale de Cauchy
definit un operateur borne sur $L^2$ pour les courbes
lipschitziennes}, Annals of Math, 116 (1982), 361-387.

\bibitem{cdm} 
R. Coifman, G. David and Y. Meyer {\it La solution des conjectures
de Calder\'on} Adv. in Math. 48, 1983, pp.144-148.


\bibitem{css} 
R. Coifman \& S. Semmes
{\it $L^2$ estimates in nonlinear Fourier analysis}  Harmonic Analysis  (Sendai, 1990), Proc. ICM-90 Satellite Conference, pp. 79-95, Springer-Verlag, 1991.

\bibitem{cr} 
W. Craig {\it An existence theory for water waves and the Boussinesq
and Korteweg-devries scaling limits} Comm. in P. D. E. 10(8), 1985
pp.787-1003

\bibitem{dj} 
G. David, J-L. Journ\'e, \& S. Semmes 
{\it OpŽrateurs de Calder\'on-Zygmund, fonctions para-accrŽtives et interpolation} Rev. Mat. Iberoamericana 1 (1985), 1--56.

\bibitem{gms}
P. Germain, N. Masmoudi, \& J. Shatah {\it Global solutions of the gravity water wave equation in dimension 3} Preprint July 2009.

\bibitem{gm}
J. Gilbert \& M. Murray {\it Clifford algebras and Dirac operators in harmonic analysis} Cambridge University Press, 1991

\bibitem{ho} 
L. H\"ormander {\it Lectures on nonlinear 	hyperbolic differential equations} Springer, 1997



\bibitem{ig1} 
T. Iguchi {\it Well-posedness of the initial value problem for capillary-gravity waves} Funkcial. Ekvac. 44 (2001) no. 2, 219-241.

\bibitem{ig2} 
T. Iguchi {\it A shallow water approximation for water waves} Preprint

\bibitem{jo} 
J-L. Journ\'e {\it Calder\'on-Zygmund operators, pseudo-differential operators, and the Cauchy integral of Calder\'on} Lecture notes in math. 994. Springer-Verlag 1983


\bibitem{ke}
C. Kenig {\it Elliptic boundary value problems on Lipschitz domains}
Beijing Lectures in Harmonic Analysis, ed. by E. M. Stein, Princeton
Univ. Press, 1986, p. 131-183.





\bibitem{kl1} 
S. Klainerman {\it Weighted $L^\infty$ amd $L^1$ estimates for solutions to the classical wave equation in three space dimensions}, Comm. Pure Appl. Math. 37 (1984), 269-288


\bibitem{kl2} 
S. Klainerman {\it Uniform decay and the Lorentz invariance of the classical wave equation}, Comm. Pure Appl. Math. 38 (1985), 321-332.

\bibitem{kl3} 
S. Klainerman {\it Global existence of small amplitude solutions to nonlinear Klein-Gordan equations in four space-time dimensions}, Comm. Pure Appl. Math. 38 (1985), 631-641.

\bibitem{kl4} 
S. Klainerman {\it The null condition and global existence to nonlinear wave equations}, Lectures in Appl. Math. vol. 23  (1986), 293-325.



 


\bibitem{la} 
D. Lannes {\it Well-posedness of the water-wave equations} J. Amer. Math. Soc. 18 (2005), 605-654

\bibitem{li} 
H. Lindblad {\it Well-posedness for the motion of an incompressible liquid with free surface boundary} Ann. of Math. 162 (2005), no. 1, 109-194.


 





\bibitem{na} 
V. I. Nalimov {\it The Cauchy-Poisson Problem} (in Russian),
Dynamika Splosh. Sredy 18, 1974, pp. 104-210.




\bibitem{ot} 
M. Ogawa, A. Tani {\it Free boundary problem for an incompressible ideal fluid with surface tension} Math. Models Methods Appl. Sci. 12, (2002), no.12, 1725-1740.
 













\bibitem{sw} 
G. Schneider and E. Wayne {\it The long wave limit for the water wave 
problem I. The case of zero surface tension } Comm. Pure. Appl. Math. 53, 2000, no.12, 1475-1535.

\bibitem{sh} 
J. Shatah {\it Normal forms and quadratic nonlinear Klein-Gordon equations} Comm. Pure Appl. Math. 38 (1985), 685-696. 

\bibitem{sz} 
J. Shatah, C. Zeng {\it Geometry and a priori estimates for free boundary problems of the Euler's equation} Comm. Pure Appl. Math. V. 61. no.5 (2008) pp.698-744




 


\bibitem{so} 
C. Sogge {\it Lectures on nonlinear wave equations} International Press, 1995. 


\bibitem{sgw}
E.M.Stein  {\it Singular integrals and differentiability properties of functions} Princeton University Press, 1970

\bibitem{st} 
W. Strauss {\it Nonlinear wave equations} CBMS No.73 AMS, 1989



\bibitem{ta} 
G. I. Taylor {\it The instability of liquid surfaces when accelerated in
		  a direction perpendicular to their planes I.}
		  Proc. Roy. Soc. London A 201, 1950 192-196



\bibitem{ve} 
G. C. Verchota {\it Layer potentials and boundary value problems for
Laplace's equation in Lipschitz domains}, Thesis, University of
Minnesota, 1982, J. of Func. Analysis, 59 (1984), 572-611. 







\bibitem{wu1} 
S. Wu {\it  Well-posedness in Sobolev spaces of the full water wave problem
in 2-D}  Inventiones Mathematicae 130, 1997, pp. 39-72

\bibitem{wu2} 
S. Wu {\it  Well-posedness in Sobolev spaces of the full water wave problem
in 3-D} Journal of the AMS. 12. no.2 (1999), pp. 445-495. 

\bibitem{wu3} 
S. Wu {\it Almost global wellposedness of the 2-D full water wave problem} Inventiones Mathematicae, Vol. 177 no.1 July 2009. pp. 45-135.

\bibitem{yo} 
H. Yosihara {\it Gravity waves on the free surface of an
incompressible perfect fluid of finite depth,} RIMS Kyoto 18, 1982,
pp. 49-96





\bibitem{zz} 
P. Zhang, Z. Zhang {\it On the free boundary problem of 3-D incompressible Euler equations}. Comm. Pure. Appl. Math. V. 61. no.7 (2008), pp. 877-940


\end{thebibliography}
\end{document}